\numberwithin{equation}{section} 
\numberwithin{table}{section} 
\numberwithin{figure}{section}
\theoremstyle{definition}
\newtheorem{definition}[equation]{Definition}
\newtheorem{remark}[equation]{Remark}
\newtheorem{example}[equation]{Example}
\theoremstyle{plain}
\newtheorem{lemma}[equation]{Lemma}
\newtheorem{proposition}[equation]{Proposition}
\newtheorem{prop}[equation]{Proposition}
\newtheorem{theorem}[equation]{Theorem}
\newtheorem{corollary}[equation]{Corollary}
\newcommand{\bbF}{\mathbb{F}} 
\newcommand{\F}{\bbF} 
\newcommand{\bbZ}{\mathbb{Z}} 
\newcommand{\Z}{\bbZ} 
\DeclareMathOperator{\res}{res} 
\DeclareMathOperator{\Tor}{Tor} 
\newcommand{\Mack}{{\textnormal{Mack}}} 
\newcommand{\mytinymatrix}[1]
{\scalebox{0.35}{$\begin{pmatrix} #1 \end{pmatrix}$}}
\newcommand{\mylittlematrix}[1]
{\scalebox{0.5}{$\begin{pmatrix} #1 \end{pmatrix}$}}
\newcommand{\bfSigma}{\textbf{$\Sigma$}}
\definecolor{odddiff}{rgb}{1 0 0}
\definecolor{evendiff}{rgb}{0 0 1}
\definecolor{orddiff}{rgb}{0 0.5 0}
\definecolor{gencolor}{rgb}{1 0 1}
\definecolor{mybackgd}{rgb}{0.2 0.2 0.2}
\newcommand{\diffcolor}{orddiff}
\newcommand{\linecolor}{blue}
\newcommand{\amult}{red}
\newcommand{\umult}{blue}
\newcommand{\xmult}{green}
\newcommand{\ymult}{purple}
\newcommand{\zmult}{pink}
\newif\ifDarkMode
	\newcommand{\foreground}{white}
	\renewcommand{\diffcolor}{yellow}
	\renewcommand{\linecolor}{cyan}
\newcommand{\R}{\mathbf{R}}  
\newcommand{\Q}{\mathbb{Q}}
\newcommand{\K}{\mathcal{K}}
\newcommand{\N}{\mathbb{N}}
\newcommand{\Sp}{\text{Sp}}
\newcommand{\Ho}{\text{Ho}}
\newcommand{\Mod}{\text{Mod}}
\newcommand{\sphere}{\mathbb{S}}
\newcommand{\Hom}{\text{Hom}}
\newcommand{\CpMack}[4]{
	\begin{tikzpicture}[scale=1]
	\node (TOP) at (0,1) {$#1$}; 
	\node (BOT) at (0,0) {$#3$};
	
	\draw[bend right=15,->] (TOP) to node[left={0.5ex}] {$#2$} (BOT);
	\draw[bend right=15,->] (BOT) to node[right={0.5ex}] {$#4$} (TOP);
	\end{tikzpicture}
}
\newcommand{\burnside}{
\begin{tikzpicture}[scale=0.9]
\node (AK) at (0,6) {\scriptsize $\Z\{\K,\K/L,\K/D,\K/R,1\}$};
\node (AL) at (-3,3) {\scriptsize $\Z\{L,1\}$};
\node (AD) at (0,3) {\scriptsize $\Z\{D,1\}$};
\node (AR) at (3,3) {\scriptsize $\Z\{R,1\}$};
\node (Ae) at (0,0) {\scriptsize $\Z$};

				
\draw[bend right=10,->] (AL) to node[left={0.5ex}] {\mylittlematrix{2 & 1}} (Ae);
\draw[bend right=10,->] (AD) to node[left] {\mylittlematrix{2 & 1}} (Ae);
\draw[bend right=10,->] (AR) to node[above={1ex}] {\mylittlematrix{2 & 1}} (Ae);

				
\draw[bend right=10,->] (Ae) to node[above={0.5ex}] {\mylittlematrix{1 \\ 0}} (AL);
\draw[bend right=10,->] (Ae) to node[right]{\mylittlematrix{1 \\ 0 }} (AR);
\draw[bend right=10,->] (Ae) to node[right]{\mylittlematrix{1 \\ 0 }} (AD);

				
\draw[bend right=10,->] (AK) to node[above={0.5ex}, rotate=45] {\mytinymatrix{2 & 0 & 1 & 1 & 0 \\ 0 & 2 & 0 & 0 & 1}} (AL);
\draw[bend right=10,->] (AK) to node[above={0.5ex}, rotate=90] {\mytinymatrix{2 & 1 & 0 & 1 & 0 \\ 0 & 0 & 2 & 0 & 1}} (AD);
\draw[bend right=10,->] (AK) to node[below right={0.5ex}, rotate=-45] {\mytinymatrix{2 & 1 & 1 & 0 & 0 \\ 0 & 0 & 0 & 2 & 1}} (AR);

				
\draw[bend right=10,->] (AL) to node[below={1ex}] {\mytinymatrix{1 & 0 \\ 0 & 1 \\ 0 & 0 \\ 0 & 0 \\ 0 & 0}} (AK);
\draw[bend right=10,->] (AD) to node[right={0.05ex}]{\mytinymatrix{1 & 0 \\ 0 & 0 \\0 & 1 \\ 0 & 0 \\ 0 & 0 }} (AK);
\draw[bend right=10,->] (AR) to node[above right]{\mytinymatrix{1 & 0 \\ 0 & 0 \\ 0 & 0 \\ 0 & 1 \\ 0 & 0 }} (AK);

\end{tikzpicture}
}
\newcommand{\constant}{
\begin{tikzpicture}[scale=0.8]
\node (ZK) at (0,4) {$\Z$};
\node (ZL) at (-2,2) {$\Z$};
\node (ZD) at (0,2) {$\Z$};
\node (ZR) at (2,2) {$\Z$};
\node (Ze) at (0,0) {$\Z$};

				
\draw[bend right=10,->] (ZL) to node[fill=white, inner sep=0.75pt] {\tiny $1$} (Ze);
\draw[bend right=10,->] (ZD) to node[fill=white, inner sep=0.75pt] {\tiny $1$} (Ze);
\draw[bend right=10,->] (ZR) to node[fill=white, inner sep=0.75pt] {\tiny $1$} (Ze);

				
\draw[bend right=10,->] (Ze) to node[fill=white, inner sep=0.75pt] {\tiny $2$} (ZL);
\draw[bend right=10,->] (Ze) to node[fill=white, inner sep=0.75pt]{\tiny $2$} (ZD);
\draw[bend right=10,->] (Ze) to node[fill=white, inner sep=0.75pt]{\tiny $2$} (ZR);

				
\draw[bend right=10,->] (ZK) to node[fill=white, inner sep=0.75pt] {\tiny $1$} (ZL);
\draw[bend right=10,->] (ZK) to node[fill=white, inner sep=0.75pt] {\tiny $1$} (ZD);
\draw[bend right=10,->] (ZK) to node[fill=white, inner sep=0.75pt] {\tiny $1$} (ZR);

				
\draw[bend right=10,->] (ZL) to node[fill=white, inner sep=0.75pt] {\tiny $2$} (ZK);
\draw[bend right=10,->] (ZD) to node[fill=white, inner sep=0.75pt]{\tiny $2$} (ZK);
\draw[bend right=10,->] (ZR) to node[fill=white, inner sep=0.75pt]{\tiny $2$} (ZK);

\end{tikzpicture}
}
\newcommand{\dualconstant}{
\begin{tikzpicture}[scale=0.8]
\node (ZK) at (0,4) {$\Z$};
\node (ZL) at (-2,2) {$\Z$};
\node (ZD) at (0,2) {$\Z$};
\node (ZR) at (2,2) {$\Z$};
\node (Ze) at (0,0) {$\Z$};

				
\draw[bend right=10,->] (ZL) to node[fill=white, inner sep=0.75pt] {\tiny $2$} (Ze);
\draw[bend right=10,->] (ZD) to node[fill=white, inner sep=0.75pt]{\tiny $2$} (Ze);
\draw[bend right=10,->] (ZR) to node[fill=white, inner sep=0.75pt]{\tiny $2$} (Ze);
			
				
\draw[bend right=10,->] (Ze) to node[fill=white, inner sep=0.75pt] {\tiny $1$} (ZL);
\draw[bend right=10,->] (Ze) to node[fill=white, inner sep=0.75pt] {\tiny $1$} (ZD);
\draw[bend right=10,->] (Ze) to node[fill=white, inner sep=0.75pt] {\tiny $1$} (ZR);

				
\draw[bend right=10,->] (ZK) to node[fill=white, inner sep=0.75pt] {\tiny $2$} (ZL);
\draw[bend right=10,->] (ZK) to node[fill=white, inner sep=0.75pt]{\tiny $2$} (ZD);
\draw[bend right=10,->] (ZK) to node[fill=white, inner sep=0.75pt]{\tiny $2$} (ZR);

				
\draw[bend right=10,->] (ZL) to node[fill=white, inner sep=0.75pt] {\tiny $1$} (ZK);
\draw[bend right=10,->] (ZD) to node[fill=white, inner sep=0.75pt] {\tiny $1$} (ZK);
\draw[bend right=10,->] (ZR) to node[fill=white, inner sep=0.75pt] {\tiny $1$} (ZK);

\end{tikzpicture}
}
\newcommand{\phiLDRMack}[8]{
\begin{tikzpicture}[scale=0.8]
\node (K) at (0,4) {$#1$};
\node (L) at (-2,2) {$#2$};
\node (D) at (0,2) {$#2$};
\node (R) at (2,2) {$#2$};
\node (e) at (0,0) {$0$};

				
\draw[bend right=10,->] (L) to (e);
\draw[bend right=10,->] (D) to (e);
\draw[bend right=10,->] (R) to (e);

				
\draw[bend right=10,->] (e) to (L);
\draw[bend right=10,->] (e) to (R);
\draw[bend right=10,->] (e) to (D);

				
\draw[bend right=10,->] (K) to node[fill=white, inner sep=0.5pt, rotate=45] {\tiny $#3$} (L);
\draw[bend right=10,->] (K) to node[fill=white, inner sep=0.5pt, rotate=90] {\tiny $#5$} (D);
\draw[bend right=10,->] (K) to node[fill=white, inner sep=0.5pt, rotate=-45] {\tiny $#7$} (R);

				
\draw[bend right=10,->] (L) to node[fill=white, inner sep=0.5pt, rotate=45] {\tiny $#4$} (K);
\draw[bend right=10,->] (D) to node[fill=white, inner sep=0.5pt, rotate=90]{\tiny $#6$} (K);
\draw[bend right=10,->] (R) to node[fill=white, inner sep=0.5pt, rotate=-45]{\tiny $#8$} (K);

\end{tikzpicture}
}
\newcommand{\infKMack}[1]{
\begin{tikzpicture}[scale=0.8]
\node (K) at (0,4) {$#1$};
\node (L) at (-2,2) {$0$};
\node (D) at (0,2) {$0$};
\node (R) at (2,2) {$0$};
\node (e) at (0,0) {$0$};

				
\draw[bend right=10,->] (L) to (e);
\draw[bend right=10,->] (D) to (e);
\draw[bend right=10,->] (R) to (e);

				
\draw[bend right=10,->] (e) to (L);
\draw[bend right=10,->] (e) to (R);
\draw[bend right=10,->] (e) to (D);

				
\draw[bend right=10,->] (K) to (L);
\draw[bend right=10,->] (K) to (D);
\draw[bend right=10,->] (K) to (R);

				
\draw[bend right=10,->] (L) to (K);
\draw[bend right=10,->] (D) to (K);
\draw[bend right=10,->] (R) to (K);

\end{tikzpicture}
}
\newcommand{\IndeKZ}{
\begin{tikzpicture}[scale=0.7]
\node (K) at (0,6.75) {\scriptsize $\Z[\K/\K]$};
\node (L) at (-3.75,3) {\scriptsize $\Z[\K/L]$};
\node (D) at (0,3) {\scriptsize $\Z[\K/D]$};
\node (R) at (3.75,3) {\scriptsize $\Z[\K/R]$};
\node (e) at (0,-0.75) {\scriptsize $\Z[\K]$};

				
\draw[bend right=10,->] (L) to node[fill=white, rotate=-45, inner sep=1pt] {\tiny $(1+l)$} (e);
\draw[bend right=10,->] (D) to node[fill=white, rotate=90, inner sep=1pt] {\tiny $(1+d)$} (e);
\draw[bend right=10,->] (R) to node[fill=white, rotate=45, inner sep=1pt] {\tiny $(1+r)$} (e);

				
\draw[bend right=10,->] (e) to node[fill=white, inner sep=1pt, rotate=-45] {\tiny $q$} (L);
\draw[bend right=10,->] (e) to node[fill=white, inner sep=1pt] {\tiny $q$} (D);
\draw[bend right=10,->] (e) to node[fill=white, inner sep=1pt, rotate=45] {\tiny $q$} (R);

				
\draw[bend right=10,->] (K) to node[fill=white, inner sep=1pt] {\tiny $\Delta$} (L);
\draw[bend right=10,->] (K) to node[fill=white, inner sep=1pt] {\tiny $\Delta$} (D);
\draw[bend right=10,->] (K) to node[fill=white, inner sep=1pt] {\tiny $\Delta$} (R);

				
\draw[bend right=10,->] (L) to node[fill=white, inner sep=1pt] {\tiny $\nabla$} (K);
\draw[bend right=10,->] (D) to node[fill=white, inner sep=1pt]{\tiny $\nabla$} (K);
\draw[bend right=10,->] (R) to node[fill=white, inner sep=1pt]{\tiny $\nabla$} (K);
\end{tikzpicture}
}
\newcommand{\TripleSumIndHKZConst}{
\begin{tikzpicture}[scale=0.7]
\node (K) at (0,6.75) {\scriptsize $\Z^3$};
\node (L) at (-3.75,3) {\scriptsize $\Z[\K/L] \oplus \Z^2$};
\node (D) at (0,3) {\scriptsize $\Z \oplus \Z[\K/D] \oplus \Z$};
\node (R) at (3.75,3) {\scriptsize $\Z^2 \oplus \Z[\K/R]$};
\node (e) at (0,-0.75) {\scriptsize $\Z[\K/L] \oplus \Z[\K/D] \oplus \Z[\K/R]$};

				
\draw[bend right=10,->] (L) to node[fill=white, rotate=-45, inner sep=1pt] {\tiny $\text{id} \oplus \Delta \oplus \Delta$} (e);
\draw[bend right=10,->] (D) to node[fill=white, rotate=90, inner sep=1pt]  {\tiny $\Delta \oplus \text{id} \oplus \Delta$} (e);
\draw[bend right=10,->] (R) to node[fill=white, rotate=45, inner sep=1pt]  {\tiny $\Delta \oplus \Delta \oplus \text{id}$} (e);

				
\draw[bend right=10,->] (e) to node[fill=white, inner sep=1pt, rotate=-45] {\tiny $2 \oplus \nabla \oplus \nabla$} (L);
\draw[bend right=10,->] (e) to node[fill=white, inner sep=1pt, rotate=90]  {\tiny $\nabla \oplus 2 \oplus \nabla$} (D);
\draw[bend right=10,->] (e) to node[fill=white, inner sep=1pt, rotate=45]  {\tiny $\nabla \oplus \nabla \oplus 2$} (R);

				
\draw[bend right=10,->] (K) to node[fill=white, inner sep=1pt, rotate=45]  {\tiny $\Delta \oplus \text{id} \oplus \text{id}$} (L);
\draw[bend right=10,->] (K) to node[fill=white, inner sep=1pt, rotate=90]  {\tiny $\text{id} \oplus \Delta \oplus \text{id}$} (D);
\draw[bend right=10,->] (K) to node[fill=white, inner sep=1pt, rotate=-45] {\tiny $\text{id} \oplus \text{id} \oplus \Delta$} (R);

				
\draw[bend right=10,->] (L) to node[fill=white, inner sep=1pt, rotate=45]  {\tiny $\nabla \oplus 2 \oplus 2$} (K);
\draw[bend right=10,->] (D) to node[fill=white, inner sep=1pt, rotate=90]  {\tiny $2 \oplus \nabla \oplus 2$} (K);
\draw[bend right=10,->] (R) to node[fill=white, inner sep=1pt, rotate=-45] {\tiny $2 \oplus 2 \oplus \nabla$} (K);
\end{tikzpicture}
}
\newcommand{\TripleSumIndHKZDual}{
\begin{tikzpicture}[scale=0.7]
\node (K) at (0,6.75) {\scriptsize $\Z^3$};
\node (L) at (-3.75,3) {\scriptsize $\Z[\K/L] \oplus \Z^2$};
\node (D) at (0,3) {\scriptsize $\Z \oplus \Z[\K/D] \oplus \Z$};
\node (R) at (3.75,3) {\scriptsize $\Z^2 \oplus \Z[\K/R]$};
\node (e) at (0,-0.75) {\scriptsize $\Z[\K/L] \oplus \Z[\K/D] \oplus \Z[\K/R]$};

				
\draw[bend right=10,->] (L) to node[fill=white, rotate=-45, inner sep=1pt] {\tiny $2 \oplus \Delta \oplus \Delta$} (e);
\draw[bend right=10,->] (D) to node[fill=white, rotate=90, inner sep=1pt]  {\tiny $\Delta \oplus 2 \oplus \Delta$} (e);
\draw[bend right=10,->] (R) to node[fill=white, rotate=45, inner sep=1pt]  {\tiny $\Delta \oplus \Delta \oplus 2$} (e);

				
\draw[bend right=10,->] (e) to node[fill=white, inner sep=1pt, rotate=-45] {\tiny $\text{id} \oplus \nabla \oplus \nabla$} (L);
\draw[bend right=10,->] (e) to node[fill=white, inner sep=1pt, rotate=90]  {\tiny $\nabla \oplus \text{id} \oplus \nabla$} (D);
\draw[bend right=10,->] (e) to node[fill=white, inner sep=1pt, rotate=45]  {\tiny $\nabla \oplus \nabla \oplus \text{id}$} (R);

				
\draw[bend right=10,->] (K) to node[fill=white, inner sep=1pt, rotate=45]  {\tiny $\Delta \oplus 2 \oplus 2$} (L);
\draw[bend right=10,->] (K) to node[fill=white, inner sep=1pt, rotate=90]  {\tiny $2 \oplus \Delta \oplus 2$} (D);
\draw[bend right=10,->] (K) to node[fill=white, inner sep=1pt, rotate=-45] {\tiny $2 \oplus 2 \oplus \Delta$} (R);

				
\draw[bend right=10,->] (L) to node[fill=white, inner sep=1pt, rotate=45]  {\tiny $\nabla \oplus \text{id} \oplus \text{id}$} (K);
\draw[bend right=10,->] (D) to node[fill=white, inner sep=1pt, rotate=90]  {\tiny $\text{id} \oplus \nabla \oplus \text{id}$} (K);
\draw[bend right=10,->] (R) to node[fill=white, inner sep=1pt, rotate=-45] {\tiny $\text{id} \oplus \text{id} \oplus \nabla$} (K);
\end{tikzpicture}
}
\newcommand{\IndeKF}{
\begin{tikzpicture}[scale=0.7]
\node (K) at (0,6.75) {\scriptsize $\F_2[\K/\K]$};
\node (L) at (-3.75,3) {\scriptsize $\F_2[\K/L]$};
\node (D) at (0,3) {\scriptsize $\F_2[\K/D]$};
\node (R) at (3.75,3) {\scriptsize $\F_2[\K/R]$};
\node (e) at (0,-0.75) {\scriptsize $\F_2[\K]$};

				
\draw[bend right=10,->] (L) to node[fill=white, rotate=-45, inner sep=1pt] {\tiny $(1+l)$} (e);
\draw[bend right=10,->] (D) to node[fill=white, rotate=90, inner sep=1pt] {\tiny $(1+d)$} (e);
\draw[bend right=10,->] (R) to node[fill=white, rotate=45, inner sep=1pt] {\tiny $(1+r)$} (e);

				
\draw[bend right=10,->] (e) to node[fill=white, inner sep=1pt, rotate=-45] {\tiny $q$} (L);
\draw[bend right=10,->] (e) to node[fill=white, inner sep=1pt] {\tiny $q$} (D);
\draw[bend right=10,->] (e) to node[fill=white, inner sep=1pt, rotate=45] {\tiny $q$} (R);

				
\draw[bend right=10,->] (K) to node[fill=white, inner sep=1pt] {\tiny $\Delta$} (L);
\draw[bend right=10,->] (K) to node[fill=white, inner sep=1pt] {\tiny $\Delta$} (D);
\draw[bend right=10,->] (K) to node[fill=white, inner sep=1pt] {\tiny $\Delta$} (R);

				
\draw[bend right=10,->] (L) to node[fill=white, inner sep=1pt] {\tiny $\nabla$} (K);
\draw[bend right=10,->] (D) to node[fill=white, inner sep=1pt]{\tiny $\nabla$} (K);
\draw[bend right=10,->] (R) to node[fill=white, inner sep=1pt]{\tiny $\nabla$} (K);
\end{tikzpicture}
}
\newcommand{\whitecirc}{
\begin{tikzpicture}
\draw[fill=white] (0,0) circle (0.075cm) {}; 
\end{tikzpicture}
}
\newcommand{\whitesquare}{
\begin{tikzpicture}
\node[draw, fill=white, inner sep=1pt, regular polygon sides=4, text=white] at (0,0) {\large $\ast$};
\end{tikzpicture}
}
\newcommand{\whitesquaredual}{
\begin{tikzpicture}
\node[draw, fill=white!100, inner sep=1pt, regular polygon sides=4] at (0,0) {\large $\ast$};
\end{tikzpicture}
}
\newcommand{\whitepent}{
\begin{tikzpicture}
\node[draw, fill=white, inner sep=0.5pt, regular polygon, regular polygon sides=5, text=white, scale=0.8] at (0,0) {\large $\ast$};
\end{tikzpicture}
}
\newcommand{\whitepentdual}{
\begin{tikzpicture}
\node[draw, fill=white, inner sep=0.5pt, regular polygon, regular polygon sides=5, scale=0.8] at (0,0) {\large $\ast$};
\end{tikzpicture}
}
\newcommand{\whitetrap}{
\begin{tikzpicture}
\node[draw, fill=white, inner sep=0.5pt, trapezium, text=white, scale=0.8] at (0,0) {$\ast$};
\end{tikzpicture}
}
\newcommand{\fillcirc}{
\begin{tikzpicture}
\draw[fill=black] (0,0) circle (0.075cm) {}; 
\end{tikzpicture}
}
\newcommand{\fillsquare}{
\begin{tikzpicture}
\node[draw, fill=black, inner sep=1pt, regular polygon sides=4] at (0,0) {$\ast$};
\end{tikzpicture}
}
\newcommand{\fillsquaredual}{
\begin{tikzpicture}
\node[draw, fill=black, inner sep=1pt, regular polygon sides=4, text=white] at (0,0) {\large $\ast$};
\end{tikzpicture}
}
\newcommand{\fillpent}{
\begin{tikzpicture}
\node[draw, fill=black, inner sep=0.5pt, regular polygon, regular polygon sides=5, scale=0.8] at (0,0) {\large $\ast$};
\end{tikzpicture}
}
\newcommand{\fillpentdual}{
\begin{tikzpicture}
\node[draw, fill=black, inner sep=0.5pt,regular polygon, regular polygon sides=5, text=white, scale=0.8] at (0,0) {\large $\ast$};
\end{tikzpicture}
}
\newcommand{\filltrap}{
\begin{tikzpicture}
\node[draw, fill=black, inner sep=0.5pt, trapezium, scale=0.8] at (0,0) {\large $\ast$};
\end{tikzpicture}
}
\newcommand{\filltrapdual}{
\begin{tikzpicture}
\node[draw, fill=black, inner sep=0.5pt, trapezium, scale=0.8, text=white] at (0,0) {\large $\ast$};
\end{tikzpicture}
}
\newcommand{\phiLDRf}{
\begin{tikzpicture}[scale=0.6]
\draw[fill=white] (0,0) rectangle (0.5,0.15);
\end{tikzpicture}
}
\newcommand{\bardot}{
\begin{tikzpicture}[scale=0.6]
\draw[fill=white] (0,0) rectangle (0.5,0.15);
\draw[fill=black] (0.25,0.3) circle (0.08cm);
\end{tikzpicture}
}
\newcommand{\phiLDRQ}{
\begin{tikzpicture}[scale=0.15]
    \begin{scope}[rotate=90]
    \draw (1,0)
        \foreach \t in {0,72,...,360}{
            -- ({cos(\t)},{sin(\t)})
        };
    \draw[fill] (1,0) -- ({cos(72)},{sin(72)}) -- ({0.5*(1+cos(144))},{0.882}) -- ({0.5*(1+cos(144))},{-0.882}) -- ({cos(72)},{-sin(72)});
    \end{scope}
\end{tikzpicture}
}
\begin{document}

\title{The $RO(\K)$-graded coefficients of $H\underline{A}$}

\begin{abstract}
In $G$-equivariant stable homotopy theory, it is known that the equivariant Eilenberg-Mac Lane spectra representing ordinary equivariant cohomology have nontrivial $RO(G)$-graded homotopy corresponding to the equivariant (co)homology of representation spheres. We will compute the universal case of this ordinary $RO(G)$-graded homotopy in the case of $G=\K$, where $\K$ is the Klein-four group. In particular, we will compute a subring of the $RO(\K)$-graded homotopy of $H\underline{A}$ for $\underline{A}$ the Burnside Mackey functor. 
\end{abstract}

\author{Jesse Keyes}

\maketitle
\tableofcontents

\section{Introduction} \label{Introduction}
	In classical topology, spheres are some of the fundamental objects we would like to understand. While the homotopy groups of spheres are famously hard to compute, their cohomology is completely understood. In the $G$-equivariant setting for $G$ a finite group, new spheres and cohomology theories arise. One can define the representation sphere associated to a real representation $V$ of $G$ by forming the one-point compactification $S^V \coloneqq \hat{V}$ and having $G$ act trivially on the point at infinity. Then, $S^V$ carries a $G$-action and has underlying space given by $S^{dim(V)}$. In the $G$-equivariant setting, ordinary cohomology is extended to \textit{Bredon cohomology}, an equivariant cohomology theory that recovers the ordinary cohomology of the underlying space.  
	
	When the coefficients of a Bredon cohomology theory are some Mackey functor $\underline{M}$, the equivariant analogue of abelian groups in stable homotopy theory, the theory is represented by an equivariant Eilenberg-Mac Lane $G$-spectrum $H\underline{M}$. Completely analogous to the classical setting, the stable homotopy of these $G$-spectra is the Bredon cohomology of a point. Further, we understand how to map trivial spheres (i.e. spheres with a trivial action) into $H\underline{M}$. If we consider maps out of representation spheres, however, this is not completely understood. Our goal will be to understand the Bredon cohomology of a point with coefficients in the Burnside Mackey functor, in the case that $G$ is the Klein four group $\K \cong C_2 \times C_2$.  

	More precisely, we will compute $\underline{\pi}_{n + k\overline{\rho}}^{\K}(H\underline{A})$ for $n, k \in \Z$, where $\underline{A}$ is the Burnside Mackey functor for $\K$. It is worth noting that $\underline{A}$ is $\underline{\pi}^\K_0(\sphere_\K)$, so that $H\underline{A}$ is the $0^{th}$-Postnikov truncation of the $\K$-equivariant sphere. Here, $\overline{\rho} \in RO(\K)$ is the reduced regular representation for $\K$. Equivalently, this is a computation of the $\K$-equivariant ordinary cohomology of a point in a restricted range, and this is the perspective we'll take at several points. The complete $C_2$-equivariant cohomology of a point with coefficients in any Mackey functor can be found in \cite{Sikora}. The complete $C_2$-equivariant cohomology of a point with coefficients in $\underline{A}$, $\underline{\Z}$, and $\underline{\F}_2$ can be found in \cite{Lewis}, \cite{Zeng}, \cite{Holler-Kriz}. In the $\K$-setting, the cohomology of a point with coefficients in $\underline{\Z}$ was computed in the same restricted range in \cite{Slone}. For $\underline{\F}_2$-coefficients along with some other more general coefficients, see \cite{Guillou-Yarnall}. The full $\K$-equivariant computation for $\underline{\F}_2$-coefficients is due to \cite{Holler-Kriz} and \cite{ellisbloor}. 
	
	In a small range, we'll attack this by computing Bredon cohomology of $S^{k\overline{\rho}}$ using a cellular approach and following up with a series of small spectral sequences. In particular, we will use the spectral sequence associated to a double complex to inductively compute the homology of $S^{(k-1)\overline{\rho}} \wedge S^{\overline{\rho}} \simeq S^{k\overline{\rho}}$. While one may expect the use of a Kunneth spectral sequence to compute this homology, this is not what we will do here. The relationship between these two methods is explored in \cref{KunnethSection}. In the remaining range, we'll use a comparison to $H\underline{\Z}$ to determine the answer for $H\underline{A}$. 
	
	As we will see, the kernel of the map $\underline{A} \rightarrow \underline{\Z}$ vanishes at $(\K/e)$, the underlying level. Broadly speaking, this should tell us to expect that cohomology of $S^{k\overline{\rho}}$ with coefficients in this kernel should vanish in higher degrees as the cell decomposition of $S^{k\overline{\rho}}$ in higher degrees is exclusively a decomposition into free cells. We conclude that the homotopy of $k\overline{\rho}$-suspensions of $H\underline{A}$ and $H\underline{\Z}$ should agree in high degrees, and the homotopy of $k\overline{\rho}$-suspensions of $H\underline{\Z}$ was computed in \cite{Slone}. To fully understand the homotopy of $k\overline{\rho}$-suspensions of $H\underline{A}$, it will then suffice to compute it in low degrees. This is precisely the approach we take. In \cref{Mackey Functors} and \cref{Preliminaries}, we collect some preliminary notions and computations for the main computation. \cref{ComparisonPositive} and \cref{ComparisonNegative} contain the calculation of the cohomology of $S^{k\overline{\rho}}$ with coefficients in the aforementioned kernel. \cref{HAPositiveCone} and \cref{HANegativeCone} finish off the computation with the cohomology in lower degrees. \cref{RmksOnMultStructure} contains some remarks on the multiplicative structure of the cohomology and, in particular, how it isn't quite as straightforward to understand some multiplicative phenomena as it is in the $C_2$-equivariant setting. The difficulty is essentially captured in the fact that $a_{\overline{\rho}} \in \underline{\pi}_0(\Sigma^{\overline{\rho}} H\underline{A})(\K/\K)$, the Euler class associated to $S^{\overline{\rho}}$, is not an irreducible element. This is in contrast to the $C_2$-equivariant setting. \\
	
\textbf{Acknowledgments} I would like to thank Bert Guillou for his endless support and guidance during the work done on this project. I am deeply grateful for the sheer amount of time spent on reading drafts of this document and thoughtfulness in guiding me through the process of producing such a thing.
 
\subsection{Notations and Conventions}
	Recall the Klein four group $\K \cong C_2 \times C_2$. 
	
	\begin{itemize}
		\item $l$ and $r$ denote generators of the left and right factors, respectively, and $d \coloneqq lr$. 
		\item $L \coloneqq \langle l \rangle$, $D \coloneqq \langle d \rangle$, and $R \coloneqq \langle r \rangle$. 
		\item $i_J^H$ denotes the inclusion of $J$ into $H$ for any $J \leq H \leq \K$. When restricting group actions along this map, we will denote $(i_J^H)^*$ as $res_J^H$. The same notation will be used for the map induced on Burnside rings.
		\item $\tau_H^G: Set_H^{fin} \rightarrow Set_G^{fin}$ is the ``induction" of $H$-sets given by $H/J \mapsto G \times_H H/J$, where $G$ acts on the left. The same notation will be used for the map induced on Burnside rings.
		\item $\varphi_H: \K \rightarrow C_2$ denotes the canonical surjection with kernel $H \leq \K$. 
		\item $q: \K/J \rightarrow \K/H$ will denote the quotient map for $J \leq H$. 
		\item $\bigoplus\limits_H (-)$ will range over $H \in \{L,D,R\}$, unless otherwise mentioned. 
		\item In the Burnside ring $A(G)$, $1 \coloneqq G/G$. 
	\end{itemize}  

\section{Mackey Functors} \label{Mackey Functors}
	Mackey functors are an algebraic gadget that appear, for example, when working in equivariant stable homotopy theory. Of course, \cite{PurpleBook} is a wonderful account of the material collected in this section. While we are mostly concerned with $\K$-equivariant homotopy theory, we will lean on knowledge of $C_2$-equivariant homotopy theory quite often. This is precisely because there are three embeddings of $C_2$ into $\K$ and each embedding gives a quotient of $C_2$. So, we will begin with $C_2$-Mackey functors. From a computational perspective, the following definition will suffice: 
	\begin{definition}
		The data of a $C_2$-Mackey functor $\underline{M}$ is as follows:
		\begin{itemize}
			\item abelian groups $\underline{M}(C_2/e)$ and $\underline{M}(C_2/C_2)$
			\item a restriction map $res_e^{C_2}: \underline{M}(C_2/C_2) \rightarrow \underline{M}(C_2/e)$
			\item a transfer map $tr_e^{C_2}: \underline{M}(C_2/e) \rightarrow \underline{M}(C_2/C_2)$
			\item a $W_{C_2}(e) \cong C_2$-action on $\underline{M}(C_2/e)$.
		\end{itemize}
	To complete the definition, we require that the ``double coset formula" is satisfied. The formula describes how to restrict an element in the image of any transfer, but it won't appear explicitly throughout this computation, so it is omitted here. 
	\end{definition}	
	
	The data of a Mackey functor will be depicted in the so-called ``Lewis diagram", as introduced in \cite{Lewis} and seen here in \cref{C2MackeyFunctor}.
\begin{figure} 
\caption{The Lewis Diagram for a $C_2$-Mackey Functor}
\label{C2MackeyFunctor}
\begin{tikzpicture}[scale=1]
\node (CC) at (0,2) {\scriptsize $\underline{M}(C_2/C_2)$};
\node (Ce) at (0,0) {\scriptsize $\underline{M}(C_2/e)$};

\draw[bend right=15,->] (CC) to node[left={0.5ex}] {$res_e^{C_2}$} (Ce);
\draw[bend right=15,->] (Ce) to node[right={0.5ex}] {$tr_e^{C_2}$} (CC);
\draw[looseness=2, loop below, ->] (Ce) to node[right={0.05ex}]{\tiny $W_{C_2}(e)$} (Ce);
\end{tikzpicture}
\end{figure}
	As $C_2$-Mackey functors come up, we will not depict the Weyl actions in the Lewis diagram. Rather, we will give descriptions of $\underline{M}(C_2/H)$ as $W_{C_2}(H)$-modules. Examples of $C_2$-Mackey functors can be found in \cref{C2Zoo}.
	
	\begin{table}
	\caption{Some $C_2$-Mackey Functors} 
	\label{C2Zoo}
	\begin{tabular}{|| c | c | c | c ||}
	\hline &&&\\[-1em]
	$\underline{\Z} = \whitesquare$ & $\underline{\Z}^* = \whitesquaredual$ & $\underline{\F_2} = \fillsquare$ & $\underline{\F_2}^* = \fillsquaredual$ \\
	\hline &&&\\[-1em]
	$\CpMack{\Z}{1}{\Z}{2}$ & $\CpMack{\Z}{2}{\Z}{1}$ & $\CpMack{\F_2}{1}{\F_2}{0}$ & $\CpMack{\F_2}{0}{\F_2}{1}$ \\
	\hline &&&\\[-1em]
	$\langle \Z \rangle = \whitecirc$ & $\underline{f}$ & $\langle \F_2 \rangle = \fillcirc$ & $\underline{Q}$ \\
	\hline &&&\\[-1em]
	$\CpMack{\Z}{}{0}{}$ & $\CpMack{0}{}{\Z_-}{}$ & $\CpMack{\F_2}{}{0}{}$ & $\CpMack{\F_2}{0}{\Z_-}{q}$ \\
	\hline 
	\end{tabular}
	\end{table}
	
\begin{definition}
	A map $f: \underline{M} \rightarrow \underline{N}$ of $C_2$-Mackey functors is a collection of $W_{C_2}(H)$-equivariant maps $f_H: \underline{M}(C_2/H) \rightarrow \underline{N}(C_2/H)$ for $H \leq C_2$. Further, the maps must commute independently with restrictions and transfers. 
\end{definition}
	We get a category $\Mack_{C_2}$ of $C_2$-Mackey functors. In fact, $\Mack_{C_2}$ is an abelian category with a closed symmetric monoidal structure. We'll have the same structure on the category of $\K$-Mackey functors, so a more detailed description of it will be given in that case. We have the following definition of a $\K$-Mackey functor: 
	\begin{definition} \label{KMackeyFunctor}
		A Mackey functor $\underline{M}$ for the Klein-four group $\K$ is the following data: 
	\begin{itemize}
		\item abelian groups $\underline{M}(\K/H)$ for $H \leq \K$
		\item restriction maps $res_J^H: \underline{M}(\K/H) \rightarrow \underline{M}(\K/J)$ whenever $J \leq H \leq \K$ 
		\item transfer maps $tr_J^H: \underline{M}(\K/J) \rightarrow \underline{M}(\K/H)$ whenever $J \leq H \leq \K$ 
		\item group actions of $W_\K(H)$ on $\underline{M}(\K/H)$ whenever $H \leq K$. 
	\end{itemize} 	 
	\end{definition}
The data of a $\K$-Mackey functor will also be depicted with a Lewis diagram as in \cref{KMackeyFunctorDiagram}. 
\begin{figure}
\caption{The Lewis Diagram for a $\K$-Mackey Functor}
\label{KMackeyFunctorDiagram}
\begin{tikzpicture}[scale=1]
\node (AK) at (0,6) {\scriptsize $\underline{M}(\K/\K)$};
\node (AL) at (-4,3) {\scriptsize $\underline{M}(\K/L)$};
\node (AD) at (0,3) {\scriptsize $\underline{M}(\K/D)$};
\node (AR) at (4,3) {\scriptsize $\underline{M}(\K/R)$};
\node (Ae) at (0,0) {\scriptsize $\underline{M}(\K/e)$};

				
\draw[bend right=10,->] (AL) to node[below={0.5ex}, rotate=-45] {$res_e^L$} (Ae);
\draw[bend right=10,->] (AD) to node[left] {$res_e^D$} (Ae);
\draw[bend right=10,->] (AR) to node[above={0.5ex}, rotate=45] {$res_e^R$} (Ae);

				
\draw[bend right=10,->] (Ae) to node[above={0.5ex}, rotate=-45] {$tr_e^L$} (AL);
\draw[bend right=10,->] (Ae) to node[right]{$tr_e^D$} (AD);
\draw[bend right=10,->] (Ae) to node[below={0.5ex}, rotate=45]{$tr_e^R$} (AR);

				
\draw[bend right=10,->] (AK) to node[above={0.5ex}, rotate=45] {$res_L^\K$} (AL);
\draw[bend right=10,->] (AK) to node[left] {$res_D^\K$} (AD);
\draw[bend right=10,->] (AK) to node[below={0.5ex}, rotate=-45] {$res_R^\K$} (AR);

				
\draw[bend right=10,->] (AL) to node[below={1ex}, rotate=45] {$tr_L^\K$} (AK);
\draw[bend right=10,->] (AD) to node[right]{$tr_D^\K$} (AK);
\draw[bend right=10,->] (AR) to node[above, rotate=-45]{$tr_R^\K$} (AK);

				
\draw[looseness=2, loop left, ->] (AL) to node[left={0.05ex}]{\tiny $W_\K(L)$} (AL); 
\draw[looseness=2, loop left, ->] (AD) to node[left={0.05ex}]{\tiny $W_\K(D)$} (AD); 
\draw[looseness=2, loop right, ->] (AR) to node[right={0.05ex}]{\tiny $W_\K(R)$} (AR); 
\draw[looseness=2, loop right, ->] (Ae) to node[right={0.05ex}]{\tiny $W_\K(e)$} (Ae);
\end{tikzpicture}
\end{figure}
  We will treat the Weyl actions as in the $C_2$-case, in that we will depict $\underline{M}(\K/H)$ as a $W_\K(H)$-module rather than depict the actions explicitly in the Lewis diagram. Note that $W_\K(H)$ is isomorphic to $C_2$ for $H \in \{L,D,R\}$. Examples of $\K$-Mackey functors can be found in \cref{KZoo}, where $\oplus_H \uparrow_H^\K \langle \Z \rangle$ and $\oplus_H \uparrow_H^\K \langle \F_2 \rangle$ both have a $\K/H$-action of swapping the generators at level $\K/H$. 
  
  \begin{table} 
	\caption{Some $\K$ Mackey Functors}
	\label{KZoo}
	\begin{tabular}{|| c | c | c ||}
	\hline &&\\[-1em]
	$\underline{\Z} = \whitesquare$ & $\phi_{LDR}^* \underline{\F_2} = \fillpent$ & $\underline{mg} = \filltrap$ \\
	\hline &&\\[-1em]
	$\constant$ & $\phiLDRMack{\F_2^3}{\F_2}{p_1}{0}{p_2}{0}{p_3}{0}$ & $\phiLDRMack{\F_2^2}{\F_2}{p_1}{0}{\nabla}{0}{p_2}{0}$ \\
	\hline &&\\[-1em]
	$\underline{\Z}^* = \whitesquaredual$ & $\phi_{LDR}^* \underline{\F_2}^* = \fillpentdual$ &  $\underline{mg}^* = \filltrapdual$ \\
	\hline &&\\[-1em]
	$\dualconstant$  & $\phiLDRMack{\F_2^3}{\F_2}{0}{i_1}{0}{i_2}{0}{i_3}$ & $\phiLDRMack{\F_2^2}{\F_2}{0}{i_1}{0}{\Delta}{0}{i_2}$ \\
	\hline &&\\[-1em]
	$\phi_{LDR}^* \underline{\Z} = \whitepent$ & $\phi_{LDR}^* \underline{f} =$ \tiny $\phiLDRf$ & $\langle \Z \rangle = \whitecirc$  \\
	\hline &&\\[-1em]
	$\phiLDRMack{\Z^3}{\Z}{p_1}{2i_1}{p_2}{2i_2}{p_3}{2i_3}$ & $\phiLDRMack{0}{\Z_-}{0}{0}{0}{0}{0}{0}$ & $\infKMack{\Z}$ \\
	\hline &&\\[-1em]
	$\phi_{LDR}^* \underline{\Z}^* = \whitepentdual$ & $\phi_{LDR}^* \underline{Q} =$ \tiny $\phiLDRQ$ & $\langle \F_2 \rangle = \underline{g} = \fillcirc$ \\
	\hline &&\\[-1em]
	$\phiLDRMack{\Z^3}{\Z}{2p_1}{i_1}{2p_2}{i_2}{2p_3}{i_3}$ & $\phiLDRMack{\F_2^3}{\Z_-}{0}{i_1 \circ q}{0}{i_2 \circ q}{0}{i_3 \circ q}$ &  $\infKMack{\F_2}$ \\
	\hline &\\[-1em]
	$\bigoplus\limits_H \uparrow_H^\K \langle \Z \rangle = \widehat{\whitecirc}$ & $\bigoplus\limits_H \uparrow_H^\K \langle \F_2 \rangle = \widehat{\fillcirc}$ & \\
	\hline &\\[-1em]
	$\phiLDRMack{\Z^3}{\Z^2}{\Delta p_1}{i_1 \nabla}{\Delta p_2}{i_2\nabla}{\Delta p_3}{i_3\nabla}$ & $\phiLDRMack{\F_2^3}{\F_2^2}{\Delta p_1}{i_1 \nabla}{\Delta p_2}{i_2 \nabla}{\Delta p_3}{i_3 \nabla}$  & \\
	\hline
	\end{tabular} 
\end{table}
  
  	The filled in symbols are meant to indicate 2-torsion levelwise while the unfilled symbols indicate integrality levelwise. The (filled) pentagon was chosen because it has 3 vertices near the top, and the Mackey functor it represents has a rank three free abelian group (three-dimensional $\F_2$-vector space) at level $\K/\K$. Similarly, the trapezoid has two vertices at the top while the Mackey functor it represents has a two-dimensional $\F_2$-vector space at level $\K/\K$. Many of these notational choices are made in \cite{Guillou-Yarnall}. In the following sections, there are several $\K$-Mackey functors that show up in spectral sequence pages but not explicitly in the homotopy of the $\K$-spectra we are interested in. Those are collected in \cref{ExtraMackeyFunctors}, where we have the convention that $\oplus_H$ always ranges over $H \in \{L,D,R\}$. 
	
\begin{table} 
	\caption{Extra $\K$-Mackey Functors}
	\label{ExtraMackeyFunctors}
	\begin{tabular}{|| c | c ||}
	\hline &\\[-1em]
	$\bigoplus\limits_H \uparrow_H^\K \underline{\Z} = \widehat{\whitesquare}$ & $\overline{\whitesquare} = \uparrow_e^\K \Z$ \\
	\hline &\\[-1em]
	$\TripleSumIndHKZConst$ & $\IndeKZ$ \\
	\hline &\\[-1em]
	$\bigoplus\limits_H \uparrow_H^\K \underline{\Z}^* = \widehat{\whitesquaredual}$ & $\overline{\fillsquare} = \uparrow_e^\K \F_2$ \\
	\hline &\\[-1em]
	$\TripleSumIndHKZDual$ & $\IndeKF$ \\
	\hline &\\[-1em]
	$\bardot = \underline{E}$ & \\
	\hline &\\[-1em]
	$\phiLDRMack{\F_2}{\Z_-}{0}{1}{0}{1}{0}{1}$ & \\
	\hline
\end{tabular} 
\end{table}

  \begin{definition}
  	A map of $\K$-Mackey functors $f: \underline{M} \rightarrow \underline{N}$ is a $W_\K(H)$-equivariant map $f_H: \underline{M}(\K/H) \rightarrow \underline{N}(\K/H)$ for $H \leq \K$ such that $f$ commutes with restrictions and transfers independently. We will depict a map of Mackey functors via the their Lewis diagrams. For an example, see \cref{sigma_Hhomology} and the convention concerning maps.
  \end{definition}
  
  We get a category of $\K$-Mackey functors which we will denote $\Mack_\K$. One can prove that $\Mack_\K$ is an abelian category where the zero object $\underline{0}$ is given by the trivial group at every level. Further, finite coproducts are given by taking the direct sum across levels, restrictions, and transfers. $\Mack_\K$ also has a closed symmetric monoidal structure under the ``box product", denoted $\boxtimes$. The definition of the box product won't be important; we'll only compute box products in cases where a nice description can easily be given. However, it's worth noting that $\underline{A}$, the Burnside Mackey functor and subject of this discussion, is the unit for the box product. This makes the computation of $\underline{\pi}_\star^\K(H\underline{A})$ the universal case for $\K$-equivariant Bredon cohomology. More precisely, $\underline{\pi}_\star^\K(H\underline{M})$ is a module over $\underline{\pi}_\star^\K(H\underline{A})$ for any $\underline{M} \in \Mack_\K$.  
	
\begin{remark} \label{adjunctions}
		Let $J \leq H \leq \K$. Recall the restriction-induction adjunction 
	\begin{center}
			\begin{tikzpicture}
 			\node (00) at (0,0) {$\uparrow_H^\K: \Mack_H$};
 			\node (40) at (4,0) {$\Mack_\K :\downarrow_H^\K$};
 			\node at (2,0) {$\perp$};
 		
 			\draw[bend left=10, ->] (00) to (40);
 			\draw[bend left=10, ->] (40) to (00);
 			\end{tikzpicture}.
	\end{center}
	Restriction is a strong symmetric monoidal functor given on objects by 
	\[ \downarrow_H^\K \underline{M}(H/J) \cong \underline{M}(\tau_H^K(H/J)) \cong \underline{M}(\K/J). \] 
	Induction is given on objects by 
	\[ \uparrow_H^\K \underline{N}(\K/H') \cong \underline{N}(res_H^K(\K/H')). \] 
	Note that $\res_J^H$ and $\tau_J^H$ are the restriction and transfer of $G$-sets, respectively. 
\begin{definition}
Let $\underline{M}, \underline{N} \in \Mack_\K$. The internal hom in $\Mack_\K$ is defined as 
\[ \underline{\Hom}(\underline{M},\underline{N})(\K/H) \coloneqq \Hom(\uparrow_H^\K \downarrow_H^\K \underline{M}, \underline{N}) \cong \Hom(\downarrow_H^\K \underline{M}, \downarrow_H^\K \underline{N}). \]
\end{definition}	
We have a nice description of maps out of a Mackey functor of the form $\uparrow_H^\K \downarrow_H^\K \underline{A}$.   
\end{remark}

\begin{lemma}
\label{MapOutOfFreeFormula}
 Let $H \leq \K$. For any $\underline{M} \in \Mack_\K$, there is a natural isomorphism
 \[ \underline{\Hom}(\uparrow_H^\K \downarrow_H^\K \underline{A}, \underline{M})(\K/\K) \cong \underline{M}(\K/H). \]
\end{lemma}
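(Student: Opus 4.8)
The plan is to unwind the definitions given in \cref{adjunctions}. By definition of the internal hom, $\underline{\Hom}(\uparrow_H^\K \downarrow_H^\K \underline{A}, \underline{M})(\K/\K) = \Hom(\uparrow_\K^\K \downarrow_\K^\K \uparrow_H^\K \downarrow_H^\K \underline{A}, \underline{M})$, which is just $\Hom(\uparrow_H^\K \downarrow_H^\K \underline{A}, \underline{M})$ in $\Mack_\K$ since $\uparrow_\K^\K$ and $\downarrow_\K^\K$ are identity functors. So the claim reduces to a natural isomorphism $\Hom_{\Mack_\K}(\uparrow_H^\K \downarrow_H^\K \underline{A}, \underline{M}) \cong \underline{M}(\K/H)$.

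First I would use the restriction-induction adjunction $\uparrow_H^\K \dashv \downarrow_H^\K$ recalled in \cref{adjunctions} to rewrite the left-hand side as $\Hom_{\Mack_H}(\downarrow_H^\K \underline{A}, \downarrow_H^\K \underline{M})$. Next, since $\downarrow_H^\K$ is strong symmetric monoidal and $\underline{A}$ is the unit for the box product on $\Mack_\K$, we get $\downarrow_H^\K \underline{A} \cong \underline{A}_H$, the Burnside Mackey functor for $H$, which is the unit of the box product on $\Mack_H$. Then I would invoke the standard fact that mapping out of the monoidal unit is the identity functor: $\Hom_{\Mack_H}(\underline{A}_H, \underline{N}) \cong \underline{N}(H/H)$ for any $\underline{N} \in \Mack_H$ — equivalently, $\underline{A}_H$ corepresents evaluation at $H/H$ (this is the Mackey-functor Yoneda lemma, since $\underline{A}_H = \underline{A}_H(H/H\times -)$ is the representable Mackey functor on the terminal $H$-set). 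Applying this with $\underline{N} = \downarrow_H^\K \underline{M}$ yields $(\downarrow_H^\K \underline{M})(H/H) \cong \underline{M}(\K/H)$ by the formula for restriction on objects in \cref{adjunctions}. Naturality in $\underline{M}$ is immediate since every isomorphism used is natural.

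The main obstacle is really just bookkeeping: making sure the identification $\downarrow_H^\K \underline{A}_\K \cong \underline{A}_H$ is correctly justified (it follows from strong monoidality of restriction together with uniqueness of the monoidal unit, but one should state this cleanly), and confirming that the corepresentability of evaluation-at-$H/H$ by the Burnside Mackey functor is exactly the statement that $\underline{A}_H$ is the box-product unit. Once these two standard facts are in hand, the proof is a two-line chain of natural isomorphisms, so there is no genuine computational difficulty — the work is in citing the right structural facts about $\Mack_H$ and the restriction functor.
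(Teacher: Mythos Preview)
Your proposal is correct and follows essentially the same route as the paper: evaluate the internal hom at $\K/\K$ to get an ordinary $\Hom$, apply the induction--restriction adjunction $\uparrow_H^\K \dashv \downarrow_H^\K$, identify $\downarrow_H^\K \underline{A}$ with the Burnside unit in $\Mack_H$, and use that mapping out of the unit corepresents evaluation at $H/H$. The paper compresses these steps into a three-line chain of isomorphisms, but the content is identical; your additional remarks about strong monoidality of restriction and the Yoneda-type corepresentability simply make explicit what the paper's proof leaves implicit in the phrase ``$\underline{A}$ is the unit for the box product in $\Mack_G$ for any finite $G$.''
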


\begin{proof} 
We need only observe that $\underline{A}$ is the unit for the box product in $\Mack_G$ for any finite $G$. Then,
\begin{eqnarray*}
\underline{\Hom}(\uparrow_H^\K \downarrow_H^\K \underline{A}, \underline{M})(\K/\K) 
&\cong& \Hom(\downarrow_H^\K \underline{A}, \downarrow_H^\K \underline{M}), \\
&\cong& \underline{\Hom}(\underline{A}, \downarrow_H^\K \underline{M})(H/H), \\
&\cong& \underline{M}(\K/H).
\end{eqnarray*}
\end{proof}

\begin{remark}
	There is an adjunction
		\begin{center}
			\begin{tikzpicture}
 			\node (00) at (0,0) {$\phi_H^*: \Mack_{\K/H}$};
 			\node (40) at (4,0) {$\Mack_\K :q^*$};
 			\node at (2,0) {$\perp$};
 		
 			\draw[bend left=10, ->] (00) to (40);
 			\draw[bend left=10, ->] (40) to (00);
 			\end{tikzpicture}.
		\end{center}
	We define $\phi_{LDR}^* \underline{M} \coloneqq \oplus_H \phi_H^* \underline{M}$.	Details of this adjunction can be found in \cite[pg.~7]{ThevenazWebb}, where $\phi_H^*$ goes by the name of $Inf_H^\K$. The name ``inflation" can be motivated by considering how $\phi_H$ looks on Lewis diagrams. The case of $H=L$ is depicted in \cref{InflationExample}, and it's worth noting that we have prescribed an appropriate Weyl action at $(\phi^*_L \underline{M})(\K/L)$.  
\begin{figure} 
\caption{Inflation Depicted via Lewis Diagrams}
\label{InflationExample}
\begin{tikzpicture}[scale=0.9]
\node (CC) at (-4,4.5) {\scriptsize $\underline{M}(\nicefrac{\K/L}{\K/L})$};
\node (Ce) at (-4,1.5) {\scriptsize $\underline{M}(\nicefrac{\K/L}{e})$};

\draw[bend right=15,->] (CC) to node[left={0.5ex}] {\scriptsize $res_e^{\K/L}$} (Ce);
\draw[bend right=15,->] (Ce) to node[right={0.5ex}] {\scriptsize $tr_e^{\K/L}$} (CC);
\draw[looseness=2, loop below, ->] (Ce) to node[right={0.05ex}]{\tiny $\K/L$} (Ce);

\node (AK) at (5,4.5) {\scriptsize $\underline{M}(\nicefrac{\K/L}{\K/L})$};
\node (AL) at (1,3) {\scriptsize $\underline{M}(\nicefrac{\K/L}{e})$};
\node (AD) at (5,3) {\scriptsize $0$};
\node (AR) at (9,3) {\scriptsize $0$};
\node (Ae) at (5,1.5) {\scriptsize $0$};

				
\draw[bend right=10,->] (AL) to (Ae);
\draw[bend right=10,->] (AD) to (Ae);
\draw[bend right=10,->] (AR) to (Ae);

				
\draw[bend right=10,->] (Ae) to (AL);
\draw[bend right=10,->] (Ae) to (AD);
\draw[bend right=10,->] (Ae) to (AR);

				
\draw[bend right=10,->] (AK) to node[above={0.05ex}, rotate=30] {\scriptsize $res_e^{\K/L}$} (AL);
\draw[bend right=10,->] (AK) to (AD);
\draw[bend right=10,->] (AK) to (AR);

				
\draw[bend right=10,->] (AL) to node[below={0.05ex}, rotate=30] {\scriptsize $tr_e^{\K/L}$} (AK);
\draw[bend right=10,->] (AD) to (AK);
\draw[bend right=10,->] (AR) to (AK);

				
\draw[looseness=2, loop below, ->] (AL) to node[below={0.05ex}]{\tiny $\K/L$} (AL); 

\draw[|->] (-2.5,3) to node[above={0.05ex}] {\scriptsize $\phi_L^*$} (0,3);
\end{tikzpicture}
\end{figure}
There are analogous descriptions of $\phi_D$ and $\phi_R$. 
\end{remark}

\begin{definition}
Let $\underline{A}_{\K/H}$ denote $\uparrow_H^\K \downarrow_H^\K \underline{A}$. 
\end{definition}
We saw in \cref{MapOutOfFreeFormula} that mapping out of these Mackey functors has a nice description. When taking a box product with these Mackey functors, we also have a nice description. 
\begin{lemma}\label{boxformula}
	Let $\underline{M} \in \Mack(\K)$. Then, $\underline{A}_{\K/H} \boxtimes \underline{M} \cong \uparrow_H^\K \downarrow_H^\K \underline{M}$. 
\end{lemma}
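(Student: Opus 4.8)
The plan is to deduce the statement from the projection (Frobenius reciprocity) formula for the induction--restriction adjunction $\uparrow_H^\K \dashv \downarrow_H^\K$ of \cref{adjunctions}, using that $\downarrow_H^\K$ is strong symmetric monoidal and that $\underline{A}$ is the unit for $\boxtimes$. The input I will use is: for all $\underline{P}\in\Mack(H)$ and $\underline{N}\in\Mack(\K)$ there is a natural isomorphism
\[ \bigl(\uparrow_H^\K \underline{P}\bigr)\boxtimes\underline{N}\;\cong\;\uparrow_H^\K\bigl(\underline{P}\boxtimes\downarrow_H^\K\underline{N}\bigr). \]
Granting this, the lemma is formal. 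A strong monoidal functor preserves units, so $\downarrow_H^\K\underline{A}$ is the unit $\underline{A}_H$ of $\Mack(H)$, and taking $\underline{P}=\downarrow_H^\K\underline{A}$, $\underline{N}=\underline{M}$ yields
\[ \underline{A}_{\K/H}\boxtimes\underline{M}=\bigl(\uparrow_H^\K\downarrow_H^\K\underline{A}\bigr)\boxtimes\underline{M}\cong\uparrow_H^\K\bigl(\downarrow_H^\K\underline{A}\boxtimes\downarrow_H^\K\underline{M}\bigr)\cong\uparrow_H^\K\bigl(\underline{A}_H\boxtimes\downarrow_H^\K\underline{M}\bigr)\cong\uparrow_H^\K\downarrow_H^\K\underline{M}, \]
the final isomorphism because $\underline{A}_H$ is the unit in $\Mack(H)$; naturality in $\underline{M}$ comes from naturality of each isomorphism used.

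So the content is the projection formula. I would build the comparison map $\uparrow_H^\K(\underline{P}\boxtimes\downarrow_H^\K\underline{N})\to(\uparrow_H^\K\underline{P})\boxtimes\underline{N}$ by transposition: it suffices to supply a map $\underline{P}\boxtimes\downarrow_H^\K\underline{N}\to\downarrow_H^\K\bigl((\uparrow_H^\K\underline{P})\boxtimes\underline{N}\bigr)$, and strong monoidality identifies the target with $(\downarrow_H^\K\uparrow_H^\K\underline{P})\boxtimes\downarrow_H^\K\underline{N}$, so one takes $\eta_{\underline{P}}\boxtimes\mathrm{id}$ with $\eta$ the unit of $\uparrow_H^\K\dashv\downarrow_H^\K$. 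To see it is an isomorphism, observe that both sides preserve colimits in $\underline{P}$ (since $\uparrow_H^\K$ is a left adjoint and $\boxtimes$ is cocontinuous in each variable, $\Mack$ being closed monoidal) and that every $H$-Mackey functor is a colimit of orbit Mackey functors $\underline{A}_{H/K}$; it therefore suffices to treat $\underline{P}=\underline{A}_{H/K}$, where one can compute both sides directly using the description of $\boxtimes$ on representables as Day convolution along the cartesian product of $G$-sets (together with transitivity of induction and $\downarrow_H^\K\underline{A}\cong\underline{A}_H$) and match them on the generators of the Burnside category via the double-coset formula. Alternatively, one may simply cite the projection formula for Mackey functors, which is standard.

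The main obstacle is precisely this verification that the canonical comparison map is an isomorphism; everything else is bookkeeping with adjunctions and strong monoidality. If a self-contained argument is wanted, the cheapest route is the reduction to orbit Mackey functors $\underline{A}_{H/K}$, for which $\boxtimes$ has the concrete ``product of orbits'' description alluded to after the definition of $\boxtimes$, so that the comparison map becomes a visible identification.
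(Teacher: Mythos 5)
Your argument is correct but takes a longer route than the paper's. The paper proves the lemma directly by showing the two sides corepresent the same functor: for any $\underline{N}$,
\[
\Hom(\underline{A}_{\K/H}\boxtimes\underline{M},\underline{N})\cong\Hom(\underline{A}_{\K/H},\underline{\Hom}(\underline{M},\underline{N}))\cong\underline{\Hom}(\underline{M},\underline{N})(\K/H)\cong\Hom(\downarrow_H^\K\underline{M},\downarrow_H^\K\underline{N})\cong\Hom(\uparrow_H^\K\downarrow_H^\K\underline{M},\underline{N}),
\]
using in turn the tensor--hom adjunction, the corepresentability of evaluation at $\K/H$ by $\underline{A}_{\K/H}$ (\cref{MapOutOfFreeFormula}), the paper's \emph{definition} of the internal hom, and the induction--restriction adjunction; Yoneda finishes. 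You instead recognise the lemma as the unit case of the projection formula $(\uparrow_H^\K\underline{P})\boxtimes\underline{N}\cong\uparrow_H^\K(\underline{P}\boxtimes\downarrow_H^\K\underline{N})$ and then prove (or cite) the projection formula by building the comparison map and checking it is an isomorphism via reduction to orbit Mackey functors and Day convolution. This is valid and buys you a more general statement, but it is heavier than necessary: the colimit reduction plus the Day-convolution computation on representables is considerably more work than the four adjunction steps above. It is also worth noting that the cheapest proof of the projection formula itself is exactly a Yoneda argument of the paper's shape (transposing on both sides reduces it to $\downarrow_H^\K\underline{\Hom}(\underline{N},\underline{X})\cong\underline{\Hom}(\downarrow_H^\K\underline{N},\downarrow_H^\K\underline{X})$, which here is the definition of the internal hom), so even on your route you could avoid the colimit/representable verification entirely by leaning on that definition, which you did not use.
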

	
\begin{proof}
	The Mackey functors in question give the same corepresentable functors: 
	\begin{eqnarray*}
	\Hom_{\Mack_\K}(\underline{A}_{\K/H} \boxtimes \underline{M},\underline{N}) &\cong &\Hom_{\Mack(\K)}(\underline{A}_{\K/H}, \underline{\Hom}(\underline{M}, \underline{N})), \\
&\cong & \Hom(\underline{M}, \underline{N})(\K/H), \\
&\cong & \Hom_{\Mack_H}(\downarrow_H^\K \underline{M}, \downarrow_H^\K \underline{N}), \\
&\cong & \Hom_{\Mack_\K}(\uparrow_H^\K \downarrow_H^\K \underline{M}, \underline{N}).
	\end{eqnarray*}
\end{proof}

\section{Preliminaries} \label{Preliminaries}
	Throughout, we will be working in $Ho\Sp_\K$, the $\K$-equivariant stable homotopy category whose objects are ``genuine" $\K$-spectra. There are at least two notable, computational differences between classical (nonequivariant) spectra and genuine $\K$-spectra. First, the equivariant suspension spectra of representation spheres become invertible in $Ho\Sp_\K$ and it is common to grade homotopical structures over these. Second, given $X \in Ho\Sp_\K$ we can arrange the homotopy of $X$ into an $RO(\K)$-graded Mackey functor for the group $\K$ which we will denote $\underline{\pi}^\K_\star(X)$. Throughout this paper, $RO(G)$ denotes the Grothendieck completion of the semiring of isomorphism classes of finite dimensional real, orthogonal representations of $G$ with direct sum and tensor product. 
	
	\begin{definition} Let $V$ and $V'$ be finite-dimensional real, orthogonal representations of $\K$ and $W \in RO(\K)$.
	\begin{enumerate}[1.] 
		\item As mentioned, the suspension $\K$-spectrum $\Sigma_\K^\infty S^V$ is invertible in $Ho(Sp^\K)$. Let $\Sigma^V$ denote $\Sigma_\K^\infty S^V \wedge (-)$ and $\Sigma^{-V}$ denote its inverse. Then, we can define 
		\[ \Sigma^{V-V'} (-) \coloneqq \Sigma^V \Sigma^{-V'} (-). \]
This extends to suspension functors for any virtual representation.  
		\item Let $X \in Ho\Sp_\K$. The $W^{th}$-homotopy Mackey functor $\underline{\pi}_W^\K(X) \in \Mack_\K$ is given levelwise by 
		\[ \underline{\pi}^\K_W(X)(\K/H) \cong [\Sigma^W\Sigma_\K^\infty \K/H_+, X]. \]
	\end{enumerate}
	The restriction maps $\underline{\pi}_W^\K(X)(\K/H)\rightarrow \underline{\pi}_W^\K(X)(\K/J)$ for $J \leq H$ are defined via pulling back along the maps $\Sigma^\infty \K/J_+ \rightarrow \Sigma^\infty \K/H_+$. The conjugation maps are defined similarly, but the transfers are really a feature of genuine $G$-spectra. The transfers can be defined using self-duality of the stable orbits $\Sigma^\infty_\K \K/H_+$. 
	\end{definition}
	
	The irreducible representations of $\K$ are all 1-dimensional, and arise as pullbacks of irreducible $C_2$-representations along the maps $\varphi_H$ mentioned above. More precisely, we have an isomorphism of abelian groups
	\[ RO(\K) \cong \Z\{1, \sigma_L, \sigma_D, \sigma_R\}, \]
where $\sigma$ is the irreducible sign representation of $C_2$ and $\sigma_H \coloneqq \varphi_H^* \sigma$. 
	A topological analogue of \cref{boxformula} which will prove extremely useful is the following: 

\begin{lemma}\label{topologicalboxformula}
	Let $X \in Ho\Sp_\K$. Then, 
	\[ \underline{\pi}_V^\K(\Sigma^\infty_\K \K/H_+ \wedge X) \cong \underline{A}_{\K/H} \boxtimes \underline{\pi}_V^\K(X). \] 
\end{lemma}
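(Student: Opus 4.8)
The plan is to deflect the topological content onto the purely algebraic \cref{boxformula}. Since \cref{boxformula} gives a natural isomorphism $\underline{A}_{\K/H}\boxtimes\underline{\pi}_V^\K(X)\cong\uparrow_H^\K\downarrow_H^\K\underline{\pi}_V^\K(X)$, it suffices to construct a natural isomorphism
\[ \underline{\pi}_V^\K(\Sigma^\infty_\K \K/H_+\wedge X)\;\cong\;\uparrow_H^\K\downarrow_H^\K\underline{\pi}_V^\K(X) \]
in $\Mack_\K$. In other words, the goal is to show that smashing with the orbit $\Sigma^\infty_\K\K/H_+$ induces the functor $\uparrow_H^\K\downarrow_H^\K$ on homotopy Mackey functors, and then invoke \cref{boxformula}.

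First I would recall the standard equivalence of $\K$-spectra $\Sigma^\infty_\K\K/H_+\wedge X\simeq\mathrm{Ind}_H^\K\res_H^\K X$, where $\mathrm{Ind}_H^\K$ is induction of $\K$-spectra (left adjoint to $\res_H^\K$); as $\K$ is finite, the Wirthmüller isomorphism (the relevant adjoint representation being trivial) identifies $\mathrm{Ind}_H^\K$ with the coinduction functor $\mathrm{CoInd}_H^\K$ (equivalently, $\Sigma^\infty_\K\K/H_+\wedge X\simeq F_\K(\Sigma^\infty_\K\K/H_+,X)$). Then, applying $\underline{\pi}_V^\K(-)$ to $\mathrm{CoInd}_H^\K(\res_H^\K X)$ and using the $\res\dashv\mathrm{CoInd}$ adjunction on spectra together with $\res_H^\K\Sigma^W\simeq\Sigma^{\res_H^\K W}\res_H^\K$, one gets levelwise
\[ \underline{\pi}_V^\K(\mathrm{CoInd}_H^\K Y)(\K/H')=[\Sigma^V\Sigma^\infty_\K\K/H'_+,\mathrm{CoInd}_H^\K Y]^\K\cong[\Sigma^{\res_H^\K V}\res_H^\K(\Sigma^\infty_\K\K/H'_+),Y]^H. \]
Since $\res_H^\K(\K/H')=\coprod_{[g]\in H\backslash\K/H'}H/(H\cap {}^gH')$ is a finite $H$-set and $\underline{\pi}^H_\bullet(Y)$ is additive over disjoint unions of orbits, the right-hand side is $\underline{\pi}^H_{\res_H^\K V}(Y)\bigl(\res_H^\K(\K/H')\bigr)$, which is exactly the formula for $\bigl(\uparrow_H^\K\underline{\pi}^H_{\res_H^\K V}(Y)\bigr)(\K/H')$ recorded in \cref{adjunctions}. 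Naturality of the adjunction and of the orbit decomposition shows these identifications respect restrictions, transfers, and Weyl actions, so $\underline{\pi}_V^\K(\mathrm{CoInd}_H^\K Y)\cong\uparrow_H^\K\underline{\pi}^H_{\res_H^\K V}(Y)$ in $\Mack_\K$.

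It then remains to identify $\underline{\pi}^H_{\res_H^\K V}(\res_H^\K X)$ with $\downarrow_H^\K\underline{\pi}_V^\K(X)$ in $\Mack_H$. For $J\leq H$, the $\mathrm{Ind}\dashv\res$ adjunction on spectra and the projection formula $\mathrm{Ind}_H^\K(\res_H^\K S^V\wedge Z)\simeq S^V\wedge\mathrm{Ind}_H^\K Z$ give
\[ \underline{\pi}^H_{\res_H^\K V}(\res_H^\K X)(H/J)=[\res_H^\K S^V\wedge\Sigma^\infty_H H/J_+,\res_H^\K X]^H\cong[S^V\wedge\Sigma^\infty_\K\K/J_+,X]^\K=\underline{\pi}_V^\K(X)(\K/J), \]
using $\mathrm{Ind}_H^\K\Sigma^\infty_H H/J_+\simeq\Sigma^\infty_\K\K/J_+$, and $\underline{\pi}_V^\K(X)(\K/J)=\bigl(\downarrow_H^\K\underline{\pi}_V^\K(X)\bigr)(H/J)$ since $\tau_H^\K(H/J)\cong\K/J$; again this is compatible with the Mackey structure. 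Combining the two halves with the spectrum-level identification yields $\underline{\pi}_V^\K(\Sigma^\infty_\K\K/H_+\wedge X)\cong\uparrow_H^\K\downarrow_H^\K\underline{\pi}_V^\K(X)\cong\underline{A}_{\K/H}\boxtimes\underline{\pi}_V^\K(X)$, the last step being \cref{boxformula}, and all isomorphisms are natural in $X$.

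The parts that are genuinely standard are the spectrum-level identification $\Sigma^\infty_\K\K/H_+\wedge X\simeq\mathrm{Ind}_H^\K\res_H^\K X$ and the Wirthmüller isomorphism. The real work is twofold: keeping the $RO(\K)$- versus $RO(H)$-grading bookkeeping straight as one passes through induction and restriction of spectra (handled above by the projection formula), and — more importantly — checking that the levelwise isomorphisms assemble into honest isomorphisms of Mackey functors, i.e. that they intertwine all restrictions, transfers, and conjugations. The double-coset combinatorics $\res_H^\K(\K/H')=\coprod_{[g]}H/(H\cap {}^gH')$ appearing here is precisely the Mackey/double-coset bookkeeping underlying \cref{boxformula}, which is why the topological and algebraic descriptions match; making this compatibility explicit at the level of Mackey structure maps, rather than merely underlying abelian groups, is the main obstacle.
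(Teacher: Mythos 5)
Your proof is correct and rests on the same two foundational inputs the paper uses: self-duality of the stable orbit $\Sigma^\infty_\K\K/H_+$ and the algebraic formula of \cref{boxformula}. The organization is different, though. The paper stays entirely inside $Ho\Sp_\K$: after self-duality rewrites $[\Sigma^V\Sigma^\infty_\K\K/J_+,\Sigma^\infty_\K\K/H_+\wedge X]$ as $[\Sigma^V\Sigma^\infty_\K(\K/J\times\K/H)_+,X]$, it invokes the $\K$-set identity $\K/J\times\K/H\cong\tau_H^\K\res_H^\K(\K/J)$ and reads off $\uparrow_H^\K\downarrow_H^\K\underline{\pi}_V^\K(X)$ directly from the formulas in \cref{adjunctions}. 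You instead detour through $H$-spectra, introducing the induction and coinduction functors of spectra, the $\res\dashv\mathrm{CoInd}$ adjunction, a projection formula for representation gradings, and an explicit double-coset decomposition of $\res_H^\K(\K/H')$. That decomposition is precisely the combinatorial content behind the $\K$-set identity the paper uses, so the two arguments are equivalent in substance. What the paper's version buys is economy: because everything is a single chain of natural isomorphisms of corepresenting objects within $Ho\Sp_\K$, the identification is automatically one of Mackey functors, and compatibility with restrictions, transfers, and conjugations comes along for free. Your version correctly flags that compatibility check as the main obstacle, but it is an obstacle created by leaving the $\K$-equivariant category; the paper's route sidesteps it entirely.
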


\begin{proof}
	Chasing a few adjunctions and definitions yields the following: 
	\begin{eqnarray*}
	\underline{\pi}_{V}^\K(\Sigma^\infty_\K \K/H_+ \wedge X)(\K/J) &\cong & [\Sigma^{V} \Sigma_\K^\infty \K/J_+, \, \Sigma_\K^\infty \K/H_+ \wedge X] \\
	&\cong & [\Sigma^V \Sigma_\K^\infty \K/J_+ \wedge \Sigma_\K^\infty \K/H_+, \, X] \\
	&\cong & [\Sigma^V \Sigma_\K^\infty (\K/J \times \K/H)_+ \, X] \\
	&\cong & [\Sigma^V \Sigma_\K^\infty (\tau_H^\K res_H^\K \K/J)_+), \, X] \\
	&\cong & \uparrow_H^\K \downarrow_H^\K \underline{\pi}_V^\K(X)(\K/J). 
	\end{eqnarray*} 
\end{proof}

\begin{corollary}
	We have an isomorphism 
	\[ \widetilde{\underline{H}}_0^\K(\K/H_+,\underline{A}) \cong \underline{\pi}_0^\K(\K/H_+ \wedge H\underline{A}) \cong \underline{A}_{\K/H} \boxtimes \underline{\pi}_0^\K(H\underline{A}) \cong \underline{A}_{\K/H}. \]
\end{corollary}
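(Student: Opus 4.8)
The statement is a formal consequence of the two box-product formulas already established, together with the defining property of the Eilenberg--Mac Lane spectrum; the plan is simply to read the three isomorphisms from left to right. First, the leftmost isomorphism is a matter of unwinding definitions: reduced $\K$-equivariant Bredon homology with coefficients in a Mackey functor $\underline{N}$ is, by convention, $\widetilde{\underline{H}}_n^\K(X,\underline{N}) := \underline{\pi}_n^\K(\Sigma_\K^\infty X \wedge H\underline{N})$ for a pointed $\K$-space $X$. Applying this with $n=0$, $X = \K/H_+$, and $\underline{N} = \underline{A}$ gives the first identification on the nose, with no hypotheses to check.

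For the middle isomorphism I would invoke \cref{topologicalboxformula} with the spectrum taken to be $H\underline{A}$ and the grading taken to be $V = 0 \in RO(\K)$, which yields $\underline{\pi}_0^\K(\Sigma_\K^\infty \K/H_+ \wedge H\underline{A}) \cong \underline{A}_{\K/H} \boxtimes \underline{\pi}_0^\K(H\underline{A})$. Then the last isomorphism combines two inputs. The Eilenberg--Mac Lane spectrum $H\underline{A}$ is characterized by $\underline{\pi}_0^\K(H\underline{A}) \cong \underline{A}$ (with $\underline{\pi}_n^\K(H\underline{A}) = \underline{0}$ for $n \neq 0$), so the box product above becomes $\underline{A}_{\K/H} \boxtimes \underline{A}$; and by \cref{boxformula} applied to $\underline{M} = \underline{A}$ --- equivalently, because $\underline{A}$ is the unit for $\boxtimes$ in $\Mack_\K$ --- we get $\underline{A}_{\K/H} \boxtimes \underline{A} \cong \uparrow_H^\K \downarrow_H^\K \underline{A} =: \underline{A}_{\K/H}$, closing the chain.

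There is essentially no obstacle here: every step is either a definition or a direct citation of a lemma proved above, so the corollary is really a bookkeeping consequence of \cref{topologicalboxformula} and \cref{boxformula}. The only points worth a sentence of care are the convention fixing $\widetilde{\underline{H}}_0^\K(-,\underline{A})$ as $\underline{\pi}_0^\K(-\wedge H\underline{A})$, and the fact that it is precisely the unit property of the Burnside Mackey functor $\underline{A}$ that collapses $\underline{A}_{\K/H}\boxtimes\underline{A}$ back to $\underline{A}_{\K/H}$ rather than to something more complicated.
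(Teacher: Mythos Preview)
Your proposal is correct and mirrors the paper's approach exactly: the corollary is stated without proof in the paper, as an immediate consequence of \cref{topologicalboxformula} together with the defining property $\underline{\pi}_0^\K(H\underline{A}) \cong \underline{A}$ and the fact that $\underline{A}$ is the unit for the box product. Your unpacking of each isomorphism is precisely the intended justification.
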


Lastly, there are two cofiber sequences that will be used as foundations for many of the computations that occur throughout. Everything here is to be interpreted stably. The first is 
\[ \K/H_+ \rightarrow S^0 \rightarrow S^{\sigma_H}. \]
Second, we can take the dual cofiber sequence 
\[ S^{-\sigma_H} \rightarrow S^0 \rightarrow \K/H_+, \]
where we make use of self-duality of stable orbits and $S^{-\sigma_H}$ denotes the inverse of $S^{\sigma_H}$.   
	 
\subsection{A Family of Free Mackey Functors}
	The Burnside Mackey functor $\underline{A}$ for the group $\K$ is given levelwise by the abelian group underlying the Burnside ring, so we'll recall the notion of these rings first. 
	
	\begin{definition}
		The Burnside ring for the group $G$, denoted $A(G)$, is the Grothendieck completion of the semiring of isomorphism classes of finite $G$-sets under disjoint union and cartesian product.  
	\end{definition}
	 
	 \begin{example}
	  The Burnside ring $A(C_2)$ has underlying abelian group $\Z\{C_2/e, C_2/C_2\}$. The unit for the multiplication is $C_2/C_2$ and $(C_2/e)^2 = 2(C_2/e)$. We get a ring isomorphism 
	  \[ A(C_2) \cong \Z[x]/(x^2-2x). \]
	 \end{example} 
The Burnside ring for $\K$ admits a similar ring structure as a quotient of a polynomial ring over $\Z$, but the precise description is unnecessary for the computations here. 

\begin{definition}
	The Burnside Mackey functor $\underline{A}$ for $\K$ has the following data: 
	\begin{itemize}
	\item $\underline{A}(\K/H) \coloneqq A(H)$.
	\item restriction maps $A(H)$ to $A(J)$ given by $res_J^H$ of $G$-sets
	\item transfer maps $A(J)$ to $A(H)$ given by $\tau_J^H$ of $G$-sets 
	\item trivial Weyl actions.
	\end{itemize}
	We're overloading the notation $res_J^H$ in that it is both a restriction in an arbitrary Mackey functor and the Burnside Mackey functor, but it will always be clear from context what is meant. The Lewis diagram for $\underline{A}$ is given in \cref{Burnside}, where the abelian group $A(H)$ is given the basis ordered as written and all of the maps depicted are described by matrices with respect to these ordered basis.
\end{definition} 

\begin{figure} 
\caption{The Burnside Mackey Functor}
\label{Burnside}
\begin{tikzpicture}[scale=1]
\node (AK) at (0,6) {\scriptsize $\Z\{\K,\K/L,\K/D,\K/R,1\}$};
\node (AL) at (-3,3) {\scriptsize $\Z\{L,1\}$};
\node (AD) at (0,3) {\scriptsize $\Z\{D,1\}$};
\node (AR) at (3,3) {\scriptsize $\Z\{R,1\}$};
\node (Ae) at (0,0) {\scriptsize $\Z$};

				
\draw[bend right=10,->] (AL) to node[left={0.5ex}] {\mylittlematrix{2 & 1}} (Ae);
\draw[bend right=10,->] (AD) to node[left] {\mylittlematrix{2 & 1}} (Ae);
\draw[bend right=10,->] (AR) to node[above={1ex}] {\mylittlematrix{2 & 1}} (Ae);

				
\draw[bend right=10,->] (Ae) to node[above={0.5ex}] {\mylittlematrix{1 \\ 0}} (AL);
\draw[bend right=10,->] (Ae) to node[right]{\mylittlematrix{1 \\ 0 }} (AR);
\draw[bend right=10,->] (Ae) to node[right]{\mylittlematrix{1 \\ 0 }} (AD);

				
\draw[bend right=10,->] (AK) to node[above={0.5ex}, rotate=45] {\mytinymatrix{2 & 0 & 1 & 1 & 0 \\ 0 & 2 & 0 & 0 & 1}} (AL);
\draw[bend right=10,->] (AK) to node[above={0.5ex}, rotate=90] {\mytinymatrix{2 & 1 & 0 & 1 & 0 \\ 0 & 0 & 2 & 0 & 1}} (AD);
\draw[bend right=10,->] (AK) to node[below right={0.5ex}, rotate=-45] {\mytinymatrix{2 & 1 & 1 & 0 & 0 \\ 0 & 0 & 0 & 2 & 1}} (AR);

				
\draw[bend right=10,->] (AL) to node[below={1ex}] {\mytinymatrix{1 & 0 \\ 0 & 1 \\ 0 & 0 \\ 0 & 0 \\ 0 & 0}} (AK);
\draw[bend right=10,->] (AD) to node[right={0.05ex}]{\mytinymatrix{1 & 0 \\ 0 & 0 \\0 & 1 \\ 0 & 0 \\ 0 & 0 }} (AK);
\draw[bend right=10,->] (AR) to node[above right]{\mytinymatrix{1 & 0 \\ 0 & 0 \\ 0 & 0 \\ 0 & 1 \\ 0 & 0 }} (AK);

\end{tikzpicture}
\end{figure}

	To compute Bredon cohomology, we'll need to understand all of $\underline{A}_{\K/H} \coloneqq \uparrow_H^\K \downarrow_H^\K \underline{A}$ for $H \leq \K$. These Mackey functors are depicted in \cref{FreeKMackeyFunctors}. 

\begin{figure} 
\caption{Free $\K$-Mackey Functors} 
\label{FreeKMackeyFunctors}
\begin{tabular}{|| c | c ||}
\hline &\\[-1em]

$\underline{A}_{\K/e} \coloneqq \uparrow_e^\K \downarrow_e^\K \underline{A} \cong \uparrow_e^\K \Z$ & $\underline{A}_{\K/L} \coloneqq \uparrow_L^K \downarrow_L^K \underline{A} $ \\

\hline &\\[-1em]

\begin{tikzpicture}[scale=0.75]
\node (K) at (0,6) {\scriptsize $\Z[\K/\K]$};
\node (L) at (-3,3) {\scriptsize $\Z[\K/L]$};
\node (D) at (0,3) {\scriptsize $\Z[\K/D]$};
\node (R) at (3,3) {\scriptsize $\Z[\K/R]$};
\node (e) at (0,0) {\scriptsize $\Z[\K]$};

				
\draw[bend right=10,->] (L) to node[fill=white, rotate=-45, inner sep=1pt] {\tiny $(1+l)$} (e);
\draw[bend right=10,->] (D) to node[fill=white, rotate=90, inner sep=1pt] {\tiny $(1+d)$} (e);
\draw[bend right=10,->] (R) to node[fill=white, rotate=45, inner sep=1pt] {\tiny $(1+r)$} (e);

				
\draw[bend right=10,->] (e) to node[fill=white, inner sep=1pt, rotate=-45] {\tiny $q$} (L);
\draw[bend right=10,->] (e) to node[fill=white, inner sep=1pt] {\tiny $q$} (D);
\draw[bend right=10,->] (e) to node[fill=white, inner sep=1pt, rotate=45] {\tiny $q$} (R);

				
\draw[bend right=10,->] (K) to node[fill=white, inner sep=1pt] {\tiny $\Delta$} (L);
\draw[bend right=10,->] (K) to node[fill=white, inner sep=1pt] {\tiny $\Delta$} (D);
\draw[bend right=10,->] (K) to node[fill=white, inner sep=1pt] {\tiny $\Delta$} (R);

				
\draw[bend right=10,->] (L) to node[fill=white, inner sep=1pt] {\tiny $\nabla$} (K);
\draw[bend right=10,->] (D) to node[fill=white, inner sep=1pt]{\tiny $\nabla$} (K);
\draw[bend right=10,->] (R) to node[fill=white, inner sep=1pt]{\tiny $\nabla$} (K);
\end{tikzpicture}

& 

\begin{tikzpicture}[scale=0.75]
\node (K) at (0,6) {\scriptsize $A(L)$};
\node (L) at (-3,3) {\scriptsize $\Z[\K/L] \otimes A(L)$};
\node (D) at (0,3) {\scriptsize $A(e)$};
\node (R) at (3,3) {\scriptsize $A(e)$};
\node (e) at (0,0) {\scriptsize $\Z[\K/L] \otimes A(e)$};

				
\draw[bend right=10,->] (L) to node[fill=white, rotate=-45, inner sep=1pt] {\tiny $\text{id} \otimes res_e^L$} (e);
\draw[bend right=10,->] (D) to node[fill=white, inner sep=0.8pt] {\tiny $\Delta$} (e);
\draw[bend right=10,->] (R) to node[fill=white, inner sep=1pt] {\tiny $\Delta$} (e);

				
\draw[bend right=10,->] (e) to node[fill=white, rotate=-45, inner sep=1pt] {\tiny $\text{id} \otimes \tau_e^L$} (L);
\draw[bend right=10,->] (e) to node[fill=white, inner sep=0.8pt]{\tiny $\nabla$} (D);
\draw[bend right=10,->] (e) to node[fill=white, inner sep=1pt]{\tiny $\nabla$} (R);

				
\draw[bend right=10,->] (K) to node[fill=white, inner sep=1pt] {\tiny $\Delta$} (L);
\draw[bend right=10,->] (K) to node[fill=white, inner sep=1pt, rotate=90, above] {\tiny $res_e^L$} (D);
\draw[bend right=10,->] (K) to node[fill=white, inner sep=1pt, rotate=-45] {\tiny $res_e^L$} (R);

				
\draw[bend right=10,->] (L) to node[fill=white, inner sep=1pt] {\tiny $\nabla$} (K);
\draw[bend right=10,->] (D) to node[fill=white, inner sep=1pt, rotate=90, below]{\tiny $\tau_e^L$} (K);
\draw[bend right=10,->] (R) to node[fill=white, inner sep=1pt, rotate=-45]{\tiny $\tau_e^L$} (K);

\end{tikzpicture}

\\

\hline &\\[-1em]

$\underline{A}_{\K/D} \coloneqq \uparrow_D^K \downarrow_D^K \underline{A}$ & $\underline{A}_{\K/R} \coloneqq \uparrow_R^K \downarrow_R^K \underline{A}$

\\

\hline &\\[-1em]

\begin{tikzpicture}[scale=0.75]
\node (K) at (0,6) {\scriptsize $A(D)$};
\node (L) at (-3,3) {\scriptsize $A(e)$};
\node (D) at (0,3) {\scriptsize $\Z[\K/D] \otimes A(D)$};
\node (R) at (3,3) {\scriptsize $A(e)$};
\node (e) at (0,0) {\scriptsize $\Z[\K/D] \otimes A(e)$};

				
\draw[bend right=10,->] (L) to node[fill=white, inner sep=1pt] {\tiny $\Delta$} (e);
\draw[bend right=10,->] (D) to node[fill=white, rotate=90, inner sep=1pt, above] {\tiny $\text{id} \otimes res_e^D$} (e);
\draw[bend right=10,->] (R) to node[fill=white, inner sep=1pt] {\tiny $\Delta$} (e);

				
\draw[bend right=10,->] (e) to node[fill=white, inner sep=1pt] {\tiny $\nabla$} (L);
\draw[bend right=10,->] (e) to node[fill=white, rotate=90, inner sep=0.5pt, below]{\tiny $\text{id} \otimes \tau_e^D$} (D);
\draw[bend right=10,->] (e) to node[fill=white, inner sep=1pt]{\tiny $\nabla$} (R);

				
\draw[bend right=10,->] (K) to node[fill=white, inner sep=1pt, rotate=45] {\tiny $res_e^D$} (L);
\draw[bend right=10,->] (K) to node[fill=white, inner sep=1pt] {\tiny $\Delta$} (D);
\draw[bend right=10,->] (K) to node[fill=white, inner sep=1pt, rotate=-45] {\tiny $res_e^D$} (R);

				
\draw[bend right=10,->] (L) to node[fill=white, inner sep=1pt, rotate=45] {\tiny $\tau_e^D$} (K);
\draw[bend right=10,->] (D) to node[fill=white, inner sep=1pt]{\tiny $\nabla$} (K);
\draw[bend right=10,->] (R) to node[fill=white, inner sep=1pt, rotate=-45]{\tiny $\tau_e^D$} (K);

\end{tikzpicture}

&

\begin{tikzpicture}[scale=0.75]
\node (K) at (0,6) {\scriptsize $A(R)$};
\node (L) at (-3,3) {\scriptsize $A(e)$};
\node (D) at (0,3) {\scriptsize $A(e)$};
\node (R) at (3,3) {\scriptsize $\Z[\K/R] \otimes A(R)$};
\node (e) at (0,0) {\scriptsize $\Z[\K/R] \otimes A(e)$};

				
\draw[bend right=10,->] (L) to node[fill=white, inner sep=1pt] {\tiny $\Delta$} (e);
\draw[bend right=10,->] (D) to node[fill=white, inner sep=0.8pt] {\tiny $\Delta$} (e);
\draw[bend right=10,->] (R) to node[fill=white, inner sep=1pt, rotate=45] {\tiny $\text{id} \otimes res_e^R$} (e);

				
\draw[bend right=10,->] (e) to node[fill=white, inner sep=1pt] {\tiny $\nabla$} (L);
\draw[bend right=10,->] (e) to node[fill=white, inner sep=0.8pt]{\tiny $\nabla$} (D);
\draw[bend right=10,->] (e) to node[fill=white, inner sep=1pt, rotate=45]{\tiny $\text{id} \otimes \tau_e^R$} (R);

				
\draw[bend right=10,->] (K) to node[fill=white, inner sep=1pt, rotate=45] {\tiny $res_e^R$} (L);
\draw[bend right=10,->] (K) to node[fill=white, inner sep=1pt, rotate=90, above] {\tiny $res_e^R$} (D);
\draw[bend right=10,->] (K) to node[fill=white, inner sep=1pt] {\tiny $\Delta$} (R);

				
\draw[bend right=10,->] (L) to node[fill=white, inner sep=1pt, rotate=45] {\tiny $\tau_e^R$} (K);
\draw[bend right=10,->] (D) to node[fill=white, inner sep=1pt,rotate=90, below]{\tiny $\tau_e^R$} (K);
\draw[bend right=10,->] (R) to node[fill=white, inner sep=1pt]{\tiny $\nabla$} (K);
\end{tikzpicture}

\\

\hline 

\end{tabular}
\end{figure}

\subsection{Cellular Bredon (Co)homology}
	Throughout this section, let $X$ denote a based $\K$-CW complex and $\underline{M}$ be a Mackey functor for $\K$. By definition, a $\K$-CW complex is built out of cells of the form $\K/H \times D^n$, where $\K$ acts diagonally and $D_n$ is equipped with a trivial action. 
\begin{definition}
The Mackey functor valued Bredon homology will be the homology of the chain complex of Mackey functors 
	\[ \underline{C}^\K_i(X ; \underline{M}) \cong \bigoplus_{H_j} \underline{A}_{\K/H_j} \boxtimes \underline{M} \cong \bigoplus_{H_j} \uparrow_{H_j}^\K \downarrow_{H_j}^{\K} \underline{M}, \]
	where $H_j$ ranges over the $H_j$ appearing in the $i$-cells of $X$. The differentials are induced by the attaching maps in the $\K$-CW structure. 
\end{definition}

\begin{definition}
The Mackey functor valued Bredon cohomology is the cohomology of the cochain complex of Mackey functors 
\[ 
\underline{C}_\K^i(X ; \underline{M}) \cong \underline{\Hom} \left( \bigoplus_{H_j} \underline{A}_{\K/H_j}, \underline{M} \right).
\] 	
\end{definition}
	
Reduced Bredon cohomology with coefficients in a Mackey functor $\underline{M}$ is represented by the genuine $G$-spectrum $H\underline{M}$. These genuine $\K$-spectra are equivariant analogues of Eilenberg-Mac Lane spectra. In particular, they satisfy
	\[ \underline{\pi}_n(H\underline{M}) \cong 
	\begin{cases}
		\underline{M} & \text{if $n=0$} \\
		\underline{0} & \text{else}.  
	\end{cases}
	\]
With this in mind, $H\underline{M}$ is called an equivariant Eilenberg-Mac Lane $\K$-spectrum and we will refer to Bredon cohomology as ordinary equivariant cohomology. The homotopy of $H\underline{M}$ that isn't purely integer graded need not vanish, of course. In the case of $\underline{M} = \underline{A}$, this is precisely what will be computed here. 

\begin{example}
\label{sigma_Hhomology}
We'll compute the Bredon cohomology of $S^{k\sigma_H}$ for $k \geq 1$ and $H \in \{L,D,R\}$. We can build $S^{k\sigma_H}$ with two 0-cells of type $\K/\K$ (fixed cells) and a single cell of type $\K/H$ in each dimension $1 \leq i \leq k$. This gives a (reduced) cellular chain complex of Mackey functors as follows: 
	\[ \underline{A}_{\K/\K} \cong \underline{A} \leftarrow \underline{A}_{\K/H} \leftarrow \cdots \leftarrow \underline{A}_{\K/H} \leftarrow \underline{0} \leftarrow \cdots. \]
	By \cref{MapOutOfFreeFormula}, we can label a differential $\underline{A}_{\K/J} \rightarrow \underline{A}_{\K/J'}$ by what it is on level $\K/J$ as this will determine the entire map. Written in this fashion, we have
	\begin{center}
	\begin{tikzpicture}[scale=1]
	\node (00) at (-7,0) {$\underline{A}$};
	\node (10) at (-4.667,0) {$\underline{A}_{\K/H}$};
	\node (20) at (-2.333,0) {$\underline{A}_{\K/H}$};
	\node (30) at (0,0) {$\cdots$}; 
	\node (40) at (2.333,0) {$\underline{A}_{\mathcal{K}/H}$};
	\node (50) at (4.667,0) {$\underline{0}$}; 
	\node (60) at (7,0) {$\cdots$};
	\draw[->] (10) to node[above] {\tiny $\nabla$} (00);
	\draw[->] (20) to node[above] {\tiny $(\text{id}-\varepsilon) \otimes \text{id}$} (10);
	\draw[->] (30) to node[above] {\tiny $(\text{id}+\varepsilon) \otimes \text{id}$} (20);
	\draw[bend right=10, color=evendiff, ->] (40) to node[above] {\tiny $(\text{id}-\varepsilon) \otimes \text{id}$} (30);
	\draw[bend left=10, color=odddiff, ->] (40) to node[below] {\tiny $(\text{id}+\varepsilon) \otimes \text{id}$} (30); 
	\draw[->] (50) to (40);
	\draw[->] (60) to (50);
	\end{tikzpicture},
	\end{center}
	where $\varepsilon$ is the generator of $\K/H$. Further, we have denoted the first map by $\nabla$, the map at level $\K/H$, since this determines the map algebraically. However, it is often the case that one is only interested in computing the $(\K/\K)$-level of any homotopy Mackey functors. With this in mind, it is worth noting that the first map is given by the transfer $\tau_H^\K$ at the top level. The blue, top (red, bottom) arrow is the differential used when $k$ is even (odd). Let's consider $H=L$ and $k \geq 3$. The first map $\nabla$ is the following, where we are using that $\Z[\K/L] \otimes (-) \cong (-)^{\oplus 2}$: \\
	
	\textbf{Convention:} Throughout this paper, a map of $\K$-Mackey functors will appear as below. The three horizontal maps - top, middle, and bottom - in the middle of this picture, though not depicted this way, indicate the maps between the levels $\K/L$, $\K/D$, and $\K/R$, respectively. This is purely an aesthetic choice made to allow maps to be depicted without overlapping arrows.

\begin{center}
{ 
\begin{tikzpicture}[scale=0.85]\label{mapexample}

				
\node (AK) at (-5,6) {\scriptsize $A(\K)$};
\node (AL) at (-8,3) {\scriptsize $A(L)$};
\node (AD) at (-5,3) {\scriptsize $A(D)$};
\node (AR) at (-2,3) {\scriptsize $A(R)$};
\node (Ae) at (-5,0) {\scriptsize $A(e)$};

				
\draw[bend right=10,->] (AL) to node[fill=white, rotate=-45] {\scriptsize $res_e^L$} (Ae);
\draw[bend right=10,->] (AD) to node[above, rotate=90] {\scriptsize  $res_e^D$} (Ae);
\draw[bend right=10,->] (AR) to node[fill=white, rotate=45] {\scriptsize $res_e^R$} (Ae);

				
\draw[bend right=10,->] (Ae) to node[fill=white, rotate=-45] {\scriptsize  $\tau_e^L$} (AL);
\draw[bend right=10,->] (Ae) to node[fill=white, rotate=45]{\scriptsize  $\tau_e^R$} (AR);
\draw[bend right=10,->] (Ae) to node[below, rotate=90]{\scriptsize  $\tau_e^D$} (AD);

				
\draw[bend right=10,->] (AK) to node[fill=white, rotate=45] {\scriptsize $res_L^\K$} (AL);
\draw[bend right=10,->] (AK) to node[above, rotate=90] {\scriptsize $res_D^\K$} (AD);
\draw[bend right=10,->] (AK) to node[fill=white, rotate=-45] {\scriptsize $res_R^\K$} (AR);

				
\draw[bend right=10,->] (AL) to node[fill=white, rotate=45] {\scriptsize $\tau_L^\K$} (AK);
\draw[bend right=10,->] (AD) to node[rotate=90, below]{\scriptsize $\tau_D^\K$} (AK);
\draw[bend right=10,->] (AR) to node[fill=white, rotate=-45]{\scriptsize $\tau_R^\K$} (AK);

				
\node (K) at (5,6) {\scriptsize $A(L)$};
\node (L) at (2,3) {\scriptsize $\Z[\K/L] \otimes A(L)$};
\node (D) at (5,3) {\scriptsize $A(e)$};
\node (R) at (8,3) {\scriptsize $A(e)$};
\node (e) at (5,0) {\scriptsize $\Z[\K/L] \otimes A(e)$};

				
\draw[bend right=10,->] (L) to node[fill=white, inner sep=1pt, rotate=-45] {\tiny $\text{id} \otimes res_e^L$} (e);
\draw[bend right=10,->] (D) to node[fill=white, inner sep=1pt] {\tiny $\Delta$} (e);
\draw[bend right=10,->] (R) to node[fill=white, inner sep=1pt] {\tiny $\Delta$} (e);

				
\draw[bend right=10,->] (e) to node[fill=white, inner sep=1pt, rotate=-45] {\tiny $\text{id} \otimes \tau_e^L$} (L);
\draw[bend right=10,->] (e) to node[fill=white, inner sep=1pt]{\tiny $\nabla$} (D);
\draw[bend right=10,->] (e) to node[fill=white, inner sep=1pt]{\tiny $\nabla$} (R);

				
\draw[bend right=10,->] (K) to node[fill=white, inner sep=1pt] {\tiny $\Delta$} (L);
\draw[bend right=10,->] (K) to node[fill=white, inner sep=1pt, rotate=90, above] {\tiny $res_e^L$} (D);
\draw[bend right=10,->] (K) to node[fill=white, inner sep=1pt, rotate=-45] {\tiny $res_e^L$} (R);

				
\draw[bend right=10,->] (L) to node[fill=white, inner sep=1pt] {\tiny $\nabla$} (K);
\draw[bend right=10,->] (D) to node[fill=white, inner sep=0.8pt, rotate=90, below]{\tiny $\tau_e^L$} (K);
\draw[bend right=10,->] (R) to node[fill=white, inner sep=1pt, rotate=-45]{\tiny $\tau_e^L$} (K);

				
\draw[->] (K) to node[above] {\tiny $\tau_L^\K$} (AK); 
\draw[bend right=30,->] (L) to node[above] {\tiny $\nabla$} (AR);
\draw[->] (L) to node[fill=white] {\tiny $\tau_e^D$} (AR);
\draw[bend left=30,->] (L) to node[below] {\tiny $\tau_e^R$} (AR);
\draw[->] (e) to node[below] {\tiny $\nabla$} (Ae); 

\end{tikzpicture}
}
\end{center}	
The second map, $(1-r) \otimes \text{id}$, is as follows: 	
\begin{center}
{ 
\begin{tikzpicture}[scale=0.825]

				
\node (SK) at (5,6) {\scriptsize $A(L)$};
\node (SL) at (2,3) {\scriptsize $\Z[\K/L] \otimes A(L)$};
\node (SD) at (5,3) {\scriptsize $A(e)$};
\node (SR) at (8,3) {\scriptsize $A(e)$};
\node (Se) at (5,0) {\scriptsize $\Z[\K/L] \otimes A(e)$};

				
\draw[bend right=10,->] (SL) to node[fill=white,rotate=-45, inner sep=1pt] {\tiny $\text{id} \otimes res_e^L$} (Se);
\draw[bend right=10,->] (SD) to node[fill=white, inner sep=1pt] {\tiny $\Delta$} (Se);
\draw[bend right=10,->] (SR) to node[fill=white, inner sep=1pt] {\tiny $\Delta$} (Se);

				
\draw[bend right=10,->] (Se) to node[fill=white, rotate=-45, inner sep=1pt] {\tiny $\text{id} \otimes \tau_e^L$} (SL);
\draw[bend right=10,->] (Se) to node[fill=white, inner sep=1pt]{\tiny $\nabla$} (SD);
\draw[bend right=10,->] (Se) to node[fill=white, inner sep=1pt]{\tiny $\nabla$} (SR);

				
\draw[bend right=10,->] (SK) to node[fill=white, inner sep=1pt] {\tiny $\Delta$} (SL);
\draw[bend right=10,->] (SK) to node[above, rotate=90, inner sep=1pt] {\tiny $res_e^L$} (SD);
\draw[bend right=10,->] (SK) to node[rotate=-45, fill=white, inner sep=1pt] {\tiny $res_e^L$} (SR);

				
\draw[bend right=10,->] (SL) to node[fill=white, inner sep=1pt] {\tiny $\nabla$} (SK);
\draw[bend right=10,->] (SD) to node[rotate=90, below, inner sep=1pt]{\tiny $\tau_e^L$} (SK);
\draw[bend right=10,->] (SR) to node[fill=white, rotate=-45, inner sep=1pt]{\tiny $\tau_e^L$} (SK);

				
\node (TK) at (-5,6) {\scriptsize $A(L)$};
\node (TL) at (-8,3) {\scriptsize $\Z[\K/L] \otimes A(L)$};
\node (TD) at (-5,3) {\scriptsize $A(e)$};
\node (TR) at (-2,3) {\scriptsize $A(e)$};
\node (Te) at (-5,0) {\scriptsize $\Z[\K/L] \otimes A(e)$};

				
\draw[bend right=10,->] (TL) to node[fill=white, rotate=-45, inner sep=1pt] {\tiny $\text{id} \otimes res_e^L$} (Te);
\draw[bend right=10,->] (TD) to node[fill=white, inner sep=1pt] {\tiny $\Delta$} (Te);
\draw[bend right=10,->] (TR) to node[fill=white, inner sep=1pt] {\tiny $\Delta$} (Te);

				
\draw[bend right=10,->] (Te) to node[fill=white, rotate=-45, inner sep=1pt] {\tiny $\text{id} \otimes \tau_e^L$} (TL);
\draw[bend right=10,->] (Te) to node[fill=white, inner sep=1pt]{\tiny $\nabla$} (TD);
\draw[bend right=10,->] (Te) to node[fill=white, inner sep=1pt]{\tiny $\nabla$} (TR);

				
\draw[bend right=10,->] (TK) to node[fill=white, inner sep=1pt] {\tiny $\Delta$} (TL);
\draw[bend right=10,->] (TK) to node[above, rotate=90, inner sep=1pt] {\tiny $res_e^L$} (TD);
\draw[bend right=10,->] (TK) to node[fill=white, rotate=-45, inner sep=1pt] {\tiny $res_e^L$} (TR);

				
\draw[bend right=10,->] (TL) to node[fill=white, inner sep=1pt] {\tiny $\nabla$} (TK);
\draw[bend right=10,->] (TD) to node[below, rotate=90, inner sep=1pt]{\tiny $\tau_e^L$} (TK);
\draw[bend right=10,->] (TR) to node[fill=white, rotate=-45, inner sep=1pt]{\tiny $\tau_e^L$} (TK);

				
\draw[->] (SK) to node[above] {\tiny $0$} (TK); 
\draw[bend right=30,->] (SL) to node[above] {\tiny $(1-r) \otimes \text{id}$} (TR);
\draw[->] (SL) to node[fill=white] {\tiny $0$} (TR);
\draw[bend left=30,->] (SL) to node[below] {\tiny $0$} (TR);
\draw[->] (Se) to node[below] {\tiny $(1-r) \otimes \text{id}$} (Te); 

\end{tikzpicture}
}
\end{center}		
The third map, $(1+r) \otimes \text{id}$, is 
\begin{center}
{ 
\begin{tikzpicture}[scale=0.825]

				
\node (SK) at (5,6) {\scriptsize $A(L)$};
\node (SL) at (2,3) {\scriptsize $\Z[\K/L] \otimes A(L)$};
\node (SD) at (5,3) {\scriptsize $A(e)$};
\node (SR) at (8,3) {\scriptsize $A(e)$};
\node (Se) at (5,0) {\scriptsize $\Z[\K/L] \otimes A(e)$};

				
\draw[bend right=10,->] (SL) to node[fill=white, inner sep=1pt, rotate=-45] {\tiny $\text{id} \otimes res_e^L$} (Se);
\draw[bend right=10,->] (SD) to node[fill=white, inner sep=1pt] {\tiny $\Delta$} (Se);
\draw[bend right=10,->] (SR) to node[fill=white, inner sep=1pt] {\tiny $\Delta$} (Se);

				
\draw[bend right=10,->] (Se) to node[fill=white, inner sep=1pt, rotate=-45] {\tiny $\text{id} \otimes \tau_e^L$} (SL);
\draw[bend right=10,->] (Se) to node[fill=white, inner sep=1pt]{\tiny $\nabla$} (SD);
\draw[bend right=10,->] (Se) to node[fill=white, inner sep=1pt]{\tiny $\nabla$} (SR);

				
\draw[bend right=10,->] (SK) to node[fill=white, inner sep=1pt] {\tiny $\Delta$} (SL);
\draw[bend right=10,->] (SK) to node[inner sep=1pt, rotate=90, above] {\tiny $res_e^L$} (SD);
\draw[bend right=10,->] (SK) to node[fill=white, inner sep=1pt, rotate=-45] {\tiny $res_e^L$} (SR);

				
\draw[bend right=10,->] (SL) to node[fill=white, inner sep=1pt] {\tiny $\nabla$} (SK);
\draw[bend right=10,->] (SD) to node[rotate=90, below, inner sep=1pt]{\tiny $\tau_e^L$} (SK);
\draw[bend right=10,->] (SR) to node[fill=white, inner sep=1pt, rotate=-45]{\tiny $\tau_e^L$} (SK);

				
\node (TK) at (-5,6) {\scriptsize $A(L)$};
\node (TL) at (-8,3) {\scriptsize $\Z[\K/L] \otimes A(L)$};
\node (TD) at (-5,3) {\scriptsize $A(e)$};
\node (TR) at (-2,3) {\scriptsize $A(e)$};
\node (Te) at (-5,0) {\scriptsize $\Z[\K/L] \otimes A(e)$};

				
\draw[bend right=10,->] (TL) to node[fill=white, inner sep=1pt, rotate=-45] {\tiny $\text{id} \otimes res_e^L$} (Te);
\draw[bend right=10,->] (TD) to node[fill=white, inner sep=1pt] {\tiny $\Delta$} (Te);
\draw[bend right=10,->] (TR) to node[fill=white, inner sep=1pt] {\tiny $\Delta$} (Te);

				
\draw[bend right=10,->] (Te) to node[fill=white, inner sep=1pt, rotate=-45] {\tiny $\text{id} \otimes \tau_e^L$} (TL);
\draw[bend right=10,->] (Te) to node[fill=white, inner sep=1pt]{\tiny $\nabla$} (TD);
\draw[bend right=10,->] (Te) to node[fill=white, inner sep=1pt]{\tiny $\nabla$} (TR);

				
\draw[bend right=10,->] (TK) to node[fill=white, inner sep=1pt] {\tiny $\Delta$} (TL);
\draw[bend right=10,->] (TK) to node[inner sep=1pt, rotate=90, above] {\tiny $res_e^L$} (TD);
\draw[bend right=10,->] (TK) to node[fill=white, inner sep=1pt, rotate=-45] {\tiny $res_e^L$} (TR);

				
\draw[bend right=10,->] (TL) to node[fill=white, inner sep=1pt] {\tiny $\nabla$} (TK);
\draw[bend right=10,->] (TD) to node[inner sep=1pt, rotate=90, below]{\tiny $\tau_e^L$} (TK);
\draw[bend right=10,->] (TR) to node[fill=white, inner sep=1pt, rotate=-45]{\tiny $\tau_e^L$} (TK);

				
\draw[->] (SK) to node[above] {\tiny $2$} (TK); 
\draw[bend right=30,->] (SL) to node[above] {\tiny $(1+r) \otimes \text{id}$} (TR);
\draw[->] (SL) to node[fill=white] {\tiny $2$} (TR);
\draw[bend left=30,->] (SL) to node[below] {\tiny $2$} (TR);
\draw[->] (Se) to node[below] {\tiny $(1+r) \otimes \text{id}$} (Te); 

\end{tikzpicture}
}
\end{center}
From here, we can compute $\underline{\pi}^\K_{p-q\sigma_H}(H\underline{A}) \cong \widetilde{\underline{H}}_p^\K(S^{q\sigma_H}; \underline{A})$ for $p,q \geq 0$. Before recording this answer, however, let's compute the corresponding cochain complexes as well. They are as follows:  
	\begin{center}
	\begin{tikzpicture}[scale=1]
	\node (00) at (-7,0) {$\underline{A}$};
	\node (10) at (-4.667,0) {$\underline{A}_{\K/H}$};
	\node (20) at (-2.333,0) {$\underline{A}_{\K/H}$};
	\node (30) at (0,0) {$\cdots$}; 
	\node (40) at (2.333,0) {$\underline{A}_{\K/H}$};
	\node (50) at (4.667,0) {$\underline{0}$}; 
	\node (60) at (7,0) {$\cdots$};
	\draw[->] (00) to node[above] {\tiny $res_H^\K$} (10);
	\draw[->] (10) to node[above] {\tiny $(\text{id}-\varepsilon) \otimes \text{id}$} (20);
	\draw[->] (20) to node[above] {\tiny $(\text{id}+\varepsilon) \otimes \text{id}$} (30);
	\draw[bend left=10, color=evendiff, ->] (30) to node[above] {\tiny $(\text{id}-\varepsilon) \otimes \text{id}$} (40);
	\draw[bend right=10, color=odddiff, ->] (30) to node[below] {\tiny $(\text{id}+\varepsilon) \otimes \text{id}$} (40); 
	\draw[->] (40) to (50);
	\draw[->] (50) to (60);
	\end{tikzpicture}
	\end{center}
	
\begin{prop}
\label{OneRepHomotopyProp}
The homotopy groups $\underline{\pi}^\K_{p+q\sigma_H}(H\underline{A})(\K/\K)$ are as in \cref{OneRepHomotopy}.   
\end{prop}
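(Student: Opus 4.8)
The plan is to read the answer directly off the reduced cellular chain and cochain complexes for $S^{q\sigma_H}$ written down in \cref{sigma_Hhomology}, evaluated at the orbit $\K/\K$. Since evaluation at a fixed orbit is an exact functor $\Mack_\K\to\Ab$, we have $\widetilde{\underline{H}}^\K_p(S^{q\sigma_H};\underline A)(\K/\K)\cong H_p\big(\widetilde{\underline C}^\K_\bullet(S^{q\sigma_H};\underline A)(\K/\K)\big)$, and dually for cohomology, so the task reduces to computing the (co)homology of two explicit complexes of abelian groups.

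First I would pin down the $(\K/\K)$-level of every differential. In the free Mackey functor $\underline A_{\K/H}$ the restriction $res^\K_H$ is the diagonal $\Delta$ (see \cref{FreeKMackeyFunctors}), which is injective; since by \cref{MapOutOfFreeFormula} the endomorphisms of $\underline A_{\K/H}$ appearing in \cref{sigma_Hhomology} are determined by their values at $\K/H$, their values at $\K/\K$ are forced by commutation with $\Delta$. As $((\mathrm{id}-\varepsilon)\otimes\mathrm{id})\circ\Delta=0$ and $((\mathrm{id}+\varepsilon)\otimes\mathrm{id})\circ\Delta=2\Delta$, the maps $(\mathrm{id}-\varepsilon)\otimes\mathrm{id}$ and $(\mathrm{id}+\varepsilon)\otimes\mathrm{id}$ act at the $(\K/\K)$-level as $0$ and as multiplication by $2$. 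The remaining boundary map $\nabla\colon\underline A_{\K/H}\to\underline A$ is the Burnside transfer $\tau^\K_H$ at the top level, and dually the leading cochain differential is the Burnside restriction $res^\K_H\colon A(\K)\to A(H)$; both are read off \cref{Burnside}. Hence at the $(\K/\K)$-level the reduced cellular chain complex of $S^{q\sigma_H}$ is
\[
A(\K)\ \xleftarrow{\ \tau^\K_H\ }\ A(H)\ \xleftarrow{\ 0\ }\ A(H)\ \xleftarrow{\ 2\ }\ A(H)\ \xleftarrow{\ 0\ }\ \cdots\ \xleftarrow{}\ A(H)\ \leftarrow\ 0 ,
\]
with $q$ copies of $A(H)$ and internal differentials alternating between $0$ and multiplication by $2$ (starting with $0$ from degree $2$ to degree $1$), and the reduced cochain complex has the dual shape beginning $A(\K)\xrightarrow{\,res^\K_H\,}A(H)\xrightarrow{\,0\,}A(H)\xrightarrow{\,2\,}\cdots$.

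The rest is elementary linear algebra over $\Z$: one has $A(H)\cong\Z^2$ for $H\in\{L,D,R\}$ and $A(\K)\cong\Z^5$; the matrices in \cref{Burnside} exhibit $\tau^\K_H$ as injective with cokernel $\cong\Z^3$ and $res^\K_H$ as surjective with kernel $\cong\Z^3$; and the only torsion that arises is $\coker(2\colon\Z^2\to\Z^2)\cong\F_2^2$. Running through the two complexes degree by degree then gives, for homology, $\Z^3$ in degree $0$, $0$ in all odd degrees, $\F_2^2$ in even degrees $2\le p<q$, and $\Z^2$ in degree $q$ when $q$ is even; the cochain complex behaves dually, with $\Z^3$ in degree $0$, vanishing in positive even degrees, $\F_2^2$ in odd degrees $3\le p<q$, and the degree-$q$ entry equal to the cokernel of the last differential. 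Since the automorphisms of $\K$ permute $L$, $D$, $R$ and fix $\underline A$, treating one value of $H$ suffices. Translating via $\widetilde{\underline{H}}^\K_p(S^{q\sigma_H})\cong\underline\pi^\K_{p-q\sigma_H}(H\underline A)$ and $\widetilde{\underline{H}}_\K^p(S^{q\sigma_H})\cong\underline\pi^\K_{q\sigma_H-p}(H\underline A)$ populates the two families of bidegrees $\{n\ge 0,\ k\le 0\}$ and $\{n\le 0,\ k\ge 0\}$ in \cref{OneRepHomotopy}.

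The bidegrees $n+k\sigma_H$ with $n$ and $k$ of the same strict sign are not reached by the two complexes above, and must separately be shown to vanish. Here $\underline\pi^\K_{n+k\sigma_H}(H\underline A)(\K/\K)$ is computed in degree $0$ by the reduced cellular cochain complex of $S^n\wedge S^{k\sigma_H}$, and iterating the cofiber sequences $\K/H_+\to S^0\to S^{\sigma_H}$ and $S^{-\sigma_H}\to S^0\to\K/H_+$ of \cref{Preliminaries} shows that this smash product admits a cell structure concentrated in dimensions between $n$ and $n+k$, hence entirely in strictly positive dimensions (if $n,k>0$) or entirely in strictly negative dimensions (if $n,k<0$); in either case the degree-$0$ cohomology vanishes. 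I expect no real conceptual obstacle here: the effort is organizational — keeping track of the parity of $p$, the special behavior at the top cell $p=q$, and checking that the two representing complexes together with these vanishing arguments account for exactly the entries of \cref{OneRepHomotopy}.
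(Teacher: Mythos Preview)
Your proposal is correct and is essentially the paper's own argument: the proposition is stated inside \cref{sigma_Hhomology}, where the reduced cellular chain and cochain complexes for $S^{q\sigma_H}$ are written down with all differentials identified, and the figure simply records the result of taking (co)homology at level $\K/\K$. Your added observation that the first and third quadrants vanish for connectivity reasons is straightforward and implicit in the paper's setup.
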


\begin{figure}
\caption{$\underline{\pi}^\K_{p + q\sigma_H}(H\underline{A})(\K/\K)$}
\label{OneRepHomotopy}
\begin{tikzpicture}[scale=0.6]
      	    \draw[gray] (-10,-10) grid (10,10); 
            \draw[->] (0,-10) to (0,10); 
            \draw[->] (-10,0) to (10,0);

            \node at (9.5,0.5) [draw] {$p \cdot 1$};
            \node at (-0.6,9.5) [draw, rotate=90] {$q \cdot \sigma_H$};

            \foreach \n in {1,2,3,4,5} {
                \filldraw (-2*\n,2*\n) circle (3pt);
            }





            
            \foreach \x in {3,5,7,9} {
                \foreach \y in {\x,...,10} {
                \draw[fill=white!100] (-\x,\y) circle (3pt);
                }
            }

            \foreach \x in {2,4,6,8} {
                \filldraw (\x,-\x) circle (3pt);
                \foreach \y in {\x,...,9} {
                    \draw[fill=white!100] (\x,-\y-1) circle (3pt);
                }
            }

            \filldraw (10,-10) circle (3pt);

            \foreach \x in {-1,-2,...,-10} {
                \node at (0,\x) {$\whitesquare$};
            }

            \node at (0,0) {$\fillsquare$};

            \foreach \x in {1,2,...,10} {
                \node at (-0.5,-\x+0.25) {}; 
            }

            \foreach \x in {1,2,3,4} {
            \node at (2*\x+0.5,-2*\x+0.5) {}; 
            \node at (-2*\x-0.5,2*\x-0.5) {}; 
            }

            \foreach \x in {1,...,9} {
                \node at (0,\x) {$\widehat{\whitesquare}$};
            }

            \node at (0.3,1.25) {}; 

            \foreach \x in {1,...,8} {
                \node at (0.75, 1.25 + \x) {}; 
            }
        \end{tikzpicture}
        
        Key: $\bullet = A(H)$ \quad $\circ = A(H)/2A(H) \quad \blacksquare = A(\K) \quad \whitesquare = \text{coker}(\tau_H^\K) \quad \widehat{\whitesquare} = \ker(res_H^K)$ \\
    \end{figure}
\end{example}

\section{Double Complexes, Universal Coefficients, and a Kunneth Spectral Sequence}
\label{KunnethSection}

In this section, we'll give a brief introduction to the spectral sequence associated to a double complex and describe its relationship to both the Kunneth spectral sequence and the Universal Coefficients spectral sequence. We will demonstrate this relationship in the context of $C_2$-equivariant stable homotopy theory, and show that the Kunneth spectral sequence is far less efficient than the spectral sequence associated to a double complex for this task. To that end, recall the real representation ring of $C_2$: 
\[ RO(C_2) \cong \Z\{1, \sigma\}, \]
where $\sigma$ is the real line equipped with a $C_2$-action by multiplication by $-1$. We can quickly compute the homology of $S^\sigma$ with the same method as in \cref{sigma_Hhomology} to get the following: 

\begin{prop} \label{SigmaSusHAHomotopy}
The non-zero homotopy of the $\sigma$-suspension of $H\underline{A}$ is
\[
\underline{\pi}_*^{C_2}(\Sigma^\sigma H\underline{A}) \cong
\begin{cases}
\langle \Z \rangle & *=0 \\
\underline{f} & *=1.
\end{cases}
\]
\end{prop}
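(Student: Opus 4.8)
The plan is to carry out the cellular computation exactly as in \cref{sigma_Hhomology}, now for the group $C_2$. First, equip $S^\sigma$, the one-point compactification of the sign line, with a based $C_2$-CW structure having two fixed $0$-cells (the points $0$ and $\infty$) and a single free $1$-cell $C_2/e \times D^1$, whose two $1$-simplices are the two half-lines of $\sigma \smallsetminus \{0\}$, interchanged by $C_2$. Taking $\infty$ as basepoint, the reduced cellular chain complex of Mackey functors is concentrated in degrees $0$ and $1$:
\[ \underline{A} \cong \underline{A}_{C_2/C_2} \xleftarrow{\ \partial_1\ } \underline{A}_{C_2/e} \leftarrow \underline{0}. \]
Since $\Sigma^\sigma H\underline{A} \simeq \Sigma^\infty_{C_2} S^\sigma \wedge H\underline{A}$, we have $\underline{\pi}_n^{C_2}(\Sigma^\sigma H\underline{A}) \cong \widetilde{\underline{H}}_n^{C_2}(S^\sigma; \underline{A})$, so it suffices to compute $\coker\partial_1$ (in degree $0$) and $\ker\partial_1$ (in degree $1$), all other groups being automatically zero.

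Next I would pin down $\partial_1$. The $C_2$-analogue of \cref{MapOutOfFreeFormula} (whose proof only uses that $\underline{A}$ is the box-product unit for any finite group) gives $\underline{\Hom}(\underline{A}_{C_2/e}, \underline{M})(C_2/C_2) \cong \underline{M}(C_2/e)$, so $\partial_1$ is the unique map of Mackey functors determined by the class of the bounding $0$-cell in $\underline{A}(C_2/e) = A(e) \cong \Z$. Concretely, just as the first map in \cref{sigma_Hhomology} is $\nabla$: at the underlying level $\partial_1 \colon \underline{A}_{C_2/e}(C_2/e) \cong \Z[C_2] \to A(e) \cong \Z$ is the fold (augmentation) map, while at the fixed level $\partial_1 \colon \underline{A}_{C_2/e}(C_2/C_2) \cong \Z \to A(C_2) \cong \Z\{[C_2/e], 1\}$ sends $1 \mapsto \tau_e^{C_2}(1) = [C_2/e]$; the latter is forced by naturality of $\partial_1$ with respect to transfers, using that the transfer of $\underline{A}_{C_2/e}$ is itself the fold map.

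Finally I would read off the homology. At the fixed level $\partial_1$ is the injection $\Z \hookrightarrow \Z^2$, $1 \mapsto [C_2/e]$, with trivial kernel and cokernel $\Z$ spanned by the image of $1 = [C_2/C_2]$; at the underlying level $\partial_1$ is the surjection $\nabla \colon \Z^2 \twoheadrightarrow \Z$, with trivial cokernel and kernel $\{(a,-a)\} \cong \Z$ on which the residual $W_{C_2}(e) \cong C_2$ acts by swapping coordinates, hence by negation, i.e. as $\Z_-$. The induced structure maps vanish in both cases (one level is $0$, and the transfer into $\ker\partial_1$ at $C_2/C_2$ is the fold restricted to $\ker(\nabla)$). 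Hence $\coker\partial_1$ is $\Z$ over $0$ with trivial structure maps, namely $\langle\Z\rangle$, and $\ker\partial_1$ is $0$ over $\Z_-$ with trivial structure maps, namely $\underline{f}$ (see \cref{C2Zoo}), which is the claimed answer.

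The step demanding the most care is the last identification of the $W_{C_2}(e)$-action on $\ker\partial_1$ at the underlying level: it is exactly this sign that separates $\underline{f}$ from $\langle\Z\rangle$, so correctly tracking the generator $(1,-1)$ of $\ker(\nabla)$ and its conjugate is the one place where a slip would change the conclusion. Everything else is routine linear algebra over $\Z$.
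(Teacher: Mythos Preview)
Your proposal is correct and follows exactly the approach the paper indicates: the paper simply states that this result follows by the same cellular method as in \cref{sigma_Hhomology}, and you have carried out precisely that computation in full detail, correctly identifying the differential $\partial_1$ as $\nabla$ at the underlying level and $\tau_e^{C_2}$ at the fixed level, and correctly reading off $\langle\Z\rangle$ and $\underline{f}$ as the cokernel and kernel respectively.
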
 

To compute the reduced homology of $S^{2\sigma} \simeq S^\sigma \wedge S^\sigma$, observe that the reduced chain complex for the smash product $S^\sigma \wedge S^\sigma$ is the two-fold tensor power of the reduced chain complex for $S^\sigma$. In particular, it is the total complex of the double complex 
\[ \underline{D}_{i,j} \cong \widetilde{\underline{C}}_i(S^\sigma) \boxtimes \widetilde{\underline{C}}_j(S^\sigma). \]
From this, we get the spectral sequence associated to the double complex with $E^2$-page 
\[ E^2_{i,j} \cong \widetilde{\underline{H}}_i^{C_2}(S^{\sigma}; \widetilde{\underline{H}}_j^{C_2}(S^{\sigma})) \Rightarrow \widetilde{\underline{H}}_{i+j}^{C_2} (S^\sigma \wedge S^\sigma; \underline{A}) \cong \widetilde{\underline{H}}_{i+j}^{C_2}(S^{2\sigma}; \underline{A}) \]
as depicted in \cref{C2DoubleComplexSS}. 

\begin{figure}
\caption{$E^2$-page of the Double Complex Spectral Sequence for $\widetilde{\underline{H}}_*^{C_2}(S^{2\sigma};\underline{A})$}
\label{C2DoubleComplexSS}
	\begin{tikzpicture}[scale=1.5,font=\tiny]
	
            	
	\draw[->] (0,-0.25) to (0,1.5); 
	\draw[->] (-0.25,0) to (1.5,0);
	
	\node at (1.75,0) {\large $i$};
	\node at (0,1.75) {\large $j$};
	
	
	\node at (0,0) {\large $\whitecirc$};
	\node[fill=white] at (0,1) {\large $\underline{0}$};
	\node[fill=white] at (1,0) {\large $\underline{0}$};
	\node at (1,1) {\large $\whitesquare$}; 
	\end{tikzpicture} \\
	Key: $\whitecirc = \langle \Z \rangle$; \quad $\whitesquare = \underline{\Z}$
\end{figure}
There isn't any room for differentials, so the spectral sequence collapses. We obtain a computation of the homology of $S^{2\sigma}$. 
\begin{prop}
The non-zero homotopy of the $2\sigma$-suspension of $H\underline{A}$ is given by
\[
\underline{\pi}_*^{C_2}(\Sigma^{2\sigma} H\underline{A}) \cong 
\begin{cases}
\langle \Z \rangle & *=0 \\
\underline{\Z} & *=2.
\end{cases}
\]
\end{prop}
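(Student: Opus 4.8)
The plan is to extract the homotopy directly from the spectral sequence already set up above. The reduced cellular chain complex of $S^{2\sigma}\simeq S^\sigma\wedge S^\sigma$ is the total complex of the double complex $\underline D_{i,j}\cong\widetilde{\underline C}_i(S^\sigma)\boxtimes\widetilde{\underline C}_j(S^\sigma)$, and after the standard identification $\underline\pi_n^{C_2}(\Sigma^{2\sigma}H\underline A)\cong\widetilde{\underline H}_n^{C_2}(S^{2\sigma};\underline A)$ it suffices to compute $\widetilde{\underline H}_*^{C_2}(S^{2\sigma};\underline A)$, the abutment of the spectral sequence with $E^2_{i,j}\cong\widetilde{\underline H}_i^{C_2}(S^\sigma;\widetilde{\underline H}_j^{C_2}(S^\sigma;\underline A))$ displayed in \cref{C2DoubleComplexSS}.

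First I would justify that $E^2$-page. By \cref{SigmaSusHAHomotopy} the only nonzero rows are $j=0$, with coefficient Mackey functor $\langle\Z\rangle$, and $j=1$, with coefficient Mackey functor $\underline f$. For each of these, $\widetilde{\underline H}_i^{C_2}(S^\sigma;-)$ is the homology of the two-term reduced complex of $S^\sigma$, namely $\underline A_{C_2/e}\to\underline A$ (the free $1$-cell folding onto the fixed $0$-cell), boxed with the coefficient Mackey functor and computed exactly as in the method of \cref{sigma_Hhomology}. Since $\langle\Z\rangle$ is concentrated at the fixed level, $\underline A_{C_2/e}\boxtimes\langle\Z\rangle\cong\,\uparrow_e^{C_2}\downarrow_e^{C_2}\langle\Z\rangle\cong\underline 0$, so the $j=0$ row contributes only $E^2_{0,0}\cong\langle\Z\rangle$; a short computation of the $j=1$ row with $\underline f$-coefficients gives $E^2_{0,1}=0$ and $E^2_{1,1}\cong\underline\Z$. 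This is exactly the picture in \cref{C2DoubleComplexSS}.

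Finally I would note that the differentials $d^r_{i,j}\colon E^r_{i,j}\to E^r_{i-r,\,j+r-1}$ all have a zero group as source or target: $S^\sigma$ is $1$-dimensional, so each row lives in columns $0$ and $1$, and the two surviving classes sit on the main diagonal. Hence the spectral sequence degenerates at $E^2$. Since total degree $0$ is supported only by $E^\infty_{0,0}$ and total degree $2$ only by $E^\infty_{1,1}$, and total degree $1$ is empty, there are no extension problems, and we read off $\widetilde{\underline H}_0^{C_2}(S^{2\sigma};\underline A)\cong\langle\Z\rangle$, $\widetilde{\underline H}_2^{C_2}(S^{2\sigma};\underline A)\cong\underline\Z$, with all other homology zero; translating back along the identification of the first paragraph gives the stated $\underline\pi_*^{C_2}(\Sigma^{2\sigma}H\underline A)$. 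Once \cref{SigmaSusHAHomotopy} is in hand there is essentially no obstacle; the one place meriting a brief calculation is the $E^2$-page — in particular checking that the free $1$-chain group annihilates $\langle\Z\rangle$, and that the $j=1$ row genuinely contributes $\underline\Z$ rather than a proper subquotient of it.
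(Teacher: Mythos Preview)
Your proof is correct and follows essentially the same approach as the paper: set up the double complex spectral sequence for $S^{2\sigma}\simeq S^\sigma\wedge S^\sigma$, identify the $E^2$-page as in \cref{C2DoubleComplexSS}, observe there is no room for differentials, and read off the answer with no extension problems. In fact you supply more justification for the $E^2$-entries (the vanishing of $\underline A_{C_2/e}\boxtimes\langle\Z\rangle$ and the computation of the $\underline f$-row) than the paper does, which simply displays the figure and notes the collapse.
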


	As in \cite{LewisMandell}, the Kunneth spectral sequence for computing $\widetilde{\underline{H}}_*^{C_2}(S^\sigma \wedge S^\sigma; \underline{A})$ has $E^2$-page given by 
\[ E_{p,q}^2 \cong \underline{\Tor}^{\underline{A}}_{p,q}(\widetilde{\underline{H}}_*^{C_2}(S^\sigma); \widetilde{\underline{H}}_*^{C_2}(S^\sigma; \underline{A})) \Rightarrow \widetilde{\underline{H}}_*^{C_2}(S^\sigma \wedge S^\sigma; \underline{A}) \cong \widetilde{\underline{H}}_*^{C_2}(S^{2\sigma}; \underline{A}),
\]
where 
\[
\underline{\Tor}^{\underline{A}}_{p,q}(\widetilde{\underline{H}}_*^{C_2}(S^\sigma); \widetilde{\underline{H}}_*^{C_2}(S^\sigma; \underline{A})) \cong \bigoplus_{s+t=q} \underline{\Tor}^{\underline{A}}_p(\widetilde{\underline{H}}_s^{C_2}(S^\sigma); \widetilde{\underline{H}}_t^{C_2}(S^\sigma; \underline{A})).
\]
The $E^2$-page is depicted in \cref{C2KunnethSS}.
\begin{figure}
\caption{$E^2$-page of the Kunneth Spectral Sequence for $\widetilde{\underline{H}}_*^{C_2}(S^{2\sigma};\underline{A})$}
\label{C2KunnethSS}
	\begin{tikzpicture}[scale=1.5,font=\tiny]
	
            	
	\draw[->] (0,-0.25) to (0,2.5); 
	\draw[->] (-0.25,0) to (7.5,0);
	
	\node at (7.75,0) {\large $p$};
	\node at (0,2.75) {\large $q$};
	
	
	\node at (0,0) {\large $\whitecirc$};
	\node[fill=white] at (1,0) {\large $\underline{0}$};
	\node[fill=white] at (2,0) {\large $\underline{0}$};
	\node (30) at (3,0) {\large $\fillcirc$}; 
	\node[fill=white] at (4,0) {\large $\underline{0}$};
	\node[fill=white] at (5,0) {\large $\underline{0}$};
	\node[fill=white] at (6,0) {\large $\underline{0}$};
	\node[fill=white] (70) at (7,0) {\tiny $\cdots$};
	
	\node[fill=white] at (0,1) {\large $\underline{0}$};
	\node[draw, fill=black, text=white, circle, inner sep=1pt] (11) at (1,1) {\tiny $2$};
	\node at (2,1) {\large $\underline{0}$};
	\node at (3,1) {\large $\underline{0}$};
	\node at (4,1) {\large $\underline{0}$};
	\node[draw, fill=black, text=white, circle, inner sep=1pt] (51) at (5,1) {\tiny $2$};
	\node at (6,1) {\large $\underline{0}$};
	\node at (7,1) {\tiny $\cdots$};
	
	\node[fill=white] at (0,2) {$\whitesquaredual$};
	\node at (1,2) {\large $\underline{0}$};
	\node at (2,2) {\large $\underline{0}$};
	\node (32) at (3,2) {\large $\fillcirc$}; 
	\node at (4,2) {\large $\underline{0}$};
	\node at (5,2) {\large $\underline{0}$};
	\node at (6,2) {\large $\underline{0}$};
	\node at (7,2) {\tiny $\cdots$};
	
	
	\draw[color=\amult, ->] (30) to (11) {};
	\draw[color=\amult, ->] (51) to (32) {};
	\draw[color=\amult, ->] (70) to (51) {};
	\end{tikzpicture} \\
	Key: $\whitecirc = \langle \Z \rangle$; \quad $\fillcirc = \langle \F_2 \rangle$; \quad $\begin{tikzpicture} \node[draw, fill=black, text=white, circle, inner sep=0.5pt] at (0,0) {\tiny $2$}; \end{tikzpicture} = \langle \F_2 \rangle^{\oplus 2}$; \quad $\whitesquaredual = \underline{\Z}^*$
\end{figure}

In \cref{C2KunnethSS}, the differentials are so that the only copy of $\langle \F_2 \rangle$ that survives the spectral sequence is in degree $(1,1)$. Further, there will be no room for differentials after the second page. Thus, the spectral sequence collapses and we are left with solving an extension problem for $\underline{\pi}_2^{C_2}(\Sigma^{2\sigma} H\underline{A})$. The extension problem is of the form 
\[ \underline{0} \rightarrow \whitesquaredual \rightarrow \underline{\pi}_2^{C_2}(\Sigma^{2\sigma} H\underline{A}) \rightarrow \fillcirc \rightarrow \underline{0}, \]
and the solution is $\underline{\Z}$ as we know from \cref{SigmaSusHAHomotopy}. 

We may also regrade the $E^2$-page of the Kunneth spectral sequence to be trigraded and use a universal coefficients spectral sequence to compute the $E^2$-page of the spectral sequence associated to a double complex. That is, we have
\[
E^2_{p,s,t} \cong \Tor_p^{\underline{A}}(\widetilde{\underline{H}}_s^{C_2}(S^\sigma; \widetilde{\underline{H}}_t^{C_2}(S^\sigma; \underline{A})) \Rightarrow \widetilde{\underline{H}}_{p+s}^{C_2}(S^\sigma; \widetilde{\underline{H}}_t^{C_2}(S^\sigma; \underline{A})). 
\]
On this page, the differentials have tridegree $(-2,1,0)$ so we really have a pair of bigraded spectral sequences indexed by the values of $t$. These spectral sequences for $t=0$ and $t=1$ are depicted in \cref{C2UnivCoeffSS0} and \cref{C2UnivCoeffSS1}, respectively. We can see that after the $E^2$-page, the spectral sequences will collapse. In the case of $t=0$, we are left with no extensions to solve. Though this is the case when $t=1$, we run into the same extension problem seen in the Kunneth spectral sequence. 

What we've seen is that there are two spectral sequences for computing homotopy of a smash product: the Kunneth spectral sequence and the spectral sequence associated to a double complex. The Kunneth spectral sequence is less efficient as the Mackey functors typically showing up as homology Mackey functors often have infinite homological dimension. See Theorem 2.1 of \cite{Greenlees}. However, the Kunneth spectral sequence ``factors" through the spectral sequence associated to a double complex via the Universal Coefficients spectral sequence, and it turns out the double complex spectral sequence is much easier to compute in this context.

\begin{figure}
\caption{$E^2_{p,s,0}$-page of the Universal Coefficients Spectral Sequence for $\widetilde{\underline{H}}_*^{C_2}(S^{2\sigma};\underline{A})$}
\label{C2UnivCoeffSS0}
	\begin{tikzpicture}[scale=1.5,font=\tiny]
	
            	
	\draw[->] (0,-0.25) to (0,1.5); 
	\draw[->] (-0.25,0) to (7.5,0);
	
	\node at (7.75,0) {\large $p$};
	\node at (0,1.75) {\large $q$};
	
	
	\node at (0,0) {\large $\whitecirc$};
	\node[fill=white] at (1,0) {\large $\underline{0}$};
	\node[fill=white] at (2,0) {\large $\underline{0}$};
	\node (30) at (3,0) {\large $\fillcirc$}; 
	\node[fill=white] at (4,0) {\large $\underline{0}$};
	\node[fill=white] at (5,0) {\large $\underline{0}$};
	\node[fill=white] at (6,0) {\large $\underline{0}$};
	\node[fill=white] (70) at (7,0) {\tiny $\cdots$};
	
	\node[fill=white] at (0,1) {\large $\underline{0}$};
	\node (11) at (1,1) {\large $\fillcirc$}; 
	\node at (2,1) {\large $\underline{0}$};
	\node at (3,1) {\large $\underline{0}$};
	\node at (4,1) {\large $\underline{0}$};
	\node (51) at (5,1) {\large $\fillcirc$}; 
	\node at (6,1) {\large $\underline{0}$};
	\node at (7,1) {\tiny $\cdots$};
	
	
	\draw[color=\amult, ->] (30) to (11) {};
	\draw[color=\amult, ->] (70) to (51) {};
	 
	\end{tikzpicture} \\
	Key: $\whitecirc = \langle \Z \rangle$; \quad $\fillcirc = \langle \F_2 \rangle$; \quad $\whitesquaredual = \underline{\Z}^*$
\end{figure}

\begin{figure}
\caption{$E^2_{p,s,1}$-page of the Universal Coefficients Spectral Sequence for $\widetilde{\underline{H}}_*^{C_2}(S^{2\sigma};\underline{A})$}
\label{C2UnivCoeffSS1}
	\begin{tikzpicture}[scale=1.5,font=\tiny]
	
            	
	\draw[->] (0,-0.25) to (0,1.5); 
	\draw[->] (-0.25,0) to (7.5,0);
	
	\node at (7.75,0) {\large $p$};
	\node at (0,1.75) {\large $q$};
	
	
	\node[fill=white, inner sep=0.5pt] at (0,0) {\large $\underline{0}$};
	\node[fill=white] (10) at (1,0) {\large $\fillcirc$}; 
	\node[fill=white] at (2,0) {\large $\underline{0}$};
	\node[fill=white] at (3,0) {\large $\underline{0}$};
	\node[fill=white] at (4,0) {\large $\underline{0}$};
	\node[fill=white] (50) at (5,0) {\large $\fillcirc$}; 
	\node[fill=white] at (6,0) {\large $\underline{0}$};
	\node[fill=white] at (7,0) {\tiny $\cdots$};
	
	\node[fill=white] at (0,1) {$\whitesquaredual$};
	\node at (1,1) {\large $\underline{0}$};
	\node at (2,1) {\large $\underline{0}$};
	\node (31) at (3,1) {\large $\fillcirc$}; 
	\node at (4,1) {\large $\underline{0}$};
	\node at (5,1) {\large $\underline{0}$};
	\node at (6,1) {\large $\underline{0}$};
	\node at (7,1) {\tiny $\cdots$};
	
	
	\draw[color=\amult, ->] (50) to (31) {}; 
	\end{tikzpicture} \\
	Key: $\fillcirc = \langle \F_2 \rangle$; \quad $\whitesquaredual = \underline{\Z}^*$
\end{figure}

\section{A Comparison of the Positive Cones for $H\underline{A}$ and $H\underline{\Z}$} \label{ComparisonPositive}

In this section, we'll compute a large range of the ``positive cone" of $H\underline{A}$, i.e. $\underline{\pi}^\K_n(\Sigma^{k\overline{\rho}} H\underline{A})$ for $k \geq 0$ via the following short exact sequence: 
\[ \underline{I} \rightarrow \underline{A} \rightarrow \underline{\Z}, \label{coeffseq} \]
where $\underline{I}$ is the kernel of the augmentation map $\underline{A} \rightarrow \underline{\Z}$:

\begin{center}
{ 
\begin{tikzpicture}[scale=0.9]

				
\node (AK) at (-4,6) {\scriptsize $\Z\{\K,\K/L,\K/D,\K/R,1\}$};
\node (AL) at (-7,3) {\scriptsize $\Z\{L,1\}$};
\node (AD) at (-4,3) {\scriptsize $\Z\{D,1\}$};
\node[inner xsep=0pt, inner ysep=1pt] (AR) at (-1,3) {\scriptsize $\Z\{R,1\}$};
\node (Ae) at (-4,0) {\scriptsize $\Z$};

				
\draw[bend right=10,->] (AL) to node[left={0.5ex}] {\mylittlematrix{2 & 1}} (Ae);
\draw[bend right=10,->] (AD) to node[left] {\mylittlematrix{2 & 1}} (Ae);
\draw[bend right=10,->] (AR) to node[above={1ex}] {\mylittlematrix{2 & 1}} (Ae);

				
\draw[bend right=10,->] (Ae) to node[above={0.5ex}] {\mylittlematrix{1 \\ 0}} (AL);
\draw[bend right=10,->] (Ae) to node[right]{\mylittlematrix{1 \\ 0 }} (AR);
\draw[bend right=10,->] (Ae) to node[right]{\mylittlematrix{1 \\ 0 }} (AD);

				
\draw[bend right=10,->] (AK) to node[above={0.5ex}, rotate=45] {\mytinymatrix{2 & 0 & 1 & 1 & 0 \\ 0 & 2 & 0 & 0 & 1}} (AL);
\draw[bend right=10,->] (AK) to node[above={0.5ex}, rotate=90] {\mytinymatrix{2 & 1 & 0 & 1 & 0 \\ 0 & 0 & 2 & 0 & 1}} (AD);
\draw[bend right=10,->] (AK) to node[below right={0.5ex}, rotate=-45] {\mytinymatrix{2 & 1 & 1 & 0 & 0 \\ 0 & 0 & 0 & 2 & 1}} (AR);

				
\draw[bend right=10,->] (AL) to node[below={1ex}] {\mytinymatrix{1 & 0 \\ 0 & 1 \\ 0 & 0 \\ 0 & 0 \\ 0 & 0}} (AK);
\draw[bend right=10,->] (AD) to node[right={0.05ex}]{\mytinymatrix{1 & 0 \\ 0 & 0 \\0 & 1 \\ 0 & 0 \\ 0 & 0 }} (AK);
\draw[bend right=10,->] (AR) to node[above right]{\mytinymatrix{1 & 0 \\ 0 & 0 \\ 0 & 0 \\ 0 & 1 \\ 0 & 0 }} (AK);

				
\node (ZK) at (4,6) {\scriptsize$\Z$};
\node (ZL) at (1,3) {\scriptsize$\Z$};
\node (ZD) at (4,3) {\scriptsize$\Z$};
\node (ZR) at (7,3) {\scriptsize$\Z$};
\node (Ze) at (4,0) {\scriptsize$\Z$};

				
\draw[bend right=10,->] (ZL) to node[fill=white, inner sep=1pt] {\tiny $1$} (Ze);
\draw[bend right=10,->] (ZD) to node[fill=white, inner sep=1pt] {\tiny $1$} (Ze);
\draw[bend right=10,->] (ZR) to node[fill=white, inner sep=1pt] {\tiny $1$} (Ze);

				
\draw[bend right=10,->] (Ze) to node[fill=white, inner sep=1pt] {\tiny $2$} (ZL);
\draw[bend right=10,->] (Ze) to node[fill=white, inner sep=1pt]{\tiny $2$} (ZD);
\draw[bend right=10,->] (Ze) to node[fill=white, inner sep=1pt]{\tiny $2$} (ZR);

				
\draw[bend right=10,->] (ZK) to node[fill=white, inner sep=1pt] {\tiny $1$} (ZL);
\draw[bend right=10,->] (ZK) to node[fill=white, inner sep=1pt] {\tiny $1$} (ZD);
\draw[bend right=10,->] (ZK) to node[fill=white, inner sep=1pt] {\tiny $1$} (ZR);

				
\draw[bend right=10,->] (ZL) to node[fill=white, inner sep=1pt] {\tiny $2$} (ZK);
\draw[bend right=10,->] (ZD) to node[fill=white, inner sep=1pt]{\tiny $2$} (ZK);
\draw[bend right=10,->] (ZR) to node[fill=white, inner sep=1pt]{\tiny $2$} (ZK);

				
\draw[->] (AK) to node[above] {$\mylittlematrix{4 & 2 & 2 & 2 & 1}$} (ZK); 
\draw[bend left=30,->] (AR) to node[above] {$\mylittlematrix{2 & 1}$} (ZL);
\draw[->] (AR) to node[fill=white, inner sep=1pt] {$\mylittlematrix{2 & 1}$} (ZL);
\draw[bend right=30,->] (AR) to node[below] {$\mylittlematrix{2 & 1}$} (ZL);
\draw[->] (Ae) to node[below] {\tiny $1$} (Ze); 

\end{tikzpicture}
}
\end{center}
Explicitly, $\underline{I}$ is the Mackey functor
\begin{center}
{ 
\begin{tikzpicture}[scale=1]

				
\node (IK) at (0,6) {\scriptsize $\Z\{\K/e - 4, \K/L-2, \K/D-2, \K/R-2\}$};
\node (IL) at (-3,3) {\scriptsize $\Z\{L/e - 2\}$};
\node (ID) at (0,3) {\scriptsize $\Z\{D/e - 2\}$};
\node (IR) at (3,3) {\scriptsize $\Z\{R/e - 2\}$};
\node (Ie) at (0,0) {\scriptsize $0$};

				
\draw[bend right=10, ->] (IL) to (Ie);
\draw[bend right=10, ->] (ID) to (Ie);
\draw[bend right=10, ->] (IR) to (Ie);

	
\draw[bend right=10, ->] (Ie) to (IL);
\draw[bend right=10, ->] (Ie) to (ID);
\draw[bend right=10, ->] (Ie) to (IR);

				
\draw[bend right=10,->] (IK) to node[fill=white, inner sep=1pt, rotate=45] {\mytinymatrix{2 & 0 & 1 & 1}} (IL);
\draw[bend right=10,->] (IK) to node[fill=white, inner sep=1pt, rotate=90] {\mytinymatrix{2 & 1 & 0 & 1}} (ID);
\draw[bend right=10,->] (IK) to node[fill=white, inner sep=1pt, rotate=-45] {\mytinymatrix{2 & 1 & 1 & 0}} (IR);

				
\draw[bend right=10,->] (IL) to node[fill=white, inner sep=1pt] {$\mytinymatrix{1 \\ -2 \\ 0 \\ 0}$} (IK);
\draw[bend right=10,->] (ID) to node[fill=white, inner sep=1pt]{$\mytinymatrix{1 \\ 0 \\ -2 \\ 0}$} (IK);
\draw[bend right=10,->] (IR) to node[fill=white, inner sep=1pt]{$\mytinymatrix{1 \\ 0 \\ 0 \\ -2}$} (IK);

\end{tikzpicture}
}
\end{center}
Notice that $\underline{I}$ vanishes at the bottom! This will play a large part in the computation. 

From our short exact sequence of Mackey functors, we get a cofiber sequence 
\[ H\underline{I} \rightarrow H\underline{A} \rightarrow H\underline{\Z}. \]
We'll compute the coefficients of $\Sigma^{k\overline{\rho}} H\underline{I}$ in several steps, and this will allow us to deduce the coefficients of $\Sigma^{k\overline{\rho}} H\underline{A}$ in a large range quite easily.

\begin{remark}\label{RhoSphereCWStructure}
$S^{k\overline{\rho}}$ has a $\K$-CW structure in which the $j$-cells are all free cells for $j \geq k+1$. This is precisely the product $\K$-CW structure on $S^{k\overline{\rho}} \simeq S^{k\sigma_L} \wedge S^{k\sigma_D} \wedge S^{k\sigma_R}$. The key ingredient in this fact is that $\K/H \times \K/J \cong \K/e$  for $H \neq J \leq \K$ with $|H| = |J| = 2$.  
\end{remark}

\begin{lemma}
Let $n, k \geq 0$. Then, $\underline{\pi}_n^\K(S^{k\overline{\rho}} \wedge H\underline{I}) \cong \underline{0}$ for $n \geq k+1$. 
\end{lemma}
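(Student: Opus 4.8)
The plan is to recognize $\underline{\pi}_n^\K(S^{k\overline{\rho}} \wedge H\underline{I})$ as the reduced Bredon homology $\widetilde{\underline{H}}_n^\K(S^{k\overline{\rho}}; \underline{I})$ and to compute the latter directly from a cellular chain complex of Mackey functors, exploiting the fact that $\underline{I}$ vanishes at the bottom level.

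First I would invoke \cref{RhoSphereCWStructure}: $S^{k\overline{\rho}}$ carries a $\K$-CW structure (the product structure on $S^{k\sigma_L} \wedge S^{k\sigma_D} \wedge S^{k\sigma_R}$) in which every cell in dimension $j \geq k+1$ is a free cell, i.e.\ of orbit type $\K/e$. Consequently, in the reduced cellular chain complex $\underline{C}_*^\K(S^{k\overline{\rho}}; \underline{I})$ the degree-$j$ term for $j \geq k+1$ is a finite direct sum of copies of $\underline{A}_{\K/e} \boxtimes \underline{I}$.

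Next I would apply \cref{boxformula} to rewrite $\underline{A}_{\K/e} \boxtimes \underline{I} \cong\, \uparrow_e^\K \downarrow_e^\K \underline{I}$. But $\downarrow_e^\K \underline{I}$ is just the abelian group $\underline{I}(\K/e)$ regarded as an object of $\Mack_e$, and this is $0$; since $\uparrow_e^\K$ is additive it carries the zero object to $\underline{0}$. Hence $\underline{C}_j^\K(S^{k\overline{\rho}}; \underline{I}) \cong \underline{0}$ for all $j \geq k+1$, so the homology of this complex vanishes there, giving $\underline{\pi}_n^\K(S^{k\overline{\rho}} \wedge H\underline{I}) \cong \widetilde{\underline{H}}_n^\K(S^{k\overline{\rho}}; \underline{I}) \cong \underline{0}$ for $n \geq k+1$.

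There is little to obstruct this argument once \cref{RhoSphereCWStructure} is granted: the whole point is that above dimension $k$ the cell structure of $S^{k\overline{\rho}}$ is purely free, and a free cell contributes $\underline{A}_{\K/e} \boxtimes \underline{I}$, which is zero precisely because $\underline{I}$ is supported away from $\K/e$. The only points requiring care are making sure the product $\K$-CW structure is set up so that \cref{RhoSphereCWStructure} genuinely applies, and that the free-cell chain term is identified with $\underline{A}_{\K/e} \boxtimes \underline{I}$ in accordance with the definition of the cellular chain complex of Mackey functors.
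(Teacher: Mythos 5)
Your argument is correct and matches the paper's proof essentially verbatim: both invoke \cref{RhoSphereCWStructure} to see that all cells above dimension $k$ are free, then use that $\underline{A}_{\K/e}\boxtimes\underline{I}\cong\underline{0}$ (which you justify, a bit more explicitly than the paper, via \cref{boxformula} and the vanishing of $\underline{I}(\K/e)$) to conclude the chain complex is zero in those degrees.
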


\begin{proof}
By \cref{RhoSphereCWStructure}, $S^{k\overline{\rho}}$ has exclusively free cells $n$-cells for $n \geq k+1$. Further, $\underline{A}_{\K/e} \boxtimes \underline{I} \cong \underline{0}$ since $\underline{I}$ vanishes at the underlying level. By definition, the $n^{th}$ chain Mackey functor vanishes.
\end{proof}

\begin{corollary}\label{PosConeComparison}
	For $n \geq k+2$, the long exact sequence in homotopy associated to the fiber sequence $H\underline{I} \rightarrow H\underline{A } \rightarrow H\underline{\Z}$ gives an isomorphism $\underline{\pi}_n^\K(\Sigma^{k\overline{\rho}} H\underline{A}) \cong \underline{\pi}_n^\K(\Sigma^{k\overline{\rho}} H\underline{\Z})$. 
\end{corollary}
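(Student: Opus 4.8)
The plan is to feed the previous lemma into the long exact sequence in homotopy and observe that the groups in question are squeezed between zeros. First, since $S^{k\overline{\rho}}$ is invertible in $\Ho\Sp_\K$, the functor $\Sigma^{k\overline{\rho}} = S^{k\overline{\rho}} \wedge (-)$ is exact; smashing the cofiber sequence $H\underline{I} \to H\underline{A} \to H\underline{\Z}$ with $S^{k\overline{\rho}}$ therefore yields a cofiber sequence
\[ \Sigma^{k\overline{\rho}} H\underline{I} \longrightarrow \Sigma^{k\overline{\rho}} H\underline{A} \longrightarrow \Sigma^{k\overline{\rho}} H\underline{\Z}. \]
Applying $\underline{\pi}_\ast^\K(-)$ produces the long exact sequence of $\K$-Mackey functors
\[ \cdots \to \underline{\pi}_n^\K(\Sigma^{k\overline{\rho}} H\underline{I}) \to \underline{\pi}_n^\K(\Sigma^{k\overline{\rho}} H\underline{A}) \to \underline{\pi}_n^\K(\Sigma^{k\overline{\rho}} H\underline{\Z}) \to \underline{\pi}_{n-1}^\K(\Sigma^{k\overline{\rho}} H\underline{I}) \to \cdots. \]

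Next I would invoke the preceding lemma, which gives $\underline{\pi}_m^\K(\Sigma^{k\overline{\rho}} H\underline{I}) \cong \underline{\pi}_m^\K(S^{k\overline{\rho}} \wedge H\underline{I}) \cong \underline{0}$ for every $m \geq k+1$. If $n \geq k+2$, then both $n$ and $n-1$ are at least $k+1$, so the outer terms $\underline{\pi}_n^\K(\Sigma^{k\overline{\rho}} H\underline{I})$ and $\underline{\pi}_{n-1}^\K(\Sigma^{k\overline{\rho}} H\underline{I})$ vanish, and exactness forces the middle arrow $\underline{\pi}_n^\K(\Sigma^{k\overline{\rho}} H\underline{A}) \to \underline{\pi}_n^\K(\Sigma^{k\overline{\rho}} H\underline{\Z})$ to be an isomorphism.

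There is essentially no obstacle here: the content is entirely in the previous lemma (the vanishing of $\underline{\pi}_\ast^\K(S^{k\overline{\rho}} \wedge H\underline{I})$ in high degrees), and the only points needing a word of care are that $\Sigma^{k\overline{\rho}}(-)$ preserves cofiber sequences and that $\underline{\pi}_\ast^\K$ sends a cofiber sequence of $\K$-spectra to a long exact sequence of Mackey functors, i.e.\ exact at every orbit level $\K/H$ simultaneously. It is worth noting in passing that the same squeezing argument shows $\underline{\pi}_{k+1}^\K(\Sigma^{k\overline{\rho}} H\underline{A}) \to \underline{\pi}_{k+1}^\K(\Sigma^{k\overline{\rho}} H\underline{\Z})$ is a monomorphism, which helps constrain the remaining low-degree groups computed later.
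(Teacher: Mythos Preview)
Your argument is correct and matches the paper's proof essentially verbatim: both smash the cofiber sequence with $S^{k\overline{\rho}}$, invoke the preceding lemma to kill the $H\underline{I}$ terms in degrees $n$ and $n-1$, and read off the isomorphism from exactness. Your version is simply more explicit about why the suspended sequence is still a cofiber sequence and why both flanking terms vanish.
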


\begin{proof}
	Applying $\underline{\pi}_n^\K(-)$ to the cofiber sequence
	\[ \Sigma^{k\overline{\rho}} H\underline{I} \rightarrow \Sigma^{k\overline{\rho}} H\underline{A} \rightarrow \Sigma^{k\overline{\rho}} H\underline{\Z} \]
	for $n \geq k+2$ gives 
	\[ \underline{0} \rightarrow \underline{\pi}_n^\K(\Sigma^{k\overline{\rho}} H\underline{A}) \rightarrow \underline{\pi}_n^\K(\Sigma^{k\overline{\rho}} H\underline{\Z}) \rightarrow \underline{0}. \]
\end{proof}

From here, we will go ahead and compute $\underline{\pi}_n^\K(\Sigma^{k\overline{\rho}} H\underline{A})$ for $n \leq k+1$. 

\begin{prop}\label{IPosConeProp1}
	The non-zero homotopy of the $\overline{\rho}$-suspension of $H\underline{I}$ is given by 
	\[
	\underline{\pi}_n^\K(\Sigma^{\overline{\rho}} H\underline{I})
	\begin{cases}
	\langle \Z \oplus \F_2^2 \rangle & n = 0 \\
	\phi_{LDR} \underline{f} & n = 1.
	\end{cases}
	\]
\end{prop}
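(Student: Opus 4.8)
The plan is to compute $\underline{\pi}_*^\K(\Sigma^{\overline{\rho}} H\underline{I}) \cong \widetilde{\underline{H}}_*^\K(S^{\overline{\rho}}; \underline{I})$ from the product $\K$-CW structure on $S^{\overline{\rho}} \simeq S^{\sigma_L} \wedge S^{\sigma_D} \wedge S^{\sigma_R}$, where each factor $S^{\sigma_H}$ carries the structure of \cref{sigma_Hhomology}. In that structure $S^{\overline{\rho}}$ has a single fixed $0$-cell, one $\K/H$-cell in dimension $1$ for each $H \in \{L,D,R\}$, and only free cells in dimensions $\geq 2$ (\cref{RhoSphereCWStructure}). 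Hence the reduced cellular chain complex $\widetilde{\underline{C}}_*(S^{\overline{\rho}})$ has $\underline{A}$ in degree $0$, $\bigoplus_H \underline{A}_{\K/H}$ in degree $1$ with differential given on the $H$-summand by the map $\nabla$ of \cref{sigma_Hhomology}, and sums of copies of $\underline{A}_{\K/e}$ in degrees $\geq 2$. Boxing with $\underline{I}$ kills the latter, since $\underline{A}_{\K/e} \boxtimes \underline{I} \cong \uparrow_e^\K\downarrow_e^\K\underline{I} \cong \underline{0}$ by \cref{boxformula} and the fact that $\underline{I}$ vanishes at the underlying level. So $\widetilde{\underline{H}}_*^\K(S^{\overline{\rho}}; \underline{I})$ is the homology of the two-term complex $\bigoplus_H(\underline{A}_{\K/H}\boxtimes\underline{I}) \xrightarrow{\partial}\underline{I}$ placed in degrees $1$ and $0$, whence $\underline{\pi}_n^\K(\Sigma^{\overline{\rho}} H\underline{I}) = \underline{0}$ for $n \notin \{0,1\}$, $\underline{\pi}_1 \cong \ker\partial$, and $\underline{\pi}_0 \cong\coker\partial$.

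Next I would unwind the two nonzero chain Mackey functors and the map $\partial$. By \cref{boxformula} the $H$-th summand is $\underline{A}_{\K/H}\boxtimes\underline{I} \cong \uparrow_H^\K\downarrow_H^\K\underline{I}$, and under this identification $\partial$ restricted to that summand is the counit $\uparrow_H^\K\downarrow_H^\K\underline{I} \to \underline{I}$ (the $\underline{I}$-box of the counit $\underline{A}_{\K/H} = \uparrow_H^\K\downarrow_H^\K\underline{A}\to\underline{A}$, which is the map $\nabla$ of \cref{sigma_Hhomology}). For $H \in \{L,D,R\}$ one has $\downarrow_H^\K\underline{I} \cong \langle\Z\rangle$, i.e. $\underline{I}(\K/H) \cong \Z$ on top and $\underline{I}(\K/e) = 0$ below. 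Reading off the Lewis diagram of $\uparrow_H^\K\langle\Z\rangle$: using that $\res_H^\K \K/H \cong (H/H)^{\sqcup 2}$ and that $\res_H^\K \K/J$ is a free $H$-set for the other two order-two subgroups $J$, this Mackey functor has value $\underline{I}(\K/H)^{\oplus 2} \cong \Z^2$ at $\K/H$ with $W_\K(H)$ swapping the summands, value $\underline{I}(\K/H) \cong \Z$ at $\K/\K$ with structure map to $\K/\K$ the transfer $tr_H^\K$ of $\underline{I}$, and $0$ at the remaining levels. In particular all ``cross terms'' of $\partial$ vanish.

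With this in hand the computation becomes levelwise. At $\K/e$ both terms are $0$. At $\K/H$ ($H\in\{L,D,R\}$) only the $H$-th summand survives and $\partial$ is the fold map $\Z^2 \to \Z$, so the cokernel is $0$ and the kernel is the anti-diagonal copy of $\Z$ on which the swap acts by $-1$; thus at each $\K/H$ the value of $\underline{\pi}_1$ is $\Z_-$ and that of $\underline{\pi}_0$ is $0$. At $\K/\K$ the map is $\bigoplus_H tr_H^\K \colon \bigoplus_H \underline{I}(\K/H) \cong \Z^3 \longrightarrow \underline{I}(\K/\K) \cong \Z^4$, which in the bases of the Lewis diagram of $\underline{I}$ is $\bigl(\begin{smallmatrix} 1 & 1 & 1 \\ -2 & 0 & 0 \\ 0 & -2 & 0 \\ 0 & 0 & -2\end{smallmatrix}\bigr)$; its kernel is $0$ and its cokernel is $\Z \oplus \F_2^2$ (elementary divisors $1,2,2$). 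Assembling these values, and noting that all restriction and transfer maps of $\underline{\pi}_0$ and $\underline{\pi}_1$ are forced to vanish for degree reasons, yields $\underline{\pi}_1^\K(\Sigma^{\overline{\rho}} H\underline{I}) \cong \phi_{LDR}^*\underline{f}$ and $\underline{\pi}_0^\K(\Sigma^{\overline{\rho}} H\underline{I}) \cong \langle \Z \oplus \F_2^2 \rangle$.

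The collapse of the chain complex to two terms is formal, given the vanishing of $\underline{A}_{\K/e}\boxtimes\underline{I}$. The substantive work is in the middle step: correctly identifying $\partial$ with the counit, working out the Lewis diagram of $\uparrow_H^\K\langle\Z\rangle$ with the right $W_\K(H)$-action — recognizing the $\Z_-$ rather than a trivial $\Z$ at the $\K/H$ levels is exactly what pins down the answer as $\phi_{LDR}^*\underline{f}$ — and the Smith-normal-form bookkeeping at $\K/\K$ that produces the $\F_2^2$. I expect the Weyl-action tracking to be the main obstacle, since it hinges on how $\res_H^\K\K/H$ decomposes and how the residual $W_\K(H)$-action permutes the two copies of $\underline{I}(\K/H)$.
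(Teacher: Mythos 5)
Your argument is correct, and it follows a genuinely different route from the paper's. The paper proves \cref{IPosConeProp1} by three successive applications of the long exact sequence in homotopy, one for each of the cofiber sequences
\[
\K/H_+ \wedge (-) \to (-) \to \Sigma^{\sigma_H}(-), \qquad H = L, D, R,
\]
so that the intermediate objects $\Sigma^{\sigma_L} H\underline{I}$ and $\Sigma^{\sigma_L+\sigma_D} H\underline{I}$ are computed as stepping stones. You instead apply \cref{RhoSphereCWStructure} directly: in the product $\K$-CW structure on $S^{\overline{\rho}}$, all cells of dimension $\geq 2$ are free, and since $\underline{A}_{\K/e} \boxtimes \underline{I} \cong \uparrow_e^\K \downarrow_e^\K \underline{I} = \underline{0}$, the coefficient chain complex for $\underline{I}$ collapses to the two-term complex $\bigoplus_H \uparrow_H^\K \downarrow_H^\K \underline{I} \to \underline{I}$, whose kernel and cokernel you compute levelwise. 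Your identification of the differential with the sum of counits is correct and is exactly what \cref{sigma_Hhomology} records (transfer at the top level, $\nabla$ at $\K/H$), and the Smith normal form bookkeeping at $\K/\K$ with the Weyl-action tracking at $\K/H$ is carried out correctly. Both routes are valid; the paper's incremental approach is chosen because it matches the filtration-by-one-sign-representation strategy used elsewhere, whereas your direct two-term chain complex is shorter and more self-contained for this particular case. One small phrasing quibble: the restriction and transfer maps in $\underline{\pi}_0$ and $\underline{\pi}_1$ vanish not ``for degree reasons'' but simply because one endpoint of each such map is the zero group.
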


\begin{proof}
	Analyzing the long exact sequence in homotopy Mackey functors given by the cofiber sequence 
	\[ K/L_+ \wedge H\underline{I} \rightarrow H\underline{I} \rightarrow \Sigma^{\sigma_L} H\underline{I} \]
gives the homotopy of $\Sigma^{\sigma_L} H\underline{I}$. We understand the integer graded homotopy of $H\underline{I}$ and by \cref{boxformula} and \cref{topologicalboxformula} we understand the left-most term. Using this as input for the long exact sequence in homotopy given by the cofiber sequence  
	\[ \K/D_+ \wedge \Sigma^{\sigma_L} H\underline{I} \rightarrow \Sigma^{\sigma_L} H\underline{I} \rightarrow \Sigma^{\sigma_L+\sigma_D} H\underline{I} \]
	gives the homotopy of $\Sigma^{\sigma_L + \sigma_D} H\underline{I}$. Lastly, using this as input for the long exact sequence in homotopy given by the cofiber sequence 
	\[ \K/R_+ \wedge \Sigma^{\sigma_L + \sigma_D} H\underline{I} \rightarrow \Sigma^{\sigma_L + \sigma_D} H\underline{I} \rightarrow \Sigma^{\overline{\rho}} H\underline{I} \]
	gives the homotopy of $\Sigma^{\overline{\rho}} H\underline{I}$. 
\end{proof}

With the homotopy of the first $\overline{\rho}$-suspension, we'll use a spectral sequence to compute the higher $\overline{\rho}$-suspensions. Recall that for $n \geq 0$, $\underline{\pi}_n(\Sigma^{2\overline{\rho}} H\underline{I})$ is isomorphic to $\underline{\tilde{H}}_n^\K(S^{2\overline{\rho}}; \underline{I})$, the ordinary equivariant homology of $S^{2\overline{\rho}}$ with coefficients in $\underline{I}$. We'll compute this via a cellular approach. Since $S^{2\overline{\rho}} \cong S^{\overline{\rho}} \wedge S^{\overline{\rho}}$, we can build the reduced cellular chain complex of the two-fold suspension as the total complex of the double complex $\underline{D}_{i,j} = (\widetilde{\underline{C}}_i(S^{\overline{\rho}}) \boxtimes \underline{I}) \boxtimes \widetilde{\underline{C}}_j(S^{\overline{\rho}})$. If we take homology of $\underline{D}_{i,j}$ with respect to the horizontal differentials, we obtain the $E^1$-page of a spectral sequence computing the reduced Bredon homology of $S^{2\overline{\rho}}$ with coefficients in $\underline{I}$. 

To get a more convenient description, note that for $j > 0$ we have 
\begin{eqnarray*}
\underline{D}_{i,j} &\cong & \widetilde{\underline{C}}_i(S^{\overline{\rho}}) \boxtimes \underline{I} \boxtimes \left( \bigoplus_{H_m} \underline{A}_{\K/H_m} \right) \\
&\cong & \bigoplus_{H_m} \left( \widetilde{\underline{C}}_i(S^{\overline{\rho}}) \boxtimes \underline{I}\right) \boxtimes \uparrow_{H_m}^\K \downarrow_{H_m}^\K \underline{A} \\
&\cong & \bigoplus_{H_m} \uparrow_{H_m}^\K \downarrow_{H_m}^\K (\widetilde{\underline{C}}_i(S^{\overline{\rho}}) \boxtimes \underline{I}), 
\end{eqnarray*}
where $H_m$ ranges through the stabilizers that show up in the collection of $j$-cells in $S^{\overline{\rho}}$. Since both induction and restriction are exact, they preserve homology with respect to the horizontal differentials. We get the following description of the $E^1$-page computing the homology of $S^{2\overline{\rho}}$:  
\[
E^1_{i,j} \cong \bigoplus_{H_m} \uparrow_{H_m}^\K \downarrow_{H_m}^\K \widetilde{\underline{H}}_i^\K(S^{\overline{\rho}};\underline{I}).
\]
The full page is depicted in \cref{E1Page}. 
\begin{figure}
\caption{$E^1_{i,j}-\text{page}$}
\label{E1Page}
	\begin{tikzpicture}[scale=1.5,font=\tiny]
	
            	
	\draw[->] (0,-0.25) to (0,2.5); 
	\draw[->] (-0.25,0) to (2.5,0);
	
	\node at (2.75,0) {\large $i$};
	\node at (0,2.75) {\large $j$};
	
			
	\node[below left={0.35ex}, draw, fill=white, inner sep=2pt, circle] at (0,0) {};
	\node[above right={0.35ex}, draw, fill=black, inner sep=2pt, circle, text=white] at (0,0) {$2$};
	
	\node[fill=white] (10) at (1,0) {\large $\phiLDRf$};
	
	\node (11) at (1,1) {\large $\widehat{\whitecirc}$}; 
	
	\node[fill=white] at (0,1) {\large $\underline{0}$};
	\node[fill=white] at (0,2) {\large $\underline{0}$};
	
	\node at (1,2) {\large $\underline{0}$};		
	
	\node[fill=white] at (2,0) {\large $\underline{0}$};
	\node at (2,1) {\large $\underline{0}$};
	\node at (2,2) {\large $\underline{0}$};
		
			
	\draw[orddiff, thick, ->] (11) to (10) {};
	
	\end{tikzpicture} \\
	Key: $\whitecirc = \langle \Z \rangle$; \quad $\phiLDRf = \phi_{LDR}^* \underline{f}$; \quad $\widehat{\whitecirc} = \bigoplus_H \uparrow_H^\K \langle \Z \rangle$; \quad $\begin{tikzpicture} \node[draw, fill=black, text=white, circle, inner sep=0.5pt] at (0,0) {\tiny $2$}; \end{tikzpicture} = \langle \F_2 \rangle^{\oplus 2}$
\end{figure}
Now, the only differential in this $E^1$-page vanishes at $\K/\K$. For all of the other levels, the maps are forced since we know the $C_2$-equivariant homotopy of $\downarrow_{H}^\K S^{\overline{\rho}} \cong S^{1 + 2\sigma}$ for any $\{e\} < H < \K$. The differential is as follows: 
\begin{center}
{ 
\begin{tikzpicture}[scale=0.825]

				
\node (SK) at (-4,6) {\scriptsize $\Z^3$};
\node (SL) at (-7,3) {\scriptsize $\Z[\K/L]$};
\node (SD) at (-4,3) {\scriptsize $\Z[\K/D]$};
\node (SR) at (-1,3) {\scriptsize $\Z[\K/R]$};
\node (Se) at (-4,0) {\scriptsize $0$};

				
\draw[bend right=10,->] (SL) to (Se);
\draw[bend right=10,->] (SD) to (Se);
\draw[bend right=10,->] (SR) to (Se);

				
\draw[bend right=10,->] (Se) to (SL);
\draw[bend right=10,->] (Se) to (SD);
\draw[bend right=10,->] (Se) to (SR);

				
\draw[bend right=10,->] (SK) to node[rotate=45, fill=white, inner sep=1pt] {\tiny $\Delta \circ p_1$} (SL);
\draw[bend right=10,->] (SK) to node[rotate=90, fill=white, inner sep=1pt] {\tiny $\Delta \circ p_2$} (SD);
\draw[bend right=10,->] (SK) to node[rotate=-45, fill=white, inner sep=1pt] {\tiny $\Delta \circ p_3$} (SR);

				
\draw[bend right=10,->] (SL) to node[rotate=45, fill=white, inner sep=1pt] {\tiny $i_1 \circ \nabla$} (SK);
\draw[bend right=10,->] (SD) to node[rotate=90, fill=white, inner sep=1pt] {\tiny $i_2 \circ \nabla$} (SK);
\draw[bend right=10,->] (SR) to node[rotate=-45, fill=white, inner sep=1pt] {\tiny $i_3 \circ \nabla$} (SK);

				
\node (TK) at (4,6) {\scriptsize $0$};
\node (TL) at (1.5,3) {\scriptsize $\Z_-$};
\node (TD) at (4,3) {\scriptsize $\Z_-$};
\node (TR) at (6.5,3) {\scriptsize $\Z_-$};
\node (Te) at (4,0) {\scriptsize $0$};

				
\draw[bend right=10,->] (TL) to (Te);
\draw[bend right=10,->] (TD) to (Te);
\draw[bend right=10,->] (TR) to (Te);

				
\draw[bend right=10,->] (Te) to (TL);
\draw[bend right=10,->] (Te) to (TD);
\draw[bend right=10,->] (Te) to (TR);

				
\draw[bend right=10,->] (TK) to (TL);
\draw[bend right=10,->] (TK) to (TD);
\draw[bend right=10,->] (TK) to (TR);

				
\draw[bend right=10,->] (TL) to (TK);
\draw[bend right=10,->] (TD) to (TK);
\draw[bend right=10,->] (TR) to (TK);

				
\draw[->] (SK) to (TK); 
\draw[bend left=30,->] (SR) to node[above] {$\mylittlematrix{1 & -1}$} (TL);
\draw[->] (SR) to node[fill=white, inner sep=1pt] {$\mylittlematrix{1 & -1}$} (TL);
\draw[bend right=30,->] (SR) to node[below] {$\mylittlematrix{1 & -1}$} (TL);
\draw[->] (Se) to (Te); 

\end{tikzpicture}
}
\end{center}
This allows us to compute the second page: 
\begin{center}
	\begin{tikzpicture}[scale=1.5,font=\tiny]
	
            	
	\draw[->] (0,-0.5) to (0,2.5); 
	\draw[->] (-0.5,0) to (2.5,0);
	
	\node at (2.75,0) {\large $i$};
	\node at (0,2.75) {\large $j$};
	
	\node at (1.5,2.75) {\large $E^2_{i,j}-\text{page}$};
	
			
	\node[below left={0.35ex}, draw, fill=white, inner sep=2pt, circle] at (0,0) {};
	\node[above right={0.35ex}, draw, fill=black, inner sep=2pt, circle, text=white] at (0,0) {$2$};
	
	\node (11) at (1,1) {\large $\whitepent$}; 
	
	\node[fill=white] at (0,1) {\large $\underline{0}$};
	\node[fill=white] at (0,2) {\large $\underline{0}$};
	
	\node[fill=white] at (1,0) {\large $\underline{0}$};
	\node at (1,2) {\large $\underline{0}$};		
	
	\node[fill=white] at (2,0) {\large $\underline{0}$};
	\node at (2,1) {\large $\underline{0}$};
	\node at (2,2) {\large $\underline{0}$};
	\end{tikzpicture} \\
	Key: $\whitecirc = \langle \Z \rangle$; \quad $\whitepent = \phi_{LDR}^* \underline{\Z}$; \quad $\begin{tikzpicture} \node[draw, fill=black, text=white, circle, inner sep=0.5pt] at (0,0) {\tiny $2$}; \end{tikzpicture} = \langle \F_2 \rangle^{\oplus 2}$
\end{center}
There's no room for differentials possible on this page, so the spectral sequence collapses. We're able to read the homotopy off as follows: 
\begin{prop}\label{IPosConeProp2}
The non-zero homotopy of the $2\overline{\rho}$-suspension of $H\underline{I}$ is as follows: 
\[
\underline{\pi}_n^\K(\Sigma^{2\overline{\rho}}H\underline{I}) \cong
\begin{cases}
	\langle \Z \oplus \F_2^2 \rangle & n=0 \\
	\phi_{LDR}^* \underline{\Z} & n=2. 
\end{cases}
\]
\end{prop}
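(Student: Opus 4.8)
The plan is to finish running the double-complex spectral sequence set up above. Since $\Sigma^{2\overline{\rho}}H\underline{I}\simeq \Sigma_\K^\infty S^{2\overline{\rho}}\wedge H\underline{I}$ and $S^{2\overline{\rho}}\simeq S^{\overline{\rho}}\wedge S^{\overline{\rho}}$ carries the product $\K$-CW structure, the reduced cellular chain complex computing $\widetilde{\underline{H}}_*^\K(S^{2\overline{\rho}};\underline{I})$ is the total complex of $\underline{D}_{i,j}=(\widetilde{\underline{C}}_i(S^{\overline{\rho}})\boxtimes\underline{I})\boxtimes\widetilde{\underline{C}}_j(S^{\overline{\rho}})$, so the associated spectral sequence converges to $\widetilde{\underline{H}}_{i+j}^\K(S^{2\overline{\rho}};\underline{I})\cong\underline{\pi}_{i+j}^\K(\Sigma^{2\overline{\rho}}H\underline{I})$. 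The $E^1$-page has already been identified (\cref{E1Page}) and its single nontrivial $d^1$ evaluated level by level, yielding an $E^2$-page whose only nonzero entries are $E^2_{0,0}\cong\langle\Z\rangle\oplus\langle\F_2\rangle^{\oplus 2}$ in bidegree $(0,0)$ and $E^2_{1,1}\cong\phi_{LDR}^*\underline{\Z}$ in bidegree $(1,1)$.

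First I would verify that the spectral sequence collapses at $E^2$. For $r\geq 2$ the differential $d^r\colon E^r_{i,j}\to E^r_{i-r,\,j+r-1}$ has either trivial source or trivial target on each of the two surviving bidegrees: for instance $d^2$ out of $(1,1)$ lands in $(-1,2)=0$ and $d^2$ into $(1,1)$ emanates from $(3,0)=0$, and similarly for $(0,0)$ and for all higher $r$. Hence $E^\infty=E^2$. It then remains to read off the abutment: total degree $0$ receives only the bidegree $(0,0)$, total degree $2$ only the bidegree $(1,1)$, and every other total degree is empty, so the filtration on $\underline{\pi}_n^\K(\Sigma^{2\overline{\rho}}H\underline{I})$ has at most one nonzero associated graded piece for each $n$ and there is no extension problem. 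This yields $\underline{\pi}_0^\K(\Sigma^{2\overline{\rho}}H\underline{I})\cong\langle\Z\rangle\oplus\langle\F_2\rangle^{\oplus 2}\cong\langle\Z\oplus\F_2^2\rangle$, $\underline{\pi}_2^\K(\Sigma^{2\overline{\rho}}H\underline{I})\cong\phi_{LDR}^*\underline{\Z}$, and $\underline{\pi}_n^\K(\Sigma^{2\overline{\rho}}H\underline{I})\cong\underline{0}$ for all other $n$.

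The genuine work — and the only place an error could enter — lies not in this formal collapse but in the inputs assembled before the statement: the homotopy of $\Sigma^{\overline{\rho}}H\underline{I}$ from \cref{IPosConeProp1}, the bookkeeping of which free and non-free stabilizers occur among the $j$-cells of the product CW structure on $S^{\overline{\rho}}$ (so that $E^1_{i,j}\cong\bigoplus_{H_m}\uparrow_{H_m}^\K\downarrow_{H_m}^\K\widetilde{\underline{H}}_i^\K(S^{\overline{\rho}};\underline{I})$ can be read off, using that $\downarrow_H^\K\langle\Z\rangle$ and $\downarrow_H^\K\langle\F_2\rangle$ vanish for $H<\K$), and the evaluation of the one nonzero $d^1$ on every level of the Lewis diagram, which is forced away from $\K/\K$ by the known $C_2$-equivariant homotopy of $\downarrow_H^\K S^{\overline{\rho}}\cong S^{1+2\sigma}$ and vanishes at $\K/\K$. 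Granting those computations the proposition is immediate. As consistency checks I would restrict along the three inclusions $C_2\hookrightarrow\K$ and compare against $\underline{\pi}_*^{C_2}(\Sigma^{2\sigma}H\underline{A})$, and observe that $E^\infty_{0,0}$ is literally $\widetilde{\underline{H}}_0^\K(S^{\overline{\rho}};\underline{I})$, so the degree-$0$ answer is forced to agree with the single-suspension case of \cref{IPosConeProp1}.
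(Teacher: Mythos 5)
Your proposal is correct and follows the same route the paper takes: set up the double-complex spectral sequence for $S^{2\overline{\rho}}\simeq S^{\overline{\rho}}\wedge S^{\overline{\rho}}$ with coefficients in $\underline{I}$, compute the single $d^1$ level by level (forced at proper subgroups by the $C_2$-equivariant picture and vanishing at $\K/\K$), observe there is no room for higher differentials or extensions, and read the answer off $E^2=E^\infty$. Your added observations -- that the degree-$0$ entry is literally $\widetilde{\underline{H}}_0^\K(S^{\overline\rho};\underline{I})$ and that one should sanity-check by restricting along $C_2\hookrightarrow\K$ -- are exactly the cross-checks the paper implicitly relies on.
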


To learn about the next suspension, we'll repeat this process. That is, we view $S^{3\overline{\rho}} \simeq S^{2\overline{\rho}} \wedge S^{\overline{\rho}}$ and set up the corresponding double complex. On the $E^1$-page, we see $\underline{\pi}_n(\Sigma^{2\overline{\rho}} H\underline{I})$ along the 0th row. The full page is as follows: 

\begin{center}
	\begin{tikzpicture}[scale=1.5,font=\tiny]
	
            	 
	\draw[->] (0,-0.5) to (0,2.5); 
	\draw[->] (-0.5,0) to (3.5,0);
	
	\node at (3.75,0) {\large $i$};
	\node at (0,2.75) {\large $j$};
	
	\node at (2.5,2.75) {\large $E^1_{i,j}-\text{page}$};
	
			
	\node[below left={0.35ex}, draw, fill=white, inner sep=2pt, circle] at (0,0) {};
	\node[above right={0.35ex}, draw, fill=black, inner sep=2pt, circle, text=white] at (0,0) {$2$};
	
	\node[fill=white] (20) at (2,0) {\large $\whitepent$}; 
	\node (21) at (2,1) {\large $\widehat{\whitecirc}$};
	
	\node[fill=white] at (0,1) {\large $\underline{0}$};
	\node[fill=white] at (0,2) {\large $\underline{0}$};
	
	\node[fill=white] at (1,0) {\large $\underline{0}$};
	\node at (1,1) {\large $\underline{0}$};	
	\node at (1,2) {\large $\underline{0}$};		
	
	\node at (2,2) {\large $\underline{0}$};
	
	\node[fill=white] at (3,0) {\large $\cdots$};
	
	\foreach \n in {1,...,2}{ 
		\node at (3,\n) {\large $\cdots$};
	}
	
			
	\draw[orddiff, ->] (21) to (20);
	\end{tikzpicture} \\
	Key: $\whitecirc = \langle \Z \rangle$; \quad $\whitepent = \phi_{LDR}^* \underline{\Z}$; \quad $\begin{tikzpicture} \node[draw, fill=black, text=white, circle, inner sep=0.5pt] at (0,0) {\tiny $2$}; \end{tikzpicture} = \langle \F_2 \rangle^{\oplus 2}$; \quad $\widehat{\whitecirc} = \bigoplus_H \uparrow_H^\K \langle \Z \rangle$
	
\end{center}
For the same reasoning, we can understand the differential here: 
\begin{center}
{ 
\begin{tikzpicture}[scale=0.825]

				
\node (SK) at (-4,6) {\scriptsize $\Z^3$};
\node (SL) at (-7,3) {\scriptsize $\Z[\K/L]$};
\node (SD) at (-4,3) {\scriptsize $\Z[\K/D]$};
\node (SR) at (-1,3) {\scriptsize $\Z[\K/R]$};
\node (Se) at (-4,0) {\scriptsize $0$};

				
\draw[bend right=10,->] (SL) to (Se);
\draw[bend right=10,->] (SD) to (Se);
\draw[bend right=10,->] (SR) to (Se);

				
\draw[bend right=10,->] (Se) to (SL);
\draw[bend right=10,->] (Se) to (SD);
\draw[bend right=10,->] (Se) to (SR);

				
\draw[bend right=10,->] (SK) to node[rotate=45, fill=white, inner sep=1pt] {\tiny $\Delta \circ p_1$} (SL);
\draw[bend right=10,->] (SK) to node[rotate=90, fill=white, inner sep=1pt] {\tiny $\Delta \circ p_2$} (SD);
\draw[bend right=10,->] (SK) to node[rotate=-45, fill=white, inner sep=1pt] {\tiny $\Delta \circ p_3$} (SR);

				
\draw[bend right=10,->] (SL) to node[rotate=45, fill=white, inner sep=1pt] {\tiny $i_1 \circ \nabla$} (SK);
\draw[bend right=10,->] (SD) to node[rotate=90, fill=white, inner sep=1pt] {\tiny $i_2 \circ \nabla$} (SK);
\draw[bend right=10,->] (SR) to node[rotate=-45, fill=white, inner sep=1pt] {\tiny $i_3 \circ \nabla$} (SK);

				
\node (TK) at (4,6) {\scriptsize $\Z^3$};
\node (TL) at (1,3) {\scriptsize $\Z$};
\node (TD) at (4,3) {\scriptsize $\Z$};
\node (TR) at (7,3) {\scriptsize $\Z$};
\node (Te) at (4,0) {\scriptsize $0$};

				
\draw[bend right=10,->] (TL) to (Te);
\draw[bend right=10,->] (TD) to (Te);
\draw[bend right=10,->] (TR) to (Te);

				
\draw[bend right=10,->] (Te) to (TL);
\draw[bend right=10,->] (Te) to (TD);
\draw[bend right=10,->] (Te) to (TR);

				
\draw[bend right=10,->] (TK) to node[rotate=45, fill=white, inner sep=1pt] {\tiny $p_1$} (TL);
\draw[bend right=10,->] (TK) to node[rotate=90, fill=white, inner sep=1pt] {\tiny $p_2$} (TD);
\draw[bend right=10,->] (TK) to node[rotate=-45, fill=white, inner sep=1pt] {\tiny $p_3$} (TR);

				
\draw[bend right=10,->] (TL) to node[rotate=45, fill=white, inner sep=1pt] {\tiny $2i_1$} (TK);
\draw[bend right=10,->] (TD) to node[rotate=90, fill=white, inner sep=1pt] {\tiny $2i_2$} (TK);
\draw[bend right=10,->] (TR) to node[rotate=-45, fill=white, inner sep=1pt] {\tiny $2i_3$} (TK);

				
\draw[->] (SK) to node[above] {\tiny $2$} (TK); 
\draw[bend left=30,->] (SR) to node[above] {\tiny $\nabla$} (TL);
\draw[->] (SR) to node[fill=white, inner sep=1pt] {\tiny $\nabla$} (TL);
\draw[bend right=30,->] (SR) to node[below] {\tiny $\nabla$} (TL);
\draw[->] (Se) to (Te); 

\end{tikzpicture}
}
\end{center}
With this, we can compute the second page: 
\begin{center}
	\begin{tikzpicture}[scale=1.5,font=\tiny]
	
            	 
	\draw[->] (0,-0.5) to (0,2.5); 
	\draw[->] (-0.5,0) to (3.5,0);
	
	\node at (3.75,0) {\large $i$};
	\node at (0,2.75) {\large $j$};
	
	\node at (2.5,2.75) {\large $E^2_{i,j}-\text{page}$};
	
			
	\node[below left={0.35ex}, draw, fill=white, inner sep=2pt, circle] at (0,0) {};
	\node[above right={0.35ex}, draw, fill=black, inner sep=2pt, circle, text=white] at (0,0) {$2$};
	
	\node[draw, fill=black, inner sep=2pt, circle, text=white] (20) at (2,0) {$3$};
	\node[fill=white] (21) at (2,1) {\large $\phiLDRf$}; 
	
			
	\node[fill=white] at (0,1) {\large $\underline{0}$};
	\node[fill=white] at (0,2) {\large $\underline{0}$};
	
	\node[fill=white] at (1,0) {\large $\underline{0}$};
	\node at (1,1) {\large $\underline{0}$};	
	\node at (1,2) {\large $\underline{0}$};	
	
	\node at (2,2) {\large $\underline{0}$};
	
	\node[fill=white] at (3,0) {\large $\cdots$};
	
	\foreach \n in {1,...,2}{ 
		\node at (3,\n) {\large $\cdots$};
	}

	\end{tikzpicture} \\
	Key: $\whitecirc = \langle \Z \rangle$; \quad $\begin{tikzpicture} \node[draw, fill=black, text=white, circle, inner sep=0.5pt] at (0,0) {\tiny $n$}; \end{tikzpicture} = \langle \F_2 \rangle^{\oplus n}$; \quad $\phiLDRf = \phi_{LDR}^* \underline{f}$
\end{center}
Again, there is no room for differentials and the spectral sequence collapses. So, we can read the homotopy off of the $E^2 = E^\infty$-page. 
\begin{prop}\label{IPosConeProp3}
The non-zero homotopy of the $3\overline{\rho}$-suspension of $H\underline{I}$ is as follows: 
\[
\underline{\pi}_n^\K(\Sigma^{3\overline{\rho}}H\underline{I}) \cong
\begin{cases}
	\langle \Z \oplus \F_2^2 \rangle & n=0 \\
	\langle \F_2^3 \rangle & n=2 \\
	\phi_{LDR}^* \underline{f} & n=3. 
\end{cases}
\]
\end{prop}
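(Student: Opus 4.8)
The plan is to continue the inductive double-complex spectral sequence used for \cref{IPosConeProp1} and \cref{IPosConeProp2}, now applied to $S^{3\overline{\rho}}\simeq S^{2\overline{\rho}}\wedge S^{\overline{\rho}}$. First I would write the reduced cellular chain complex of $S^{3\overline{\rho}}\wedge H\underline{I}$ as the total complex of the double complex $\underline{D}_{i,j}\cong(\widetilde{\underline{C}}_i(S^{2\overline{\rho}})\boxtimes\underline{I})\boxtimes\widetilde{\underline{C}}_j(S^{\overline{\rho}})$ and take homology in the $i$-direction. Since each $\widetilde{\underline{C}}_j(S^{\overline{\rho}})$ is a direct sum of free Mackey functors $\underline{A}_{\K/H_m}$ and the functors $\underline{A}_{\K/H_m}\boxtimes(-)\cong\uparrow_{H_m}^\K\downarrow_{H_m}^\K(-)$ are exact (\cref{boxformula}, \cref{topologicalboxformula}), this identifies the $E^1$-page as $E^1_{i,j}\cong\bigoplus_{H_m}\uparrow_{H_m}^\K\downarrow_{H_m}^\K\widetilde{\underline{H}}_i^\K(S^{2\overline{\rho}};\underline{I})$, with $H_m$ running over the stabilizers of the $j$-cells of $S^{\overline{\rho}}$. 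Feeding in \cref{IPosConeProp2} together with the product $\K$-CW structure on $S^{\overline{\rho}}=S^{\sigma_L}\wedge S^{\sigma_D}\wedge S^{\sigma_R}$ (one $\K/\K$-cell in degree $0$, one $\K/H$-cell in degree $1$ for each $H\in\{L,D,R\}$, and only free $\K/e$-cells in degrees $\ge 2$), one finds that the only nonzero entries are $E^1_{0,0}\cong\langle \Z\oplus\F_2^2\rangle$, $E^1_{2,0}\cong\phi_{LDR}^*\underline{\Z}$, and $E^1_{2,1}\cong\bigoplus_H\uparrow_H^\K\langle\Z\rangle$, exactly as displayed above; the rows $j\ge 2$ vanish because $\widetilde{\underline{H}}_i^\K(S^{2\overline{\rho}};\underline{I})$ is trivial at level $\K/e$ by \cref{IPosConeProp2}.

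The main step is to identify the single $E^1$-differential $d\colon E^1_{2,1}\to E^1_{2,0}$. At all levels strictly below $\K/\K$ it is forced by the known $C_2$-equivariant homotopy of the restriction $\downarrow_H^\K S^{2\overline{\rho}}\cong S^{2+4\sigma}$, and at level $\K/\K$ it is multiplication by $2$ on the top groups $\Z^3$, together with the fold maps $\nabla$ at the three intermediate levels, as in the diagram above. Taking kernels and cokernels levelwise then produces the $E^2$-page: $E^2_{0,0}\cong\langle\Z\oplus\F_2^2\rangle$ is unchanged, $E^2_{2,0}\cong\coker(d)\cong\langle\F_2^3\rangle$ (cokernel of $2$ on the top, surjective $\nabla$ at the other levels), and $E^2_{2,1}\cong\ker(d)\cong\phi_{LDR}^*\underline{f}$ (the sign representations $\Z_-$ arising as $\ker(\nabla)$ at the intermediate levels).

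Finally I would check that the spectral sequence degenerates and leaves no extension problems. The $E^\infty$-page is supported at the three spots $(0,0)$, $(2,0)$, $(2,1)$, and for these no $d_r$ with $r\ge 2$ can be nonzero: each such differential either lands in a zero column ($i\le 1$) or in negative filtration, and nothing can map into these spots since $E^1_{i,0}\cong\widetilde{\underline{H}}_i^\K(S^{2\overline{\rho}};\underline{I})=\underline{0}$ for $i\notin\{0,2\}$ by \cref{IPosConeProp2}. Hence $E^2=E^\infty$, and since the three surviving classes sit in the three distinct total degrees $0$, $2$, and $3$, there is no filtration to resolve; reading off the $E^\infty$-page gives $\underline{\pi}_n^\K(\Sigma^{3\overline{\rho}}H\underline{I})$ as claimed. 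The only genuinely non-formal input is the computation of the $\K/\K$-component of the $E^1$-differential; everything else is bookkeeping with the free Mackey functors of \cref{FreeKMackeyFunctors} and the output of \cref{IPosConeProp2}.
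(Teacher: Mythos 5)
Your proposal follows the same route as the paper: view $S^{3\overline{\rho}}\simeq S^{2\overline{\rho}}\wedge S^{\overline{\rho}}$, form the double complex with $\widetilde{\underline{C}}_i(S^{2\overline{\rho}})\boxtimes\underline{I}$ along the $i$-direction, identify the $E^1$-page as induced/restricted copies of $\widetilde{\underline{H}}_*^\K(S^{2\overline{\rho}};\underline{I})$ from \cref{IPosConeProp2}, compute the single $E^1$-differential (multiplication by $2$ at $\K/\K$, fold maps at the intermediate levels) by restricting to proper subgroups, and observe that the $E^2$-page has no room for further differentials or extensions. The identifications $E^2_{2,0}\cong\langle\F_2^3\rangle$ and $E^2_{2,1}\cong\phi_{LDR}^*\underline{f}$ (with $\ker\nabla=\Z_-$ at the middle levels) are exactly what the paper records, so the argument is correct and essentially identical.
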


We can continue in this fashion to compute $\underline{\pi}^\K_n(\Sigma^{k\overline{\rho}} H\underline{I})$ for $k \geq 4$. The only necessary thing to check is that the differentials repeat what we've seen already. 

\begin{theorem}\label{IPosConeThm}
	The non-zero homotopy of $\Sigma^{k\overline{\rho}} H\underline{I}$ for $k \geq 4$ is as follows:
	\[
	\underline{\pi}_n^\K(\Sigma^{k\overline{\rho}} H\underline{I}) \cong 
	\begin{cases}
	\langle \Z \oplus \F_2^2 \rangle & n=0 \\	
	\langle \F_2^3 \rangle & 1 < n < k; n \text{ even} \\
	\phi_{LDR} \underline{\Z} & n = k; \text{k even} \\
	\phi_{LDR} \underline{f} & n=k; \text{k odd}. 
	\end{cases}
	\]
\end{theorem}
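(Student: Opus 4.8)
The plan is to argue by induction on $k$, with the cases $k\le 3$ supplied by \cref{IPosConeProp1,IPosConeProp2,IPosConeProp3}. For the inductive step, assume the formula for some $k-1\ge 3$ and use the equivalence $S^{k\overline{\rho}}\simeq S^{(k-1)\overline{\rho}}\wedge S^{\overline{\rho}}$. Exactly as in the $k=2,3$ cases, the reduced cellular chain complex of $S^{k\overline{\rho}}$ is the total complex of the double complex $\underline{D}_{i,j}=(\widetilde{\underline{C}}_i(S^{(k-1)\overline{\rho}})\boxtimes\underline{I})\boxtimes\widetilde{\underline{C}}_j(S^{\overline{\rho}})$, and taking horizontal homology first yields a spectral sequence converging to $\widetilde{\underline{H}}_*^\K(S^{k\overline{\rho}};\underline{I})\cong\underline{\pi}_*^\K(\Sigma^{k\overline{\rho}}H\underline{I})$ in non-negative degrees. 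For $j\ge 2$ the $j$-cells of $S^{\overline{\rho}}$ are all free (\cref{RhoSphereCWStructure} with $k=1$), and since $\underline{I}$ vanishes at $\K/e$ so does $\widetilde{\underline{C}}_i(S^{(k-1)\overline{\rho}})\boxtimes\underline{I}$ (a direct sum of copies of $\underline{A}_{\K/H_j}\boxtimes\underline{I}\cong\uparrow_{H_j}^\K\downarrow_{H_j}^\K\underline{I}$, each zero at $\K/e$); hence $\underline{D}_{i,j}=\underline{0}$ for $j\ge 2$. Exactness of $\uparrow_H^\K$ and $\downarrow_H^\K$ together with \cref{boxformula} then gives $E^1_{i,0}\cong\underline{\pi}_i^\K(\Sigma^{(k-1)\overline{\rho}}H\underline{I})$ and $E^1_{i,1}\cong\bigoplus_H\uparrow_H^\K\downarrow_H^\K\underline{\pi}_i^\K(\Sigma^{(k-1)\overline{\rho}}H\underline{I})$, with all other rows zero.

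Next I would pin down the $E^1$-page using the inductive hypothesis. For $i<k-1$ every summand of $\underline{\pi}_i^\K(\Sigma^{(k-1)\overline{\rho}}H\underline{I})$ is of the form $\langle-\rangle$, i.e.\ concentrated at $\K/\K$; since $\downarrow_H^\K$ annihilates such Mackey functors, the $j=1$ row vanishes except at $i=k-1$. There the inductive hypothesis gives $\underline{\pi}_{k-1}^\K(\Sigma^{(k-1)\overline{\rho}}H\underline{I})\cong\phi_{LDR}^*\underline{\Z}$ or $\phi_{LDR}^*\underline{f}$ according to the parity of $k-1$, and a short computation — only the $\phi_H^*$-summand survives $\downarrow_H^\K$, and it becomes the $C_2$-Mackey functor $\langle\Z\rangle$ in either case — identifies $E^1_{k-1,1}\cong\bigoplus_H\uparrow_H^\K\langle\Z\rangle$. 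So the $E^1$-page lives in rows $j\in\{0,1\}$ and carries a single possibly non-zero differential $d^1\colon E^1_{k-1,1}\to E^1_{k-1,0}$, in precise analogy with \cref{E1Page}.

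The heart of the matter is that $d^1$ is induced by $\mathrm{id}\boxtimes\partial_1$, where $\partial_1\colon\widetilde{\underline{C}}_1(S^{\overline{\rho}})=\bigoplus_H\underline{A}_{\K/H}\to\widetilde{\underline{C}}_0(S^{\overline{\rho}})=\underline{A}$ is the sum of the maps $\nabla$, i.e.\ of the adjunction counits $\uparrow_H^\K\downarrow_H^\K\underline{A}\to\underline{A}$ (\cref{sigma_Hhomology}). Consequently $d^1$ is the sum of counits $\bigoplus_H\uparrow_H^\K\downarrow_H^\K\underline{M}\to\underline{M}$ with $\underline{M}=\underline{\pi}_{k-1}^\K(\Sigma^{(k-1)\overline{\rho}}H\underline{I})$, and by naturality of the counit this is, transported along the isomorphism of the inductive hypothesis, exactly the differential already computed in the $S^{2\overline{\rho}}$ analysis when $k$ is even ($\underline{M}\cong\phi_{LDR}^*\underline{f}$) and in the $S^{3\overline{\rho}}$ analysis when $k$ is odd ($\underline{M}\cong\phi_{LDR}^*\underline{\Z}$). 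In particular $\coker d^1$ is $\underline{0}$ (if $k$ even) or $\langle\F_2^3\rangle$ (if $k$ odd) and $\ker d^1$ is $\phi_{LDR}^*\underline{\Z}$ (if $k$ even) or $\phi_{LDR}^*\underline{f}$ (if $k$ odd), just as there. Since $E^2$ is concentrated in two adjacent rows there is no room for higher differentials, so $E^2=E^\infty$; and for each total degree $n$ at most one of $E^\infty_{n,0}$ and $E^\infty_{n-1,1}$ is non-zero, so there is no extension problem. Reading off the entries — $\langle\Z\oplus\F_2^2\rangle$ at $n=0$ and $\langle\F_2^3\rangle$ at even $1<n<k-1$ unchanged from row $0$, then $\coker d^1$ at $n=k-1$ and $\ker d^1$ at $n=k$ — yields precisely the asserted answer.

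The main obstacle is the bookkeeping of the previous paragraph: recognizing $d^1$ as the adjunction counit on $\underline{\pi}_{k-1}^\K(\Sigma^{(k-1)\overline{\rho}}H\underline{I})$, so that the two kernel/cokernel computations for $\phi_{LDR}^*\underline{\Z}$ and $\phi_{LDR}^*\underline{f}$ carried out once in the base cases genuinely suffice for all $k$, and checking that at the levels $\K/L,\K/D,\K/R,\K/e$ the $E^1$-page and the differential are forced by the known $C_2$-equivariant homotopy of $\downarrow_H^\K S^{k\overline{\rho}}\simeq S^{k+2k\sigma}$. Everything else is routine.
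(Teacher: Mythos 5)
Your proposal is correct and follows essentially the same strategy the paper uses --- induction on $k$, the double complex $\underline{D}_{i,j}=(\widetilde{\underline{C}}_i(S^{(k-1)\overline{\rho}})\boxtimes\underline{I})\boxtimes\widetilde{\underline{C}}_j(S^{\overline{\rho}})$, and the spectral sequence obtained by first taking horizontal homology --- but you fill in the inductive step that the paper dispatches with a one-sentence remark that ``the differentials repeat what we've seen already.'' The substantive content you add is the identification of $d^1$ as the sum of adjunction counits $\uparrow_H^\K\downarrow_H^\K\underline{M}\to\underline{M}$ applied to $\underline{M}=\underline{\pi}_{k-1}^\K(\Sigma^{(k-1)\overline{\rho}}H\underline{I})$, together with naturality of the counit, which reduces the kernel/cokernel computation to the two already done in the $k=2$ and $k=3$ cases depending on the parity of $k$. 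That observation is exactly what is needed to make the paper's ``continue in this fashion'' precise, and your bookkeeping --- rows $j\geq 2$ vanish because $\underline{I}(\K/e)=0$; the $j=1$ row vanishes in columns $i<k-1$ because $\langle-\rangle$-type Mackey functors die under $\downarrow_H^\K$; $\downarrow_H^\K\phi_{LDR}^*\underline{\Z}\cong\downarrow_H^\K\phi_{LDR}^*\underline{f}\cong\langle\Z\rangle$ as $C_2$-Mackey functors since only the $\phi_H^*$-summand survives and the Weyl twist is forgotten; $E^2$ is concentrated in two adjacent rows so there are no higher differentials or extension problems --- is all accurate. The only thing I'd flag is that the theorem statement in the paper writes $\phi_{LDR}\underline{\Z}$ and $\phi_{LDR}\underline{f}$ without the $^*$; your consistent use of $\phi_{LDR}^*$ is the correct notation from the rest of the paper.
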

The data of \cref{IPosConeThm} will be referred to as the ``positive cone" for $H\underline{I}$ and is displayed in the fourth quadrant of \cref{fig:piK4HI}. We'll see that $\Sigma^{k\overline{\rho}} H\underline{A}$ and $\Sigma^{k\overline{\rho}} H\underline{\Z}$ agree in a very slightly larger range ($n \geq k+1$), but this method can't see this as $H\underline{I}$ doesn't vanish in the appropriate range. In fact, the connecting morphism in the long exact sequence vanishes. 

\section{A Comparison of the Negative Cones for $H\underline{A}$ and $H\underline{\Z}$} \label{ComparisonNegative}

Our focus in this section will be to compute the ``negative cone" of $H\underline{A}$, i.e. $\underline{\pi}_n(\Sigma^{-k\overline{\rho}} H\underline{A})$ for $k \geq 0$ and $n \leq 0$. Like the last section, we'll give a comparison to the negative cone for $H\underline{\Z}$ via the cofiber sequence 
\[ \Sigma^{-k\overline{\rho}} H\underline{I} \rightarrow \Sigma^{-k\overline{\rho}} H\underline{A} \rightarrow \Sigma^{-k\overline{\rho}} H\underline{\Z}. \]

\begin{lemma}\label{NegConeComparison}
The long exact sequence associated to the fiber sequence $H\underline{I} \rightarrow H\underline{A} \rightarrow H\underline{\Z}$ gives an isomorphism 
\[ \underline{\pi}_n^\K(\Sigma^{-k\overline{\rho}} H\underline{A}) \cong \underline{\pi}_n^\K(\Sigma^{-k\overline{\rho}} H\underline{\Z}) \]
for $n \leq -(k+2)$. 
\end{lemma}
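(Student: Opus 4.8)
The plan is to mirror the treatment of the positive cone in \cref{ComparisonPositive}: first bound the ``negative cone'' of $H\underline{I}$, then extract the isomorphism from the long exact sequence of the fiber sequence $H\underline{I} \to H\underline{A} \to H\underline{\Z}$. The key claim to establish is that $\underline{\pi}_n^\K(\Sigma^{-k\overline{\rho}} H\underline{I}) \cong \underline{0}$ for $n \leq -(k+1)$, which is the Spanier--Whitehead dual of the vanishing statement used to prove \cref{PosConeComparison}.

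To prove this claim I would pass to function spectra: $\Sigma^{-k\overline{\rho}} H\underline{I} \simeq F(S^{k\overline{\rho}}, H\underline{I})$, so that as Mackey functors $\underline{\pi}_n^\K(\Sigma^{-k\overline{\rho}} H\underline{I}) \cong \widetilde{\underline{H}}_\K^{-n}(S^{k\overline{\rho}}; \underline{I})$, the reduced Bredon cohomology of $S^{k\overline{\rho}}$. By \cref{RhoSphereCWStructure}, $S^{k\overline{\rho}}$ has a $\K$-CW structure whose $j$-cells are all free for $j \geq k+1$, so the cellular cochain complex $\underline{C}_\K^\bullet(S^{k\overline{\rho}}; \underline{I})$ satisfies $\underline{C}_\K^j(S^{k\overline{\rho}}; \underline{I}) \cong \underline{\Hom}\bigl(\bigoplus \underline{A}_{\K/e}, \underline{I}\bigr)$ in degrees $j \geq k+1$. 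Since $\underline{I}$ vanishes at the underlying level and $\underline{A}_{\K/e} \cong \uparrow_e^\K \Z$, the restriction--induction adjunction gives $\underline{\Hom}(\underline{A}_{\K/e}, \underline{I}) \cong \underline{0}$ (the same observation $\underline{A}_{\K/e} \boxtimes \underline{I} \cong \underline{0}$ that drove the proof of \cref{PosConeComparison}). Hence the cochain complex is zero in degrees $\geq k+1$, so $\widetilde{\underline{H}}_\K^{j}(S^{k\overline{\rho}}; \underline{I}) \cong \underline{0}$ for $j \geq k+1$, i.e. $\underline{\pi}_n^\K(\Sigma^{-k\overline{\rho}} H\underline{I}) \cong \underline{0}$ for $n \leq -(k+1)$. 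One could instead dualize the $\K$-CW filtration of $S^{k\overline{\rho}}$ directly and run the resulting finite cellular filtration spectral sequence for $\Sigma^{-k\overline{\rho}} H\underline{I}$; the free subquotients contribute $\underline{A}_{\K/e} \boxtimes \underline{I} \cong \underline{0}$, which is exactly the same input.

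With the claim in hand, apply $\underline{\pi}_\bullet^\K(-)$ to the cofiber sequence $\Sigma^{-k\overline{\rho}} H\underline{I} \to \Sigma^{-k\overline{\rho}} H\underline{A} \to \Sigma^{-k\overline{\rho}} H\underline{\Z}$. For $n \leq -(k+2)$ both $\underline{\pi}_n^\K$ and $\underline{\pi}_{n-1}^\K$ of $\Sigma^{-k\overline{\rho}} H\underline{I}$ vanish, so the long exact sequence collapses to $\underline{0} \to \underline{\pi}_n^\K(\Sigma^{-k\overline{\rho}} H\underline{A}) \to \underline{\pi}_n^\K(\Sigma^{-k\overline{\rho}} H\underline{\Z}) \to \underline{0}$, which is the asserted isomorphism; in fact the vanishing above already gives the isomorphism for $n \leq -(k+1)$.

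The main obstacle is really just the duality bookkeeping in the middle step: one must check that passing from $S^{k\overline{\rho}}$ to its dual $S^{-k\overline{\rho}}$ converts the homological vanishing of $S^{k\overline{\rho}} \wedge H\underline{I}$ in degrees $\geq k+1$ (from \cref{ComparisonPositive}) into cohomological vanishing of $\Sigma^{-k\overline{\rho}} H\underline{I}$ in degrees $\leq -(k+1)$, with the free cells of $S^{k\overline{\rho}}$ lining up correctly with this range. Once that identification is in place, the remaining verifications ($\underline{\Hom}(\underline{A}_{\K/e}, \underline{I}) \cong \underline{0}$ and the long exact sequence computation) are routine.
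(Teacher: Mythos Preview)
Your proposal is correct and follows exactly the approach the paper intends: the paper's proof is the one-liner ``Similar to the proof of \cref{PosConeComparison}, the $n^{th}$ cochain Mackey functor will vanish for $n \leq -(k+2)$,'' and you have simply unpacked that sentence by passing to $F(S^{k\overline{\rho}},H\underline{I})$, invoking \cref{RhoSphereCWStructure} and the vanishing $\underline{\Hom}(\underline{A}_{\K/e},\underline{I})\cong\underline{0}$, and then reading off the long exact sequence. Your observation that the isomorphism in fact holds already for $n\leq -(k+1)$ is also noted in the paper (it remarks after both cone comparisons that the agreement extends one degree further because the connecting map vanishes).
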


\begin{proof}
Similar to the proof of \cref{PosConeComparison}, the $n^{th}$ cochain Mackey functor will vanish for $n \leq -(k+2)$.
\end{proof}

For computing the homotopy in low degrees, we start with a $-\overline{\rho}$-suspension as in the last section:

\begin{prop}
	The non-zero homotopy of the $-\overline{\rho}$-suspension of $H\underline{I}$ is given by 
	\[
	\underline{\pi}_n^\K(\Sigma^{-\overline{\rho}} H\underline{I}) \cong 
	\begin{cases}
		\langle \Z \rangle & n=0 \\
		\underline{E} & n=-1.
	\end{cases}
	\]
\end{prop}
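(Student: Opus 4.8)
The plan is to run the argument of \cref{IPosConeProp1} with the \emph{dual} cofiber sequences $S^{-\sigma_H}\to S^0\to \K/H_+$ in place of $\K/H_+\to S^0\to S^{\sigma_H}$. Smashing the dual sequence with an arbitrary $\K$-spectrum $Y$ and taking homotopy Mackey functors gives a long exact sequence
\[ \cdots \to \underline{\pi}_n^\K(\Sigma^{-\sigma_H}Y)\to \underline{\pi}_n^\K(Y)\to \underline{\pi}_n^\K(\K/H_+\wedge Y)\to \underline{\pi}_{n-1}^\K(\Sigma^{-\sigma_H}Y)\to\cdots, \]
in which, by \cref{topologicalboxformula} together with \cref{boxformula}, the third term is $\uparrow_H^\K\downarrow_H^\K\underline{\pi}_n^\K(Y)$. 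Since $\overline{\rho}=\sigma_L+\sigma_D+\sigma_R$, applying this successively with $Y=H\underline{I}$, then $Y=\Sigma^{-\sigma_L}H\underline{I}$, then $Y=\Sigma^{-\sigma_L-\sigma_D}H\underline{I}$ computes in turn the homotopy of $\Sigma^{-\sigma_L}H\underline{I}$, of $\Sigma^{-\sigma_L-\sigma_D}H\underline{I}$, and finally of $\Sigma^{-\overline{\rho}}H\underline{I}$.

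First I would carry out the $\sigma_L$-stage. Here $\underline{\pi}_*^\K(H\underline{I})$ is $\underline{I}$ concentrated in degree $0$, so $\underline{\pi}_*^\K(\K/L_+\wedge H\underline{I})\cong\uparrow_L^\K\downarrow_L^\K\underline{I}$ is also concentrated in degree $0$, and the long exact sequence degenerates to
\[ \underline{0}\to\underline{\pi}_0^\K(\Sigma^{-\sigma_L}H\underline{I})\to\underline{I}\xrightarrow{\ \eta\ }\uparrow_L^\K\downarrow_L^\K\underline{I}\to\underline{\pi}_{-1}^\K(\Sigma^{-\sigma_L}H\underline{I})\to\underline{0}, \]
where $\eta$ is the map of Mackey functors induced by $S^0\to\K/L_+$. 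The Mackey functor $\uparrow_L^\K\downarrow_L^\K\underline{I}$ is written down from the explicit description of $\underline{I}$ exactly as $\underline{A}_{\K/L}$ is obtained from $\underline{A}$ in \cref{FreeKMackeyFunctors}, and $\eta$ can then be read off levelwise, so that its kernel and cokernel give the homotopy of $\Sigma^{-\sigma_L}H\underline{I}$ in degrees $0$ and $-1$. Repeating for $\sigma_D$ and then $\sigma_R$ yields at each step an exact sequence, but each new suspension can a priori contribute only one further negative degree, so $\Sigma^{-\overline{\rho}}H\underline{I}$ is at the outset concentrated in degrees $0,-1,-2,-3$; the content of the proposition is that the degree $-2$ and $-3$ groups vanish and that the surviving degree $-1$ Mackey functor is $\underline{E}$ of \cref{ExtraMackeyFunctors}.

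To pin down the connecting maps I would restrict to the subgroups $H\in\{L,D,R\}$: one checks $\downarrow_H^\K\overline{\rho}\cong 1+2\sigma$ and $\downarrow_H^\K\underline{I}\cong\langle\Z\rangle$ (the latter since $\underline{I}$ vanishes at $\K/e$ and has trivial Weyl actions), so $\downarrow_H^\K\Sigma^{-\overline{\rho}}H\underline{I}\simeq\Sigma^{-(1+2\sigma)}H\langle\Z\rangle$, whose $C_2$-equivariant homotopy is already known (as in the positive-cone discussion). This forces every map in sight at the levels $\K/L,\K/D,\K/R$ and shows that those levels collapse to the pattern of $\underline{E}$, namely a single $\Z_-$ in degree $-1$ and nothing else in negative degrees. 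The main obstacle is the top level $\K/\K$, which restriction does not detect: there the relevant connecting homomorphisms must be computed directly as maps of abelian groups from the Lewis diagrams, and one must verify that the alternating kernels and cokernels collapse across the three stages to leave only $\Z$ in degree $0$ and $\F_2$ in degree $-1$. Assembling the five levels together with all restrictions and transfers then identifies the two nonzero homotopy Mackey functors as $\langle\Z\rangle$ in degree $0$ and $\underline{E}$ in degree $-1$.
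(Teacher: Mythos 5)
Your proposal is correct and takes essentially the same route as the paper: both iterate the dual cofiber sequences $S^{-\sigma_H}\to S^0\to\K/H_+$ smashed with $H\underline{I}$, peeling off one sign representation at a time ($\sigma_L$, then $\sigma_D$, then $\sigma_R$). The extra detail you give about detecting the connecting maps by restriction to proper subgroups and handling the top level $\K/\K$ separately is a sensible elaboration of what the paper leaves implicit, but it is the same underlying argument.
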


\begin{proof}
	The situation is dual to \cref{IPosConeProp1} in that we are computing ordinary equivariant cohomology as opposed to homology. So, we use the dual cofiber sequence 
	\[ S^{-\sigma_L} \rightarrow \mathbb{S} \rightarrow \K/L_+. \]
Smashing with $H\underline{I}$ gives another cofiber sequence: 
	\[ \Sigma^{-\sigma_L} H\underline{I} \rightarrow H\underline{I} \rightarrow \K/L_+ \wedge H\underline{I}, \]
	and this is enough to compute the integer graded homotopy of $\Sigma^{-\sigma_L} H\underline{I}$. Using this as input, analyzing the long exact sequence given by the cofiber sequence 
	\[ \Sigma^{-\sigma_L - \sigma_D} H\underline{I} \rightarrow \Sigma^{-\sigma_L} H\underline{I} \rightarrow \K/D_+ \wedge \Sigma^{-\sigma_L} H\underline{I} \]
	gives the homotopy of $\Sigma^{-\sigma_L-\sigma_D} H\underline{I}$. Lastly, we input this into the cofiber sequence 
	\[ \Sigma^{-\overline{\rho}} H\underline{I} \rightarrow \Sigma^{-\sigma_L-\sigma_D} H\underline{I} \rightarrow \K/R_+ \wedge \Sigma^{-\sigma_L-\sigma_D} H\underline{I}. \]
	This gives the homotopy of the $-\overline{\rho}$-suspension of $H\underline{I}$. 
\end{proof}

Now that we've recorded this answer, we begin a series of spectral sequences used in the same manner as to compute the positive cone. In particular, we'll see the cohomology of $S^{\overline{\rho}}$ in the first row and differentials going up. The $E_1$-page of the spectral sequence computing the homotopy of the $-2\overline{\rho}$-suspension of $H\underline{I}$ is as follows: 

\begin{center}
	\begin{tikzpicture}[scale=1.5,font=\tiny]
	
            	
	\draw[->] (0,-0.5) to (0,2.5); 
	\draw[->] (-0.5,0) to (2.5,0);
	
	\node at (2.75,0) {\large $i$};
	\node at (0,2.75) {\large $j$};
	
	\node at (1.5,2.75) {\large $E^1_{i,j}-\text{page}$};
	
			
	\node[draw, fill=white, inner sep=2pt, circle] at (0,0) {};
	
	\node[fill=white] (10) at (1,0) {$\bardot$};
	
	\node (11) at (1,1) {\large $\widehat{\whitecirc}$}; 
	
	\node[fill=white] at (0,1) {\large $\underline{0}$};
	\node[fill=white] at (0,2) {\large $\underline{0}$};
	
	\node at (1,2) {\large $\underline{0}$};
	
	\node[fill=white] at (2,0) {\large $\underline{0}$};
	\node at (2,1) {\large $\underline{0}$};
	\node at (2,2) {\large $\underline{0}$};
		
			
	\draw[orddiff, thick, ->] (10) to (11) {};
	\end{tikzpicture} \\
	Key: $\whitecirc = \langle \Z \rangle$; \quad $\bardot = \underline{E}$ \quad $\widehat{\whitecirc} = \bigoplus_H \uparrow_H^\K \langle \Z \rangle$
\end{center}
As with the positive cone, we'll see that the differential is forced by knowing the $C_2$-equivariant restriction of this spectral sequence. Explicitly, the differential is the following: 
\begin{center}
{ 
\begin{tikzpicture}[scale=0.825]

				
\node (SK) at (-4,6) {\scriptsize $\F_2$};
\node (SL) at (-6.8,3) {\scriptsize $\Z_-$};
\node (SD) at (-4,3) {\scriptsize $\Z_-$};
\node (SR) at (-1.2,3) {\scriptsize $\Z_-$};
\node (Se) at (-4,0) {\scriptsize $0$};

				
\draw[bend right=10,->] (SL) to (Se);
\draw[bend right=10,->] (SD) to (Se);
\draw[bend right=10,->] (SR) to (Se);

				
\draw[bend right=10,->] (Se) to (SL);
\draw[bend right=10,->] (Se) to (SD);
\draw[bend right=10,->] (Se) to (SR);

				
\draw[bend right=10,->] (SK) to node[fill=white, inner sep=1pt] {\tiny $0$} (SL);
\draw[bend right=10,->] (SK) to node[fill=white, inner sep=1pt] {\tiny $0$} (SD);
\draw[bend right=10,->] (SK) to node[fill=white, inner sep=1pt] {\tiny $0$} (SR);

				
\draw[bend right=10,->] (SL) to node[fill=white, inner sep=1pt] {\tiny $1$} (SK);
\draw[bend right=10,->] (SD) to node[fill=white, inner sep=1pt] {\tiny $1$} (SK);
\draw[bend right=10,->] (SR) to node[fill=white, inner sep=1pt] {\tiny $1$} (SK);

				
\node (TK) at (4,6) {\scriptsize $\Z^3$};
\node (TL) at (1,3) {\scriptsize $\Z[\K/L]$};
\node (TD) at (4,3) {\scriptsize $\Z^[\K/D]$};
\node (TR) at (7,3) {\scriptsize $\Z[\K/R]$};
\node (Te) at (4,0) {\scriptsize $0$};

				
\draw[bend right=10,->] (TL) to (Te);
\draw[bend right=10,->] (TD) to (Te);
\draw[bend right=10,->] (TR) to (Te);

				
\draw[bend right=10,->] (Te) to (TL);
\draw[bend right=10,->] (Te) to (TD);
\draw[bend right=10,->] (Te) to (TR);

				
\draw[bend right=10,->] (TK) to node[rotate=45, fill=white, inner sep=1pt] {\tiny $\Delta \circ p_1$} (TL);
\draw[bend right=10,->] (TK) to node[rotate=90, fill=white, inner sep=1pt] {\tiny $\Delta \circ p_2$} (TD);
\draw[bend right=10,->] (TK) to node[rotate=-45, fill=white, inner sep=1pt] {\tiny $\Delta \circ p_3$} (TR);

				
\draw[bend right=10,->] (TL) to node[rotate=45, fill=white, inner sep=1pt] {\tiny $i_1 \circ \nabla$} (TK);
\draw[bend right=10,->] (TD) to node[rotate=90, fill=white, inner sep=1pt] {\tiny $i_2 \circ \nabla$} (TK);
\draw[bend right=10,->] (TR) to node[rotate=-45, fill=white, inner sep=1pt] {\tiny $i_3 \circ \nabla$} (TK);

				
\draw[->] (SK) to (TK); 
\draw[bend left=30,->] (SR) to node[above] {\tiny $\Delta_-$} (TL);
\draw[->] (SR) to node[fill=white] {\tiny $\Delta_-$} (TL);
\draw[bend right=30,->] (SR) to node[below] {\tiny $\Delta_-$} (TL);
\draw[->] (Se) to (Te); 

\end{tikzpicture}
}
\end{center}
This allows us to compute the second page of the spectral sequence: 
\begin{center}
	\begin{tikzpicture}[scale=1.5,font=\tiny]
	
            	
	\draw[->] (0,-0.5) to (0,2.5); 
	\draw[->] (-0.5,0) to (2.5,0);
	
	\node at (2.75,0) {\large $i$};
	\node at (0,2.75) {\large $j$};
	
	\node at (1.5,2.75) {\large $E^2_{i,j}-\text{page}$};
	
			
	\node[draw, fill=white, inner sep=2pt, circle] at (0,0) {};
	\node[draw, circle, fill=black, inner sep=2pt] at (1,0) {};
	
	\node (11) at (1,1) {\large $\whitepentdual$}; 
	
	\node[fill=white] at (0,1) {\large $\underline{0}$};
	\node[fill=white] at (0,2) {\large $\underline{0}$};
	
	\node at (1,2) {\large $\underline{0}$};	
	
	\node[fill=white] at (2,0) {\large $\underline{0}$};
	\node at (2,1) {\large $\underline{0}$};
	\node at (2,2) {\large $\underline{0}$};
		
	\end{tikzpicture} \\
	Key: $\whitecirc = \langle \Z \rangle$; \quad $\fillcirc = \langle \F_2 \rangle$; \quad $\whitepentdual = \phi_{LDR}^* \underline{\Z}^*$
\end{center}
Since there isn't room for any differentials, the spectral sequence collapses. 
\begin{prop}
The non-zero homotopy of the $-2\overline{\rho}$-suspension of $H\underline{I}$ is as follows:
	\[
	\underline{\pi}_n^\K(\Sigma^{-2\overline{\rho}} H\underline{I}) \cong
	\begin{cases}
		\langle \Z \rangle & n=0 \\
		\langle \F_2 \rangle & n=-1 \\
		\phi_{LDR}^* \underline{\Z}^* & n=-2. 
	\end{cases}
	\]
\end{prop}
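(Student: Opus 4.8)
The plan is to compute $\underline{\pi}_n^\K(\Sigma^{-2\overline{\rho}}H\underline{I})$ for $n\le 0$ as the reduced Bredon cohomology $\widetilde{\underline{H}}^{-n}_\K(S^{2\overline{\rho}};\underline{I})$, via the dual of the double-complex mechanism used for the positive cone. Writing $S^{2\overline{\rho}}\simeq S^{\overline{\rho}}\wedge S^{\overline{\rho}}$ with the product $\K$-CW structure, the cochain complex computing this cohomology is the total complex of the double cochain complex $\underline{\Hom}\!\left(\widetilde{\underline{C}}_i(S^{\overline{\rho}})\boxtimes\widetilde{\underline{C}}_j(S^{\overline{\rho}}),\underline{I}\right)$. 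Since each $\widetilde{\underline{C}}_j(S^{\overline{\rho}})$ is a direct sum of free Mackey functors $\underline{A}_{\K/H}$, a box-product/adjunction manipulation (using \cref{boxformula} and exactness of induction and restriction) shows that taking cohomology in the $i$-direction first yields an $E_1$-page
\[ E_1^{i,j}\cong\bigoplus_{H_m}\uparrow_{H_m}^\K\downarrow_{H_m}^\K\widetilde{\underline{H}}^i_\K(S^{\overline{\rho}};\underline{I}), \]
where $H_m$ ranges over the stabilizers of the $j$-cells of $S^{\overline{\rho}}$.

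I would then feed in the input $\widetilde{\underline{H}}^0_\K(S^{\overline{\rho}};\underline{I})\cong\langle\Z\rangle$ and $\widetilde{\underline{H}}^1_\K(S^{\overline{\rho}};\underline{I})\cong\underline{E}$, which is the preceding proposition on $\underline{\pi}_*^\K(\Sigma^{-\overline{\rho}}H\underline{I})$ reinterpreted cohomologically. In the product CW structure on $S^{\overline{\rho}}=S^{\sigma_L}\wedge S^{\sigma_D}\wedge S^{\sigma_R}$ the $j$-cells have stabilizer $\K$ for $j=0$, stabilizers $L,D,R$ for $j=1$, and are free for $j\ge 2$; since $\underline{I}$ vanishes at the underlying level the free rows contribute nothing, so only the rows $j=0,1$ survive and one obtains exactly the $E_1$-page displayed above, whose only potentially nonzero differential is $d_1\colon E_1^{1,0}=\underline{E}\to E_1^{1,1}=\widehat{\whitecirc}$.

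The crux is pinning down this $d_1$. It is the map on $\widetilde{\underline{H}}^1(-;\underline{I})$ induced by the degree-one cellular boundary of the second smash factor, which is a sum of transfers $\tau_H^\K$, so $d_1$ is a sum of restriction-type maps (the unit of the restriction--induction adjunction). At the level $\K/\K$ it must vanish, since the target $\widehat{\whitecirc}(\K/\K)\cong\Z^3$ is torsion-free while $\underline{E}(\K/\K)\cong\F_2$. At the levels $\K/L,\K/D,\K/R$ the differential is forced by restricting the whole spectral sequence along $i_H^\K$: there it becomes the $C_2$-equivariant double-complex spectral sequence for $\downarrow_H^\K S^{\overline{\rho}}\simeq S^{1+2\sigma}$, whose homotopy is already known, which identifies the differential with the essentially unique equivariant injection $\Z_-\hookrightarrow\Z[\K/H]$. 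Computing the kernel and cokernel levelwise then gives $E_2^{1,0}=\ker d_1\cong\langle\F_2\rangle$ and $E_2^{1,1}=\coker d_1\cong\phi_{LDR}^*\underline{\Z}^*$, together with $E_2^{0,0}=\langle\Z\rangle$. I expect the main obstacle to be precisely this step, and within it the identification of $\coker d_1$ with $\phi_{LDR}^*\underline{\Z}^*$ rather than $\phi_{LDR}^*\underline{\Z}$, since it is the passage to the cokernel that doubles the relevant restriction maps.

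Finally, the three surviving $E_2$-terms lie in total degrees $0$, $1$, $2$ respectively, so there is no room for any higher differential and no extension problem; hence $E_2=E_\infty$, and reading off the columns gives $\underline{\pi}_0^\K(\Sigma^{-2\overline{\rho}}H\underline{I})\cong\langle\Z\rangle$, $\underline{\pi}_{-1}^\K(\Sigma^{-2\overline{\rho}}H\underline{I})\cong\langle\F_2\rangle$, and $\underline{\pi}_{-2}^\K(\Sigma^{-2\overline{\rho}}H\underline{I})\cong\phi_{LDR}^*\underline{\Z}^*$, as claimed.
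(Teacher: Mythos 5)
Your proposal is correct and follows essentially the same route as the paper: set up the double-complex (cochain) spectral sequence for $S^{\overline\rho}\wedge S^{\overline\rho}$, note that the rows $j\ge 2$ vanish because the higher cells of $S^{\overline\rho}$ are free and $\underline{I}(\K/e)=0$, pin down the lone $d_1$ by its vanishing at $\K/\K$ together with the $C_2$-equivariant restrictions, and read off kernel and cokernel. Your identification of the $\K/\K$-level differential as zero by torsion-freeness of $\Z^3$, and of the cokernel as $\phi_{LDR}^*\underline{\Z}^*$ rather than $\phi_{LDR}^*\underline{\Z}$ because the quotient by the antidiagonal doubles the induced restriction, are exactly the points the paper implicitly relies on.
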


Using this as input, we obtain the following $E_1$-page of a spectral sequence computing the homotopy of the $-3\overline{\rho}$-suspension of $H\underline{I}$: 

\begin{center}
	\begin{tikzpicture}[scale=1.5,font=\tiny]
	
            	 
	\draw[->] (0,-0.5) to (0,2.5); 
	\draw[->] (-0.5,0) to (3.5,0);
	
	\node at (3.75,0) {\large $i$};
	\node at (0,2.75) {\large $j$};
	
	\node at (2.5,2.75) {\large $E^1_{i,j}-\text{page}$};
	
			
	\node[draw, fill=white, inner sep=2pt, circle] at (0,0) {};
	\node[draw, fill=black, inner sep=2pt, circle] at (1,0) {};
	\node[fill=white] (20) at (2,0) {\large $\whitepentdual$}; 
	\node (21) at (2,1) {\large $\widehat{\whitecirc}$};
	
	\node[fill=white] at (0,1) {\large $\underline{0}$};
	\node[fill=white] at (0,2) {\large $\underline{0}$};
	
	\node at (1,1) {\large $\underline{0}$};	
	\node at (1,2) {\large $\underline{0}$};	
	
	\node at (2,2) {\large $\underline{0}$};
	
	\node[fill=white] at (3,0) {\large $\cdots$};
	
	\foreach \n in {1,...,2}{ 
		\node at (3,\n) {\large $\cdots$};
	}
	
			
	\draw[orddiff, ->] (20) to (21);
	\end{tikzpicture} \\
	Key: $\whitecirc = \langle \Z \rangle$; \quad $\fillcirc = \langle \F_2 \rangle$; \quad $\whitepentdual = \phi_{LDR}^* \underline{\Z}$; \quad $\widehat{\whitecirc} = \bigoplus_H \uparrow_H^\K \langle \Z \rangle$
\end{center}
For the same reasoning, we can understand the differential here: 
\begin{center}
{ 
\begin{tikzpicture}[scale=0.825]

				
\node (SK) at (-4,6) {\scriptsize $\Z^3$};
\node (SL) at (-7,3) {\scriptsize $\Z$};
\node (SD) at (-4,3) {\scriptsize $\Z$};
\node (SR) at (-1,3) {\scriptsize $\Z$};
\node (Se) at (-4,0) {\scriptsize $0$};

				
\draw[bend right=10,->] (SL) to (Se);
\draw[bend right=10,->] (SD) to (Se);
\draw[bend right=10,->] (SR) to (Se);

				
\draw[bend right=10,->] (Se) to (SL);
\draw[bend right=10,->] (Se) to (SD);
\draw[bend right=10,->] (Se) to (SR);

				
\draw[bend right=10,->] (SK) to node[rotate=45, fill=white, inner sep=1pt] {\tiny $2p_1$} (SL);
\draw[bend right=10,->] (SK) to node[rotate=90, fill=white, inner sep=1pt] {\tiny $2p_2$} (SD);
\draw[bend right=10,->] (SK) to node[rotate=-45, fill=white, inner sep=1pt] {\tiny $2p_3$} (SR);

				
\draw[bend right=10,->] (SL) to node[rotate=45, fill=white, inner sep=1pt] {\tiny $i_1$} (SK);
\draw[bend right=10,->] (SD) to node[rotate=90, fill=white, inner sep=1pt] {\tiny $i_2$} (SK);
\draw[bend right=10,->] (SR) to node[rotate=-45, fill=white, inner sep=1pt] {\tiny $i_3$} (SK);

				
\node (TK) at (4,6) {\scriptsize $\Z^3$};
\node (TL) at (1,3) {\scriptsize $\Z[\K/L]$};
\node (TD) at (4,3) {\scriptsize $\Z[\K/D]$};
\node (TR) at (7,3) {\scriptsize $\Z[\K/R]$};
\node (Te) at (4,0) {\scriptsize $0$};

				
\draw[bend right=10,->] (TL) to (Te);
\draw[bend right=10,->] (TD) to (Te);
\draw[bend right=10,->] (TR) to (Te);

				
\draw[bend right=10,->] (Te) to (TL);
\draw[bend right=10,->] (Te) to (TD);
\draw[bend right=10,->] (Te) to (TR);

				
\draw[bend right=10,->] (TK) to node[rotate=45, fill=white, inner sep=1pt] {\tiny $\Delta \circ p_1$} (TL);
\draw[bend right=10,->] (TK) to node[rotate=90, fill=white, inner sep=1pt] {\tiny $\Delta \circ p_2$} (TD);
\draw[bend right=10,->] (TK) to node[rotate=-45, fill=white, inner sep=1pt] {\tiny $\Delta \circ p_3$} (TR);

				
\draw[bend right=10,->] (TL) to node[rotate=45, fill=white, inner sep=1pt] {\tiny $i_1 \circ \nabla$} (TK);
\draw[bend right=10,->] (TD) to node[rotate=90, fill=white, inner sep=1pt] {\tiny $i_2 \circ \nabla$} (TK);
\draw[bend right=10,->] (TR) to node[rotate=-45, fill=white, inner sep=1pt] {\tiny $i_3 \circ \nabla$} (TK);

				
\draw[->] (SK) to node[above] {\tiny $2$} (TK); 
\draw[bend left=30,->] (SR) to node[above] {\tiny $\Delta$} (TL);
\draw[->] (SR) to node[fill=white] {\tiny $\Delta$} (TL);
\draw[bend right=30,->] (SR) to node[below] {\tiny $\Delta$} (TL);
\draw[->] (Se) to (Te); 

\end{tikzpicture}
}
\end{center}
With this, we can compute the second page: 
\begin{center}
	\begin{tikzpicture}[scale=1.5,font=\tiny]
	
            	 
	\draw[->] (0,-0.5) to (0,2.5); 
	\draw[->] (-0.5,0) to (3.5,0);
	
	\node at (3.75,0) {\large $i$};
	\node at (0,2.75) {\large $j$};
	
	\node at (2.5,2.75) {\large $E^2_{i,j}-\text{page}$};
	
			
	\node[draw, fill=white, inner sep=2pt, circle] at (0,0) {};
	\node[draw, fill=black, inner sep=2pt, circle] at (1,0) {};
	
	\node[fill=white] (21) at (2,1) {$\phiLDRQ$}; 
	
			
	\node[fill=white] at (0,1) {\large $\underline{0}$};
	\node[fill=white] at (0,2) {\large $\underline{0}$};
	
	\node at (1,1) {\large $\underline{0}$};	
	\node at (1,2) {\large $\underline{0}$};	
	
	\node[fill=white] at (2,0) {\large $\underline{0}$};
	\node at (2,2) {\large $\underline{0}$};
	
	\node[fill=white] at (3,0) {\large $\cdots$};
	
	\foreach \n in {1,...,2}{ 
		\node at (3,\n) {\large $\cdots$};
	}
	
	\end{tikzpicture} \\
	Key: $\whitecirc = \langle \Z \rangle$; \quad $\fillcirc = \langle \F_2 \rangle$; \quad $\phiLDRQ = \phi_{LDR}^* \underline{Q}$
\end{center}
Again, the spectral sequence collapses. 
\begin{prop}
	The non-zero homotopy of the $-3\overline{\rho}$-suspension of $H\underline{I}$ is
	\[
	\underline{\pi}_n^\K(\Sigma^{-3\overline{\rho}} H\underline{I}) \cong
	\begin{cases}
	\langle \Z \rangle & n=0 \\
	\langle \F_2 \rangle & n=-1 \\
	\phi_{LDR}^* \underline{Q} & n=-3. 
	\end{cases}
	\]
\end{prop}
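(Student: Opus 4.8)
The plan is to reuse, for one more suspension, the double-complex spectral sequence that produced \cref{IPosConeProp2} and \cref{IPosConeProp3}, now in its cohomological incarnation and with only a single $\overline{\rho}$ peeled off. Write $S^{-3\overline{\rho}} \simeq S^{-2\overline{\rho}} \smsh S^{-\overline{\rho}}$; the complex of $\K$-Mackey functors computing $\underline{\pi}_\star^\K(\Sigma^{-3\overline{\rho}}H\underline{I})$ is then the total complex of a double complex whose rows are, up to applying $\uparrow_{H_m}^\K\downarrow_{H_m}^\K$ over the stabilizers $H_m$ of the cells of $S^{-\overline{\rho}}$, copies of the complex computing $\underline{\pi}_\star^\K(\Sigma^{-2\overline{\rho}}H\underline{I})$. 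Since induction and restriction along any $H \leq \K$ are exact, \cref{topologicalboxformula} lets us commute them past the homology taken in the $S^{-\overline{\rho}}$-direction, so the resulting $E_1$-page has the homotopy of $\Sigma^{-2\overline{\rho}}H\underline{I}$ — computed in the preceding proposition — along its $0$th row and a single further nonzero entry $\bigoplus_H \uparrow_H^\K \langle \Z \rangle$; this is exactly the $E_1$-page exhibited above.

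The one step requiring care is the identification of the sole $d_1$-differential, $\phi_{LDR}^* \underline{\Z}^* \to \bigoplus_H \uparrow_H^\K \langle \Z \rangle$. At the levels $\K/L$, $\K/D$ and $\K/R$ every component is forced by restricting the whole spectral sequence along the inclusions $C_2 \cong H < \K$: there $\downarrow_H^\K S^{-3\overline{\rho}} \simeq S^{-3-6\sigma}$ and the corresponding $C_2$-equivariant computation is classical, so the $C_2$-restricted spectral sequence is entirely determined, exactly as in the positive-cone arguments. The $\K/\K$-component is the only piece these restrictions do not see, but the total restriction map $\phi_{LDR}^* \underline{\Z}^*(\K/\K) \to \bigoplus_H \phi_{LDR}^* \underline{\Z}^*(\K/H)$ is injective, so that component is pinned down as well and one checks it is the displayed map. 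Taking kernels and cokernels level by level produces the $E_2$-page exhibited above, the only subtlety being the computation at level $\K/H$, where the cokernel is $\Z^2/\Delta\Z \cong \Z_-$ because the Weyl involution that swaps the two summands of $\bigoplus_H \uparrow_H^\K \langle \Z \rangle$ at level $\K/H$ acts by $-1$ on the quotient; this is what yields the $\Z_-$-levels of $\phi_{LDR}^* \underline{Q}$.

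Finally, the three surviving $E_2$-entries $\langle \Z \rangle$, $\langle \F_2 \rangle$, and $\phi_{LDR}^* \underline{Q}$ sit in pairwise distinct total degrees, so no $d_r$ with $r \geq 2$ can be nonzero and the spectral sequence collapses at $E_2$; and since no two of them share a total degree there are no extension problems. Reading off $E_\infty = E_2$ and translating the total degree $i+j$ into the homotopical degree $n = -(i+j)$ gives $\underline{\pi}_0^\K(\Sigma^{-3\overline{\rho}}H\underline{I}) \cong \langle \Z \rangle$, $\underline{\pi}_{-1}^\K(\Sigma^{-3\overline{\rho}}H\underline{I}) \cong \langle \F_2 \rangle$, $\underline{\pi}_{-2}^\K(\Sigma^{-3\overline{\rho}}H\underline{I}) \cong \underline{0}$, and $\underline{\pi}_{-3}^\K(\Sigma^{-3\overline{\rho}}H\underline{I}) \cong \phi_{LDR}^* \underline{Q}$, as claimed. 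The main obstacle is thus entirely the $\K/\K$-component of $d_1$; if the injectivity argument above is felt to be too terse, it can be replaced by a direct chain-level computation of the attaching maps in the product $\K$-CW structure on $S^{-3\overline{\rho}}$.
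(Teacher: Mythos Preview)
Your approach is essentially the same as the paper's: set up the double-complex spectral sequence for $S^{-3\overline{\rho}} \simeq S^{-2\overline{\rho}} \smsh S^{-\overline{\rho}}$, feed in the $-2\overline{\rho}$ computation along the bottom row, determine the one $d_1$ from the $C_2$-restrictions, and observe collapse.

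One small correction: to pin down the $\K/\K$-component of $d_1$ from the $\K/H$-components you need injectivity of the total restriction in the \emph{target} $\bigoplus_H \uparrow_H^\K \langle \Z \rangle$, not in the source $\phi_{LDR}^* \underline{\Z}^*$. The constraint is $\res_N \circ d_{\K/\K} = d_{\K/H} \circ \res_M$, and it is injectivity of $\res_N$ that forces $d_{\K/\K}$ once the right-hand side is known. In this case the target restriction $\Z^3 \to \bigoplus_H \Z[\K/H]$ sends $(a,b,c)$ to $((a,a),(b,b),(c,c))$ and is indeed injective, so your conclusion stands; the paper simply displays the resulting map (multiplication by $2$ at $\K/\K$) without spelling out this step.
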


We can continue inductively in this fashion knowing what the differentials will be at any restriction to a proper subgroup and obtain the rest of the negative cone. 

\begin{theorem}\label{INegConeThm}
 The non-zero homotopy of $\Sigma^{-k\overline{\rho}} H\underline{I}$ for $k \geq 4$ is as follows: 
 \[
 \underline{\pi}_n^\K(\Sigma^{-k\overline{\rho}} H\underline{I}) \cong 
 \begin{cases}
 \langle \Z \rangle & n=0 \\
 \langle \F_2 \rangle & n=-1 \\
 \langle \F_2 \rangle^3 & -k < n < -1; k \text{ odd} \\
 \phi_{LDR}^* \underline{\Z}^* & n=-k; k \text{ even} \\
 \phi_{LDR}^* \underline{Q} & n=-k; n \text{ odd}. 
 \end{cases}
 \]
\end{theorem}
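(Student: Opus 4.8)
The plan is to induct on $k$, the cases $k\le 3$ being the propositions already established above; the inductive step computes $\underline{\pi}^\K_n(\Sigma^{-k\overline{\rho}}H\underline{I})$ from $\underline{\pi}^\K_n(\Sigma^{-(k-1)\overline{\rho}}H\underline{I})$ by the same double-complex spectral sequence used for $\Sigma^{-2\overline{\rho}}H\underline{I}$ and $\Sigma^{-3\overline{\rho}}H\underline{I}$. Writing $S^{-k\overline{\rho}}\simeq S^{-(k-1)\overline{\rho}}\wedge S^{-\overline{\rho}}$, one forms the double complex whose total complex is the reduced cellular cochain complex of $S^{k\overline{\rho}}$ with coefficients in $\underline{I}$, and takes cohomology in the $S^{\overline{\rho}}$-direction first. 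Using exactness of $\uparrow_H^\K$ and $\downarrow_H^\K$ together with \cref{boxformula} and \cref{MapOutOfFreeFormula}, the resulting $E_1$-page has $\underline{\pi}^\K_{-\bullet}(\Sigma^{-(k-1)\overline{\rho}}H\underline{I})$ along its zeroth row (known by induction) and $\bigoplus_H\uparrow_H^\K\downarrow_H^\K(-)$ of the same along its first row, the latter being the contribution of the three non-free $1$-cells $\K/L,\K/D,\K/R$ of the new $S^{\overline{\rho}}$-factor; the higher rows come from free cells and vanish because $\underline{I}(\K/e)=0$ and \cref{RhoSphereCWStructure}.

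The key simplification is that each of $\langle\Z\rangle$, $\langle\F_2\rangle$ and $\langle\F_2\rangle^3$ is concentrated at $\K/\K$, so $\uparrow_H^\K\downarrow_H^\K$ annihilates all of them, whereas for the single $\phi_{LDR}^*$-summand, which sits in internal degree $n=-(k-1)$, one computes $\bigoplus_H\uparrow_H^\K\downarrow_H^\K\phi_{LDR}^*(-)\cong\widehat{\whitecirc}=\bigoplus_H\uparrow_H^\K\langle\Z\rangle$. Hence the $E_1$-page carries exactly one possibly nonzero differential,
\[ d_1\colon \underline{\pi}^\K_{-(k-1)}(\Sigma^{-(k-1)\overline{\rho}}H\underline{I}) \longrightarrow \widehat{\whitecirc}, \]
whose source is $\phi_{LDR}^*\underline{\Z}^*$ when $k$ is odd and $\phi_{LDR}^*\underline{Q}$ when $k$ is even.

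Identifying this $d_1$ is the crux. It is determined by its restrictions to $L,D,R$: the restriction maps $\widehat{\whitecirc}(\K/\K)\to\widehat{\whitecirc}(\K/H)$ are jointly injective, so a map into $\widehat{\whitecirc}$ is pinned down by its behaviour at the levels $\K/H$ together with commutation with transfers, and each restricted map is the $d_1$ of the $C_2$-equivariant double-complex spectral sequence computing $\widetilde{\underline{H}}^*_{C_2}(S^{k(1+2\sigma)};\downarrow_H^\K\underline{I})$ with $\downarrow_H^\K\underline{I}\cong\langle\Z\rangle$ — a small cellular computation of the kind in \cref{sigma_Hhomology}, whose answer leaves only one possibility. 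One finds that $d_1$ is ``multiplication by $2$ at $\K/\K$ and a diagonal at each $\K/H$'' when $k$ is odd and ``zero at $\K/\K$ and a sign-twisted diagonal at each $\K/H$'' when $k$ is even, exactly as in the $k=3$ and $k=4$ cases. Taking kernel and cokernel: for $k$ odd the map is injective with cokernel $\phi_{LDR}^*\underline{Q}$ (in degree $n=-k$) and no class at $n=-(k-1)$; for $k$ even the kernel is $\langle\F_2\rangle^3$ (at the odd degree $n=-(k-1)$) and the cokernel is $\phi_{LDR}^*\underline{\Z}^*$ (at $n=-k$). The $\langle\F_2\rangle^3$ classes created at earlier suspensions persist untouched in the zeroth row because $\uparrow_H^\K\downarrow_H^\K$ kills them.

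Finally, on $E_2$ the surviving classes occupy total degrees $0$, $-1$, the isolated middle degrees, and $-k$, which are too far apart for higher differentials, so $E_2=E_\infty$; and since each total degree receives a contribution from at most one bidegree there is no extension problem and the stated homotopy can be read off. The principal obstacle is the identification of $d_1$ — in particular checking that, after the $k=3$ and $k=4$ cases have been done by hand, its restriction pattern is genuinely $2$-periodic in $k$ so that nothing new can arise; everything else is bookkeeping resting on $\underline{I}(\K/e)=0$ and the CW structure of \cref{RhoSphereCWStructure}.
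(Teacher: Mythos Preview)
Your proposal is correct and follows exactly the paper's approach: induction on $k$ via the double-complex spectral sequence, with the single relevant $d_1$ determined by restricting to the $C_2$-equivariant computations at each proper subgroup. The paper's own proof is the one-line ``This follows by continuing inductively on $k$ via these spectral sequences,'' so you have in fact supplied considerably more detail than the paper does; the only minor slip is that the paper carries out $k=2$ and $k=3$ by hand (not $k=3$ and $k=4$), so the $k=4$ case is already the first instance of your induction rather than a separately verified base case.
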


\begin{proof}
	This follows by continuing inductively on $k$ via these spectral sequences. 
\end{proof}
Like with the positive cone, there will be a slightly larger range in which $\Sigma^{-k\overline{\rho}} H\underline{A}$ and $\Sigma^{-k\overline{\rho}} H\underline{\Z}$ agree, but it will be because the connecting morphism in the long exact sequence induced by the short exact sequence of coefficients vanishes. The data of \cref{INegConeThm} will be referred to as the ``negative cone" for $H\underline{I}$ and is depicted in quadrant two of \cref{fig:piK4HI}. 

\section{The Positive Cone for $H\underline{A}$} \label{HAPositiveCone}

Given the results in the previous sections, the only portion of the positive cone of $H\underline{A}$ left to compute is $\underline{\pi}^\K_n(\Sigma^{k\overline{\rho}} H\underline{A})$ for $k \geq 0$ and $0 \leq n \leq k+1$. This will mostly follow by the same approach that we used for $H\underline{I}$, i.e. using cofiber sequences to compute the first suspension and spectral sequences to compute further suspensions. However, we will run into some extension problems and resolve them using the multiplicative structure of $\underline{\pi}^\K_{\star}(H\underline{A})$ as well as the long exact sequence in homotopy given by the cofiber sequence $H\underline{I} \rightarrow H\underline{A} \rightarrow H\underline{\Z}$. Recall that the $\star$ wildcard indicates an $RO(\K)$-graded homotopy. 

\begin{prop}
	The non-zero homotopy of the $\overline{\rho}$-suspension of $H\underline{A}$ is: 
	\[
	\underline{\pi}_n^\K(\Sigma^{\overline{\rho}} H\underline{A}) \cong
	\begin{cases}
		\langle \Z \rangle & n=0 \\
		\phi_{LDR}^* \underline{f} & n=1 \\
		\underline{\Z} & n=3. \\
	\end{cases}
	\]
\end{prop}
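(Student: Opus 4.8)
The plan is to follow the template of the proof of \cref{IPosConeProp1}: write $\Sigma^{\overline{\rho}} H\underline{A} = \Sigma^{\sigma_R}\Sigma^{\sigma_D}\Sigma^{\sigma_L} H\underline{A}$ and peel off one sign representation at a time using the three cofiber sequences
\[ \K/H_+ \wedge X \longrightarrow X \longrightarrow \Sigma^{\sigma_H} X \]
for $H = L$, then $H = D$, then $H = R$, starting from $X = H\underline{A}$. At each stage the input we need is the homotopy of $\K/H_+ \wedge X$, and \cref{topologicalboxformula} together with \cref{boxformula} identifies this with $\underline{A}_{\K/H}\boxtimes \underline{\pi}^\K_\star(X) \cong \uparrow_H^\K\downarrow_H^\K \underline{\pi}^\K_\star(X)$, which is read off from the inductively known $\underline{\pi}^\K_\star(X)$ by restricting the levelwise data to $C_2$ and inducing back up. The first step recovers $\underline{\pi}^\K_\star(\Sigma^{\sigma_L} H\underline{A})$, which also appears as the $q=-1$ slice of \cref{OneRepHomotopy} and serves as a check.

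Running the three long exact sequences then produces $\underline{\pi}^\K_n(\Sigma^{\overline{\rho}} H\underline{A})$ for all $n$, modulo (a) the behaviour of the connecting maps at the top level $\K/\K$ and (b) extension problems between Mackey functors. The connecting maps are forced at each of the proper levels $\K/L,\K/D,\K/R$: after restriction along $i_H^\K$ the cofiber sequences become the $C_2$-equivariant cofiber sequences computing $\sigma$-suspensions of Eilenberg--MacLane spectra, whose homotopy is recorded in \cref{SigmaSusHAHomotopy} and its relatives. To pin down the $\K/\K$-level and the extensions there are two supplementary tools: the ring structure on $\underline{\pi}^\K_\star(H\underline{A})$ — multiplication by the Euler classes $a_{\sigma_H}$ and by the orientation classes ties the groups in play to the $\sigma_H$-graded groups of \cref{OneRepHomotopy} — and the long exact sequence coming from $\Sigma^{\overline{\rho}} H\underline{I} \to \Sigma^{\overline{\rho}} H\underline{A} \to \Sigma^{\overline{\rho}} H\underline{\Z}$, whose outer terms are known from \cref{IPosConeProp1} and from Slone's computation \cite{Slone}. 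The degree-$3$ class comes out immediately this way: $\underline{\pi}_3^\K(\Sigma^{\overline{\rho}} H\underline{I}) = \underline{\pi}_2^\K(\Sigma^{\overline{\rho}} H\underline{I}) = \underline{0}$ by \cref{IPosConeProp1}, so the comparison sequence gives $\underline{\pi}_3^\K(\Sigma^{\overline{\rho}} H\underline{A}) \cong \underline{\pi}_3^\K(\Sigma^{\overline{\rho}} H\underline{\Z}) \cong \underline{\Z}$.

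I expect the main obstacle to be degree $0$. A naive reading of either the $R$-cofiber sequence or the $H\underline{I}$-comparison sequence only presents $\underline{\pi}_0^\K(\Sigma^{\overline{\rho}} H\underline{A})$ as an extension in which the sub-object is a cokernel of $\underline{\pi}_1^\K(\Sigma^{\overline{\rho}} H\underline{\Z}) \to \underline{\pi}_0^\K(\Sigma^{\overline{\rho}} H\underline{I}) \cong \langle \Z\oplus\F_2^2\rangle$; the latter carries $2$-torsion that the claimed answer $\langle\Z\rangle$ does not, so one must show the relevant connecting map is nonzero and surjects onto that torsion. The cleanest way to do this is to compute $\widetilde{\underline{H}}_0^\K(S^{\overline{\rho}};\underline{A})$ directly as the cokernel of the first cellular differential of the product $\K$-CW structure on $S^{\overline{\rho}} \simeq S^{\sigma_L}\wedge S^{\sigma_D}\wedge S^{\sigma_R}$ (using \cref{RhoSphereCWStructure} and \cref{MapOutOfFreeFormula}): since $S^{\overline{\rho}}$ is $3$-dimensional this is an honest cokernel of a map $\bigoplus_H \underline{A}_{\K/H} \to \underline{A}$, which one checks equals $\langle\Z\rangle$. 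The vanishing of $\underline{\pi}_2^\K(\Sigma^{\overline{\rho}} H\underline{A})$ then follows from the dual observation that the succeeding connecting map in the $H\underline{I}$-comparison sequence is injective with the expected cokernel, using that $\underline{\pi}_2^\K(\Sigma^{\overline{\rho}} H\underline{I}) = \underline{0}$ and the already-identified value of $\underline{\pi}_1^\K$. Everything else is bookkeeping with the long exact sequences and the Mackey-functor diagrams of \cref{sigma_Hhomology}.
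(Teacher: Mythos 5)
Your proof uses essentially the same strategy as the paper: both peel off one sign representation at a time via the cofiber sequences $\K/H_+ \wedge X \to X \to \Sigma^{\sigma_H}X$ for $H=L,D,R$, starting from the cellular computation of $\underline{\pi}_\star^\K(\Sigma^{\sigma_L}H\underline{A})$ in \cref{OneRepHomotopyProp} and feeding each answer into the next long exact sequence. Your supplementary checks (the $H\underline{I}\to H\underline{A}\to H\underline{\Z}$ comparison to nail down degree $3$, and the explicit cellular cokernel $\bigoplus_H \underline{A}_{\K/H}\to\underline{A}$ for degree $0$) are sound and consistent with the paper's setup, but they are auxiliary confirmations of the same route rather than a genuinely different decomposition.
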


\begin{proof}
	In \cref{OneRepHomotopyProp}, we computed $\underline{\pi}_n^\K(\Sigma^{\sigma_L} H\underline{A})$ at level $\K/\K$. Using this data to analyze the long exact sequence given by the cofiber sequence 
	\[ \K/D_+ \wedge \Sigma^{\sigma_L} H\underline{A} \rightarrow \Sigma^{\sigma_L} H\underline{A} \rightarrow \Sigma^{\sigma_L + \sigma_D} H\underline{A} \]
	yields the homotopy of the $\Sigma^{(\sigma_L + \sigma_D)}H\underline{A}$. Then, we can analyze the long exact sequence given by the cofiber sequence 
	\[ \K/R_+ \wedge \Sigma^{\sigma_L + \sigma_D} H\underline{A} \rightarrow \Sigma^{\sigma_L + \sigma_D} H\underline{A} \rightarrow \Sigma^{\overline{\rho}} H\underline{A} \]
	to learn the homotopy of the $\overline{\rho}$-suspension.  
\end{proof}

Similar to when we were computing the positive cone of $H\underline{I}$, we'll use a small spectral sequence argument to compute consecutive $\overline{\rho}$-suspensions. The $E^1$-page of the spectral sequence computing the $2\overline{\rho}$-suspension is as follows: 

\begin{center}
	\begin{tikzpicture}[scale=1.5,font=\tiny]
	
            	
	\draw[->] (0,-0.5) to (0,3.5); 
	\draw[->] (-0.5,0) to (3.5,0);
	
	\node at (3.75,0) {\large $i$};
	\node at (0,3.75) {\large $j$};
	
	\node at (1.5,3.75) {\large $E^1_{i,j}-\text{page}$};
	
			
	\node[draw, fill=white, inner sep=2pt, circle] at (0,0) {};
	
	\node[fill=white] (10) at (1,0) {$\phiLDRf$};
	\node[draw, fill=white, inner sep = 4pt, regular polygon sides=4, 
 minimum width=0pt] (30) at (3,0) {};
	
	\node (11) at (1,1) {\large $\widehat{\whitecirc}$};
	
	\node (31) at (3,1) {\large $\widehat{\whitesquare}$};
	
	\node (32) at (3,2) {\large $\overline{\whitesquare}^{\oplus^3}$};
	\node (33) at (3,3) {\large $\overline{\whitesquare}^{\oplus^2}$};
	 
				
	\node[fill=white] at (0,1) {\large $\underline{0}$};
	\node[fill=white] at (0,2) {\large $\underline{0}$};
	\node[fill=white] at (0,3) {\large $\underline{0}$};
	
	\node at (1,2) {\large $\underline{0}$};		
	\node at (1,3) {\large $\underline{0}$};
	
	\node[fill=white] at (2,0) {\large $\underline{0}$};
	\node at (2,1) {\large $\underline{0}$};
	\node at (2,2) {\large $\underline{0}$};
	\node at (2,3) {\large $\underline{0}$};
		
			
	\draw[orddiff, thick, ->] (11) to (10) {};
	\draw[orddiff, thick, ->] (31) to (30) {};
	\draw[orddiff, thick, ->] (32) to (31) {};
	\draw[orddiff, thick, ->] (33) to (32) {};
	
	\end{tikzpicture} \\
	Key: $\whitesquare = \underline{\Z}$; \quad $\overline{\whitesquare} = \uparrow_e^\K \Z$; \quad $\widehat{\whitesquare} = \bigoplus_H \uparrow_H^\K \underline{\Z}$; \quad $\whitecirc = \langle \Z \rangle$; \quad $\widehat{\whitecirc} = \bigoplus_H \uparrow_H^\K \langle \Z \rangle$; \quad $\phiLDRf = \phi_{LDR}^* \underline{f}$
\end{center}
Given \cref{PosConeComparison}, we need only understand the differentials affecting homotopy in degrees $n \leq 3$. In particular, we need only compute the differentials out of $(1,1)$ and $(3,1)$. As we've seen, it's enough to understand these differentials on the levels of proper subgroups, which is at most a $C_2$-equivariant calculation. Resolving these differentials gives us the following $E^2$-page, where we'll omit a description of terms outside the range of interest. 
\begin{center}
	\begin{tikzpicture}[scale=1.5,font=\tiny]
	
            	
	\draw[->] (0,-0.5) to (0,3.5); 
	\draw[->] (-0.5,0) to (3.5,0);
	
	\node at (3.75,0) {\large $i$};
	\node at (0,3.75) {\large $j$};
	
	\node at (1.5,3.75) {\large $E^2_{i,j}-\text{page}$};
	
			
	\node[draw, fill=white, inner sep=2pt, circle] at (0,0) {};
	
	\node at (1,1) {\large $\whitepent$};
	\node[draw, fill=black, inner sep=2pt, circle]  at (3,0) {}; 
	\node at (3,1) {\large $E^2_{3,1}$};
	\node at (3,2) {\large $E^2_{3,2}$};
	\node at (3,3) {\large $E^2_{3,3}$};
	
	\node[fill=white] at (0,1) {\large $\underline{0}$};
	\node[fill=white] at (0,2) {\large $\underline{0}$};
	\node[fill=white] at (0,3) {\large $\underline{0}$};
	
	\node[fill=white] at (1,0) {\large $\underline{0}$};
	\node at (1,2) {\large $\underline{0}$};		
	\node at (1,3) {\large $\underline{0}$};
	
	\node[fill=white] at (2,0) {\large $\underline{0}$};
	\node at (2,1) {\large $\underline{0}$};
	\node at (2,2) {\large $\underline{0}$};
	\node at (2,3) {\large $\underline{0}$};
	
	\end{tikzpicture} \\
	Key: $\whitecirc = \langle \Z \rangle$; \quad $\whitepent = \phi_{LDR}^* \underline{\Z}$; \quad $\fillcirc = \langle \F_2 \rangle$
\end{center}
There isn't any room for differentials, so we can read the homotopy off as follows: 
\begin{prop}
	The non-zero homotopy of the $2\overline{\rho}$-suspension of $H\underline{A}$ is
	\[
	\underline{\pi}_n^\K(\Sigma^{2\overline{\rho}} H\underline{A}) \cong
	\begin{cases}
		\langle \Z \rangle & n=0 \\
		\phi_{LDR}^* \Z & n=2 \\
		\langle \F_2 \rangle & n=3 \\
		\underline{\pi}_n(\Sigma^{2\overline{\rho}} H\underline{\Z}) & n \geq 4
	\end{cases}
	\]
\end{prop}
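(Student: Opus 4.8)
The plan is to run the double-complex spectral sequence for $S^{2\overline{\rho}} \simeq S^{\overline{\rho}} \wedge S^{\overline{\rho}}$ with coefficients in $\underline{A}$, exactly as in the computation of the positive cone of $H\underline{I}$, and to fill in the range $n \geq 4$ using \cref{PosConeComparison}. Concretely, one forms the double complex $\underline{D}_{i,j} \cong (\widetilde{\underline{C}}_i(S^{\overline{\rho}}) \boxtimes \underline{A}) \boxtimes \widetilde{\underline{C}}_j(S^{\overline{\rho}})$, takes homology in the $i$-direction first, and uses exactness of induction and restriction together with \cref{topologicalboxformula} to identify the $E^1$-page as $E^1_{i,j} \cong \bigoplus_{H_m} \uparrow_{H_m}^\K \downarrow_{H_m}^\K \widetilde{\underline{H}}_i^\K(S^{\overline{\rho}};\underline{A})$, where $H_m$ runs over the stabilizers of the $j$-cells of $S^{\overline{\rho}}$. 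Substituting the description of $\underline{\pi}_\bullet^\K(\Sigma^{\overline{\rho}} H\underline{A})$ just obtained in the previous proposition yields the $E^1$-page displayed above.

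Next I would determine the $d^1$ differentials. Since \cref{PosConeComparison} already handles $n \geq 4$, the only differentials that matter are those affecting total degree $n \leq 3$, namely the ones emanating from the $(1,1)$ and $(3,1)$ spots. As before, any differential in this spectral sequence is determined by its restrictions to proper subgroups, where $\downarrow_H^\K S^{\overline{\rho}} \cong S^{1+2\sigma}$ and the relevant $C_2$-equivariant homotopy of $H\underline{A}$-smash-products is already recorded; the behavior at $\K/\K$ is then forced either by compatibility with restrictions and transfers or, in the one genuinely ambiguous case (the differential out of $(3,1)$ landing in the $\underline{\Z}$ at $(3,0)$), by comparison with the long exact sequence coming from the cofiber sequence $H\underline{I} \rightarrow H\underline{A} \rightarrow H\underline{\Z}$, using \cref{IPosConeProp2} and the positive cone of $H\underline{\Z}$ from \cite{Slone}. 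Resolving these produces the $E^2$-page shown, whose nonzero entries in total degree $n \leq 3$ are $E^2_{0,0} \cong \langle \Z \rangle$, $E^2_{1,1} \cong \phi_{LDR}^* \underline{\Z}$, and $E^2_{3,0} \cong \langle \F_2 \rangle$.

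Finally, bidegree considerations leave no room for any $d^r$ with $r \geq 2$ in the range $n \leq 3$, so $E^2 = E^\infty$ there, and because each total degree $n \leq 3$ supports at most one nonzero $E^\infty$-term there are no extension problems to solve. Reading off gives $\underline{\pi}_0^\K(\Sigma^{2\overline{\rho}} H\underline{A}) \cong \langle \Z \rangle$, $\underline{\pi}_2^\K(\Sigma^{2\overline{\rho}} H\underline{A}) \cong \phi_{LDR}^* \underline{\Z}$, $\underline{\pi}_3^\K(\Sigma^{2\overline{\rho}} H\underline{A}) \cong \langle \F_2 \rangle$, and vanishing for $n=1$ and $n < 0$; for $n \geq 4$, \cref{PosConeComparison} identifies $\underline{\pi}_n^\K(\Sigma^{2\overline{\rho}} H\underline{A})$ with $\underline{\pi}_n^\K(\Sigma^{2\overline{\rho}} H\underline{\Z})$. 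The main obstacle is the $\K/\K$-level behavior of the $d^1$ differential out of $(3,1)$: showing that its cokernel at the top level is $\F_2$ (so that $E^2_{3,0} \cong \langle \F_2 \rangle$ rather than $\langle \Z \rangle$) is the one point that cannot be read off purely from subgroup data, and is where the $H\underline{I} \rightarrow H\underline{A} \rightarrow H\underline{\Z}$ long exact sequence earns its keep.
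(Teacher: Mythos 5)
Your proposal follows the same strategy as the paper: form the double complex for $S^{2\overline{\rho}} \simeq S^{\overline{\rho}} \wedge S^{\overline{\rho}}$, identify the $E^1$-page via exactness of induction and restriction, resolve the $d^1$ differentials out of $(1,1)$ and $(3,1)$, observe the collapse and lack of extensions, and invoke \cref{PosConeComparison} for $n \geq 4$. The one place you depart from the paper is the claim that the $\K/\K$-level of the differential $\bigoplus_H \uparrow_H^\K \underline{\Z} \to \underline{\Z}$ out of $(3,1)$ is a ``genuinely ambiguous case'' that ``cannot be read off purely from subgroup data,'' requiring the $H\underline{I} \to H\underline{A} \to H\underline{\Z}$ long exact sequence together with \cref{IPosConeProp2}. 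This is not correct: the target $\underline{\Z}$ has injective (indeed identity) restriction maps $\underline{\Z}(\K/\K) \to \underline{\Z}(\K/H)$, so the commutativity $\mathrm{res}_H^\K \circ d(\K/\K) = d(\K/H) \circ \mathrm{res}_H^\K$ forces $d(\K/\K)$ as soon as one knows $d(\K/H)$ for a single order-two $H$. Equivalently, by the adjunction each summand map $\uparrow_H^\K \underline{\Z} \to \underline{\Z}$ is determined by a map $\underline{\Z} \to \downarrow_H^\K \underline{\Z}$ of $C_2$-Mackey functors, which is read off from the known $C_2$-equivariant chain-level differential for $\downarrow_H^\K S^{2\overline{\rho}} \cong S^{2+4\sigma}$; these adjoints are units, so the $\K/\K$-level map is $\Z^3 \xrightarrow{(2,2,2)} \Z$ with cokernel $\F_2$, exactly as the paper asserts. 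Your alternative route through the $H\underline{I}$ long exact sequence does yield the correct answer (since $\underline{\pi}_3(\Sigma^{2\overline{\rho}} H\underline{I}) = \underline{0}$ and the connecting map $\langle \F_2 \rangle \to \phi_{LDR}^*\underline{\Z}$ necessarily vanishes), so nothing is wrong with the conclusion, but the detour is unnecessary, and the stated justification for it misstates what subgroup data does and does not determine.
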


For further suspensions, we continue using the spectral sequence argument, but we will begin seeing extension problems at this point. The $E^1$-page of the spectral sequence computing the $3\overline{\rho}$-suspension is as follows: 

\begin{center}
	\begin{tikzpicture}[scale=1.5,font=\tiny]
	
            	 
	\draw[->] (0,-0.5) to (0,2.5); 
	\draw[->] (-0.5,0) to (5.5,0);
	
	\node at (5.75,0) {\large $i$};
	\node at (0,2.75) {\large $j$};
	
	\node at (3,2.75) {\large $E^1_{i,j}-\text{page}$};
	
			
	\node[draw, fill=white, inner sep=2pt, circle] at (0,0) {};
	\node[fill=white] (20) at (2,0) {\large $\whitepentdual$}; 
	\node[draw, fill=black, inner sep=2pt, circle] at (3,0) {};
	\node[draw, fill=black, inner sep=3pt, trapezium] (40) at(4,0) {};

	\node (21) at (2,1) {\large $\widehat{\whitecirc}$};
	
	\node (41) at (4,1) {\large $\widehat{\fillcirc}$};
	
	\node[fill=white] at (0,1) {\large $\underline{0}$};
	\node[fill=white] at (0,2) {\large $\underline{0}$};
	
	\node[fill=white] at (1,0) {\large $\underline{0}$};
	\node at (1,1) {\large $\underline{0}$};	
	\node at (1,2) {\large $\underline{0}$};	
	
	\node at (2,2) {\large $\underline{0}$};
	
	\node at (3,1) {\large $\underline{0}$};
	\node at (3,2) {\large $\underline{0}$};
	
	\node at (4,2) {\large $\underline{0}$};
	
	\node[fill=white] at (5,0) {\large $\cdots$};
	
	\foreach \n in {1,...,2}{ 
		\node at (5,\n) {\large $\cdots$};
	}
	
			
	\draw[orddiff, ->] (21) to (20);
	\draw[orddiff, ->] (41) to (40);
	\end{tikzpicture} \\
	Key: $\whitecirc = \langle \Z \rangle$; \quad $\widehat{\whitecirc} = \bigoplus_H \uparrow_H^\K \langle \Z \rangle$; \quad $\widehat{\fillcirc} = \bigoplus_H \uparrow_H^\K \langle \F_2 \rangle$; \quad $\whitepentdual = \phi_{LDR}^* \underline{\Z}^*$; \quad $\filltrap = \underline{mg}$
\end{center}
After resolving the differentials, we obtain the following: 
\begin{center}
	\begin{tikzpicture}[scale=1.5,font=\tiny]
	
            	 
	\draw[->] (0,-0.5) to (0,2.5); 
	\draw[->] (-0.5,0) to (5.5,0);
	
	\node at (5.75,0) {\large $i$};
	\node at (0,2.75) {\large $j$};
	
	\node at (3,2.75) {\large $E^2_{i,j}-\text{page}$};
	
			
	\node[draw, fill=white, inner sep=2pt, circle] at (0,0) {};
	\node[draw, fill=black, inner sep=2pt, circle, text=white] at (2,0) {$3$}; 
	\node[draw, fill=black, inner sep=2pt, circle] at (3,0) {};
	\node[draw, fill=black, inner sep=2pt, circle, text=white] at (4,0) {$2$};

	\node at (2,1) {$\phiLDRf$};
	
	\node at (4,1) {\large $E^2_{4,1}$};
	
	\node[fill=white] at (0,1) {\large $\underline{0}$};
	\node[fill=white] at (0,2) {\large $\underline{0}$};
	
	\node[fill=white] at (1,0) {\large $\underline{0}$};
	\node at (1,1) {\large $\underline{0}$};	
	\node at (1,2) {\large $\underline{0}$};	
	
	\node at (2,2) {\large $\underline{0}$};
	
	\node at (3,1) {\large $\underline{0}$};
	\node at (3,2) {\large $\underline{0}$};
	
	\node at (4,2) {\large $\underline{0}$};
	
	\node[fill=white] at (5,0) {\large $\cdots$};
	
	\foreach \n in {1,...,2}{ 
		\node at (5,\n) {\large $\cdots$};
	}
	
	\end{tikzpicture} \\
	Key: $\whitecirc = \langle \Z \rangle$; \quad $\fillcirc = \langle \F_2 \rangle$; \quad $\begin{tikzpicture} \node[draw, fill=black, text=white, circle, inner sep=0.5pt] at (0,0) {\tiny $n$}; \end{tikzpicture} = \langle \F_2 \rangle^{\oplus n}$; \quad $\phiLDRf = \phi_{LDR}^* \underline{f}$
\end{center}
There isn't room for differentials, so the spectral sequence collapses. However, we're left with an extension problem for $\underline{\pi}^\K_3(\Sigma^{3\overline{\rho}} H\underline{A})$. We will refer to the following extension problem as the ``main extension problem" for $\underline{\pi}^\K_3(\Sigma^{3\overline{\rho}} H\underline{A})$: 
\[ \underline{0} \rightarrow \langle \F_2 \rangle \rightarrow \underline{\pi}^\K_3(\Sigma^{3\overline{\rho}} H\underline{A}) \rightarrow  \phi_{LDR}^* \underline{f} \rightarrow \underline{0}. \]
From the main extension problem, we can see that levelwise the exact sequence is split, and the restrictions of the extension must be zero. However, we cannot deduce the transfers of the extension from this information. We'll see that the extension is trivial once we analyze another extension problem. Recall our comparison (i.e. cofiber sequence) 
\[ H\underline{I} \rightarrow H\underline{A} \rightarrow H\underline{\Z}. \]
The long exact sequence in homotopy gives us another extension problem for $\underline{\pi}^\K_3(\Sigma^{3\overline{\rho}} H\underline{A})$, which we'll refer to as the ``coefficients extension problem". This extension takes the following form: 
\[ \underline{0} \rightarrow \phi_{LDR}^* \underline{f} \rightarrow \underline{\pi}^\K_3(\Sigma^{3\overline{\rho}} H\underline{A}) \rightarrow \phi_{LDR}^* \underline{\F_2} \oplus \langle \F_2 \rangle \rightarrow \langle \F_2^3 \rangle. \]
This sequence is enough to force the transfers in $\underline{\pi}^\K_3(\Sigma^{3\overline{\rho}} H\underline{A})$ to be 0, hence the extension is a trivial one. We obtain the homotopy of the $3\overline{\rho}$-suspension: 

\begin{prop}
	The non-zero homotopy of the $3\overline{\rho}$-suspension of $H\underline{A}$ is as follows: 
	\[
	\underline{\pi}_n^\K(\Sigma^{3\overline{\rho}} H\underline{A}) \cong
	\begin{cases}
	\langle \Z \rangle & n=0 \\
	\langle \F_2^3 \rangle & n=2 \\
	\phi_{LDR}^* \underline{f} \oplus \langle \F_2 \rangle & n=3 \\
	\langle \F_2^2 \rangle & n=4 \\
	\underline{\pi}_n(\Sigma^{3\overline{\rho}} H\underline{\Z}) & n \geq 5
	\end{cases}
	\]
\end{prop}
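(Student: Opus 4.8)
The plan is to run the double-complex spectral sequence for the splitting $S^{3\overline{\rho}} \simeq S^{2\overline{\rho}} \wedge S^{\overline{\rho}}$, exactly as in the computation of $\underline{\pi}_*^\K(\Sigma^{2\overline{\rho}} H\underline{A})$, and then to resolve the single nontrivial extension that remains in total degree $3$. Concretely, form the double complex $\underline{D}_{i,j} \cong \widetilde{\underline{C}}_i(S^{2\overline{\rho}}) \boxtimes \widetilde{\underline{C}}_j(S^{\overline{\rho}})$ whose total complex is the reduced cellular chain complex of $S^{3\overline{\rho}}$ (here $\underline{A}$ is the box-product unit), and take homology in the $i$-direction first. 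By \cref{boxformula}, exactness of $\uparrow_H^\K$ and $\downarrow_H^\K$, and the product $\K$-CW structure of $S^{\overline{\rho}}$ (one fixed $0$-cell, cells of type $\K/L,\K/D,\K/R$ in dimension $1$, and free cells in dimensions $2$ and $3$; cf.\ \cref{RhoSphereCWStructure}), this gives
\[
E^1_{i,j} \cong \bigoplus_{H_m} \uparrow_{H_m}^\K \downarrow_{H_m}^\K \underline{\pi}_i^\K(\Sigma^{2\overline{\rho}} H\underline{A}),
\]
with $H_m$ ranging over the stabilizers of the $j$-cells. Substituting the previous proposition gives the $E^1$-page displayed above; the rows $j=2,3$ vanish in the relevant range because $\underline{\pi}_i^\K(\Sigma^{2\overline{\rho}} H\underline{A})$ vanishes at the underlying level for $i \leq 3$. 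Since \cref{PosConeComparison} already gives $\underline{\pi}_n^\K(\Sigma^{3\overline{\rho}} H\underline{A}) \cong \underline{\pi}_n^\K(\Sigma^{3\overline{\rho}} H\underline{\Z})$ for $n \geq 5$, it suffices to analyze the spectral sequence in total degrees $\leq 4$.

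Next I identify the relevant $d^1$-differentials, namely those emanating from the $j=1$ entries over $i=2$ and $i=4$. Restricting the whole spectral sequence along any inclusion $C_2 \cong L,D,R \hookrightarrow \K$ turns it into the double-complex spectral sequence for $\underline{\pi}_*^{C_2}(\Sigma^{3+6\sigma} H\underline{A})$, which is understood by the method of \cref{SigmaSusHAHomotopy}; since restriction is exact and strong symmetric monoidal, this pins down the differentials at every level other than $\K/\K$, and the $\K/\K$-component is then forced, the three restriction maps being jointly injective on the sources in question. Resolving these differentials produces the displayed $E^2$-page, and there is no room for any $d^r$ with $r\geq 2$ affecting total degree $\leq 4$, so $E^2 = E^\infty$ there. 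Reading off total degrees $0$, $2$, and $4$ yields $\langle \Z \rangle$, $\langle \F_2^3 \rangle$, and $\langle \F_2^2 \rangle$ respectively, with no extension to solve.

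The only real work is total degree $3$, where $E^\infty$ contributes the two pieces $\langle \F_2 \rangle$ and $\phi_{LDR}^* \underline{f}$, so that (as recorded above) $\underline{\pi}_3^\K(\Sigma^{3\overline{\rho}} H\underline{A})$ fits into the ``main extension problem''
\[
\underline{0} \rightarrow \langle \F_2 \rangle \rightarrow \underline{\pi}_3^\K(\Sigma^{3\overline{\rho}} H\underline{A}) \rightarrow \phi_{LDR}^* \underline{f} \rightarrow \underline{0}.
\]
Levelwise this splits, and the restriction maps of the extension are forced to vanish for torsion reasons, so the extension class is detected precisely by whether the three transfers $\tau_H^\K \colon \underline{\pi}_3^\K(\Sigma^{3\overline{\rho}} H\underline{A})(\K/H) \to \underline{\pi}_3^\K(\Sigma^{3\overline{\rho}} H\underline{A})(\K/\K)$ are nonzero for $H \in \{L,D,R\}$. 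To kill them I would feed \cref{IPosConeProp3} (giving $\underline{\pi}_3^\K(\Sigma^{3\overline{\rho}} H\underline{I}) \cong \phi_{LDR}^* \underline{f}$ and $\underline{\pi}_2^\K(\Sigma^{3\overline{\rho}} H\underline{I}) \cong \langle \F_2^3 \rangle$) together with Slone's computation of $\underline{\pi}_*^\K(\Sigma^{3\overline{\rho}} H\underline{\Z})$ into the long exact sequence of the coefficient cofiber sequence $H\underline{I} \to H\underline{A} \to H\underline{\Z}$; after checking that the connecting map out of $\underline{\pi}_4^\K(\Sigma^{3\overline{\rho}} H\underline{\Z})$ vanishes, this produces the ``coefficients extension''
\[
\underline{0} \rightarrow \phi_{LDR}^* \underline{f} \xrightarrow{\ \iota\ } \underline{\pi}_3^\K(\Sigma^{3\overline{\rho}} H\underline{A}) \xrightarrow{\ p\ } \phi_{LDR}^* \underline{\F_2} \oplus \langle \F_2 \rangle \xrightarrow{\ \delta\ } \langle \F_2^3 \rangle.
\]
Here $\ker p$ is the image of $\iota$, which is isomorphic to $\phi_{LDR}^* \underline{f}$ and hence vanishes at $\K/\K$, so $p$ is injective at $\K/\K$; and each transfer $\tau_H^\K$ of the target $\phi_{LDR}^* \underline{\F_2} \oplus \langle \F_2 \rangle$ is zero (on the $\phi_H^*\underline{\F_2}$-summand it is $tr_e^{C_2}$ of $\underline{\F_2}$, which vanishes by definition of $\underline{\F_2}$, and the remaining summands are already zero at level $\K/H$). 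For $x \in \underline{\pi}_3^\K(\Sigma^{3\overline{\rho}} H\underline{A})(\K/H)$ one then has $p(\tau_H^\K x) = \tau_H^\K(p x) = 0$, and injectivity of $p$ at $\K/\K$ gives $\tau_H^\K x = 0$. So all three transfers vanish, the main extension splits, and $\underline{\pi}_3^\K(\Sigma^{3\overline{\rho}} H\underline{A}) \cong \phi_{LDR}^* \underline{f} \oplus \langle \F_2 \rangle$, completing the proposition.

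I expect the main obstacle to be exactly this last paragraph: one must read $\underline{\pi}_3$ off the spectral sequence levelwise (seeing that the only surviving ambiguity lies in the transfers), locate the second, ``coefficients'' presentation of the same Mackey functor, and — the genuinely subtle point — verify that the relevant connecting homomorphism out of $\underline{\pi}_4^\K(\Sigma^{3\overline{\rho}} H\underline{\Z})$ is zero, so that $\phi_{LDR}^* \underline{f}$ really embeds as a sub-Mackey functor. The differential and collapse bookkeeping, by contrast, is routine once everything is restricted to the three copies of $C_2$.
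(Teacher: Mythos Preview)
Your proposal is correct and follows essentially the same approach as the paper: the same double-complex spectral sequence for $S^{3\overline{\rho}} \simeq S^{2\overline{\rho}} \wedge S^{\overline{\rho}}$, the same identification of the $d^1$-differentials via restriction to the three copies of $C_2$, and the same resolution of the degree-$3$ extension by comparing the ``main extension problem'' against the ``coefficients extension'' coming from $H\underline{I} \to H\underline{A} \to H\underline{\Z}$. In fact your final paragraph spells out the transfer-vanishing argument (injectivity of $p$ at $\K/\K$ combined with vanishing transfers in $\phi_{LDR}^*\underline{\F_2} \oplus \langle \F_2 \rangle$) more explicitly than the paper does, and you correctly flag the one point the paper leaves implicit, namely that the connecting map out of $\underline{\pi}_4^\K(\Sigma^{3\overline{\rho}} H\underline{\Z})$ vanishes.
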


As we continue on, we will run into another extension problem that cannot be resolved in the same way. We have the following $E^1$-page for computing $\underline{\pi}_n^\K(\Sigma^{4\overline{\rho}} H\underline{A})$ in the appropriate range: 

\begin{center}
	\begin{tikzpicture}[scale=1.5,font=\tiny]
	
            	 
	\draw[->] (0,-0.5) to (0,2.5); 
	\draw[->] (-0.5,0) to (6.5,0);
	
	\node at (6.75,0) {\large $i$};
	\node at (0,2.75) {\large $j$};
	
	\node at (3.5,2.75) {\large $E^1_{i,j}-\text{page}$};
	
			
	\node[draw, fill=white, inner sep=2pt, circle] at (0,0) {};
	
	\node[draw, fill=black, inner sep=2pt, circle, text=white] at (2,0) {$3$}; 
	
	\node[below left] at (3,0) {$\phiLDRf$};
	
	\node[draw, circle, fill=black, inner sep=2pt] at (3.25,0.1) {};
	
	\node[draw, fill=black, inner sep=2pt, circle, text=white] at(4,0) {$2$};
	\node[draw, fill=black, inner sep=3pt, regular polygon, regular polygon sides=5] (50) at (5,0) {};

	\node (31) at (3,1) {\large $\widehat{\whitecirc}$};
	
	\node (51) at (5,1) {\large $\widehat{\fillcirc}$};
	
	\node[fill=white] at (0,1) {\large $\underline{0}$};
	\node[fill=white] at (0,2) {\large $\underline{0}$};
	
	\node[fill=white] at (1,0) {\large $\underline{0}$};
	\node at (1,1) {\large $\underline{0}$};	
	\node at (1,2) {\large $\underline{0}$};	
	
	\node at (2,1) {\large $\underline{0}$};
	\node at (2,2) {\large $\underline{0}$};
	
	\node at (3,2) {\large $\underline{0}$};
	
	\node at (4,1) {\large $\underline{0}$};
	\node at (4,2) {\large $\underline{0}$};
	
	\node at (5,2) {\large $\underline{0}$};
	
	\foreach \n in {1,...,2}{ 
		\node at (6,\n) {\large $\cdots$};
	}
	
	\node[fill=white] at (6,0) {$\cdots$};	
	
			
	\draw[orddiff, ->] (31) to (30);
	\draw[orddiff, ->] (51) to (50);
	
	\end{tikzpicture} \\
	Key: $\whitecirc = \langle \Z \rangle$; \quad $\widehat{\whitecirc} = \bigoplus\limits_H \uparrow_H^\K \langle \Z \rangle$; \quad $\phiLDRf = \phi_{LDR}^* \underline{f}$; \quad $\fillpent = \phi_{LDR}^* \underline{\F_2}$; \quad $\begin{tikzpicture} \node[draw, fill=black, text=white, circle, inner sep=0.5pt] at (0,0) {\tiny $n$}; \end{tikzpicture} = \langle \F_2 \rangle^{\oplus n}$; \quad $\widehat{\fillcirc} = \bigoplus\limits_H \uparrow_H^\K \langle \F_2 \rangle$
\end{center}
Resolving the differentials yields the following $E^2$-page: 
\begin{center}
	\begin{tikzpicture}[scale=1.5,font=\tiny]
	
            	 
	\draw[->] (0,-0.5) to (0,2.5); 
	\draw[->] (-0.5,0) to (6.5,0);
	
	\node at (6.75,0) {\large $i$};
	\node at (0,2.75) {\large $j$};
	
	\node at (3.5,2.75) {\large $E^2_{i,j}-\text{page}$};
	
			
	\node[draw, fill=white, inner sep=2pt, circle] at (0,0) {};
	
	\node[draw, fill=black, inner sep=2pt, circle, text=white] at (2,0) {$3$}; 
	
	\node[draw, circle, fill=black, inner sep=2pt] at (3,0) {};
	
	\node[draw, fill=black, inner sep=2pt, circle, text=white] at(4,0) {$2$};
	
	\node[draw, fill=black, inner sep=2pt, circle, text=white] (50) at (5,0) {$3$};

	\node (31) at (3,1) {\large $\whitepent$};
	
	\node (51) at (5,1) {\large $E^2_{5,1}$};
	
	\node[fill=white] at (0,1) {\large $\underline{0}$};
	\node[fill=white] at (0,2) {\large $\underline{0}$};
	
	\node[fill=white] at (1,0) {\large $\underline{0}$};
	\node at (1,1) {\large $\underline{0}$};	
	\node at (1,2) {\large $\underline{0}$};	
	
	\node at (2,1) {\large $\underline{0}$};
	\node at (2,2) {\large $\underline{0}$};
	
	\node at (3,2) {\large $\underline{0}$};
	
	\node at (4,1) {\large $\underline{0}$};
	\node at (4,2) {\large $\underline{0}$};
	
	\node at (5,2) {\large $\underline{0}$};
	
	\foreach \n in {1,...,2}{ 
		\node at (6,\n) {\large $\cdots$};
	}
	
	\node[fill=white] at (6,0) {$\cdots$};	
	
	\end{tikzpicture} \\
	Key: $\whitecirc = \langle \Z \rangle$; \quad $\whitepent = \phi_{LDR}^* \underline{\Z}$; \quad $\fillcirc = \langle \F_2 \rangle$; \quad $\begin{tikzpicture} \node[draw, fill=black, text=white, circle, inner sep=0.5pt] at (0,0) {\tiny $n$}; \end{tikzpicture} = \langle \F_2 \rangle^{\oplus n}$
\end{center}
Again, there isn't any room for differentials. So, the spectral sequence collapses but we still get an extension problem for $\underline{\pi}^\K_4(\Sigma^{4\overline{\rho}} H\underline{A})$. We'll refer to this extension problem again as the ``main extension problem", and it takes the following form: 
\[ \underline{0} \rightarrow \langle \F_2^2 \rangle \rightarrow \underline{\pi}_4^\K(\Sigma^{4\overline{\rho}} H\underline{A}) \rightarrow \phi_{LDR}^* \underline{\Z} \rightarrow \underline{0}. \]
This extension problem is enough to force a levelwise split exact sequence and the restrictions in $\underline{\pi}_4^\K(\Sigma^{4\overline{\rho}}H\underline{A})$. We are stuck, again, with understanding the transfers. We will see that the sequence is split as a sequence of Mackey functors, but the proof will come later with the multiplicative structure at play here.

\begin{proposition}
	There is an isomorphism
	\[ \underline{\pi}_4^\K(\Sigma^{4\overline{\rho}} H\underline{A}) \cong \phi_{LDR}^* \underline{\Z} \oplus \langle \F_2^2 \rangle. \]
\end{proposition}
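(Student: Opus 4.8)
The plan is to resolve the ``main extension problem'' recorded just above,
\[ \underline{0} \rightarrow \langle \F_2^2 \rangle \rightarrow \underline{\pi}^\K_4(\Sigma^{4\overline{\rho}} H\underline{A}) \rightarrow \phi_{LDR}^* \underline{\Z} \rightarrow \underline{0}, \]
by exploiting the multiplicative structure of the $RO(\K)$-graded Green functor $\underline{\pi}^\K_\star(H\underline{A})$. As already noted, this extension is split levelwise and the restriction maps of $\underline{\pi}^\K_4(\Sigma^{4\overline{\rho}} H\underline{A})$ are forced; fixing a group-theoretic splitting we have $\underline{\pi}^\K_4(\Sigma^{4\overline{\rho}} H\underline{A})(\K/\K) \cong \Z^3 \oplus \F_2^2$ with $\F_2^2$ the image of $\langle\F_2^2\rangle(\K/\K)$, a copy of $\Z$ at each level $\K/H$ with $|H|=2$, and $\underline{0}$ at $\K/e$. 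So it suffices to check, for each such $H$, that $tr_H^\K$ of a generator of $\underline{\pi}^\K_4(\Sigma^{4\overline{\rho}} H\underline{A})(\K/H)$ has no component in the $\F_2^2$-summand, i.e.\ that it matches the transfer $2i_H$ of $\phi_{LDR}^*\underline{\Z}$.

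First I would fix multiplicative generators in the fully understood suspension $\underline{\pi}^\K_2(\Sigma^{2\overline{\rho}} H\underline{A}) \cong \phi_{LDR}^*\underline{\Z}$: choose $u_L, u_D, u_R \in \underline{\pi}^\K_{2-2\overline{\rho}}(H\underline{A})(\K/\K) = \Z^3$ with $u_H$ a generator of the $\phi_H^*\underline{\Z}$-summand. Since $\phi_H^*\underline{\Z}$ is supported only at $\K/\K$ and $\K/H$, one has $res_{H'}^\K(u_H)=0$ for $H' \ne H$, while $res_H^\K(u_H)$ generates $\underline{\pi}^\K_2(\Sigma^{2\overline{\rho}} H\underline{A})(\K/H) \cong \Z$. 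Squaring gives $u_H^2 \in \underline{\pi}^\K_4(\Sigma^{4\overline{\rho}} H\underline{A})(\K/\K)$. As restriction is a ring homomorphism, $res_{H'}^\K(u_H^2)=0$ for $H'\ne H$, and $res_H^\K(u_H^2) = (res_H^\K u_H)^2$. To see this square is a generator of $\underline{\pi}^\K_4(\Sigma^{4\overline{\rho}} H\underline{A})(\K/H) \cong \Z$, restrict to $C_2 \cong H$: using $res_H^\K\overline{\rho} \cong 1 + 2\sigma$, this is the square of a generator of $\underline{\pi}^{C_2}_{-4\sigma}(H\underline{A})(C_2/C_2) \cong \Z$, and since powers of the Euler class $a_\sigma$ generate $\underline{\pi}^{C_2}_{-n\sigma}(H\underline{A})(C_2/C_2) \cong \Z$ for $n \ge 1$ with $a_\sigma^m a_\sigma^n = a_\sigma^{m+n}$, the square of a generator is again a generator. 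Consequently, under our splitting, the $\Z^3$-coordinate of $u_H^2$ equals $\pm e_H$, with some a priori unknown $\F_2^2$-component $c_H$.

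The crux is then Frobenius reciprocity, which holds in $\underline{\pi}^\K_\star(H\underline{A})$ because $H\underline{A}$ is a commutative ring $\K$-spectrum. Writing $x_H := res_H^\K(u_H^2) = res_H^\K(u_H) \cdot res_H^\K(u_H)$ for the chosen generator at level $\K/H$,
\[ tr_H^\K(x_H) = tr_H^\K\!\big( res_H^\K(u_H)\cdot res_H^\K(u_H)\big) = u_H \cdot tr_H^\K\!\big(res_H^\K(u_H)\big). \]
The element $tr_H^\K(res_H^\K(u_H))$ is computed entirely inside the known Mackey functor $\underline{\pi}^\K_2(\Sigma^{2\overline{\rho}} H\underline{A}) \cong \phi_{LDR}^*\underline{\Z}$: since $res_H^\K(u_H)$ generates the $\Z$ at level $\K/H$ and the transfer $\K/H \to \K/\K$ of $\phi_{LDR}^*\underline{\Z}$ is $2i_H$ (see \cref{KZoo}), it equals $2u_H$. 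Hence
\[ tr_H^\K(x_H) = u_H \cdot 2u_H = 2 u_H^2 = \pm 2 e_H + 2 c_H = \pm 2 e_H \in \Z^3 \subseteq \Z^3 \oplus \F_2^2, \]
since $2 c_H = 0$. So the transfer of a generator at $\K/H$ lands in the free summand and matches that of $\phi_{LDR}^*\underline{\Z}$; together with the already-determined underlying groups and restrictions, this exhibits $\Z^3$ (with the copies of $\Z$ at the $\K/H$ and $\underline{0}$ at $\K/e$, and the induced structure maps) as a sub-Mackey functor of $\underline{\pi}^\K_4(\Sigma^{4\overline{\rho}} H\underline{A})$ isomorphic to $\phi_{LDR}^*\underline{\Z}$ and complementary to $\langle\F_2^2\rangle$. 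Therefore the extension splits and $\underline{\pi}^\K_4(\Sigma^{4\overline{\rho}} H\underline{A}) \cong \phi_{LDR}^*\underline{\Z} \oplus \langle\F_2^2\rangle$.

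I expect the main obstacle to be bookkeeping rather than depth: one must verify that $u_H$ lifts to an honest $RO(\K)$-graded homotopy class with the stated restrictions and, crucially, that $(res_H^\K u_H)^2$ is a genuine generator and not, say, twice a generator — if $x_H$ were twice a generator the displayed identity $tr_H^\K(x_H) = \pm 2e_H$ would give no control over the $2$-torsion part of $tr_H^\K$ of the actual generator. This is exactly where the reduction to the order-$2$ subgroups (an essentially $C_2$-equivariant computation with powers of $a_\sigma$) does the real work. One also needs a little care with signs and with the identification of the chosen splitting so that ``$\pm 2e_H$'' truly lands in the distinguished free summand; none of this affects splitness.
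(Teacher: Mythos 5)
Your proof is correct and matches the paper's argument essentially step for step (see \cref{extensionresolution}): your $u_H$ is the paper's distinguished class $x_H \in \underline{\pi}^\K_{2-2\overline{\rho}}(H\underline{A})(\K/\K)$, and the Frobenius reciprocity computation $\tau_H^\K(res_H^\K x_H^2) = x_H\,\tau_H^\K(res_H^\K x_H) = 2x_H^2$ is the same identity you use. Your explicit check, by restriction to $C_2 \cong H$ and powers of $a_\sigma$, that $(res_H^\K u_H)^2$ is a genuine generator is a useful expansion of a step the paper handles implicitly by pinning down $res_H^\K x_H = a_\sigma^4$.
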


\begin{proof}
See \cref{extensionresolution}.
\end{proof}
We read off the homotopy from the $E^2$-page as follows: 
\begin{theorem}
	The non-zero homotopy of the $4\overline{\rho}$-suspension of $H\underline{A}$ is 
	\[
	\underline{\pi}_n^\K(\Sigma^{4\overline{\rho}} H\underline{A}) \cong
	\begin{cases}
		\langle \Z \rangle & n=0 \\
		\langle \F_2^3 \rangle & n=2 \\
		\langle \F_2 \rangle & n=3 \\
		\phi_{LDR}^* \underline{\Z} \oplus \langle \F_2^2 \rangle & n=4 \\
		\langle \F_2^3 \rangle & n=5 \\
		\underline{\pi}^\K_n(\Sigma^{4\overline{\rho}} H\underline{\Z}) & n \geq 6
	\end{cases}
	\]
\end{theorem}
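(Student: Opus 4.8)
The plan is to read the answer off the collapsed double-complex spectral sequence for $S^{4\overline{\rho}} \simeq S^{3\overline{\rho}} \wedge S^{\overline{\rho}}$ displayed above, combining three inputs: the $E^2 = E^\infty$ page in the range $n \le 5$, the resolution of the single surviving extension problem in total degree $4$, and the comparison with $H\underline{\Z}$ from \cref{PosConeComparison} in degrees $n \ge 6$.

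First I would pin down the $E^1$-page. Its $0$th row is, cell by cell, $\underline{\pi}_\bullet^\K(\Sigma^{3\overline{\rho}} H\underline{A})$ as computed in the previous proposition, and the rows with $j > 0$ are obtained by applying $\bigoplus_H \uparrow_H^\K \downarrow_H^\K$ to the $j$-cell data of $S^{\overline{\rho}}$; since induction and restriction are exact they commute with passage to horizontal homology. Next I would resolve the $d^1$-differentials. Only the differentials out of bidegrees $(3,1)$ and $(5,1)$ can affect $\underline{\pi}_n^\K$ for $n \le 5$, and both are pinned down after restriction to each proper subgroup $\{e\} < H < \K$, where the computation reduces to the known $C_2$-equivariant homotopy of $\downarrow_H^\K S^{\overline{\rho}} \cong S^{1+2\sigma}$. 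This yields the $E^2$-page shown, and inspection of the figure shows no room for $d^r$ with $r \ge 2$ in the relevant range, so $E^2 = E^\infty$ there.

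For $n \in \{0,2,3,5\}$ the $E^\infty$-page has a single nonzero entry in total degree $n$, so the homotopy Mackey functor is read off immediately as $\langle \Z \rangle$, $\langle \F_2^3 \rangle$, $\langle \F_2 \rangle$, and $\langle \F_2^3 \rangle$ respectively. In total degree $4$ there are two surviving classes, assembling into the main extension problem
\[ \underline{0} \to \langle \F_2^2 \rangle \to \underline{\pi}_4^\K(\Sigma^{4\overline{\rho}} H\underline{A}) \to \phi_{LDR}^* \underline{\Z} \to \underline{0}, \]
which \cref{extensionresolution} shows is split, giving $\underline{\pi}_4^\K(\Sigma^{4\overline{\rho}} H\underline{A}) \cong \phi_{LDR}^* \underline{\Z} \oplus \langle \F_2^2 \rangle$. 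Finally, for $n \ge 6 = k+2$ I would invoke \cref{PosConeComparison} to identify $\underline{\pi}_n^\K(\Sigma^{4\overline{\rho}} H\underline{A}) \cong \underline{\pi}_n^\K(\Sigma^{4\overline{\rho}} H\underline{\Z})$. Assembling these cases gives the stated list.

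The main obstacle is the degree-$4$ extension problem. As already noted, the $E^\infty$-page together with the levelwise-split structure forces the restriction maps of the extension to vanish, but gives no handle on the transfers; likewise the long exact sequence coming from $H\underline{I} \to H\underline{A} \to H\underline{\Z}$ does not by itself pin down the extension. I expect the resolution to go through the multiplicative structure of $\underline{\pi}_\star^\K(H\underline{A})$: one exhibits a generator of the $\phi_{LDR}^* \underline{\Z}$ summand as a product of classes coming from lower $\overline{\rho}$-suspensions that are already known to be annihilated by the relevant transfers, so that it lifts to a class with vanishing transfer in $\underline{\pi}_4^\K(\Sigma^{4\overline{\rho}} H\underline{A})$ and thereby splits the sequence — this being the content of the cited \cref{extensionresolution}.
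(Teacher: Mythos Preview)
Your proposal is correct and follows essentially the same approach as the paper: the double-complex spectral sequence for $S^{3\overline{\rho}} \wedge S^{\overline{\rho}}$, differentials resolved via $C_2$-restriction, collapse at $E^2$, the degree-$4$ extension handled by the forward reference, and \cref{PosConeComparison} for $n \ge 6$.

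One small correction to your final speculative paragraph: the mechanism in \cref{extensionresolution} is not that the relevant classes have \emph{vanishing} transfer. Rather, Frobenius reciprocity gives $\tau_H^\K(\res_H^\K x_H^2) = x_H \cdot \tau_H^\K(\res_H^\K x_H) = 2x_H^2$, so the transfer of the generator at level $\K/H$ is explicitly $2x_H^2$, landing in the $\phi_{LDR}^*\underline{\Z}$ part and matching its transfers exactly; this is what forces the extension to split. Since you are only citing the result, this does not affect the validity of your argument.
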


Now, consecutive suspensions will be computed by collapsing spectral sequences, each with one extension problem (in the appropriate range). Further, we will always be able to resolve the extension.

\begin{theorem}
	Let $k \geq 5$. The extension problems in the spectral sequences computing $\underline{\pi}_n(\Sigma^{k\overline{\rho}} H\underline{A})$ in $n \leq k+2$ will occur at $n=k$ and resolve to the following extensions:  
	\[ \underline{\pi}^\K_k(\Sigma^{k\overline{\rho}} H\underline{A}) \cong 
	\begin{cases}
		\phi_{LDR}^* \underline{f} \oplus \langle \F_2^{k-2} \rangle & k \text{ odd} \\
		\phi_{LDR}^* \underline{\Z} \oplus \langle \F_2^{k-2} \rangle & k \text{ even}
	\end{cases}
	\]
\end{theorem}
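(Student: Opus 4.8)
The plan is to prove the theorem by induction on $k$, the cases $k \le 4$ being established above. Assume $\underline{\pi}_n^\K(\Sigma^{(k-1)\overline{\rho}} H\underline{A})$ is known for $n \le (k-1)+2$, and split $S^{k\overline{\rho}} \simeq S^{(k-1)\overline{\rho}} \wedge S^{\overline{\rho}}$. As in \cref{HAPositiveCone}, I would form the double complex $\underline{D}_{i,j} \cong \widetilde{\underline{C}}_i(S^{(k-1)\overline{\rho}}) \boxtimes \widetilde{\underline{C}}_j(S^{\overline{\rho}})$ and pass to the spectral sequence obtained by taking horizontal homology first. Using exactness of induction and restriction together with \cref{RhoSphereCWStructure}, the $E^1$-page is
\[ E^1_{i,j} \cong \bigoplus_{H_m} \uparrow_{H_m}^\K \downarrow_{H_m}^\K \underline{\pi}_i^\K(\Sigma^{(k-1)\overline{\rho}} H\underline{A}), \]
with $H_m$ ranging over the stabilizers of the $j$-cells of $S^{\overline{\rho}}$, so the only nonzero columns are $j \le 3$ and --- since $S^{(k-1)\overline{\rho}}$ has free cells only above level $k-1$ and $\Sigma^{(k-1)\overline{\rho}} H\underline{A}$ has bounded underlying homotopy --- only $j \in \{0,1\}$ meet the range $i+j \le k+2$. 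The $j=0$ row is $\underline{\pi}_\bullet^\K(\Sigma^{(k-1)\overline{\rho}} H\underline{A})$, supplied by the inductive hypothesis; the $j=1$ row is obtained by inducing up its restrictions to $L$, $D$, $R$, which are read off from the $C_2$-homotopy of $\downarrow_H^\K \Sigma^{(k-1)\overline{\rho}} H\underline{A} \simeq \Sigma^{(k-1)(1+2\sigma)} H(\downarrow_H^\K \underline{A})$ computed in \cref{sigma_Hhomology} and \cref{OneRepHomotopyProp}.

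Next I would pin down the $d^1$-differentials. As in every earlier instance of this argument, on each level $\K/H$ with $H \ne \K$ the differential is forced, because restricting the entire spectral sequence to a proper subgroup reduces it to an (at most) $C_2$-equivariant computation that is already understood; on the top level $\K/\K$ it is then determined by Mackey-functor naturality with respect to these restrictions, giving the familiar maps (multiplication by $2$, $\nabla$, $\Delta$, and so on). Since the columns form a bounded strip $j \le 3$, no higher differentials can touch the range $i + j \le k+2$, so there $E^2 = E^\infty$. Reading the diagonal $i+j = n$: for $n < k$ the associated graded lies in a single filtration degree and reproduces the stage-$(k-1)$ pattern, for $k < n \le k+2$ it yields the values already forced by \cref{PosConeComparison}, and at $n = k$ one is left with the short exact sequence of Mackey functors
\[ \underline{0} \to \langle \F_2^{k-2} \rangle \to \underline{\pi}_k^\K(\Sigma^{k\overline{\rho}} H\underline{A}) \to P_k \to \underline{0}, \qquad P_k = \begin{cases} \phi_{LDR}^* \underline{f} & k \text{ odd} \\ \phi_{LDR}^* \underline{\Z} & k \text{ even}, \end{cases} \]
the ``main extension problem''.

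Finally I would resolve this extension. Levelwise it splits --- at $\K/\K$ because $P_k$ is free there, at $\K/H$ because the kernel vanishes there, at $\K/e$ trivially --- and all restrictions of $\underline{\pi}_k^\K(\Sigma^{k\overline{\rho}} H\underline{A})$ are forced to be zero, since no $\F_2$-torsion class can map to $\Z_-$ or into a free group; the only remaining ambiguity is the transfers $tr_H^\K$ out of the level-$\K/H$ summands of $P_k$. To kill them I would invoke the comparison cofiber sequence $H\underline{I} \to H\underline{A} \to H\underline{\Z}$: by \cref{IPosConeThm} we have $\underline{\pi}_{k+1}^\K(\Sigma^{k\overline{\rho}} H\underline{I}) = \underline{0}$, so the long exact sequence exhibits $\underline{\pi}_k^\K(\Sigma^{k\overline{\rho}} H\underline{I}) \cong P_k$ as a \emph{sub}-Mackey functor of $\underline{\pi}_k^\K(\Sigma^{k\overline{\rho}} H\underline{A})$, with quotient a submodule of $\underline{\pi}_k^\K(\Sigma^{k\overline{\rho}} H\underline{\Z})$ as computed in \cite{Slone}. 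The composite of this inclusion with the projection of the main extension is an endomorphism of $P_k$ that is an isomorphism on each level $\K/H$ (the restrictions of $P_k$ being the coordinate projections $p_i$), hence an isomorphism; so the coefficients inclusion splits the main extension and the transfers vanish --- exactly as in the $k=3$ case. Where, for even $k$, the relevant levels of $\underline{\pi}_k^\K(\Sigma^{k\overline{\rho}} H\underline{\Z})$ do not make this comparison immediate, I would instead propagate the splitting from stage $k-1$ by the multiplicative argument of \cref{extensionresolution}, using multiplication by the Euler classes $a_{\sigma_L}, a_{\sigma_D}, a_{\sigma_R}$ with $a_{\overline{\rho}} = a_{\sigma_L} a_{\sigma_D} a_{\sigma_R}$. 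I expect this last step --- getting simultaneous control of $\underline{\pi}_k^\K(\Sigma^{k\overline{\rho}} H\underline{I})$, of the relevant part of Slone's computation of $\underline{\pi}_k^\K(\Sigma^{k\overline{\rho}} H\underline{\Z})$, and of the compatibility of the two extensions (equivalently, of the module structure over $\underline{\pi}_\star^\K(H\underline{A})$) --- to be the only genuine obstacle; the remainder is a mechanical continuation of the analysis already carried out for $k \le 4$.
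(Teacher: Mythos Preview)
Your inductive spectral-sequence setup and the identification of the $E^1$-page, differentials, and the ``main extension problem'' at $n=k$ all match the paper's approach exactly. The gap is in the resolution of that extension.

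Your splitting argument does not work as stated. The inclusion $P_k \hookrightarrow \underline{\pi}_k^\K(\Sigma^{k\overline{\rho}} H\underline{A})$ coming from the coefficients sequence is \emph{not} an isomorphism at level $\K/H$: restricting the long exact sequence to $H \cong C_2$ gives
\[
0 \to P_k(\K/H) \to \underline{\pi}_k^\K(\Sigma^{k\overline{\rho}} H\underline{A})(\K/H) \to \underline{\pi}_k^\K(\Sigma^{k\overline{\rho}} H\underline{\Z})(\K/H) \to 0,
\]
and the rightmost term is $\underline{\pi}_0^{C_2}(\Sigma^{2k\sigma} H\underline{\Z})(C_2/C_2) \cong \F_2$, so the inclusion is multiplication by~$2$. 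Consequently the composite $P_k \to \underline{\pi}_k \to P_k$ is multiplication by~$2$, not an isomorphism, and does not split the main extension.

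The paper resolves the two parities differently. For \emph{odd} $k$, one uses the \emph{quotient} map rather than the inclusion: since $P_k(\K/\K)=\phi_{LDR}^*\underline{f}(\K/\K)=0$, the map $\underline{\pi}_k^\K(\Sigma^{k\overline{\rho}} H\underline{A}) \to \underline{\pi}_k^\K(\Sigma^{k\overline{\rho}} H\underline{\Z})$ is injective at level $\K/\K$, and the target has vanishing transfers (by Slone's computation, $\underline{\pi}_k^\K(\Sigma^{k\overline{\rho}} H\underline{\Z}) \cong \phi_{LDR}^*\underline{\F_2} \oplus \langle \F_2\rangle^{\oplus(k-2)}$); hence $tr_H^\K=0$ in $\underline{\pi}_k^\K(\Sigma^{k\overline{\rho}} H\underline{A})$ and the extension is trivial. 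For \emph{even} $k$, your fallback is exactly right and is what the paper does: the Frobenius reciprocity argument of \cref{extensionresolution} gives $\tau_H^\K(\res_H^\K x_H^{k/2}) = 2x_H^{k/2}$, which lies in the torsion-free summand, forcing the transfer to factor through $\phi_{LDR}^*\underline{\Z}$ and splitting the extension.
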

Putting this all together gives the positive cone for $H\underline{A}$. 
\begin{theorem}\label{APosConeThm}
	The positive cone for $H\underline{A}$ has the following form for $k \geq 5$:
	\[ 
	\underline{\pi}_n^\K(\Sigma^{k\overline{\rho}} H\underline{A}) \cong 
	\begin{cases}
		\langle \Z \rangle & n=0 \\
		\underline{0} & n=1 \\
		\langle \F_2 \rangle^{n+1} & 2 \leq n \leq k-1, n \text{ even} \\		
		\langle \F_2 \rangle^{n-2} & 2 \leq n \leq k-1, n \text{ odd} \\
		\phi_{LDR}^* \underline{f} \oplus \langle \F_2 \rangle^{k-2} & n=k \text{ odd} \\
		\phi_{LDR}^* \underline{\Z} \oplus \langle \F_2 \rangle^{k-2} & n=k \text{ even} \\
		\langle \F_2 \rangle^{k-2} & n=k+1 \\
		\underline{\pi}^\K_n(\Sigma^{k\overline{\rho}} H\underline{\Z}) & n \geq k+2
	\end{cases}
	\]
\end{theorem}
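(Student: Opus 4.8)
The plan is to prove this theorem by induction on $k$, with the cases $k\le4$ already computed and the immediately preceding theorem supplying $\underline{\pi}^\K_k(\Sigma^{k\overline{\rho}}H\underline{A})$ for all $k\ge5$. Concretely, for $k\ge5$ one takes as inductive hypothesis that the stated formula holds for $\Sigma^{(k-1)\overline{\rho}}H\underline{A}$ (the base of the induction being the already-established computation of $\Sigma^{4\overline{\rho}}H\underline{A}$), reads off all of $\underline{\pi}^\K_n(\Sigma^{k\overline{\rho}}H\underline{A})$ for $0\le n\le k+1$ from a collapsing spectral sequence, and invokes \cref{PosConeComparison} for $n\ge k+2$.

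For the inductive step I would write $S^{k\overline{\rho}}\simeq S^{(k-1)\overline{\rho}}\wedge S^{\overline{\rho}}$ and form the double complex $\underline{D}_{i,j}\cong\widetilde{\underline{C}}_i(S^{(k-1)\overline{\rho}})\boxtimes\widetilde{\underline{C}}_j(S^{\overline{\rho}})$, exactly as in \cref{ComparisonPositive}. Since $\underline{A}$ is the unit for $\boxtimes$, taking homology in the horizontal ($i$) direction and using exactness of induction and restriction gives
\[ E^1_{i,j}\cong\bigoplus_{H_m}\uparrow_{H_m}^\K\downarrow_{H_m}^\K\underline{\pi}^\K_i(\Sigma^{(k-1)\overline{\rho}}H\underline{A}), \]
with $H_m$ running over the stabilizers occurring among the $j$-cells of $S^{\overline{\rho}}$. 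By \cref{RhoSphereCWStructure} those cells are free for $j\ge2$, so in the rows $j\ge2$ the functor $\uparrow_e^\K\downarrow_e^\K$ kills everything except the underlying homology of $S^{(k-1)\overline{\rho}}$, which is concentrated in degree $3(k-1)>k+1$. Hence in the range $n\le k+1$ only the rows $j=0$ (equal to $\underline{\pi}^\K_i(\Sigma^{(k-1)\overline{\rho}}H\underline{A})$, known by induction) and $j=1$ (equal to $\bigoplus_H\uparrow_H^\K\downarrow_H^\K\underline{\pi}^\K_i(\Sigma^{(k-1)\overline{\rho}}H\underline{A})$) contribute, and the only differential is $d^1\colon E^1_{i,1}\to E^1_{i,0}$.

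This $d^1$ is natural in $S^{\overline{\rho}}$, hence detected after restriction to the proper subgroups $H<\K$, where $\downarrow_H S^{k\overline{\rho}}\simeq S^{k+2k\sigma}$ and the $C_2$-equivariant homotopy of $\Sigma^{k+2k\sigma}H\underline{A}$ is known; just as in the verifications already carried out for $k=2,3,4$, this pins down $d^1$ and shows its shape is eventually periodic in $k$, so that after $E^2$ there is no room for further differentials in the range $n\le k+1$ and the spectral sequence collapses there. The only extension problem in this range occurs at $n=k$ and is settled by the preceding theorem on the extension problems at $n=k$ for $k\ge5$, using the multiplicative structure together with the long exact sequence of $H\underline{I}\to H\underline{A}\to H\underline{\Z}$. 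Reading off $E^2=E^\infty$ then yields $\langle\Z\rangle$ at $n=0$ and $\underline{0}$ at $n=1$ (the relevant rows of the inductive hypothesis vanish or restrict to zero), the asserted powers of $\langle\F_2\rangle$ for $2\le n\le k-1$, the extension-resolved group at $n=k$, and $\langle\F_2\rangle^{k-2}$ at $n=k+1$; for $n\ge k+2$, \cref{PosConeComparison} identifies the answer with $\underline{\pi}^\K_n(\Sigma^{k\overline{\rho}}H\underline{\Z})$. I expect the main obstacle to be the bookkeeping in this last step — confirming that the $d^1$-pattern stabilizes to a genuinely $k$-independent form so that the kernels and cokernels on $E^2$ assemble to exactly $\langle\F_2\rangle^{n+1}$ for even $n$ and $\langle\F_2\rangle^{n-2}$ for odd $n$ throughout $2\le n\le k-1$, and that no unexpected extension appears at the boundary degrees $n=k-1$ and $n=k+1$ where the inductive-hypothesis row, the induced $j=1$ row, and the onset of agreement with $H\underline{\Z}$ all interact — rather than anything conceptual.
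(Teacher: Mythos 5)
Your proposal follows essentially the same inductive double-complex spectral sequence strategy the paper uses: decompose $S^{k\overline{\rho}} \simeq S^{(k-1)\overline{\rho}} \wedge S^{\overline{\rho}}$, note that only rows $j=0,1$ of the $E^1$-page contribute in degrees $n \le k+1$ (since the $j \ge 2$ cells are free and the underlying homotopy is concentrated in degree $3(k-1)$), pin down $d^1$ by restriction to proper subgroups, resolve the single extension at $n=k$ via the preceding theorem (Frobenius reciprocity and the $H\underline{I}\to H\underline{A}\to H\underline{\Z}$ sequence), and invoke \cref{PosConeComparison} for $n \ge k+2$. The bookkeeping you flag at the end is indeed the substantive remaining work, but the paper is equally terse at this point — it states the extension-resolution theorem and then asserts "putting this all together" gives the result — so your plan matches the paper's proof both in method and in level of completeness.
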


\section{The Negative Cone for $H\underline{A}$} \label{HANegativeCone}
	
	In this section, we'll compute the negative cone for $H\underline{A}$ away from where it agrees with the negative cone for $H\underline{\Z}$. Like the positive cone, this computation will be done using cofiber sequences to compute the first desuspension and spectral sequences to compute further desuspensions. Similarly, we will run into extension problems at $\underline{\pi}^\K_{-n}(\Sigma^{-n\overline{\rho}} H\underline{A})$ for $n > 0$. These will be solved with another comparison, this time with the short exact sequence 
	\[ \underline{0} \rightarrow \underline{\Z}^* \rightarrow \underline{A} \rightarrow \underline{J}. \]
	Here, the map $\underline{\Z}^* \rightarrow \underline{A}$ is given by sending $1 \in \underline{\Z}^*(\K/e)$ to $1 \in \underline{A}(\K/e)$ and $\underline{J}$ is the cokernel of this map. This gives a cofiber sequence 
	\[ H\underline{\Z}^* \rightarrow H\underline{A} \rightarrow H\underline{J}. \]
	Understanding $\underline{\pi}_{n+k\overline{\rho}}^\K(H\underline{J})$ is done through the same method as for $H\underline{I}$, so the homotopy will be depicted in \cref{fig:piK4HJ}.   
	
	\begin{prop}
		The non-zero homotopy of the $-\overline{\rho}$-suspension of $H\underline{A}$ is given by 
	\[
	\underline{\pi}_n^\K(\Sigma^{-\overline{\rho}} H\underline{A}) \cong 
	\begin{cases}
		\langle \Z \rangle & n=0 \\
		\underline{E} & n=-1 \\
		\underline{\Z}^* & n=-3. 
	\end{cases}
	\] 
	\end{prop}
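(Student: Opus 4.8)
The plan is to follow the dual of the strategy used for the positive cone of $H\underline{A}$, building up one irreducible representation at a time by means of the dual cofiber sequences $S^{-\sigma_H} \to \mathbb{S} \to \K/H_+$ recalled in \cref{Preliminaries}. This is exactly the method used just above for $\Sigma^{-\overline{\rho}}H\underline{I}$, only now with $\underline{A}$ in place of $\underline{I}$; the integer-graded homotopy seeding the induction is simply $\underline{A}$ concentrated in degree $0$, and for each free smash term we invoke \cref{topologicalboxformula} (equivalently \cref{boxformula} applied levelwise).

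First I would smash $S^{-\sigma_L} \to \mathbb{S} \to \K/L_+$ with $H\underline{A}$ to obtain the cofiber sequence $\Sigma^{-\sigma_L}H\underline{A} \to H\underline{A} \to \K/L_+ \wedge H\underline{A}$. By \cref{topologicalboxformula}, $\underline{\pi}_n^\K(\K/L_+ \wedge H\underline{A}) \cong \underline{A}_{\K/L}$ for $n=0$ and vanishes otherwise, so the long exact sequence in homotopy Mackey functors identifies $\underline{\pi}_0^\K(\Sigma^{-\sigma_L}H\underline{A})$ with the kernel and $\underline{\pi}_{-1}^\K(\Sigma^{-\sigma_L}H\underline{A})$ with the cokernel of the map $\underline{A} \to \underline{A}_{\K/L}$, which one reads off directly from the Lewis diagrams of \cref{FreeKMackeyFunctors}; this first step is just the $C_2$-type cohomology computation implicit in \cref{OneRepHomotopyProp}. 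I would then feed $\Sigma^{-\sigma_L}H\underline{A}$ into the cofiber sequence $\Sigma^{-\sigma_L-\sigma_D}H\underline{A} \to \Sigma^{-\sigma_L}H\underline{A} \to \K/D_+ \wedge \Sigma^{-\sigma_L}H\underline{A}$, using \cref{topologicalboxformula} to identify the last term as $\uparrow_D^\K\downarrow_D^\K$ of what was just computed, and analyze the long exact sequence to obtain $\underline{\pi}_n^\K(\Sigma^{-\sigma_L-\sigma_D}H\underline{A})$. Repeating once more with $\K/R_+$ in the cofiber sequence $\Sigma^{-\overline{\rho}}H\underline{A} \to \Sigma^{-\sigma_L-\sigma_D}H\underline{A} \to \K/R_+ \wedge \Sigma^{-\sigma_L-\sigma_D}H\underline{A}$ produces $\underline{\pi}_n^\K(\Sigma^{-\overline{\rho}}H\underline{A})$, and one matches the resulting Mackey functors with $\langle \Z \rangle$ and $\underline{\Z}^*$ from \cref{KZoo} and with $\underline{E}$ from \cref{ExtraMackeyFunctors}.

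The main obstacle is the bookkeeping that recurs throughout the negative cone: deciding which connecting homomorphisms in these long exact sequences vanish, and in particular confirming that the degree $-3$ contribution assembles to $\underline{\Z}^*$ rather than to a nontrivial extension. As in the proof of \cref{IPosConeProp1} and its negative-cone analogue, the restriction of each such map to any proper subgroup $\{e\} < H < \K$ is forced by the known $C_2$-equivariant homotopy of the restriction of $\Sigma^{-\overline{\rho}}H\underline{A}$ (which, restricted to $H$, is $\Sigma^{-(1+2\sigma)}H\underline{A}$), so every level except $\K/\K$ is pinned down; the remaining $\K/\K$-level maps are then determined by naturality and can be cross-checked against the comparison cofiber sequences $H\underline{I} \to H\underline{A} \to H\underline{\Z}$ and $H\underline{\Z}^* \to H\underline{A} \to H\underline{J}$. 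Since this is only the first desuspension, I expect no genuine extension problem to occur here — unlike for $\underline{\pi}^\K_{-n}(\Sigma^{-n\overline{\rho}}H\underline{A})$ with larger $n$ — so the argument should close directly.
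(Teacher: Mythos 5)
Your proposal follows exactly the paper's proof: smashing the dual cofiber sequences $S^{-\sigma_H} \to \mathbb{S} \to \K/H_+$ one representation at a time with $H\underline{A}$, identifying the free smash terms via \cref{topologicalboxformula}, and running the resulting long exact sequences in the order $L$, then $D$, then $R$. The additional remarks about pinning down maps at proper-subgroup levels via $C_2$-restriction are consistent with the paper's methodology throughout and do not constitute a different route.
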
		
	
	\begin{proof}
Smashing the dual cofiber sequence $S^{-\sigma_L} \rightarrow \mathbb{S} \rightarrow \K/L_+$ 
with $H\underline{A}$ yields a cofiber sequence
\[ \Sigma^{-\sigma_L} H\underline{A} \rightarrow H\underline{A} \rightarrow \K/L_+ \wedge H\underline{A}. \] 
Analyzing the long exact sequence in homotopy induced by this cofiber sequence yields the homotopy of the $-\sigma_L$-suspension of $H\underline{A}$. Using this as input in the long exact sequence given by the cofiber sequence 
	\[ \Sigma^{-\sigma_L-\sigma_D} H\underline{A} \rightarrow \Sigma^{-\sigma_L} H\underline{A} \rightarrow \K/D_+ \wedge \Sigma^{-\sigma_L} H\underline{A} \]
gives the homotopy of the $-(\sigma_L+\sigma_D)$-suspension. Lastly, inputting this into the long exact sequence given by the cofiber sequence 
	\[ \Sigma^{-\overline{\rho}} H\underline{A} \rightarrow \Sigma^{-\sigma_L-\sigma_D} H\underline{A} \rightarrow \K/R_+ \wedge \Sigma^{-\sigma_L-\sigma_D} H\underline{A} \]
gives the homotopy of the $-\overline{\rho}$-suspension. 
	\end{proof}
	
	Now, we turn to spectral sequences to compute the $-2\overline{\rho}$-suspension. The $E_1$-page is as follows: 
	
	\begin{center}
	\begin{tikzpicture}[scale=1.5,font=\tiny]
	
            	
	\draw[->] (0,-0.5) to (0,3.5); 
	\draw[->] (-0.5,0) to (3.5,0);
	
	\node at (3.75,0) {\large $i$};
	\node at (0,3.75) {\large $j$};
	
	\node at (1.5,3.75) {\large $E_1^{i,j}-\text{page}$};
	
			
	\node[draw, fill=white, inner sep=2pt, circle] at (0,0) {};
	
	\node[fill=white] (10) at (1,0) {$\bardot$};
	
	\node[fill=white] (30) at (3,0) {\large $\whitesquaredual$};
	\node (31) at (3,1) {\large $\widehat{\whitesquaredual}$};
	\node (32) at (3,2) {\large $\overline{\whitesquare}^{\oplus 3}$};
	\node (33) at (3,3) {\large $\overline{\whitesquare}^{\oplus 2}$};
	
	\node (11) at (1,1) {\large $\widehat{\whitecirc}$}; 
	
	\node[fill=white] at (0,1) {\large $\underline{0}$};
	\node[fill=white] at (0,2) {\large $\underline{0}$};
	\node[fill=white] at (0,3) {\large $\underline{0}$};
	
	\node at (1,2) {\large $\underline{0}$};		
	\node at (1,3) {\large $\underline{0}$};
	
	\node[fill=white] at (2,0) {\large $\underline{0}$};
	\node at (2,1) {\large $\underline{0}$};
	\node at (2,2) {\large $\underline{0}$};
	\node at (2,3) {\large $\underline{0}$};
		
			
	\draw[orddiff, thick, ->] (10) to (11) {};
	\draw[orddiff, thick, ->] (30) to (31) {};
	\draw[orddiff, thick, ->] (31) to (32) {};
	\draw[orddiff, thick, ->] (32) to (33) {};
	\end{tikzpicture} \\
	Key: $\whitecirc = \langle \Z \rangle$; \quad $\widehat{\whitecirc} = \bigoplus\limits_H \uparrow_H^\K \langle \Z \rangle$; \quad $\whitesquaredual = \underline{\Z}^*$; \quad $\widehat{\whitesquaredual} = \bigoplus\limits_H \uparrow_H^\K \underline{\Z}^*$; \quad $\overline{\whitesquare} = \uparrow_e^\K \Z$; \quad $\bardot = \underline{E}$
\end{center}
	As with all of the previous computations, it is enough to understand the underlying $C_2$-equivariant spectral sequences to resolve the differentials in the range we are interested in. This gives the following $E_2$-page:
	\begin{center}
	\begin{tikzpicture}[scale=1.5,font=\tiny]
	
            	
	\draw[->] (0,-0.5) to (0,3.5); 
	\draw[->] (-0.5,0) to (3.5,0);
	
	\node at (3.75,0) {\large $i$};
	\node at (0,3.75) {\large $j$};
	
	\node at (1.5,3.75) {\large $E_2^{i,j}-\text{page}$};
	
			
	\node[draw, fill=white, inner sep=2pt, circle] at (0,0) {};
	
	\node[draw, circle, fill=black, inner sep=2pt] at (1,0) {};
	
	\node[fill=white] at (3,0) {\large $\underline{0}$};
	\node[fill=white] at (3,1) {\large $E_2^{3,1}$};
	\node[fill=white] at (3,2) {\large $E_2^{3,2}$};
	\node[fill=white] at (3,3) {\large $E_2^{3,3}$};
	
	\node at (1,1) {\large $\whitepentdual$}; 
	
	\node[fill=white] at (0,1) {\large $\underline{0}$};
	\node[fill=white] at (0,2) {\large $\underline{0}$};
	\node[fill=white] at (0,3) {\large $\underline{0}$};
	
	\node at (1,2) {\large $\underline{0}$};		
	\node at (1,3) {\large $\underline{0}$};
	
	\node[fill=white] at (2,0) {\large $\underline{0}$};
	\node at (2,1) {\large $\underline{0}$};
	\node at (2,2) {\large $\underline{0}$};
	\node at (2,3) {\large $\underline{0}$};
		
	\end{tikzpicture} \\
	Key: $\whitecirc = \langle \Z \rangle$; \quad $\fillcirc = \langle \F_2 \rangle$; \quad $\whitepentdual = \phi_{LDR}^* \underline{\Z}^*$
\end{center}
	
	\begin{prop}
		The homotopy of the $-2\overline{\rho}$-suspension of $H\underline{A}$ is 
		\[ 
		\underline{\pi}_n^\K(\Sigma^{-2\overline{\rho}} H\underline{A}) \cong 
		\begin{cases}
		\langle \Z \rangle & n=0 \\
		\langle \F_2 \rangle & n=-1 \\
		\phi_{LDR}^* \underline{\Z}^* & n=-2 \\
		\underline{\pi}_n(\Sigma^{-2\overline{\rho}} H\underline{\Z}) & n \leq -4.
		\end{cases}
		\]
	\end{prop}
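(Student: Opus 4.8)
The plan is to read the answer off the spectral sequence that has just been set up. Writing $S^{-2\overline{\rho}} \simeq S^{-\overline{\rho}} \wedge S^{-\overline{\rho}}$ and forming the double complex computing reduced Bredon cohomology of $S^{2\overline{\rho}}$ with coefficients in $\underline{A}$ — the cohomological analogue of the double complex used in \cref{ComparisonPositive}, with one factor replaced by the cochain complex of $S^{\overline{\rho}}$ with $\underline{A}$-coefficients — and taking cohomology in the horizontal direction first, one obtains the $E_1$-page displayed above. As in the positive-cone computations, exactness of induction and restriction identifies $E_1^{i,j}$ with $\bigoplus_{H_m} \uparrow_{H_m}^\K \downarrow_{H_m}^\K$ applied to $\widetilde{\underline{H}}^i_\K(S^{\overline{\rho}}; \underline{A})$, so that the column $j = 0$ reproduces the homotopy of $\Sigma^{-\overline{\rho}} H\underline{A}$ computed in the previous proposition, and $E_\infty^{i,j}$ is a subquotient of $\underline{\pi}_{-(i+j)}^\K(\Sigma^{-2\overline{\rho}} H\underline{A})$.

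Next I would pin down the only differential that matters in the range $n \geq -3$, namely $E_1^{1,0} \to E_1^{1,1}$ (the map $\underline{E} \to \bigoplus_H \uparrow_H^\K \langle \Z \rangle$ displayed above). Its components at the levels $\K/L, \K/D, \K/R, \K/e$ are forced: restricting the whole spectral sequence along $i_H^\K$ for a proper subgroup $H < \K$ produces the $C_2$-equivariant double-complex spectral sequence computing $\widetilde{\underline{H}}^*_{C_2}(S^{1 + 2\sigma}; \downarrow_H^\K \underline{A})$ (using $\downarrow_H^\K \overline{\rho} \cong 1 + 2\sigma$), and the known $C_2$-answer leaves no choice; the $\K/\K$-component is the map written out above. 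Taking homology yields the $E_2$-page displayed, whose only nonzero entries with $i + j \leq 3$ are $E_2^{0,0} \cong \langle \Z \rangle$, $E_2^{1,0} \cong \langle \F_2 \rangle$, and $E_2^{1,1} \cong \phi_{LDR}^* \underline{\Z}^*$.

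Finally, the spectral sequence collapses at $E_2$ for lack of room, as noted above, so these are the $E_\infty$-terms. Each of the total degrees $n = 0, -1, -2$ is hit by exactly one nonzero $E_\infty$-term and $n = -3$ by none, so there is no extension problem and no hidden contribution, giving the first three cases of the statement; for $n \leq -4$ the isomorphism $\underline{\pi}_n^\K(\Sigma^{-2\overline{\rho}} H\underline{A}) \cong \underline{\pi}_n^\K(\Sigma^{-2\overline{\rho}} H\underline{\Z})$ is precisely \cref{NegConeComparison} with $k = 2$. I do not expect a real obstacle here: the genuine content is the identification of the single $d_1$, which is a $C_2$-level check of the kind done repeatedly above, and — unlike the larger desuspensions treated later via the comparison $\underline{\Z}^* \to \underline{A} \to \underline{J}$ — there are no extension problems in this case. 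The only things to be careful about are the bookkeeping of the bidegree-to-homotopy-degree dictionary and confirming the emptiness of total degree $-3$, both immediate from the shape of the $E_2$-page.
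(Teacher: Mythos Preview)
Your proposal is correct and follows essentially the same approach as the paper: set up the double-complex spectral sequence for $S^{\overline{\rho}} \wedge S^{\overline{\rho}}$ with $\underline{A}$-coefficients, resolve the single relevant $d_1$ by restricting to the $C_2$-equivariant spectral sequences, read off the collapsed $E_2$-page for $n \geq -3$, and invoke \cref{NegConeComparison} for $n \leq -4$. The paper presents this slightly more tersely (the proposition is simply stated after the $E_2$-page is displayed), but the substance is identical.
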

	
Continuing on for the $-3\overline{\rho}$-suspension, we have the following $E_1$-page: 

\begin{center}
	\begin{tikzpicture}[scale=1.5,font=\tiny]
	
            	 
	\draw[->] (0,-0.5) to (0,2.5); 
	\draw[->] (-0.5,0) to (5.5,0);
	
	\node at (5.75,0) {\large $i$};
	\node at (0,2.75) {\large $j$};
	
	\node at (3,2.75) {\large $E^1_{i,j}-\text{page}$};
	
			
	\node[draw, fill=white, inner sep=2pt, circle] at (0,0) {}; 
	\node[draw, fill=black, inner sep=2pt, circle] at (1,0) {};
	\node[fill=white] (20) at(2,0) {\large $\whitepentdual$};
	\node[draw, fill=black, inner sep=2pt, circle] at (4,0) {};
	
	\node (21) at (2,1) {\large $\widehat{\whitecirc}$};
	
	\node[fill=white] at (0,1) {\large $\underline{0}$};
	\node[fill=white] at (0,2) {\large $\underline{0}$};
	
	\node at (1,1) {\large $\underline{0}$};	
	\node at (1,2) {\large $\underline{0}$};	
	
	\node at (2,2) {\large $\underline{0}$};
	
	\node[fill=white] at (3,0) {\large $\underline{0}$};
	\node at (3,1) {\large $\underline{0}$};
	\node at (3,2) {\large $\underline{0}$};
	
	\node at (4,1) {\large $\underline{0}$};
	\node at (4,2) {\large $\underline{0}$};
	
	\node[fill=white] at (5,0) {\large $\cdots$};
	
	\foreach \n in {1,...,2}{ 
		\node at (5,\n) {\large $\cdots$};
	}
	
			
	\draw[orddiff, ->] (20) to (21);
	\end{tikzpicture} \\
	Key: $\whitecirc = \langle \Z \rangle$; \quad $\widehat{\whitecirc} = \bigoplus\limits_H \uparrow_H^\K \langle \Z \rangle$; \quad $\fillcirc = \langle \F_2 \rangle$; \quad $\whitepentdual = \phi_{LDR}^* \underline{\Z}^*$
\end{center}
Resolving the differentials gives the following $E_2$-page: 
\begin{center}
\begin{tikzpicture}[scale=1.5,font=\tiny]
	
            	 
	\draw[->] (0,-0.5) to (0,2.5); 
	\draw[->] (-0.5,0) to (5.5,0);
	
	\node at (5.75,0) {\large $i$};
	\node at (0,2.75) {\large $j$};
	
	\node at (3,2.75) {\large $E^1_{i,j}-\text{page}$};
	
			
	\node[draw, fill=white, inner sep=2pt, circle] at (0,0) {}; 
	\node[draw, fill=black, inner sep=2pt, circle] at (1,0) {};
	\node[draw, fill=black, inner sep=2pt, circle] at (4,0) {};
	
	\node at (2,1) {$\phiLDRQ$};
	
	\node[fill=white] at (0,1) {\large $\underline{0}$};
	\node[fill=white] at (0,2) {\large $\underline{0}$};
	
	\node at (1,1) {\large $\underline{0}$};	
	\node at (1,2) {\large $\underline{0}$};	
	
	\node[fill=white] at (2,0) {\large $\underline{0}$};
	\node at (2,2) {\large $\underline{0}$};
	
	\node[fill=white] at (3,0) {\large $\underline{0}$};
	\node at (3,1) {\large $\underline{0}$};
	\node at (3,2) {\large $\underline{0}$};
	
	\node at (4,1) {\large $\underline{0}$};
	\node at (4,2) {\large $\underline{0}$};
	
	\node[fill=white] at (5,0) {\large $\cdots$};
	
	\foreach \n in {1,...,2}{ 
		\node at (5,\n) {\large $\cdots$};
	}
	
\end{tikzpicture} \\
Key: $\whitecirc = \langle \Z \rangle$; \quad $\fillcirc = \langle \F_2 \rangle$; \quad $\phiLDRQ = \phi_{LDR}^* \underline{Q}$
\end{center}	
There's no room for differentials, so the spectral collapses. Further, there isn't an extension problem to resolve, so we can read the homotopy off: 
	
\begin{prop}
The non-zero homotopy of the $-3\overline{\rho}$-suspension of $H\underline{A}$ is
	\[
	\underline{\pi}_n^\K(\Sigma^{-3\overline{\rho}} H\underline{A}) \cong
	\begin{cases}
		\langle \Z \rangle & n=0 \\
		\langle \F_2 \rangle & n=-1 \\
		\phi_{LDR}^* \underline{Q} & n=-3 \\
		\langle \F_2 \rangle & n=-4 \\
		\underline{\pi}_n(\Sigma^{-3\overline{\rho}} H\underline{\Z}) & n \leq -5.
	\end{cases}
	\]
	\end{prop}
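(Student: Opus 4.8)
The plan is to run the same double-complex spectral sequence used for the earlier $-k\overline{\rho}$-suspensions in this section. First I would write $S^{-3\overline{\rho}} \simeq S^{-2\overline{\rho}} \wedge S^{-\overline{\rho}}$ and build the reduced cellular cochain complex of $S^{3\overline{\rho}}$ (with $\underline{A}$-coefficients) as the total complex of the double complex obtained by boxing the cochains of $S^{2\overline{\rho}}$ with those of $S^{\overline{\rho}}$. Taking cohomology in the $S^{2\overline{\rho}}$-variable produces a spectral sequence converging to $\underline{\pi}^\K_{\ast}(\Sigma^{-3\overline{\rho}} H\underline{A})$. Its $j=0$ row is the homotopy $\underline{\pi}^\K_{-i}(\Sigma^{-2\overline{\rho}} H\underline{A})$ computed in the preceding proposition, and, since $\uparrow_H^\K$ and $\downarrow_H^\K$ are exact, the rows $j \geq 1$ are $E^1_{i,j} \cong \bigoplus_{H_m} \uparrow_{H_m}^\K \downarrow_{H_m}^\K \underline{\pi}^\K_{-i}(\Sigma^{-2\overline{\rho}} H\underline{A})$ by \cref{topologicalboxformula}, where $H_m$ runs over the stabilizers of the $j$-cells of $S^{\overline{\rho}}$. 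Feeding in the $-2\overline{\rho}$-answer and noting that $\downarrow_H^\K \langle \Z \rangle = \downarrow_H^\K \langle \F_2 \rangle = \underline{0}$ while $\downarrow_H^\K \phi_{LDR}^* \underline{\Z}^*$ is (a $C_2$-version of) $\langle \Z \rangle$ at the single subgroup $H$ recovers exactly the displayed $E^1$-page; only the entries with $i+j \le 4$ matter, since \cref{NegConeComparison} (applied with $k=3$) already gives $\underline{\pi}^\K_n(\Sigma^{-3\overline{\rho}} H\underline{A}) \cong \underline{\pi}^\K_n(\Sigma^{-3\overline{\rho}} H\underline{\Z})$ for $n \le -5$.

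Next I would identify the only $d_1$ differential in range, namely $\phi_{LDR}^* \underline{\Z}^* \to \bigoplus_H \uparrow_H^\K \langle \Z \rangle$ from $(2,0)$ to $(2,1)$. Because $\downarrow_H^\K$ is exact and strong symmetric monoidal, restricting the whole spectral sequence along $i_H^\K$ yields the analogous $C_2$-equivariant double-complex spectral sequence for $\underline{\pi}^{C_2}_\ast(\Sigma^{-3(1+2\sigma)} H\underline{A})$ (using $\downarrow_H^\K \overline{\rho} \cong 1 + 2\sigma$), whose abutment is known from the $C_2$-computation of $\underline{\pi}^{C_2}_\star(H\underline{A})$. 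This determines each component of the differential on the levels $\K/H$ with $H \lneq \K$, and naturality with respect to the restriction maps then forces the $(\K/\K)$-component to be multiplication by $2$. One checks the resulting map of Mackey functors is injective with cokernel $\phi_{LDR}^* \underline{Q}$, giving the $E^2 = E^\infty$-page displayed: $\langle \Z \rangle$ at $(0,0)$, $\langle \F_2 \rangle$ at $(1,0)$, $\phi_{LDR}^* \underline{Q}$ at $(2,1)$, and $\langle \F_2 \rangle$ at $(4,0)$, with the remaining relevant entries zero.

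Finally, since $d_r$ for $r \ge 2$ has nowhere to go on this page, the sequence collapses, and each total degree $n = -(i+j)$ in range is hit by at most one surviving entry, so there are no extension problems; reading off gives $\underline{\pi}^\K_0 \cong \langle \Z \rangle$, $\underline{\pi}^\K_{-1} \cong \langle \F_2 \rangle$, $\underline{\pi}^\K_{-3} \cong \phi_{LDR}^* \underline{Q}$, $\underline{\pi}^\K_{-4} \cong \langle \F_2 \rangle$, and $\underline{\pi}^\K_{-2} \cong \underline{0}$, together with the comparison isomorphism with $H\underline{\Z}$ for $n \le -5$. The one step that needs genuine care — rather than bookkeeping — is the claim that restriction to the proper subgroups, supplemented by naturality, pins down the $d_1$-differential exactly (in particular its $(\K/\K)$-component and hence the precise cokernel $\phi_{LDR}^* \underline{Q}$ rather than some extension of it); everything downstream of that is forced.
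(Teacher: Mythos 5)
Your proposal is correct and follows essentially the same route as the paper: rewrite $S^{-3\overline{\rho}} \simeq S^{-2\overline{\rho}} \wedge S^{-\overline{\rho}}$, run the double-complex spectral sequence with $\underline{\pi}^\K_{-i}(\Sigma^{-2\overline{\rho}} H\underline{A})$ along the bottom row and with induced/restricted copies in the higher rows, pin down the lone $d_1$ from $(2,0)$ to $(2,1)$ by restricting to the three order-two subgroups (together with the fact that a Mackey-functor map must commute with restrictions, which forces the $(\K/\K)$-component to be multiplication by $2$ since the restrictions of $\phi_{LDR}^*\underline{\Z}^*$ out of the top level are jointly injective), and observe that the resulting $E^2$-page has no room for higher differentials and no extension problems in total degree $\geq -4$. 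The one small inaccuracy is terminological: what forces the top component is commutativity with the internal restriction maps, not ``naturality'' in the functorial sense, but your computation and conclusion are the same as the paper's.
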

	
	For the next suspension, we have the following $E_1$-page: 
	
\begin{center}
	\begin{tikzpicture}[scale=1.5,font=\tiny]
	
            	 
	\draw[->] (0,-0.5) to (0,2.5); 
	\draw[->] (-0.5,0) to (6.5,0);
	
	\node at (6.75,0) {\large $i$};
	\node at (0,2.75) {\large $j$};
	
	\node at (3.5,2.75) {\large $E^1_{i,j}-\text{page}$};
	
			
	\node[draw, fill=white, inner sep=2pt, circle] at (0,0) {};
	
	\node[draw, fill=black, inner sep=2pt, circle] at (1,0) {}; 
	
	\node[fill=white] at (3,0) {$\phiLDRQ$};
	
	\node[draw, fill=black, inner sep=2pt, circle] at(4,0) {};
	
	\node[draw, fill=black, inner sep=2pt, circle, text=white] at (5,0) {$2$};

	\node (31) at (3,1) {\large $\widehat{\whitecirc}$};
	
	\node[fill=white] at (0,1) {\large $\underline{0}$};
	\node[fill=white] at (0,2) {\large $\underline{0}$};
	
	\node at (1,1) {\large $\underline{0}$};	
	\node at (1,2) {\large $\underline{0}$};	
	
	\node[fill=white] at (2,0) {\large $\underline{0}$};
	\node at (2,1) {\large $\underline{0}$};
	\node at (2,2) {\large $\underline{0}$};
	
	\node at (3,2) {\large $\underline{0}$};
	
	\node at (4,1) {\large $\underline{0}$};
	\node at (4,2) {\large $\underline{0}$};
	
	\node at (5,1) {\large $\underline{0}$};
	\node at (5,2) {\large $\underline{0}$};
	
	\foreach \n in {1,...,2}{ 
		\node at (6,\n) {\large $\cdots$};
	}
	
	\node[fill=white] at (6,0) {$\cdots$};	
	
			
	\draw[orddiff, ->] (30) to (31);
	
\end{tikzpicture} \\
Key: $\whitecirc = \langle \Z \rangle$; \quad $\widehat{\whitecirc} = \bigoplus\limits_H \uparrow_H^\K \langle \Z \rangle$; \quad $\fillcirc = \langle \F_2 \rangle$; \quad $\begin{tikzpicture} \node[draw, fill=black, text=white, circle, inner sep=0.5pt] at (0,0) {\tiny $2$}; \end{tikzpicture} = \langle \F_2 \rangle^{\oplus 2}$; \quad $\phiLDRQ = \phi_{LDR}^* \underline{Q}$
\end{center}
	Resolving the differential gives the following $E_2$-page:
\begin{center}
\begin{tikzpicture}[scale=1.5,font=\tiny]
	
            	 
	\draw[->] (0,-0.5) to (0,3.5); 
	\draw[->] (-0.5,0) to (6.5,0);
	
	\node at (6.75,0) {\large $i$};
	\node at (0,3.75) {\large $j$};
	
	\node at (3.5,3.75) {\large $E^2_{i,j}-\text{page}$};
	
			
	\node[draw, fill=white, inner sep=2pt, circle] at (0,0) {};
	
	\node[draw, fill=black, inner sep=2pt, circle] at (1,0) {}; 
	
	\node[draw, fill=black, circle, inner sep=2pt, text=white] at (3,0) {$3$};
	
	\node[draw, fill=black, inner sep=2pt, circle] at(4,0) {};
	
	\node[draw, fill=black, inner sep=2pt, circle, text=white] at (5,0) {$2$};

	\node (31) at (3,1) {\large $\whitepentdual$};
	
	\node[fill=white] at (0,1) {\large $\underline{0}$};
	\node[fill=white] at (0,2) {\large $\underline{0}$};
	\node[fill=white] at (0,3) {\large $\underline{0}$};
	
	\node at (1,1) {\large $\underline{0}$};	
	\node at (1,2) {\large $\underline{0}$};		
	\node at (1,3) {\large $\underline{0}$};
	
	\node[fill=white] at (2,0) {\large $\underline{0}$};
	\node at (2,1) {\large $\underline{0}$};
	\node at (2,2) {\large $\underline{0}$};
	\node at (2,3) {\large $\underline{0}$};
	
	\node at (3,2) {\large $\underline{0}$};
	\node at (3,3) {\large $\underline{0}$};
	
	\node at (4,1) {\large $\underline{0}$};
	\node at (4,2) {\large $\underline{0}$};
	\node at (4,3) {\large $\underline{0}$};
	
	\node at (5,1) {\large $\underline{0}$};
	\node at (5,2) {\large $\underline{0}$};
	\node at (5,3) {\large $\underline{0}$};
	
	\foreach \n in {1,...,3}{ 
		\node at (6,\n) {\large $\cdots$};
	}
	
	\node[fill=white] at (6,0) {$\cdots$};	
	
\end{tikzpicture} \\
Key: $\whitecirc = \langle \Z \rangle$; \quad $\fillcirc = \langle \F_2 \rangle$; \quad $\begin{tikzpicture} \node[draw, fill=black, text=white, circle, inner sep=0.5pt] at (0,0) {\tiny $n$}; \end{tikzpicture} = \langle \F_2 \rangle^{\oplus n}$; \quad $\whitepentdual = \phi_{LDR}^* \underline{\Z}^*$
\end{center}
Here, the spectral sequence collapses as there isn't room for differentials, but we're left with an extension problem for $\underline{\pi}^\K_{-4}$. The ``main extension problem" is of the form
	\[ \underline{0} \rightarrow \phi_{LDR}^* \underline{\Z}^* \rightarrow \underline{\pi}^\K_{-4}(\Sigma^{-4\overline{\rho}} H\underline{A}) \rightarrow \langle \F_2 \rangle \rightarrow \underline{0}. \]
This extension problem does not determine the value of $\underline{\pi}_{-4}^\K(\Sigma^{-4\overline{\rho}} H\underline{A})$ at level $\K/\K$, the restrictions, or the transfers of $\underline{\pi}_{-4}^\K(\Sigma^{-4\overline{\rho}} H\underline{A})$. Further, the long exact sequence in cohomology induced by the short exact sequence of coefficients
\[ \underline{0} \rightarrow \underline{I} \rightarrow \underline{A} \rightarrow \underline{\Z} \rightarrow \underline{0}
\]
gives the same extension problem. Consequently, we will prove the extension is split using another short exact sequence of coefficients. 
\begin{theorem}
	There is an isomorphism 
	\[ \underline{\pi}_{-4}^\K(\Sigma^{-4\overline{\rho}} H\underline{A}) \cong \phi_{LDR}^* \underline{\Z}^* \oplus \underline{g}. \]
\end{theorem}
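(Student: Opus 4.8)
\emph{Plan.} The idea is to resolve the ``main extension problem'' above by playing it off against the long exact sequence in homotopy induced by the second short exact sequence of coefficients $\underline{0} \to \underline{\Z}^* \to \underline{A} \to \underline{J} \to \underline{0}$; that is, by the cofiber sequence $\Sigma^{-4\overline{\rho}} H\underline{\Z}^* \to \Sigma^{-4\overline{\rho}} H\underline{A} \to \Sigma^{-4\overline{\rho}} H\underline{J}$. From the main extension problem we already know $\underline{\pi}_{-4}^\K(\Sigma^{-4\overline{\rho}} H\underline{A})$ levelwise: it is $\Z$ at each of $\K/L$, $\K/D$, $\K/R$, it vanishes at $\K/e$, and at $\K/\K$ it is an extension of $\F_2$ by $\Z^3$; moreover the restriction maps are forced. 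The only remaining ambiguity is in the transfers $\tau_H^\K$, and concretely the only question is whether the composite of $\tau_H^\K$ with the projection $\underline{\pi}_{-4}^\K(\Sigma^{-4\overline{\rho}} H\underline{A}) \to \langle \F_2 \rangle$ vanishes; the claim is equivalent to its vanishing, hence to the splitting of the extension in $\Mack_\K$.

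\emph{Step 1.} Record the homotopy of the outer two terms near degree $-4$. The homotopy $\underline{\pi}_n^\K(\Sigma^{-k\overline{\rho}} H\underline{J})$ is obtained by the same cellular/cofiber-sequence bootstrap already used for $H\underline{I}$ in \cref{ComparisonPositive} and \cref{ComparisonNegative}, and in the range $n \ge -5$ it is displayed in \cref{fig:piK4HJ}; in particular its $\langle \F_2 \rangle$ summand occurs with vanishing transfers. The homotopy $\underline{\pi}_n^\K(\Sigma^{-k\overline{\rho}} H\underline{\Z}^*)$ is the negative cone of $H\underline{\Z}^*$, which may be computed by the same methods of this paper (or, since $\underline{\Z}^*$ is the dual Mackey functor of $\underline{\Z}$, extracted by Spanier--Whitehead duality from the positive cone of $\Sigma^{k\overline{\rho}} H\underline{\Z}$ of \cite{Slone}); the outputs we need are that $\underline{\pi}_{-4}^\K(\Sigma^{-4\overline{\rho}} H\underline{\Z}^*) \cong \phi_{LDR}^* \underline{\Z}^*$ and that the adjacent group $\underline{\pi}_{-3}^\K(\Sigma^{-4\overline{\rho}} H\underline{\Z}^*)$ vanishes.

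\emph{Step 2.} Feed this into the relevant segment of the long exact sequence,
\[ \underline{\pi}_{-3}^\K(\Sigma^{-4\overline{\rho}} H\underline{J}) \xrightarrow{\partial} \underline{\pi}_{-4}^\K(\Sigma^{-4\overline{\rho}} H\underline{\Z}^*) \to \underline{\pi}_{-4}^\K(\Sigma^{-4\overline{\rho}} H\underline{A}) \to \underline{\pi}_{-4}^\K(\Sigma^{-4\overline{\rho}} H\underline{J}). \]
First check that $\partial$ vanishes: as in the differential arguments of the previous sections this is detected after restriction to the proper subgroups of $\K$, where the whole sequence becomes the already-understood $C_2$-equivariant one. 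Then $\phi_{LDR}^* \underline{\Z}^* \cong \underline{\pi}_{-4}^\K(\Sigma^{-4\overline{\rho}} H\underline{\Z}^*)$ injects, as a sub-Mackey functor, into $\underline{\pi}_{-4}^\K(\Sigma^{-4\overline{\rho}} H\underline{A})$, with quotient $\langle \F_2 \rangle$; comparing with the main extension problem, this submodule is exactly the kernel of the projection to $\langle \F_2 \rangle$. Since it already accounts at level $\K/\K$ for a full-rank copy of $\Z^3$ with transfers the canonical inclusions $i_H$, and since it is preserved by the Mackey structure, the transfers $\tau_H^\K$ of $\underline{\pi}_{-4}^\K(\Sigma^{-4\overline{\rho}} H\underline{A})$ must land in it; hence they carry no $\langle \F_2 \rangle$-component, the main extension splits, and $\underline{\pi}_{-4}^\K(\Sigma^{-4\overline{\rho}} H\underline{A}) \cong \phi_{LDR}^* \underline{\Z}^* \oplus \underline{g}$.

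\emph{Main obstacle.} The delicate point is the identification in Step 2: one must see that the image of $\underline{\pi}_{-4}^\K(\Sigma^{-4\overline{\rho}} H\underline{\Z}^*)$ inside $\underline{\pi}_{-4}^\K(\Sigma^{-4\overline{\rho}} H\underline{A})$ is \emph{the same} sub-Mackey functor as the $\phi_{LDR}^* \underline{\Z}^*$ appearing in the main extension problem, not merely an abstractly isomorphic one, and that the connecting maps flanking it vanish. This is a concrete but fiddly diagram chase, carried out levelwise and, for the transfers and connecting maps, after restriction to the order-two subgroups of $\K$. The reason the present short exact sequence succeeds where $\underline{0} \to \underline{I} \to \underline{A} \to \underline{\Z} \to \underline{0}$ failed is precisely that $\underline{I}$ is blind to the transfers in question -- it vanishes at the bottom and so contributes only the torsion-free part of the extension -- whereas $\underline{\Z}^*$ witnesses them.
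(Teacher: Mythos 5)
Your plan starts from the right place — the short exact sequence $\underline{0} \to \underline{\Z}^* \to \underline{A} \to \underline{J} \to \underline{0}$ is indeed the tool the paper uses — but you have misidentified what the obstruction to splitting actually is, and the concrete argument you give does not close the gap.

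You assert that ``the only question is whether the composite of $\tau_H^\K$ with the projection $\underline{\pi}_{-4}^\K(\Sigma^{-4\overline{\rho}} H\underline{A}) \to \langle \F_2 \rangle$ vanishes,'' and your Step~2 concludes with ``the transfers $\tau_H^\K$ \ldots must land in it; hence they carry no $\langle \F_2 \rangle$-component, the main extension splits.'' But that composite is \emph{automatically} zero, for any Mackey functor extension of $\langle \F_2 \rangle$ by $\phi_{LDR}^* \underline{\Z}^*$: the projection $\pi$ is a map of Mackey functors, so $\pi_{\K/\K} \circ \tau_H^\K = \tau_H^\K \circ \pi_{\K/H}$, and $\pi_{\K/H}$ has target $\langle \F_2 \rangle(\K/H) = 0$. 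For the same reason (the inclusion commutes with transfers and is an isomorphism at every proper-subgroup level), the transfers of $\underline{\pi}_{-4}^\K$ always factor through the sub-Mackey functor $\phi_{LDR}^* \underline{\Z}^*$. So your observation is true but contentless: it holds in both the split and non-split cases, and proves nothing.

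The actual obstruction, and the thing the paper's argument targets, is the abelian-group extension $0 \to \Z^3 \to \underline{\pi}_{-4}^\K(\Sigma^{-4\overline{\rho}} H\underline{A})(\K/\K) \to \F_2 \to 0$ at the top level. This can be non-split (with middle term $\cong \Z^3$, torsion-free), and in that case there is no Mackey-functor section of $\langle \F_2 \rangle$ even though all restrictions and transfers behave as you describe. The paper resolves this by using the long exact sequence for $H\underline{\Z}^* \to H\underline{A} \to H\underline{J}$ to exhibit a copy of $\underline{g} = \langle \F_2 \rangle$ \emph{injecting into} $\underline{\pi}_{-4}^\K(\Sigma^{-4\overline{\rho}} H\underline{A})$: the existence of a $2$-torsion class at level $\K/\K$ forces the abelian-group extension there to split, after which the main extension problem pins down the restrictions and transfers. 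Relatedly, your Step~1 claim that $\underline{\pi}_{-4}^\K(\Sigma^{-4\overline{\rho}} H\underline{\Z}^*) \cong \phi_{LDR}^* \underline{\Z}^*$ is inconsistent with this mechanism: that Mackey functor is torsion-free, so it could not supply the needed copy of $\underline{g}$, and the whole point of passing to $\underline{\Z}^*$-coefficients is that this homotopy group \emph{is} torsion and its image detects the torsion class at $\K/\K$.
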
 

\begin{proof}
Recall the short exact sequence 
\[ \underline{0} \rightarrow \underline{\Z}^* \rightarrow \underline{A} \rightarrow \underline{J} \rightarrow \underline{0} \]
of coefficients and the induced cofiber sequence 
\[ H\underline{\Z}^* \rightarrow H\underline{A} \rightarrow H\underline{J}. \]
The long exact sequence in homotopy tells us that there is at least one copy of $\underline{g}$ including into $\underline{\pi}_{-4}^\K(\Sigma^{-4\overline{\rho}} H\underline{A})$, and this forces
\[ \underline{\pi}_{-4}^\K(\Sigma^{-4\overline{\rho}} H\underline{A})(\K/\K) \cong \phi_{LDR}^* \underline{\Z}^*(\K/\K) \oplus \F_2.
\]
The ``main extension problem" forces the restrictions and transfers, and we see that the extension is a split extension.
\end{proof}

This finishes the spectral sequence computation: 
\begin{prop}
	The non-zero homotopy of the $-4\overline{\rho}$-suspension of $H\underline{A}$ is
	\[
	\underline{\pi}_n^{\K}(\Sigma^{-4\overline{\rho}} H\underline{A}) \cong 
	\begin{cases}
		\langle \Z \rangle & n=0 \\
		\langle \F_2 \rangle & n=-1 \\
		\langle \F_2^3 \rangle & n=-3 \\
		\phi_{LDR}^* \underline{\Z}^* \oplus \langle \F_2 \rangle & n=-4 \\
		\langle \F_2^2 \rangle & n=-5 \\
		\underline{\pi}^\K_n(\Sigma^{-4\overline{\rho}} H\underline{\Z}) & n\leq -6.
	\end{cases}
	\]
\end{prop}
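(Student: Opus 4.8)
The plan is to read the answer off the $E_2$-page of the double-complex spectral sequence displayed just above, after verifying it is the $E_\infty$-page, and then to splice in the one extension that has already been resolved and the comparison range where $H\underline{A}$ agrees with $H\underline{\Z}$. First I would record that the entry at $(i,j)$ contributes to $\underline{\pi}_{-(i+j)}^\K(\Sigma^{-4\overline{\rho}} H\underline{A})$, that the $E_2$-page is supported only at $(0,0),(1,0),(3,0),(3,1),(4,0),(5,0)$, and then, exactly as in the earlier $\overline{\rho}$-suspension computations, that there is no room for a nonzero $d_r$ with $r\ge 2$: the potential bidegrees either land in $\underline{0}$ or would force a nonzero map out of the $\phi$-inflation $\phi_{LDR}^*\underline{\Z}^*$ (which is levelwise torsion-free, with transfers $\tau_H^\K$ spanning the top level) into a levelwise $2$-torsion Mackey functor with vanishing transfers, which is impossible. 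Hence $E_2 = E_\infty$.

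Next I would run through the anti-diagonals $i+j=\text{const}$. For $i+j\in\{0,1,3,5\}$ there is a single nonzero filtration quotient, sitting at $(0,0)$, $(1,0)$, $(3,0)$, $(5,0)$ respectively, so $\underline{\pi}_0\cong\langle\Z\rangle$, $\underline{\pi}_{-1}\cong\langle\F_2\rangle$, $\underline{\pi}_{-3}\cong\langle\F_2^3\rangle$, and $\underline{\pi}_{-5}\cong\langle\F_2^2\rangle$, with no extension to solve; for $i+j=2$ every entry is $\underline{0}$, giving $\underline{\pi}_{-2}\cong\underline{0}$. The only anti-diagonal carrying two contributions is $i+j=4$, where $(3,1)=\phi_{LDR}^*\underline{\Z}^*$ and $(4,0)=\underline{g}$ produce the short exact sequence
\[ \underline{0} \rightarrow \phi_{LDR}^*\underline{\Z}^* \rightarrow \underline{\pi}_{-4}^\K(\Sigma^{-4\overline{\rho}} H\underline{A}) \rightarrow \underline{g} \rightarrow \underline{0}, \]
which is precisely the ``main extension problem'' resolved in the preceding theorem; that theorem supplies $\underline{\pi}_{-4}^\K(\Sigma^{-4\overline{\rho}} H\underline{A}) \cong \phi_{LDR}^*\underline{\Z}^* \oplus \langle\F_2\rangle$.

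Finally, for $n\le -6$ I would invoke \cref{NegConeComparison} with $k=4$ to get $\underline{\pi}_n^\K(\Sigma^{-4\overline{\rho}} H\underline{A}) \cong \underline{\pi}_n^\K(\Sigma^{-4\overline{\rho}} H\underline{\Z})$; equivalently, in that range the relevant cochain Mackey functor for $\underline{I}$ vanishes, so the $E_2$-page agrees with the one for $H\underline{\Z}$ and the two long exact sequences match. The genuinely delicate point, the extension at $n=-4$, has already been isolated and settled in the preceding theorem, so here it is only quoted; the remaining work is the bookkeeping of confirming the collapse claim, i.e.\ that none of the higher differentials among the six populated bidegrees can be nonzero.
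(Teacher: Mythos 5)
Your proposal is correct and follows essentially the same route as the paper: read off the collapsed double-complex $E_2$-page, invoke the preceding theorem for the extension at $n=-4$, and use the comparison lemma (here numbered as \cref{NegConeComparison}) with $k=4$ for the range $n\le -6$. The only difference is cosmetic — you spell out the ``no room for differentials'' claim with a short structural argument, which the paper leaves implicit; that argument is sound (and in fact moot, since all potential targets/sources of a $d_r$ for $r\ge 2$ are zero by bidegree alone), so there is no gap.
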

	
	Continuing inductively in this fashion yields the rest of the negative cone, where we continue to use the short exact sequence 
	\[ \underline{0} \rightarrow \underline{\Z}^* \rightarrow \underline{A} \rightarrow \underline{J} \rightarrow \underline{0} \]
	to resolve the extension problems appearing in the spectral sequences for computing further desuspensions.  
	
	\begin{theorem}\label{ANegConeThm}
	For $k \geq 5$, the homotopy of the $-k\overline{\rho}$-suspension of $H\underline{A}$ in negative degrees is: 
	\[
	\underline{\pi}_{-n}^\K(\Sigma^{-k\overline{\rho}} H\underline{A}) \cong
	\begin{cases}
	\langle \Z \rangle & n=0 \\
	\langle \F_2 \rangle^{\oplus n} & 2 \leq n \leq k-1, n \text{ odd} \\
	\langle \F_2 \rangle^{\oplus n-3} & 2 \leq n \leq k-1, n \text{ even} \\
	\phi_{LDR}^* \underline{Q} \oplus \langle \F_2 \rangle^{\oplus k-3} & n=k, n \text{ odd} \\
	\phi_{LDR}^* \underline{\Z}^* \oplus \langle \F_2 \rangle^{\oplus k-3} & n=k, n \text{ even} \\
	\langle \F_2 \rangle^{\oplus k-2} & n=k+1 \\
	\underline{\pi}^\K_{-n}(\Sigma^{-k\overline{\rho}} H\underline{\Z}) & n \geq k+2.
	\end{cases}
	\]
	\end{theorem}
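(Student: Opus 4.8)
The plan is to argue by induction on $k$, running the same double-complex spectral sequence used above for the $-2\overline{\rho}$, $-3\overline{\rho}$, and $-4\overline{\rho}$ suspensions, with the inductive hypothesis supplying the negative cone of $\Sigma^{-(k-1)\overline{\rho}}H\underline{A}$ and \cref{INegConeThm} supplying the corresponding statement for $H\underline{I}$. Writing $S^{-k\overline{\rho}} \simeq S^{-(k-1)\overline{\rho}} \wedge S^{-\overline{\rho}}$ and taking the total cochain complex of the evident double complex, one gets a spectral sequence whose $E^1$-page (obtained by taking homology in one direction and using exactness of induction and restriction exactly as in \cref{ComparisonPositive}) has the negative cone of $\Sigma^{-(k-1)\overline{\rho}}H\underline{A}$ along the row $j=0$ and, in the row $j=1$, copies of $\bigoplus_H \uparrow_H^\K \langle \Z \rangle$ coming from the free cells of $S^{\overline{\rho}}$ in dimensions $\geq 2$; the columns $i < k-1$ contribute nothing new, and the only nonzero differentials run upward out of the bottom row. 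Since \cref{NegConeComparison} already identifies $\underline{\pi}_{-n}^\K(\Sigma^{-k\overline{\rho}}H\underline{A})$ with $\underline{\pi}_{-n}^\K(\Sigma^{-k\overline{\rho}}H\underline{\Z})$ for $n \geq k+2$, and the latter is known by \cite{Slone}, only the columns $i \leq k+1$ need to be analyzed.

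As in every previous case, each differential is a map of $\K$-Mackey functors that vanishes at $\K/\K$ for degree reasons, hence is completely determined by its restrictions to $L$, $D$, and $R$; restricting the entire spectral sequence along $\res_L^\K$, $\res_D^\K$, $\res_R^\K$ turns it into the three $C_2$-equivariant double-complex spectral sequences computing $\underline{\pi}_*^{C_2}$ of the relevant $(1+2\sigma)$-type suspensions, whose differentials were pinned down in the $-2\overline{\rho}$ and $-3\overline{\rho}$ computations. The key point to verify is \emph{stabilization}: for $k \geq 5$ the shape of the $E^1$-page and the source/target Mackey functors of the differentials out of the bottom row repeat — up to the obvious increase in the number of $\langle \F_2 \rangle$ summands — the pattern already seen for $k=4$, so resolving them yields $\langle \F_2 \rangle^{\oplus n}$ (resp.\ $\langle \F_2 \rangle^{\oplus n-3}$) for $2 \leq n \leq k-1$ with $n$ odd (resp.\ even), $\phi_{LDR}^*\underline{Q}$ (resp.\ $\phi_{LDR}^*\underline{\Z}^*$) at $n=k$ for $k$ odd (resp.\ even), and $\langle \F_2 \rangle^{\oplus k-2}$ at $n=k+1$. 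With the differentials resolved, the $E^2$-page has no room for further differentials in this range, so the spectral sequence collapses there.

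It remains to resolve the extension problems. As already in the $-4\overline{\rho}$ case, the only nontrivial one — the ``main extension problem'' — occurs at $n=k$ and, for $k$ even, takes the form
\[ \underline{0} \to \phi_{LDR}^* \underline{\Z}^* \to \underline{\pi}_{-k}^\K(\Sigma^{-k\overline{\rho}} H\underline{A}) \to \langle \F_2 \rangle^{\oplus k-3} \to \underline{0}, \]
with $\phi_{LDR}^*\underline{Q}$ in place of $\phi_{LDR}^*\underline{\Z}^*$ for $k$ odd; the long exact sequence coming from $H\underline{I} \to H\underline{A} \to H\underline{\Z}$ only reproduces the same ambiguity at $\K/\K$, on the restrictions, and on the transfers. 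Instead I would feed in the cofiber sequence $H\underline{\Z}^* \to H\underline{A} \to H\underline{J}$ arising from $\underline{0} \to \underline{\Z}^* \to \underline{A} \to \underline{J} \to \underline{0}$: the homotopy of $\Sigma^{-k\overline{\rho}}H\underline{J}$, computed by the same machinery (cf.\ \cref{fig:piK4HJ}), forces enough copies of $\underline{g}$ to split off $\underline{\pi}_{-k}^\K(\Sigma^{-k\overline{\rho}} H\underline{A})$ to pin down its value at $\K/\K$, and then the main extension problem forces the restrictions and transfers, so the extension is split and $\underline{\pi}_{-k}^\K(\Sigma^{-k\overline{\rho}}H\underline{A}) \cong \phi_{LDR}^*\underline{\Z}^* \oplus \langle \F_2 \rangle^{\oplus k-3}$ (resp.\ $\phi_{LDR}^*\underline{Q} \oplus \langle \F_2 \rangle^{\oplus k-3}$). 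Collecting the surviving contributions in each row then gives the stated formula. I expect the main obstacle to be the bookkeeping in the inductive step: checking that the $E^1$-differentials genuinely stabilize for all $k \geq 5$ rather than acquiring new components, and that the extension at $n=k$ keeps splitting, for which the $\underline{J}$-comparison is the essential input.
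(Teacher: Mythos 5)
Your proposal matches the paper's (very terse) argument: induct on $k$ via the double-complex spectral sequence, determine the differentials from their restrictions to the proper subgroups by reducing to the known $C_2$-equivariant computations, and resolve the lone extension problem at $n=k$ by feeding in the cofiber sequence $H\underline{\Z}^* \rightarrow H\underline{A} \rightarrow H\underline{J}$. One imprecision worth flagging: you claim that ``each differential vanishes at $\K/\K$ for degree reasons,'' but this is false in general --- already in the $-3\overline{\rho}$ case the differential $\phi_{LDR}^*\underline{\Z}^* \to \bigoplus_H \uparrow_H^\K \langle \Z \rangle$ is multiplication by $2$ at level $\K/\K$, not zero. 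Your stronger (and correct) statement, that the differential is determined by its restrictions, is what does the work: here it holds because $\phi_{LDR}^*\underline{\Z}^*$ has injective restrictions from $\K/\K$ to the three index-two subgroups, so the $C_2$-data pins down the top level even when it is nonzero. As long as you run the argument from the restriction data rather than from an a priori vanishing at $\K/\K$, the proof goes through exactly as the paper intends.
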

	This completes the computation of the negative cone for $H\underline{A}$.
	
\section{Remarks on the Multiplicative Structure} \label{RmksOnMultStructure}

In this last section, we'll see enough of the multiplicative structure to resolve the extension problems in the positive cone. First, recall that $\underline{A}$ is a commutative monoid in $\Mack_\K$ since it is the unit of the symmetric monoidal structure. As $H(-): \Mack_G \rightarrow Ho\Sp_G$ is a lax monoidal functor, it preserves monoids. 

\begin{definition}
	A commutative Green functor $\underline{R}$ for the group $G$ is a commutative monoid in $\Mack_G$. Levelwise, commutative Green functors are commutative rings. For $J \leq H \leq G$, the restriction maps are ring maps and the transfer maps are module maps, where $\underline{R}(J)$ is made into an $\underline{R}(H)$-module via the restriction map. This is known as ``Frobenius reciprocity". 
\end{definition}

Given that $H\underline{A}$ is a commutative monoid in $Ho\Sp_G$, the homotopy Mackey functors will form an $RO(G)$-graded Green functor. This is equivalent to a Green functor valued in $RO(G)$-graded rings, in which the restriction maps are maps of $RO(G)$-graded rings and the transfer maps satisfy an analogous Frobenius reciprocity condition. As mentioned, $\underline{A}$ is the unit for the symmetric monoidal product, and this implies that $\underline{M}$ is a module over $\underline{A}$ for any $\underline{M} \in \Mack_\K$. Further, we have that $H\underline{M}$ is a module over $H\underline{A}$ in the equivariant stable homotopy category. For example, $H\underline{\F_2}$ is a module over $H\underline{A}$ whose $RO(\K)$-graded homotopy is discussed in \cite{HausmannSchwede}. Similarly, $H\underline{\Z}$ is a module over $H\underline{A}$. Before going on, let's name some distinguished elements in $\underline{\pi}_\star(H\underline{A})$.

\begin{definition} Let $\sphere_\K$ denote the $\K$-equivariant sphere spectrum. \\
\begin{enumerate}[i)]
	\item The Euler class $a_{\overline{\rho}} \in \underline{\pi}_{-\overline{\rho}}^\K(H\underline{A})$ associated to $\overline{\rho}$ is the Hurewicz image of the class represented by the equivariant inclusion of fixed points $S^0 \rightarrow S^{\overline{\rho}}$ in $\underline{\pi}_{-\overline{\rho}}(\sphere_\K)$. 
	\item Recall that $\underline{\pi}^\K_{2-2\overline{\rho}}(H\underline{A}) \cong \phi_{LDR}^* \underline{\Z}$. Let $x_H \in \underline{\pi}_{2-2\overline{\rho}}^\K(H\underline{A})(\K/\K)$ be such that $res_H^\K x_H = a_\sigma^4$ and $\res_J^\K x_H = 0$ for $J \neq H$ an order two subgroup. Since the restrictions in $\phi_{LDR}^* \underline{\Z}$ are isomorphisms, this is well-defined. 
	\item Recall that $\underline{\pi}_{3-\overline{\rho}}^\K(H\underline{A}) \cong \underline{\Z}$. Let $u \in \underline{\pi}_{3-\overline{\rho}}^\K(H\underline{A})(\K/\K)\cong \Z$ be an element that restricts to 1 at the underlying level $(\K/e)$. This is known as an "equivariant orientation class", and the restrictions of $u$ to any non-trivial, proper subgroup will correspond to the orientation class for $C_2$-equivariant $H\underline{A}$. 
\end{enumerate}
\end{definition}

\begin{remark} \label{extensionresolution}
(Extension Resolution) Recall that we have extension problems for $\underline{\pi}_n^\K(\Sigma^{n\overline{\rho}} H\underline{A})$ when $n \geq 4$ is even, but that we know the levelwise values and restrictions of this Mackey functor. To understand the transfers, we simply use Frobenius reciprocity.  

For example, let's consider the main extension problem for $\underline{\pi}_4^\K(\Sigma^{4\overline{\rho}} H\underline{A})$: 
\[ \underline{0} \rightarrow \underline{g}^2 \rightarrow \underline{\pi}_4^\K(\Sigma^{4\overline{\rho}} H\underline{A}) \rightarrow \phi_{LDR}^* \underline{\Z} \rightarrow \underline{0}. \]
Again, we know that levelwise the sequence splits and the restrictions in $\underline{\pi}_4^\K$ are forced. In Lewis diagrams, the middle three terms in the sequence look like   
\begin{center}
{ 
\begin{tikzpicture}[scale=0.6]

				
\node (1K) at (-8,6) {\scriptsize $\F_2^2$};
\node (1L) at (-11,3) {\scriptsize $0$};
\node (1D) at (-8,3) {\scriptsize $0$};
\node (1R) at (-5,3) {\scriptsize $0$};
\node (1e) at (-8,0) {\scriptsize $0$};

				
\draw[bend right=10,->] (1L) to (1e);
\draw[bend right=10,->] (1D) to (1e);
\draw[bend right=10,->] (1R) to (1e);

				
\draw[bend right=10,->] (1e) to (1L);
\draw[bend right=10,->] (1e) to (1D);
\draw[bend right=10,->] (1e) to (1R);

				
\draw[bend right=10,->] (1K) to (1L);
\draw[bend right=10,->] (1K) to (1D);
\draw[bend right=10,->] (1K) to (1R);

				
\draw[bend right=10,->] (1L) to (1K);
\draw[bend right=10,->] (1D) to (1K);
\draw[bend right=10,->] (1R) to (1K);

				
\node (2K) at (0,6) {\scriptsize $\Z^3 \oplus \F_2^2$};
\node (2L) at (-3,3) {\scriptsize $\Z$};
\node (2D) at (0,3) {\scriptsize $\Z$};
\node (2R) at (3,3) {\scriptsize $\Z$};
\node (2e) at (0,0) {\scriptsize $0$};

				
\draw[bend right=10,->] (2L) to (2e);
\draw[bend right=10,->] (2D) to (2e);
\draw[bend right=10,->] (2R) to (2e);

				
\draw[bend right=10,->] (2e) to (2L);
\draw[bend right=10,->] (2e) to (2D);
\draw[bend right=10,->] (2e) to (2R);

				
\draw[bend right=10,->] (2K) to node[rotate=45, fill=white, inner sep=0.8pt] {\tiny $p_1$} (2L);
\draw[bend right=10,->] (2K) to node[rotate=90, fill=white, inner sep=0.8pt] {\tiny $p_2$} (2D);
\draw[bend right=10,->] (2K) to node[rotate=-45, fill=white, inner sep=0.8pt] {\tiny $p_3$} (2R);

				
\draw[bend right=10,->] (2L) to node[fill=white, inner sep=0.8pt] {\tiny $?$} (2K);
\draw[bend right=10,->] (2D) to node[fill=white, inner sep=0.8pt] {\tiny $?$} (2K);
\draw[bend right=10,->] (2R) to node[fill=white, inner sep=0.8pt] {\tiny $?$} (2K);

				
\node (3K) at (8,6) {\scriptsize $\Z^3$};
\node (3L) at (5,3) {\scriptsize $\Z$};
\node (3D) at (8,3) {\scriptsize $\Z$};
\node (3R) at (11,3) {\scriptsize $\Z$};
\node (3e) at (8,0) {\scriptsize $0$};

				
\draw[bend right=10,->] (3L) to (3e);
\draw[bend right=10,->] (3D) to (3e);
\draw[bend right=10,->] (3R) to (3e);

				
\draw[bend right=10,->] (3e) to (3L);
\draw[bend right=10,->] (3e) to (3D);
\draw[bend right=10,->] (3e) to (3R);

				
\draw[bend right=10,->] (3K) to node[rotate=45, fill=white, inner sep=1pt] {\tiny $p_1$} (3L);
\draw[bend right=10,->] (3K) to node[rotate=90, fill=white, inner sep=1pt] {\tiny $p_2$} (3D);
\draw[bend right=10,->] (3K) to node[rotate=-45, fill=white, inner sep=1pt] {\tiny $p_3$} (3R);

				
\draw[bend right=10,->] (3L) to node[rotate=45, fill=white, inner sep=1pt] {\tiny $2i_1$} (3K);
\draw[bend right=10,->] (3D) to node[rotate=90, fill=white, inner sep=1pt] {\tiny $2i_2$} (3K);
\draw[bend right=10,->] (3R) to node[rotate=-45, fill=white, inner sep=1pt] {\tiny $2i_3$} (3K);

				
\draw[->] (1K) to node[above] {\tiny $2$} (2K); 
\draw[bend left=30,->] (1R) to node[above] {\tiny $\Delta$} (2L);
\draw[->] (1R) to node[fill=white, inner sep=0.8pt] {\tiny $\Delta$} (2L);
\draw[bend right=30,->] (1R) to node[below] {\tiny $\Delta$} (2L);
\draw[->] (1e) to (2e); 

				
\draw[->] (2K) to node[above] {\tiny $2$} (3K); 
\draw[bend left=30,->] (2R) to node[above] {\tiny $\Delta$} (3L);
\draw[->] (2R) to node[fill=white, inner sep=0.8pt] {\tiny $\Delta$} (3L);
\draw[bend right=30,->] (2R) to node[below] {\tiny $\Delta$} (3L);
\draw[->] (2e) to (3e); 
\end{tikzpicture}
}
\end{center}

Frobenius reciprocity tells us that 
\[ \tau_H^\K(res_H^\K x_H^2) = x_H\tau_H^\K(res_H^\K x_H) = 2x_H^2. \]  
Since $res_H^\K x_H^2$ generates $\underline{\pi}_4^\K(\Sigma^{4\overline{\rho}} H\underline{A})(\K/H)$, we know the transfers! This proves that the ``main extension problem" for $\underline{\pi}_4^\K(\Sigma^{4\overline{\rho}}H\underline{A})$ is a split extension. To learn the transfers in degrees higher than four, note that the main extension problem will always give the same levelwise values and restrictions. Further, we have 
\[ \tau_H^\K(\res_H^\K x_H^{2j}) = x_H^{2j-2} \tau_H^\K(res_H^\K x_H^2) = 2x_H^{2j}.  
\]
\end{remark}

Let's define some more notable elements in $\underline{\pi}_\star^\K(H\underline{A})$. For $H=L,D,R$, we have Euler classes $a_{\sigma_H} \in \underline{\pi}_{-\sigma_H}^\K(H\underline{A})$ corresponding to the equivariant inclusions
\[ S^0 \hookrightarrow S^{\sigma_L}. \]
While these elements aren't depicted in Figures 8.1-8.4 for grading reasons, the product 
\[ a_{\overline{\rho}} = \prod\limits_H a_{\sigma_H} \]
certainly is. Analogous to the $C_2$-equivariant setting, we can understand the multiplicative characteristics of $a_{\sigma_H}$ (torsion and divisibility) via the cofiber sequence
\[ \K/H_+ \rightarrow S^0 \rightarrow S^{\sigma_H} \]
and its dual
\[ S^{-\sigma_H} \rightarrow S^0 \rightarrow \K/H_+. \]

\begin{proposition}
	In $\underline{\pi}_\star^\K(X)$, $a_{\sigma_H}$-torsion \textbf{is} the image of the transfer from $H$.
\end{proposition}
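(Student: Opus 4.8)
The plan is to run the long exact sequence coming from the Euler cofiber sequence $\K/H_+ \to S^0 \xrightarrow{a_{\sigma_H}} S^{\sigma_H}$ recorded in \cref{Preliminaries} (this is the unit-sphere cofiber sequence of the one-dimensional representation $\sigma_H$, so the second map is, unstably, a representative of $a_{\sigma_H}$). First I would smash the whole cofiber sequence with $X$ to get $\K/H_+ \wedge X \to X \xrightarrow{a_{\sigma_H} \wedge 1} \Sigma^{\sigma_H} X$ and apply $\underline{\pi}_V^\K(-)$ for each $V \in RO(\K)$. Using the natural identification $\underline{\pi}_V^\K(\Sigma^{\sigma_H}X) \cong \underline{\pi}_{V-\sigma_H}^\K(X)$, the resulting long exact sequence of Mackey functors reads
\[ \cdots \longrightarrow \underline{\pi}_V^\K(\K/H_+ \wedge X) \xrightarrow{\ f\ } \underline{\pi}_V^\K(X) \xrightarrow{\ a_{\sigma_H}\ } \underline{\pi}_{V-\sigma_H}^\K(X) \longrightarrow \cdots, \]
where the middle map is multiplication by the Euler class (a short diagram chase shows the map induced by $a_{\sigma_H} \wedge 1$ on homotopy is the module action of $a_{\sigma_H} \in \underline{\pi}_{-\sigma_H}^\K(\sphere_\K)$). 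Exactness immediately gives, for every $V$, that the kernel of multiplication by $a_{\sigma_H}$ on $\underline{\pi}_V^\K(X)$ equals the image of $f$. Since $\res_H^\K a_{\sigma_H} = 0$ (the restriction of $\sigma_H$ to $H$ is trivial), every element of the image of the transfer from $H$ is killed by $a_{\sigma_H}$; and replacing the cofiber sequence above by the one for $S(n\sigma_H)_+ \to S^0 \to S^{n\sigma_H}$ (whose source has only cells of type $\K/H$, so that its homotopy still maps through $\uparrow_H^\K \downarrow_H^\K \underline{\pi}_V^\K(X)$) shows that the kernel of $a_{\sigma_H}^n$ also lands in the image of the transfer for every $n$. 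Hence all reasonable notions of ``$a_{\sigma_H}$-torsion'' agree, and it suffices to identify $f$ with the transfer from $H$.

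The second step is therefore to identify $f$, which is induced by the collapse $\K/H_+ \to S^0$, with the transfer. I would invoke \cref{topologicalboxformula}, which gives a natural isomorphism $\underline{\pi}_V^\K(\K/H_+ \wedge X) \cong \uparrow_H^\K \downarrow_H^\K \underline{\pi}_V^\K(X)$ coming from the equivalence $\K/H_+ \wedge X \simeq \uparrow_H^\K \downarrow_H^\K X$; under this equivalence the collapse map is the counit of the induction–restriction adjunction on $\K$-spectra, so $f$ is the counit $\uparrow_H^\K \downarrow_H^\K \underline{\pi}_V^\K(X) \to \underline{\pi}_V^\K(X)$ of Mackey functors. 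Evaluated at $\K/\K$ this counit is precisely the transfer $\tau_H^\K$; for the statement at the other levels I would rewrite $\underline{\pi}_V^\K(\K/H_+ \wedge X)(\K/J) \cong [\Sigma^V \Sigma_\K^\infty(\K/J \times \K/H)_+, X]$ using self-duality of $\Sigma_\K^\infty \K/H_+$, split $\K/J \times \K/H$ into $\K$-orbits, and observe that $f$ is componentwise a Pontryagin–Thom transfer followed by a quotient map, so that its image is the sub-Mackey functor generated by transfers from subgroups of $J$ subconjugate to $H$ — which, via the double-coset relations, is exactly ``the image of the transfer from $H$''. For the three order-two subgroups of $\K$ this is quick bookkeeping: at $\K/\K$ one gets $\tau_H^\K$, at $\K/H$ one gets $1+w$ with $w$ the Weyl action, at the remaining order-two orbits one gets the transfer from $e$, and at $\K/e$ one gets the fold map.

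The main obstacle is this second step: matching the topological collapse map with the algebraic counit/transfer and being careful about what ``image of the transfer from $H$'' means as a sub-Mackey functor at the levels $\K/J$ with $H \not\le J$. Everything else — the cofiber sequence, the induced long exact sequence, the kernel-equals-image conclusion, and the remark that torsion is detected on the nose by $a_{\sigma_H}$ — is formal. I would thus devote most of the write-up to the identification of $f$, and if a slicker route is wanted, reduce it to the single assertion that $f$ restricted to level $\K/\K$ is $\tau_H^\K$, pinning down the lower levels by compatibility with restriction.
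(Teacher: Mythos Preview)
Your approach is essentially the paper's: smash the cofiber sequence $\K/H_+ \to S^0 \to S^{\sigma_H}$ with $X$, take the long exact sequence in homotopy, and read off that the kernel of multiplication by $a_{\sigma_H}$ equals the image of the map induced by the collapse $\K/H_+ \to S^0$. The paper then simply declares that map to be the transfer at level $\K/\K$ and stops.

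The difference is scope. The paper's proof works only at level $\K/\K$: it applies $\underline{\pi}_\star^\K((-)\wedge X)(\K/\K)$, identifies $\underline{\pi}_\star^\K(\K/H_+\wedge X)(\K/\K)\cong \underline{\pi}_\star^\K(X)(\K/H)$, and names the collapse-induced map as $\tau_H^\K$. That matches the intended reading of the statement, since ``the image of the transfer from $H$'' refers to $\mathrm{im}(\tau_H^\K)\subset \underline{\pi}_\star^\K(X)(\K/\K)$. Your second step---unpacking the counit $\uparrow_H^\K\downarrow_H^\K\underline{\pi}_V^\K(X)\to\underline{\pi}_V^\K(X)$ at every level $\K/J$ via orbit decompositions and double-coset formulas---and your aside about $a_{\sigma_H}^n$-torsion are correct but not needed here; the paper neither states nor proves a levelwise version. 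If you want to match the paper, drop everything after the long exact sequence except the identification of $f$ at $\K/\K$ with $\tau_H^\K$.
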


\begin{proof}
Applying $\underline{\pi}_\star^\K((-) \wedge X)(\K/\K)$ to the first cofiber sequence gives a long exact sequence 
\[ \cdots \rightarrow \underline{\pi}_\star^\K(\K/H_+ \wedge X)(\K/\K) \rightarrow \underline{\pi}_\star^\K(X)(\K/\K) \rightarrow \underline{\pi}_{\star - \sigma_H}^\K(X)(\K/\K) \rightarrow \cdots. \]
Up to isomorphism, this is the sequence 
\[ \cdots \rightarrow \underline{\pi}_\star^\K(X)(\K/H) \xrightarrow{tr} \underline{\pi}_\star^\K(X)(\K/\K) \xrightarrow{a_{\sigma_H}} \underline{\pi}_{\star - \sigma_H}^\K(X)(\K/\K) \rightarrow \cdots, \] 
where the first map is the transfer map in the Mackey functor and the map labeled $a_{\sigma_H}$ is multiplication by $a_{\sigma_H}$. 
\end{proof}

\begin{proposition}
	In $\underline{\pi}_\star^\K(X)$, the kernel of restriction to $H$ \textbf{is} the set of $a_{\sigma_H}$-divisible elements.  
\end{proposition}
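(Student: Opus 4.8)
This proposition is the Spanier--Whitehead dual of the previous one, and the plan is to run the analogous long exact sequence ``backwards''. First I would smash the dual cofiber sequence $S^{-\sigma_H} \to S^0 \to \K/H_+$ with $X$ to obtain the cofiber sequence
\[ \Sigma^{-\sigma_H} X \to X \to \K/H_+ \wedge X, \]
and then apply $\underline{\pi}_\star^\K(-)(\K/\K)$ to produce the long exact sequence
\[ \cdots \to \underline{\pi}_\star^\K(\Sigma^{-\sigma_H} X)(\K/\K) \to \underline{\pi}_\star^\K(X)(\K/\K) \to \underline{\pi}_\star^\K(\K/H_+ \wedge X)(\K/\K) \to \cdots. \]

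Next I would identify the two outer terms together with the two displayed maps. On the left, the suspension isomorphism gives $\underline{\pi}_\star^\K(\Sigma^{-\sigma_H} X)(\K/\K) \cong \underline{\pi}_{\star + \sigma_H}^\K(X)(\K/\K)$, and since $a_{\sigma_H}$ is by definition the Hurewicz image of the class of $S^{-\sigma_H} \to S^0$ in $\underline{\pi}_{-\sigma_H}^\K(\sphere_\K)$, the map $\Sigma^{-\sigma_H} X \to X$ induces precisely multiplication by $a_{\sigma_H}$ (via the $\sphere_\K$-module structure on $X$). On the right, the adjunction chase of \cref{topologicalboxformula} gives $\underline{\pi}_\star^\K(\K/H_+ \wedge X)(\K/\K) \cong \underline{\pi}_\star^\K(X)(\K/H)$, and under this identification the map $X \to \K/H_+ \wedge X$ induced by $S^0 \to \K/H_+$ is the restriction $\res_H^\K$ --- this is exactly dual to the identification of $\K/H_+ \to S^0$ with the transfer in the previous proof. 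Thus, up to isomorphism, the sequence reads
\[ \cdots \to \underline{\pi}_{\star+\sigma_H}^\K(X)(\K/\K) \xrightarrow{\,a_{\sigma_H}\,} \underline{\pi}_\star^\K(X)(\K/\K) \xrightarrow{\,\res_H^\K\,} \underline{\pi}_\star^\K(X)(\K/H) \to \cdots, \]
and exactness at the middle term is precisely the assertion that the kernel of restriction to $H$ coincides with the set of $a_{\sigma_H}$-divisible elements.

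The only real content beyond bookkeeping lies in the two map identifications of the middle step; but each is the Spanier--Whitehead dual of an identification already in hand --- the first cofiber sequence $\K/H_+ \to S^0 \to S^{\sigma_H}$ and the transfer identification of the previous proposition. So the main (and essentially only) obstacle is to articulate carefully that passing to the dual cofiber sequence interchanges the roles of transfer and restriction for these orbit (co)fiber sequences, after which the statement follows at once from the long exact sequence.
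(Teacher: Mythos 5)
Your proof is correct and follows essentially the same argument as the paper: smash the dual cofiber sequence $S^{-\sigma_H}\to S^0\to \K/H_+$ with $X$, identify the first map in the resulting long exact sequence with multiplication by $a_{\sigma_H}$ and the second with $\res_H^\K$, and read off the claim from exactness at the middle term.
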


\begin{proof}
The long exact sequence in homotopy given by the dual cofiber sequence is 
\[ \cdots \rightarrow \underline{\pi}_{\star+\sigma_H^\K}(X)(\K/\K) \rightarrow \underline{\pi}_\star^\K(X)(\K/\K) \rightarrow \underline{\pi}_{\star}^\K(\K/H_+ \wedge X)(\K/\K) \rightarrow \cdots. \]
Up to isomorphism, we have 
\[ \cdots \rightarrow \underline{\pi}_{\star+\sigma_H^\K}(X)(\K/\K) \xrightarrow{a_H} \underline{\pi}_\star^\K(X)(\K/\K) \xrightarrow{res_H^\K} \underline{\pi}_\star^\K(X)(\K/H) \rightarrow \cdots. \]
\end{proof}

We would like similar statements for $a_{\overline{\rho}}$ but don't have ones quite as strong. Since $a_{\overline{\rho}} = a_{\sigma_L}a_{\sigma_D}a_{\sigma_R}$, torsion elements for any of the factors of $a_{\overline{\rho}}$ are torsion for $a_{\overline{\rho}}$. The converse, however, is not necessarily true. 
\begin{remark} \label{torsionslogan}
	Elements in the image of the transfers from $H=L,D,R$ are $a_{\overline{\rho}}$-torsion, but not all $a_{\overline{\rho}}$-torsion elements are hit by some transfer. 
\end{remark}
We have a family of examples in the negative cone for $H\underline{A}$ that demonstrate \cref{torsionslogan}. Consider $(x,y) = (-1,1),(-4,2),(-7,3),(-10,4),...$ in \cref{fig:piK4HA}. The classes here aren't hit by transfers because the Mackey functors are inflated, but must certainly by $a_{\overline{\rho}}$-torsion because the target group (Mackey functor) is zero. It's also worth noting that all but one of the members of this family of examples lies in the negative cone of $H\underline{\Z}$, not just $H\underline{A}$.

A better statement can be made for divisibility since an $a_{\overline{\rho}}$-divisible element is certainly divisible by $a_{\sigma_H}$ for all of $H=L,D,R$. Thus, such an element is in the kernel of all of the restriction maps. The converse does not hold, so unfortunately we only have a test for non-divisibility. 
\begin{remark}
Elements that are $a_{\overline{\rho}}$-divisible are in the kernels of the restrictions to $H \in \{L,D,R\}$, but not all such elements are $a_{\overline{\rho}}$-divisible. 
\end{remark}

We can give some depiction of the multiplicative structure of $\underline{\pi}_n^\K(\Sigma^{k\overline{\rho}} H\underline{A})(\K/\K)$ at this point, and it's worth noting that we completely understand the multiplicative structure at all of the proper subgroup levels as well, see for example \cite{Sikora}. In \cref{fig:multpiK4HA}, there are several multiplicative generators worth collecting. The generators are colored so that arrows of the same color as some generator indicate a non-zero multiplication by this generator. Alternatively, the bi-degree of an element will determine the multiplicative arrows. We'll collect the elements we've seen so far in \cref{MultGenerators}.

\begin{table}
\caption{Distinguished Classes in $\underline{\pi}^\K_{x+y\overline{\rho}}(H\underline{A})$}
\label{MultGenerators}
\begin{tabular}{|| c | c | c ||}
\hline
Element & Bidegree $(x+y\overline{\rho})$ & Additive Generator? \\
\hline
$a_{\overline{\rho}}$ & (0,-1) & Yes \\
\hline
$u$ & (3,-1) & Yes \\
\hline
$\frac{\K - \K/L-\K/D-\K/R+2\K/\K}{a_{\overline{\rho}}}$ & (0,1) & Yes \\
\hline
$x_H$ & (2,-2) & Additive Basis \\
\hline
$\frac{\K}{u}$ & (-3,1) & Yes \\
\hline
$z_H \coloneqq \frac{\K/e-2\K/H}{x_H}$ & (-2,2) & Additive Basis \\
\hline
\end{tabular}
\end{table}

\begin{figure}
\caption{Some Mult. Structure of $\underline{\pi}^\K_{x + y\overline\rho}(H\underline{A})$}
\label{fig:multpiK4HA}
\begin{center}
	\begin{tikzpicture}[scale=0.625,font=\tiny]
	
	\draw[gray] (-12,-6) grid (12,6); 
	\draw[->] (0,-6) to (0,6.25); 
	\draw[->] (-12,0) to (12,0);
	
	\node at (11.5,0.5) [draw] {$x \cdot 1$};
	\node at (0.5,5.5) [draw, rotate=90] {$y \cdot \overline{\rho}$};       
	
            	           
    \node[fill=white, draw, circle, inner sep=0.5pt] (00) at (0,0) {\normalsize $\underline{A}$};
            	           
    \begin{scope}[every node/.style={draw,fill=white,circle,inner sep=1.25pt}]
		\node (0-1) at (0,-1){};
		\node (0-2) at (0,-2){};
		\node (0-3) at (0,-3){};
		\node (0-4) at (0,-4){};
		\node (0-5) at (0,-5){};
		\node (0-6) at (0,-6){};
	\end{scope}
	            
	\node (1-10) at (1,-1) {$\phiLDRf$};

	\node[regular polygon, fill=white, draw, regular polygon sides=4, 
 minimum width=0pt, inner sep = 0.5ex,] (3-1) at (3,-1) {};
	
	
	\node[regular polygon, fill=white, draw, regular polygon sides=5, 
 minimum width=0pt, inner sep = 0.5ex,] (2-2) at (2,-2) {};
	
	\begin{scope}[every node/.style={draw,fill=black,circle,inner sep=1.25pt}]
		\node (3-2) at (3,-2) {};
	\end{scope}
	
	\node[trapezium, fill=black, draw, inner sep=2.5pt,scale=1] (4-2) at (4,-2) {};
	
	\node[regular polygon, fill=white, draw, regular polygon sides=4, 
 minimum width=0pt, inner sep = 0.5ex,] (6-2) at (6,-2) {};
	

	\node[draw,fill=black,circle,inner sep=1.25pt, text=white] (2-3) at (2,-3) {$3$}; 
	
	\node[below left={-0.5ex}] at (3,-3) {$\phiLDRf$};
	
	\begin{scope}[every node/.style={draw,fill=black,circle,inner sep=1.25pt}]
		\node[above right={0.35ex}] (3-3) at (3,-3) {};
	\end{scope}
				
	\node[draw,fill=black,circle,inner sep=1.25pt, text=white] (4-3) at (4,-3) {$2$};
	
	\node[regular polygon, fill=black, draw, regular polygon sides=5, 
 minimum width=0pt, inner sep = 0.5ex,] (5-3) at (5,-3) {};
 
 	\node[draw, fill=black, circle, inner sep=1.25pt] (6-3) at (6,-3) {};
 	
 	\node[trapezium, fill=black, draw, inner sep=2.5pt,scale=1] (7-3) at (7,-3) {};
 	
 	\node[regular polygon, fill=white, draw, regular polygon sides=4, 
 minimum width=0pt, inner sep = 0.5ex,] (9-3) at (9,-3) {};
 
	
	\node[draw,fill=black,circle,inner sep=1.25pt, text=white] (2-4) at (2,-4) {$3$}; 	
	
	\node[draw,fill=black,circle,inner sep=1.25pt, text=white] (3-4) at (3,-4) {};
	
	\node[below left={0.35ex}, regular polygon, fill=white, draw, regular polygon sides=5, 
 minimum width=0pt, inner sep = 0.5ex,] (4-4')at (4,-4) {};
 
 	\node[above right={0.35ex}, draw,fill=black,circle,inner sep=1.25pt, text=white] (4-4) at (4,-4) {$2$}; 
 	
 	\node[draw,fill=black,circle,inner sep=1.25pt, text=white] (5-4) at (5,-4) {$3$}; 
 	
 	\node[below left={0.35ex}, regular polygon, fill=black, draw, regular polygon sides=5, 
 minimum width=0pt, inner sep = 0.5ex,] at (6,-4) {};
 
 	\node[above right={0.35ex}, draw,fill=black,circle,inner sep=1.25pt, text=white] (6-4) at (6,-4) {};
 	
 	\node[draw,fill=black,circle,inner sep=1.25pt, text=white] (7-4) at (7,-4) {$2$};
 	
 	\node[regular polygon, fill=black, draw, regular polygon sides=5, 
 minimum width=0pt, inner sep = 0.5ex,] (8-4) at (8,-4) {};
	
	\node[draw,fill=black,circle,inner sep=1.25pt, text=white] (9-4) at (9,-4) {};
	
	\node[trapezium, fill=black, draw, inner sep=2.5pt,scale=1] (10-4) at (10,-4) {};
	
	\node[regular polygon, fill=white, draw, regular polygon sides=4, 
 minimum width=0pt, inner sep = 0.5ex,] (12-4) at (12,-4) {};
				
			    			
	\node[draw,fill=black,circle,inner sep=1.25pt, text=white] (2-5) at (2,-5) {$3$}; 
	
	\node[draw,fill=black,circle,inner sep=1.25pt, text=white] (3-5) at (3,-5) {};
	
	\node[draw,fill=black,circle,inner sep=1.25pt, text=white] (4-5) at (4,-5) {$5$};  	
	
	\node[below left={-0.5ex}] at (5,-5) {\small $\phiLDRf$};
	
	\begin{scope}[every node/.style={draw,fill=black,circle,inner sep=1.25pt}]
		\node[above right={0.35ex}, text=white] (5-5) at (5,-5) {$3$};
	\end{scope}
	
	\node[draw,fill=black,circle,inner sep=1.25pt, text=white] (6-5) at (6,-5) {$4$};
	
	\node[below left={0.35ex}, regular polygon, fill=black, draw, regular polygon sides=5, 
 minimum width=0pt, inner sep = 0.5ex,] at (7,-5) {};
 
 	\node[above right={0.35ex}, draw,fill=black,circle,inner sep=1.25pt, text=white] (7-5) at (7,-5) {$2$}; 
 	
 	\node[draw,fill=black,circle,inner sep=1.25pt, text=white] (8-5) at (8,-5) {$3$};
 	
 	\node[below left={0.35ex}, regular polygon, fill=black, draw, regular polygon sides=5, 
 minimum width=0pt, inner sep = 0.5ex,] at (9,-5) {};
 
 	\node[above right={0.35ex}, draw,fill=black,circle,inner sep=1.25pt, text=white] (9-5) at (9,-5) {};
 	
 	\node[draw,fill=black,circle,inner sep=1.25pt, text=white] (10-5) at (10,-5) {$2$};
 
 	\node[regular polygon, fill=black, draw, regular polygon sides=5, 
 minimum width=0pt, inner sep = 0.5ex,] (11-5) at (11,-5) {};
 
 \node[draw,fill=black,circle,inner sep=1.25pt, text=white] (12-5) at (12,-5) {};
			
			    			
	\node[draw,fill=black,circle,inner sep=1.25pt, text=white] (2-6) at (2,-6) {$3$}; 
	
	\node[draw,fill=black,circle,inner sep=1.25pt, text=white] (3-6) at (3,-6) {};
	
	\node[draw,fill=black,circle,inner sep=1.25pt, text=white] (4-6) at (4,-6) {$5$};
	
	\node[draw,fill=black,circle,inner sep=1.25pt, text=white] (5-6) at (5,-6) {$3$};
	
	\node[above right={0.35ex}, draw,fill=black,circle,inner sep=1.25pt, text=white] (6-6) at (6,-6) {$4$};   
	
	\node[below left={0.35ex}, regular polygon, fill=white, draw, regular polygon sides=5, 
 minimum width=0pt, inner sep = 0.5ex,] (6-6') at (6,-6) {};
	
	\node[draw,fill=black,circle,inner sep=1.25pt, text=white] (7-6) at (7,-6) {$5$};
	
	\node[below left={0.35ex}, regular polygon, fill=black, draw, regular polygon sides=5, 
 minimum width=0pt, inner sep = 0.5ex,] at (8,-6) {};
 
 	\node[above right={0.35ex}, draw,fill=black,circle,inner sep=1.25pt, text=white] (8-6) at (8,-6) {$3$};
 	
 	\node[draw,fill=black,circle,inner sep=1.25pt, text=white] (9-6) at (9,-6) {$4$};
 	
 	\node[below left={0.35ex}, regular polygon, fill=black, draw, regular polygon sides=5, 
 minimum width=0pt, inner sep = 0.5ex,] at (10,-6) {};
 
 	\node[above right={0.35ex}, draw,fill=black,circle,inner sep=1.25pt, text=white] (10-6) at (10,-6) {$2$};
 
 	\node[draw,fill=black,circle,inner sep=1.25pt, text=white] (11-6) at (11,-6) {$3$};
 	
 	\node[below left={0.35ex}, regular polygon, fill=black, draw, regular polygon sides=5, 
 minimum width=0pt, inner sep = 0.5ex,] at (12,-6) {};
 
 	\node[above right={0.35ex}, draw,fill=black,circle,inner sep=1.25pt, text=white] (12-6) at (12,-6) {};
	

	\foreach \n in {1,...,6}{ 	                      
		\node[draw, fill=white, circle, inner sep=1.25pt] (0\n) at (0,\n) {};
		}
		
	
	\node (-11) at (-1,1) {$\bardot$};
	
	\node[regular polygon, fill=white, draw, regular polygon sides=4, 
 minimum width=0pt, inner sep = 0.5ex,] (-31) at (-3, 1) {};
	\node at (-3,1) {\large $\ast$};	
	
	
	\node[draw, fill=black, circle, inner sep=1.25pt] (-12) at (-1,2) {};
	
	\node[regular polygon, fill=white, draw, regular polygon sides=5, 
 minimum width=0pt, inner sep = 0.5ex,] (-22) at (-2,2) {};
	\node at (-2,2) {\large $\ast$};
	
	\node[draw, fill=black, circle, inner sep = 1.25pt] (-42) at (-4,2) {};
	
	\node[trapezium, fill=black, draw, inner sep=2.5pt,scale=1] (-52) at (-5,2) {};
	\node[text=white] at (-5,2) {\large $\ast$};
	
	\node[regular polygon, fill=white, draw, regular polygon sides=4, 
 minimum width=0pt, inner sep = 0.5ex,] (-62) at (-6,2) {};
	\node at (-6,2) {\large $\ast$};
	
	
	\node[draw, fill=black, circle, inner sep =1.25pt] (-13) at (-1,3) {};
	
	\node (-33) at (-3,3) {$\phiLDRQ$};
	
	\node[draw, fill=black, circle, inner sep=1.25pt] (-43) at (-4,3) {};	
	
	\node[draw, fill=black, circle, inner sep=1.25pt, text=white] (-53) at (-5,3) {$2$};
	
	\node[regular polygon, fill=black, draw, regular polygon sides=5, 
 minimum width=0pt, inner sep = 0.5ex,] (-63) at (-6,3) {};
	\node[text=white] at (-6,3) {\large $\ast$};

	\node[draw, fill=black, circle, inner sep=1.25pt, text=white] (-73) at (-7,3) {};	
	
	\node[draw, trapezium, fill=black, inner sep=2.5pt,scale=1] (-83) at (-8,3) {};
	\node[text=white] at (-8,3) {\large $\ast$};
	
	\node[regular polygon, fill=white, draw, regular polygon sides=4, 
 minimum width=0pt, inner sep = 0.5ex,] (-93) at (-9,3) {};
	\node at (-9,3) {\large $\ast$};
	

	\node[draw, fill=black, circle, inner sep=1.25pt] (-14) at (-1,4) {};
			
	\node[draw, fill=black, circle, inner sep=1.25pt, text=white] (-34) at (-3,4) {$3$};			
			
	\begin{scope}[xshift=-0.2cm, yshift=-0.2cm]
	\node[regular polygon, fill=white, draw, regular polygon sides=5, 
 minimum width=0pt, inner sep = 0.5ex] (-44') at (-4,4) {};
	\node[text=black] at (-4,4) {\large $\ast$};
	\end{scope}
	
	\begin{scope}[xshift=0.2cm, yshift=0.2cm]
	\node[draw, fill=black, circle, inner sep =1.25pt, text=white] (-44) at (-4,4) {};
	\end{scope}
	
	\node[draw, fill=black, circle, inner sep =1.25pt, text=white] (-54) at (-5,4) {$2$};
	
	\node[draw, fill=black, circle, inner sep =1.25pt, text=white] (-64) at (-6,4) {$3$};
	
	\begin{scope}[xshift=-0.2cm, yshift=-0.2cm]
	\node[regular polygon, fill=black, draw, regular polygon sides=5, 
 minimum width=0pt, inner sep = 0.5ex] at (-7,4) {};
	\node[text=white] at (-7,4) {\large $\ast$};
	\end{scope}
	
	\begin{scope}[xshift=0.2cm, yshift=0.2cm]
	\node[draw, fill=black, circle, inner sep =1.25pt, text=white] (-74) at (-7,4) {};
	\end{scope}
	
	\node[draw, fill=black, circle, inner sep =1.25pt, text=white] (-84) at (-8,4) {$2$};
	
	\node[regular polygon, fill=black, draw, regular polygon sides=5, 
 minimum width=0pt, inner sep = 0.5ex,] (-94) at (-9,4) {};
	\node[text=white] at (-9,4) {\large $\ast$};

	\node[draw, fill=black, circle, inner sep=1.25pt, text=white] (-104) at (-10,4) {};
	
	\node[draw, trapezium, fill=black, inner sep=2.5pt,scale=1] (-114) at (-11,4) {};
	\node[text=white] at (-11,4) {\large $\ast$};
	
	\node[regular polygon, fill=white, draw, regular polygon sides=4, 
 minimum width=0pt, inner sep = 0.5ex,] (-124) at (-12,4) {};
	\node at (-12,4) {\large $\ast$};
	
			
	\node[draw, fill=black, circle, inner sep =1.25pt] (-15) at (-1,5) {};
	
	\node[draw, fill=black, circle, inner sep=1.25pt, text=white] (-35) at (-3,5) {$3$};
	
	\node[draw, fill=black, circle, inner sep =1.25pt] (-45) at (-4,5) {};
	
	\begin{scope}[xshift=0.2cm,yshift=0.2cm]
	\node[draw, fill=black, circle, inner sep=1.25pt, text=white] (-55) at (-5,5) {$2$};
	\end{scope}
	
	\begin{scope}[xshift=-0.2cm,yshift=-0.2cm]
	\node at (-5,5) {$\phiLDRQ$};
	\end{scope}
	
	\node[draw, fill=black, circle, inner sep =1.25pt, text=white] (-65) at (-6,5) {$3$};
	
	\node[draw, fill=black, circle, inner sep =1.25pt, text=white] (-75) at (-7,5) {$4$};
	
	\begin{scope}[xshift=-0.2cm, yshift=-0.2cm]
	\node[regular polygon, fill=black, draw, regular polygon sides=5, 
 minimum width=0pt, inner sep = 0.5ex] (-85) at (-8,5) {};
	\node[text=white] at (-8,5) {\large $\ast$};
	\end{scope}
	
	\begin{scope}[xshift=0.2cm, yshift=0.2cm]
	\node[draw, fill=black, circle, inner sep =1.25pt, text=white] (-85) at (-8,5) {$2$};
	\end{scope}
	
	\node[draw, fill=black, circle, inner sep =1.25pt, text=white] (-95) at (-9,5) {$3$};
	
	\begin{scope}[xshift=-0.2cm, yshift=-0.2cm]
	\node[regular polygon, fill=black, draw, regular polygon sides=5, 
 minimum width=0pt, inner sep = 0.5ex] (-105) at (-10,5) {};
	\node[text=white] at (-10,5) {\large $\ast$};
	\end{scope}
	
	\begin{scope}[xshift=0.2cm, yshift=0.2cm]
	\node[draw, fill=black, circle, inner sep =1.25pt, text=white] (-105) at (-10,5) {};
	\end{scope}
	
	\node[draw, fill=black, circle, inner sep =1.25pt, text=white] (-115) at (-11,5) {$2$};
	
	\node[regular polygon, fill=black, draw, regular polygon sides=5, 
 minimum width=0pt, inner sep = 0.5ex,] (-125) at (-12,5) {};
	\node[text=white] at (-12,5) {\large $\ast$};
	
	
	\node[draw, fill=black, circle, inner sep =1.25pt, text=white] (-16) at (-1,6) {};
	
	\node[draw, fill=black, circle, inner sep=1.25pt, text=white] (-36) at (-3,6) {$3$};
	
	\node[draw, fill=black, circle, inner sep =1.25pt, text=white] (-46) at (-4,6) {};
	
	\node[draw, fill=black, circle, inner sep =1.25pt, text=white] (-56) at (-5,6) {$5$};
	
	\begin{scope}[xshift=-0.2cm, yshift=-0.2cm]
	\node[regular polygon, fill=white, draw, regular polygon sides=5, 
 minimum width=0pt, inner sep = 0.5ex] (-66') at (-6,6) {};
	\node[text=black] at (-6,6) {\large $\ast$};
	\end{scope}
	
	\begin{scope}[xshift=0.2cm, yshift=0.2cm]
	\node[draw, fill=black, circle, inner sep =1.25pt, text=white] (-66) at (-6,6) {$3$};
	\end{scope}
	
	\node[draw, fill=black, circle, inner sep =1.25pt, text=white] (-76) at (-7,6) {$4$};
	
	\node[draw, fill=black, circle, inner sep =1.25pt, text=white] (-86) at (-8,6) {$5$};
	
	\begin{scope}[xshift=-0.2cm, yshift=-0.2cm]
	\node[regular polygon, fill=black, draw, regular polygon sides=5, 
 minimum width=0pt, inner sep = 0.5ex] at (-9,6) {};
	\node[text=white] at (-9,6) {\large $\ast$};
	\end{scope}
	
	\begin{scope}[xshift=0.2cm, yshift=0.2cm]
	\node[draw, fill=black, circle, inner sep =1.25pt, text=white] (-96) at (-9,6) {$3$};
	\end{scope}
	
	\node[draw, fill=black, circle, inner sep =1.25pt, text=white] (-106) at (-10,6) {$4$};
	
	\begin{scope}[xshift=-0.2cm, yshift=-0.2cm]
	\node[regular polygon, fill=black, draw, regular polygon sides=5, 
 minimum width=0pt, inner sep = 0.5ex] at (-11,6) {};
	\node[text=white] at (-11,6) {\large $\ast$};
	\end{scope}
	
	\begin{scope}[xshift=0.2cm, yshift=0.2cm]
	\node[draw, fill=black, circle, inner sep =1.25pt, text=white] (-116) at (-11,6) {$2$};
	\end{scope}
		
	\node[draw, fill=black, circle, inner sep =1.25pt, text=white] (-126) at (-12,6) {$3$};

			
	\draw[color=\amult, bend right=25, ->] (00) to (0-1) {};
	\draw[color=\amult, bend right=25, ->] (0-1) to (0-2) {};
	\draw[color=\amult, bend right=25, ->] (0-2) to (0-3) {};
	\draw[color=\amult, bend right=25, ->] (0-3) to (0-4) {};
	\draw[color=\amult, bend right=25, ->] (0-4) to (0-5) {};
	\draw[color=\amult, bend right=25, ->] (0-5) to (0-6) {};		
	
	\draw[color=\amult, ->] (2-2) to (2-3) {};
	\draw[color=\amult, ->] (2-3) to (2-4) {};
	\draw[color=\amult, ->] (2-4) to (2-5) {};
	\draw[color=\amult, ->] (2-5) to (2-6) {};	
	
	\draw[color=\amult, ->] (3-1) to (3-2) {};
	\draw[color=\amult, ->] (3-2) to (3-3) {};
	\draw[color=\amult, ->] (3-3) to (3-4) {};
	\draw[color=\amult, ->] (3-4) to (3-5) {};
	\draw[color=\amult, ->] (3-5) to (3-6) {};	
	
	\draw[color=\amult, ->] (4-2) to (4-3) {};
	\draw[color=\amult, ->] (4-3) to (4-4) {};
	\draw[color=\amult, ->] (4-4) to (4-5) {};
	\draw[color=\amult, ->] (4-5) to (4-6) {};	
	
	\draw[color=\amult, ->] (5-3) to (5-4) {};
	\draw[color=\amult, ->] (5-4) to (5-5) {};
	\draw[color=\amult, ->] (5-5) to (5-6) {};	
	
	\draw[color=\amult, ->] (6-2) to (6-3) {};
	\draw[color=\amult, ->] (6-3) to (6-4) {};
	\draw[color=\amult, ->] (6-4) to (6-5) {};
	\draw[color=\amult, ->] (6-5) to (6-6) {};	
	
	\draw[color=\amult, ->] (7-3) to (7-4) {};
	\draw[color=\amult, ->] (7-4) to (7-5) {};
	\draw[color=\amult, ->] (7-5) to (7-6) {};	
	
	\draw[color=\amult, ->] (8-4) to (8-5) {};
	\draw[color=\amult, ->] (8-5) to (8-6) {};	
	
	\draw[color=\amult, ->] (9-3) to (9-4) {};
	\draw[color=\amult, ->] (9-4) to (9-5) {};
	\draw[color=\amult, ->] (9-5) to (9-6) {};
	
	\draw[color=\amult, ->] (10-4) to (10-5) {};
	\draw[color=\amult, ->] (10-5) to (10-6) {};		
	
	\draw[color=\amult, ->] (11-5) to (11-6) {};
	
	\draw[color=\amult, ->] (12-4) to (12-5) {};
	\draw[color=\amult, ->] (12-5) to (12-6) {};		
	
	\draw[color=\amult, bend left=25, ->] (01) to (00) {};
	\draw[color=\amult, bend left=25, ->] (02) to (01) {};
	\draw[color=\amult, bend left=25, ->] (03) to (02) {};
	\draw[color=\amult, bend left=25, ->] (04) to (03) {};
	\draw[color=\amult, bend left=25, ->] (05) to (04) {};
	\draw[color=\amult, bend left=25, ->] (06) to (05) {};
	
	\draw[color=\amult, ->] (-12) to (-11) {};
	\draw[color=\amult, ->] (-13) to (-12) {};
	\draw[color=\amult, ->] (-14) to (-13) {};
	\draw[color=\amult, ->] (-15) to (-14) {};
	\draw[color=\amult, ->] (-16) to (-15) {};

	\draw[color=\amult, ->] (-34) to (-33) {};
	\draw[color=\amult, ->] (-35) to (-34) {};
	\draw[color=\amult, ->] (-36) to (-35) {};	
	
	\draw[color=\amult, ->] (-43) to (-42) {};
	\draw[color=\amult, ->] (-44) to (-43) {};
	\draw[color=\amult, ->] (-45) to (-44) {};
	\draw[color=\amult, ->] (-46) to (-45) {};	
	
	\draw[color=\amult, ->] (-54) to (-53) {};
	\draw[color=\amult, ->] (-55) to (-54) {};
	\draw[color=\amult, ->] (-56) to (-55) {};	
	
	\draw[color=\amult, ->] (-63) to (-62) {};
	\draw[color=\amult, ->] (-64) to (-63) {};
	\draw[color=\amult, ->] (-65) to (-64) {};
	\draw[color=\amult, ->] (-66) to (-65) {};	
	
	\draw[color=\amult, ->] (-74) to (-73) {};
	\draw[color=\amult, ->] (-75) to (-74) {};
	\draw[color=\amult, ->] (-76) to (-75) {};	
	
	\draw[color=\amult, ->] (-85) to (-84) {};
	\draw[color=\amult, ->] (-86) to (-85) {};	
	
	\draw[color=\amult, ->] (-94) to (-93) {};
	\draw[color=\amult, ->] (-95) to (-94) {};
	\draw[color=\amult, ->] (-96) to (-95) {};
	
	\draw[color=\amult, ->] (-105) to (-104) {};
	\draw[color=\amult, ->] (-106) to (-105) {};		
	
	\draw[color=\amult, ->] (-115) to (-114) {};
	\draw[color=\amult, ->] (-116) to (-115) {};
	
	\draw[color=\amult, ->] (-125) to (-124) {};
	\draw[color=\amult, ->] (-126) to (-125) {};
	
	\draw[color=\umult, ->] (-124) to (-93) {};
	\draw[color=\umult, ->] (-93) to (-62) {};
	\draw[color=\umult, ->] (-62) to (-31) {};
	\draw[color=\umult, ->] (-31) to (00) {};
	\draw[color=\umult, ->] (00) to (3-1) {};
	\draw[color=\umult, ->] (3-1) to (6-2) {};
	\draw[color=\umult, ->] (6-2) to (9-3) {};
	\draw[color=\umult, ->] (9-3) to (12-4) {};
	
	\draw[color=\umult, ->] (-114) to (-83) {};
	\draw[color=\umult, ->] (-83) to (-52) {};
	\draw[color=\umult, ->] (4-2) to (7-3) {};	
	\draw[color=\umult, ->] (7-3) to (10-4) {};
	
	\draw[color=\xmult, ->] (-66') to (-44') {};
	\draw[color=\xmult, ->] (-44') to (-22) {};
	\draw[color=\xmult, ->] (-22) to (00) {};
	\draw[color=\xmult, ->] (00) to (2-2) {};
	\draw[color=\xmult, ->] (2-2) to (4-4') {};
	\draw[color=\xmult, ->] (4-4') to (6-6') {};

				
	\node[color=\amult=100!] at (-0.5,-1.25) {$a_{\overline{\rho}}$};
	\node[color=\amult=100!] at (2.85,1) {$\frac{\K-\K/L-\K/D-\K/R+2\K/\K}{a_{\overline{\rho}}}$};
	
	\node[color=\umult=100!] at (3.3,-0.7) {$u$};
	\node[color=\umult=100!] at (-3.35,0.7) {$\frac{\K}{u}$};
	
	\node[color=\xmult=100!] at (1.7,-2.3) {$x_H$}; 
	\node[color=\xmult=100!] at (-2.3,1.7) {$z_H$};

	
	\draw[fill=white] (-11,-5.25) rectangle (-1,-0.75);
	\draw (-10,-1.6) -- (-3,-1.6);
	
	\node at (-6.4,-1.25) {\large Key:};
	
	\node at (-9,-2.5) {$\whitesquare = \underline{\Z}$};
	\node at (-8.875,-3.5) {$\whitesquaredual = \underline{\Z}^*$};
	\node at (-8.875,-4.5) {$\whitecirc = \langle \Z \rangle$};
	
	\node at (-6.4,-2.5) {$\filltrap = \underline{mg}$};
	\node at (-6.3,-3.5) {$\filltrapdual = \underline{mg}^*$};
	
	\node at (-3,-2.5) {$\fillpent = \phi_{LDR}^* \underline{\F_2}$};
	\node at (-2.9,-3.5) {$\fillpentdual = \phi_{LDR}^* \underline{\F_2}^*$};
	\node at (-3.7, -4.5) {$\bardot = \underline{E}$};
	
	\node at (-6,-4.5) {$\begin{tikzpicture} \node[draw,circle,fill=black,inner sep=0.5pt, text=white] {n}; \end{tikzpicture} = \langle \mathbb{F}_2 \rangle^{\oplus n}$};

	
	\draw[fill=white] (11,5.75) rectangle (2,1.75);
	\draw (10,4.9) -- (3,4.9);
	
	\node at (6.5,5.25) {\large Key:};
	
	\node at (4.5,4) {$\whitepent = \phi_{LDR}^* \underline{\Z}$};
	\node at (4.6,3) {$\whitepentdual = \phi_{LDR}^* \underline{\Z^*}$};
	
	\node at (8.5,4) {$\phiLDRf = \phi_{LDR}^* \underline{f}$};
	\node at (8.6,3) {$\phiLDRQ = \phi_{LDR}^* \underline{Q}$};
	
	\end{tikzpicture}
	
    \end{center}
    \end{figure}

Here, we will collect all of the computations we have done. For detailed descriptions of the symbols used in these charts, see \cref{KZoo}. The blue (quadrant four) and red (quadrant two) regions in \cref{fig:piK4HA} and \cref{fig:piK4HZ} are meant to indicate the regions in which the comparison results \cref{PosConeComparison} and \cref{NegConeComparison}, respectively, apply for $H\underline{A}$ and $H\underline{\Z}$. The green (quadrant two in \cref{fig:piK4HI}) region indicates a region in which $H\underline{I}$ and $H\underline{A}$ agree, while the pink region (quadrant four in \ref{fig:piK4HJ}) indicates a region in which $H\underline{A}$ and $H\underline{J}$ agree. 

\begin{figure}
\caption{$\underline{\pi}^\K_{x + y\overline\rho}(H\underline{I})$}
\label{fig:piK4HI}
\begin{center}
	\begin{tikzpicture}[scale=0.625,font=\tiny]
	
	\draw[color=green!50, thick, fill=green!25, rounded corners] (-2.5,1.5) -- (-0.5,-0.5) -- (-0.5,6.25) -- (-2.5,6.25) -- cycle;
	
	\draw[gray] (-12,-6) grid (12,6); 
	\draw[->] (0,-6) to (0,6.25); 
	\draw[->] (-12,0) to (12,0);
	
	\node at (11.5,0.5) [draw] {$x \cdot 1$};
	\node at (0.5,5.5) [draw, rotate=90] {$y \cdot \overline{\rho}$};

            	           
    \node[draw, circle, fill=white, inner sep=0.5pt] at (0,0) {\normalsize $\underline{I}$};
            	           
    \begin{scope}[every node/.style={draw,fill=white,circle,inner sep=1.25pt}]
		\node[below left={0.35ex}] at (0,-1){};
		\node[below left={0.35ex}] at (0,-2){};
		\node[below left={0.35ex}] at (0,-3){};
		\node[below left={0.35ex}] at (0,-4){};
		\node[below left={0.35ex}] at (0,-5){};
		\node[below left={0.35ex}] at (0,-6){};
	\end{scope}
	
	\begin{scope}[every node/.style={draw,fill=black,circle,inner sep=1pt}]
		\node[text=white, above right={0.35ex}] at (0,-1){2};
		\node[text=white, above right={0.35ex}] at (0,-2){2};
		\node[text=white, above right={0.35ex}] at (0,-3){2};
		\node[text=white, above right={0.35ex}] at (0,-4){2};
		\node[text=white, above right={0.35ex}] at (0,-5){2};
		\node[text=white, above right={0.35ex}] at (0,-6){2};
		
		\foreach \n in {-3,...,-6}{
			\node[text=white] at (2,\n){3};
		}
		
		\foreach \n in {-5,-6}{
			\node[text=white] at (4,\n){3};
		}
    \end{scope}
	
	\node at (1,-1) {$\phiLDRf$};
	
	\node[regular polygon, fill=white, draw, regular polygon sides=5, 
 minimum width=0pt, inner sep = 0.5ex,] at (2,-2) {};
	
	\node at (3,-3) {$\phiLDRf$};
	
	\node[regular polygon, fill=white, draw, regular polygon sides=5, 
 minimum width=0pt, inner sep = 0.5ex,] at (4,-4) {};
	
	\node at (5,-5) {$\phiLDRf$};
	
	\node[regular polygon, fill=white, draw, regular polygon sides=5, 
 minimum width=0pt, inner sep = 0.5ex,] at (6,-6) {};
		
	                      
	\begin{scope}[every node/.style={draw,fill=white,circle,inner sep=1.25pt}]
		\foreach \n in {1,...,6} {
			\node at (0,\n) {};
		}
	\end{scope}
	
	\begin{scope}[every node/.style={draw,fill=black,circle,inner sep=1.25pt}]
		\foreach \n in {2,...,6} {
			\node at (-1,\n) {};
		}
	\end{scope}

	\node at (-1,1) {$\bardot$};
	
	\node[regular polygon, fill=white, draw, regular polygon sides=5, 
 minimum width=0pt, inner sep = 0.5ex,] at (-2,2) {};
	\node at (-2,2) {\large $\ast$};
	
	\node at (-3,3) {$\phiLDRQ$};
	
	\node[draw, circle, fill=black, text=white, inner sep=1.25pt] at (-3,4) {$3$};
	\node[regular polygon, fill=white, draw, regular polygon sides=5, 
 minimum width=0pt, inner sep = 0.5ex,] at (-4,4) {};
	\node at (-4,4) {\large $\ast$};
	
	\node[draw, circle, fill=black, text=white, inner sep=1.25pt] at (-3,5) {$3$};
	
	\node at (-5,5) {$\phiLDRQ$};
	
	\node[draw, circle, fill=black, text=white, inner sep=1.25pt] at (-3,6) {$3$};
	\node[draw, circle, fill=black, text=white, inner sep=1.25pt] at (-5,6) {$3$};
	\node[regular polygon, fill=white, draw, regular polygon sides=5, 
 minimum width=0pt, inner sep = 0.5ex,] at (-6,6) {};
	\node at (-6,6) {\large $\ast$};
	
%
%
%

	
	\draw[fill=white] (-11,-5.25) rectangle (-1,-0.75);
	\draw (-10,-1.6) -- (-3,-1.6);
	
	\node at (-6.4,-1.25) {\large Key:};
	
	\node at (-9,-2.5) {$\whitesquare = \underline{\Z}$};
	\node at (-8.875,-3.5) {$\whitesquaredual = \underline{\Z}^*$};
	\node at (-8.875,-4.5) {$\whitecirc = \langle \Z \rangle$};
	
	\node at (-6.4,-2.5) {$\filltrap = \underline{mg}$};
	\node at (-6.3,-3.5) {$\filltrapdual = \underline{mg}^*$};
	
	\node at (-3,-2.5) {$\fillpent = \phi_{LDR}^* \underline{\F_2}$};
	\node at (-2.9,-3.5) {$\fillpentdual = \phi_{LDR}^* \underline{\F_2}^*$};
	\node at (-3.7, -4.5) {$\bardot = \underline{E}$};
	
	\node at (-6,-4.5) {$\begin{tikzpicture} \node[draw,circle,fill=black,inner sep=0.5pt, text=white] {n}; \end{tikzpicture} = \langle \mathbb{F}_2 \rangle^{\oplus n}$};

	
	\draw[fill=white] (11,5.75) rectangle (2,1.75);
	\draw (10,4.9) -- (3,4.9);
	
	\node at (6.5,5.25) {\large Key:};
	
	\node at (4.5,4) {$\whitepent = \phi_{LDR}^* \underline{\Z}$};
	\node at (4.6,3) {$\whitepentdual = \phi_{LDR}^* \underline{\Z^*}$};
	
	\node at (8.5,4) {$\phiLDRf = \phi_{LDR}^* \underline{f}$};
	\node at (8.6,3) {$\phiLDRQ = \phi_{LDR}^* \underline{Q}$};
	\end{tikzpicture}
        
    \end{center}
    \end{figure}

\begin{figure}
\caption{$\underline{\pi}^\K_{x + y\overline\rho}(H\underline{J})$}
\label{fig:piK4HJ}
\begin{center}
	\begin{tikzpicture}[scale=0.625,font=\tiny]
	
	\draw[color=purple!50, thick, fill=purple!25, rounded corners] (2.5,-1.5) -- (0.5,0.5) -- (0.5,-6.25) -- (2.5,-6.25) -- cycle;
	
	\draw[gray] (-12,-6) grid (12,6); 
	\draw[->] (0,-6) to (0,6.25); 
	\draw[->] (-12,0) to (12,0);
	
	\node at (11.5,0.5) [draw] {$x \cdot 1$};
	\node at (0.5,5.5) [draw, rotate=90] {$y \cdot \overline{\rho}$};

            	           
    \node[draw, circle, fill=white, inner sep=0.5pt] at (0,0) {\normalsize $\underline{J}$};
            	           
    \begin{scope}[every node/.style={draw,fill=white,circle,inner sep=1.25pt}]
		\foreach \n in {1,...,6} {
			\node at (0,-\n) {$2$};
		}
	\end{scope}
	
	\foreach \n in {1,3,5}{
	\node at (\n,-\n) {$\phiLDRf$};
	}
	
	\foreach \n in {2,4,6}{
	\node at (\n,-\n) {$\whitepent$};
	}
	
	\foreach \k in {2,4}{
		\foreach \n in {\k,...,4,5}{
			\node[draw, circle, fill=black, text=white, inner sep=1.25pt] at (\k,-\n-1) {$3$};
		}
	}
		
	                      
	\begin{scope}[every node/.style={draw,fill=white,circle,inner sep=1.25pt}]
		\foreach \n in {1,...,6} {
			\node at (0,\n) {$2$};
		}
	\end{scope}
	
	\begin{scope}[every node/.style={draw,fill=white,circle,inner sep=1.25pt}]
		\foreach \n in {2,...,6} {
			\node at (-1,\n) {$3$};
		}
	\end{scope}

	\node at (-1,1) {$\phiLDRf$};
	
	\node[regular polygon, fill=white, draw, regular polygon sides=5, 
 minimum width=0pt, inner sep = 0.5ex,] at (-2,2) {};
 
	\node at (-2,2) {\large $\ast$};
 
	\node at (-3,3) {$\phiLDRQ$};
	
	\node[draw, circle, fill=black, text=white, inner sep=1.25pt] at (-3,4) {$3$};
	
	\node[regular polygon, fill=white, draw, regular polygon sides=5, 
 minimum width=0pt, inner sep = 0.5ex,] at (-4,4) {};
 
	\node at (-4,4) {\large $\ast$};
	
	\node[draw, circle, fill=black, text=white, inner sep=1.25pt] at (-3,5) {$3$};
	
	\node at (-5,5) {$\phiLDRQ$};
	
	\node[draw, circle, fill=black, text=white, inner sep=1.25pt] at (-3,6) {$3$};
	\node[draw, circle, fill=black, text=white, inner sep=1.25pt] at (-5,6) {$3$};
	\node[regular polygon, fill=white, draw, regular polygon sides=5, 
 minimum width=0pt, inner sep = 0.5ex,] at (-6,6) {};
	\node at (-6,6) {\large $\ast$};
	
%
%
%

	
	\draw[fill=white] (-11,-5.25) rectangle (-1,-0.75);
	\draw (-10,-1.6) -- (-3,-1.6);
	
	\node at (-6.4,-1.25) {\large Key:};
	
	\node at (-9,-2.5) {$\whitesquare = \underline{\Z}$};
	\node at (-8.875,-3.5) {$\whitesquaredual = \underline{\Z}^*$};
	\node at (-8.875,-4.5) {$\whitecirc = \langle \Z \rangle$};
	
	\node at (-6.4,-2.5) {$\filltrap = \underline{mg}$};
	\node at (-6.3,-3.5) {$\filltrapdual = \underline{mg}^*$};
	
	\node at (-3,-2.5) {$\fillpent = \phi_{LDR}^* \underline{\F_2}$};
	\node at (-2.9,-3.5) {$\fillpentdual = \phi_{LDR}^* \underline{\F_2}^*$};
	\node at (-3.7, -4.5) {$\bardot = \underline{E}$};
	
	\node at (-6,-4.5) {$\begin{tikzpicture} \node[draw,circle,fill=black,inner sep=0.5pt, text=white] {n}; \end{tikzpicture} = \langle \mathbb{F}_2 \rangle^{\oplus n}$};

	
	\draw[fill=white] (11,5.75) rectangle (2,1.75);
	\draw (10,4.9) -- (3,4.9);
	
	\node at (6.5,5.25) {\large Key:};
	
	\node at (4.5,4) {$\whitepent = \phi_{LDR}^* \underline{\Z}$};
	\node at (4.6,3) {$\whitepentdual = \phi_{LDR}^* \underline{\Z^*}$};
	
	\node at (8.5,4) {$\phiLDRf = \phi_{LDR}^* \underline{f}$};
	\node at (8.6,3) {$\phiLDRQ = \phi_{LDR}^* \underline{Q}$};
	\end{tikzpicture}
        
    \end{center}
    \end{figure}
    
\clearpage

\begin{figure}
\caption{$\underline{\pi}^\K_{x + y\overline\rho}(H\underline{A})$}
\label{fig:piK4HA}
\begin{center}
	\begin{tikzpicture}[scale=0.625,font=\tiny]
	
	\draw[color=red!50, thick, fill=red!25, rounded corners] (-2,0.5) -- (-8.15,6.65) -- (-12.5,6.65) -- (-12.5,0.5) -- cycle;

	\draw[color=blue!50, thick, fill=blue!25, rounded corners] (2,-0.5) -- (8,-6.75) -- (12.5,-6.75) -- (12.5,-0.5) -- cycle;
	
	\draw[color=green!50, thick, fill=green!25, rounded corners] (-2.5,1.5) -- (-0.5,-0.5) -- (-0.5,6.25) -- (-2.5,6.25) -- cycle;
	
	\draw[color=purple!50, thick, fill=purple!25, rounded corners] (2.275,-1.5) -- (0.5,0.5) -- (0.5,-6.35) -- (2.275,-6.35) -- cycle;
	
	\draw[gray] (-12,-6) grid (12,6); 
	\draw[->] (0,-6) to (0,6.25); 
	\draw[->] (-12,0) to (12,0);
	
	\node at (11.5,0.5) [draw] {$x \cdot 1$};
	\node at (0.5,5.5) [draw, rotate=90] {$y \cdot \overline{\rho}$};       
	
            	           
    \node[draw, circle, fill=white, inner sep=0.5pt] at (0,0) {\normalsize $\underline{A}$};
            	           
    \begin{scope}[every node/.style={draw,fill=white,circle,inner sep=1.25pt}]
		\node at (0,-1){};
		\node at (0,-2){};
		\node at (0,-3){};
		\node at (0,-4){};
		\node at (0,-5){};
		\node at (0,-6){};
	\end{scope}
	            
	\node at (1,-1) {$\phiLDRf$};
	
	\node[regular polygon, fill=white, draw, regular polygon sides=4, 
 minimum width=0pt, inner sep = 0.5ex,] at (3,-1) {};
	
	
	\node[regular polygon, fill=white, draw, regular polygon sides=5, 
 minimum width=0pt, inner sep = 0.5ex,] at (2,-2) {};
	
	\begin{scope}[every node/.style={draw,fill=black,circle,inner sep=1.25pt}]
		\node at (3,-2) {};
	\end{scope}
	
	\node[trapezium, fill=black, draw, inner sep=2.5pt,scale=1] at (4,-2) {};
	
	\node[regular polygon, fill=white, draw, regular polygon sides=4, 
 minimum width=0pt, inner sep = 0.5ex,] at (6,-2) {};
	

	\node[draw,fill=black,circle,inner sep=1.25pt, text=white] at (2,-3) {$3$}; 
	
	\node[below left={-0.5ex}] at (3,-3) {$\phiLDRf$};
	
	\begin{scope}[every node/.style={draw,fill=black,circle,inner sep=1.25pt}]
		\node[above right={0.35ex}] at (3,-3) {};
	\end{scope}
				
	\node[draw,fill=black,circle,inner sep=1.25pt, text=white] at (4,-3) {$2$};
	
	\node[regular polygon, fill=black, draw, regular polygon sides=5, 
 minimum width=0pt, inner sep = 0.5ex,] at (5,-3) {};
 
 	\node[draw, fill=black, circle, inner sep=1.25pt] at (6,-3) {};
 	
 	\node[trapezium, fill=black, draw, inner sep=2.5pt,scale=1] at (7,-3) {};
 	
 	\node[regular polygon, fill=white, draw, regular polygon sides=4, 
 minimum width=0pt, inner sep = 0.5ex,] at (9,-3) {};
 
	
	\node[draw,fill=black,circle,inner sep=1.25pt, text=white] at (2,-4) {$3$}; 	
	
	\node[draw,fill=black,circle,inner sep=1.25pt, text=white] at (3,-4) {};
	
	\node[below left={0.35ex}, regular polygon, fill=white, draw, regular polygon sides=5, 
 minimum width=0pt, inner sep = 0.5ex,] at (4,-4) {};
 
 	\node[above right={0.35ex}, draw,fill=black,circle,inner sep=1.25pt, text=white] at (4,-4) {$2$}; 
 	
 	\node[draw,fill=black,circle,inner sep=1.25pt, text=white] at (5,-4) {$3$}; 
 	
 	\node[below left={0.35ex}, regular polygon, fill=black, draw, regular polygon sides=5, 
 minimum width=0pt, inner sep = 0.5ex,] at (6,-4) {};
 
 	\node[above right={0.35ex}, draw,fill=black,circle,inner sep=1.25pt, text=white] at (6,-4) {};
 	
 	\node[draw,fill=black,circle,inner sep=1.25pt, text=white] at (7,-4) {$2$};
 	
 	\node[regular polygon, fill=black, draw, regular polygon sides=5, 
 minimum width=0pt, inner sep = 0.5ex,] at (8,-4) {};
	
	\node[draw,fill=black,circle,inner sep=1.25pt, text=white] at (9,-4) {};
	
	\node[trapezium, fill=black, draw, inner sep=2.5pt,scale=1] at (10,-4) {};
	
	\node[regular polygon, fill=white, draw, regular polygon sides=4, 
 minimum width=0pt, inner sep = 0.5ex,] at (12,-4) {};
				
			    			
	\node[draw,fill=black,circle,inner sep=1.25pt, text=white] at (2,-5) {$3$}; 
	
	\node[draw,fill=black,circle,inner sep=1.25pt, text=white] at (3,-5) {};
	
	\node[draw,fill=black,circle,inner sep=1.25pt, text=white] at (4,-5) {$5$};  	
	
	\node[below left={-0.5ex}] at (5,-5) {$\phiLDRf$};
	
	\begin{scope}[every node/.style={draw,fill=black,circle,inner sep=1.25pt}]
		\node[above right={0.35ex}, text=white] at (5,-5) {$3$};
	\end{scope}
	
	\node[draw,fill=black,circle,inner sep=1.25pt, text=white] at (6,-5) {$4$};
	
	\node[below left={0.35ex}, regular polygon, fill=black, draw, regular polygon sides=5, 
 minimum width=0pt, inner sep = 0.5ex,] at (7,-5) {};
 
 	\node[above right={0.35ex}, draw,fill=black,circle,inner sep=1.25pt, text=white] at (7,-5) {$2$}; 
 	
 	\node[draw,fill=black,circle,inner sep=1.25pt, text=white] at (8,-5) {$3$};
 	
 	\node[below left={0.35ex}, regular polygon, fill=black, draw, regular polygon sides=5, 
 minimum width=0pt, inner sep = 0.5ex,] at (9,-5) {};
 
 	\node[above right={0.35ex}, draw,fill=black,circle,inner sep=1.25pt, text=white] at (9,-5) {};
 	
 	\node[draw,fill=black,circle,inner sep=1.25pt, text=white] at (10,-5) {$2$};
 
 	\node[regular polygon, fill=black, draw, regular polygon sides=5, 
 minimum width=0pt, inner sep = 0.5ex,] at (11,-5) {};
 
 \node[draw,fill=black,circle,inner sep=1.25pt, text=white] at (12,-5) {};
			
			    			
	\node[draw,fill=black,circle,inner sep=1.25pt, text=white] at (2,-6) {$3$}; 
	
	\node[draw,fill=black,circle,inner sep=1.25pt, text=white] at (3,-6) {};
	
	\node[draw,fill=black,circle,inner sep=1.25pt, text=white] at (4,-6) {$5$};
	
	\node[draw,fill=black,circle,inner sep=1.25pt, text=white] at (5,-6) {$3$};
	
	\node[above right={0.35ex}, draw,fill=black,circle,inner sep=1.25pt, text=white] at (6,-6) {$4$};   
	
	\node[below left={0.35ex}, regular polygon, fill=white, draw, regular polygon sides=5, 
 minimum width=0pt, inner sep = 0.5ex,] at (6,-6) {};
	
	\node[draw,fill=black,circle,inner sep=1.25pt, text=white] at (7,-6) {$5$};
	
	\node[below left={0.35ex}, regular polygon, fill=black, draw, regular polygon sides=5, 
 minimum width=0pt, inner sep = 0.5ex,] at (8,-6) {};
 
 	\node[above right={0.35ex}, draw,fill=black,circle,inner sep=1.25pt, text=white] at (8,-6) {$3$};
 	
 	\node[draw,fill=black,circle,inner sep=1.25pt, text=white] at (9,-6) {$4$};
 	
 	\node[below left={0.35ex}, regular polygon, fill=black, draw, regular polygon sides=5, 
 minimum width=0pt, inner sep = 0.5ex,] at (10,-6) {};
 
 	\node[above right={0.35ex}, draw,fill=black,circle,inner sep=1.25pt, text=white] at (10,-6) {$2$};
 
 	\node[draw,fill=black,circle,inner sep=1.25pt, text=white] at (11,-6) {$3$};
 	
 	\node[below left={0.35ex}, regular polygon, fill=black, draw, regular polygon sides=5, 
 minimum width=0pt, inner sep = 0.5ex,] at (12,-6) {};
 
 	\node[above right={0.35ex}, draw,fill=black,circle,inner sep=1.25pt, text=white] at (12,-6) {};
	

	\foreach \n in {1,...,6}{ 	                      
		\node[draw, fill=white, circle, inner sep=1.25pt] at (0,\n) {};
		}
		
	
	\node at (-1,1) {$\bardot$};
	
	\node[regular polygon, fill=white, draw, regular polygon sides=4, 
 minimum width=0pt, inner sep = 0.5ex,] at (-3, 1) {};
	\node at (-3,1) {\large $\ast$};	
	
	
	\node[draw, fill=black, circle, inner sep=1.25pt] at (-1,2) {};
	
	\node[regular polygon, fill=white, draw, regular polygon sides=5, 
 minimum width=0pt, inner sep = 0.5ex,] at (-2,2) {};
	\node at (-2,2) {\large $\ast$};
	
	\node[draw, fill=black, circle, inner sep = 1.25pt] at (-4,2) {};
	
	\node[trapezium, fill=black, draw, inner sep=2.5pt,scale=1] at (-5,2) {};
	\node[text=white] at (-5,2) {\large $\ast$};
	
	\node[regular polygon, fill=white, draw, regular polygon sides=4, 
 minimum width=0pt, inner sep = 0.5ex,] at (-6,2) {};
	\node at (-6,2) {\large $\ast$};
	
	
	\node[draw, fill=black, circle, inner sep =1.25pt] at (-1,3) {};
	
	\node at (-3,3) {$\phiLDRQ$}; 
	
	\node[draw, fill=black, circle, inner sep=1.25pt] at (-4,3) {};	
	
	\node[draw, fill=black, circle, inner sep=1.25pt, text=white] at (-5,3) {$2$};
	
	\node[regular polygon, fill=black, draw, regular polygon sides=5, 
 minimum width=0pt, inner sep = 0.5ex,] at (-6,3) {};
	\node[text=white] at (-6,3) {\large $\ast$};

	\node[draw, fill=black, circle, inner sep=1.25pt, text=white] at (-7,3) {};	
	
	\node[draw, trapezium, fill=black, inner sep=2.5pt,scale=1] at (-8,3) {};
	\node[text=white] at (-8,3) {\large $\ast$};
	
	\node[regular polygon, fill=white, draw, regular polygon sides=4, 
 minimum width=0pt, inner sep = 0.5ex,] at (-9,3) {};
	\node at (-9,3) {\large $\ast$};
	

	\node[draw, fill=black, circle, inner sep=1.25pt] at (-1,4) {};
			
	\node[draw, fill=black, circle, inner sep=1.25pt, text=white] at (-3,4) {$3$};			
			
	\begin{scope}[xshift=-0.2cm,yshift=-0.2cm]
	\node[regular polygon, fill=white, draw, regular polygon sides=5, 
 minimum width=0pt, inner sep = 0.5ex] at (-4,4) {};
	\node[text=black] at (-4,4) {\large $\ast$};
	\end{scope}
	
	\node[above right={0.35ex}, draw, fill=black, circle, inner sep =1.25pt, text=white] at (-4,4) {};
	
	\node[draw, fill=black, circle, inner sep =1.25pt, text=white] at (-5,4) {$2$};
	
	\node[draw, fill=black, circle, inner sep =1.25pt, text=white] at (-6,4) {$3$};
	
	\node[above right={0.35ex}, draw, fill=black, circle, inner sep =1.25pt] at (-7,4) {};
	
	\begin{scope}[xshift=-0.2cm,yshift=-0.2cm]
	\node[regular polygon, fill=black, draw, regular polygon sides=5, 
 minimum width=0pt, inner sep = 0.5ex] at (-7,4) {};
	\node[text=white] at (-7,4) {\large $\ast$};
	\end{scope}
	
	\node[draw, fill=black, circle, inner sep =1.25pt, text=white] at (-8,4) {$2$};
	
	\node[regular polygon, fill=black, draw, regular polygon sides=5, 
 minimum width=0pt, inner sep = 0.5ex,] at (-9,4) {};
	\node[text=white] at (-9,4) {\large $\ast$};

	\node[draw, fill=black, circle, inner sep=1.25pt, text=white] at (-10,4) {};
	
	\node[draw, trapezium, fill=black, inner sep=2.5pt,scale=1] at (-11,4) {};
	\node[text=white] at (-11,4) {\large $\ast$};
	
	\node[regular polygon, fill=white, draw, regular polygon sides=4, 
 minimum width=0pt, inner sep = 0.5ex,] at (-12,4) {};
	\node at (-12,4) {\large $\ast$};
	
			
	\node[draw, fill=black, circle, inner sep =1.25pt] at (-1,5) {};
	
	\node[draw, fill=black, circle, inner sep=1.25pt, text=white] at (-3,5) {$3$};
	
	\node[draw, fill=black, circle, inner sep =1.25pt] at (-4,5) {};
	
	\node[above right, draw, fill=black, circle, inner sep=1.25pt, text=white] at (-5,5) {$2$};
	
	\begin{scope}[xshift=-0.2cm,yshift=-0.2cm]
	\node at (-5,5) {$\phiLDRQ$};
	\end{scope}
	
	\node[draw, fill=black, circle, inner sep =1.25pt, text=white] at (-6,5) {$3$};
	
	\node[draw, fill=black, circle, inner sep =1.25pt, text=white] at (-7,5) {$4$};
	
	\begin{scope}[xshift=-0.2cm,yshift=-0.2cm]
	\node[regular polygon, fill=black, draw, regular polygon sides=5, 
 minimum width=0pt, inner sep = 0.5ex] at (-8,5) {};
	\node[text=white] at (-8,5) {\large $\ast$};
	\end{scope}
	
	\node[above right={0.35ex}, draw, fill=black, circle, inner sep =1.25pt, text=white] at (-8,5) {$2$};
	
	\node[draw, fill=black, circle, inner sep =1.25pt, text=white] at (-9,5) {$3$};
	
	\begin{scope}[xshift=-0.2cm,yshift=-0.2cm]
	\node[regular polygon, fill=black, draw, regular polygon sides=5, 
 minimum width=0pt, inner sep = 0.5ex] at (-10,5) {};
	\node[text=white] at (-10,5) {\large $\ast$};
	\end{scope}
	
	\node[above right={0.35ex}, draw, fill=black, circle, inner sep =1.25pt, text=white] at (-10,5) {};
	
	\node[draw, fill=black, circle, inner sep =1.25pt, text=white] at (-11,5) {$2$};
	
	\node[regular polygon, fill=black, draw, regular polygon sides=5, 
 minimum width=0pt, inner sep = 0.5ex,] at (-12,5) {};
	\node[text=white] at (-12,5) {\large $\ast$};
	
	
	\node[draw, fill=black, circle, inner sep =1.25pt, text=white] at (-1,6) {};
	
	\node[draw, fill=black, circle, inner sep=1.25pt, text=white] at (-3,6) {$3$};
	
	\node[draw, fill=black, circle, inner sep =1.25pt, text=white] at (-4,6) {};
	
	\node[draw, fill=black, circle, inner sep =1.25pt, text=white] at (-5,6) {$5$};
	
	\begin{scope}[xshift=-0.2cm,yshift=-0.2cm]
	\node[regular polygon, fill=white, draw, regular polygon sides=5, 
 minimum width=0pt, inner sep = 0.5ex] at (-6,6) {};
	\node[text=black] at (-6,6) {\large $\ast$};
	\end{scope}
	
	\node[above right={0.35ex}, draw, fill=black, circle, inner sep =1.25pt, text=white] at (-6,6) {$3$};
	
	\node[draw, fill=black, circle, inner sep =1.25pt, text=white] at (-7,6) {$4$};
	
	\node[draw, fill=black, circle, inner sep =1.25pt, text=white] at (-8,6) {$5$};
	
	\begin{scope}[xshift=-0.2cm,yshift=-0.2cm]
	\node[regular polygon, fill=black, draw, regular polygon sides=5, 
 minimum width=0pt, inner sep = 0.5ex] at (-9,6) {};
	\node[text=white] at (-9,6) {\large $\ast$};
	\end{scope}
	
	\node[above right={0.35ex}, draw, fill=black, circle, inner sep =1.25pt, text=white] at (-9,6) {$3$};
	
	\node[draw, fill=black, circle, inner sep =1.25pt, text=white] at (-10,6) {$4$};
	
	\begin{scope}[xshift=-0.2cm,yshift=-0.2cm]
	\node[regular polygon, fill=black, draw, regular polygon sides=5, 
 minimum width=0pt, inner sep = 0.5ex] at (-11,6) {};
	\node[text=white] at (-11,6) {\large $\ast$};
	\end{scope}
	
	\node[above right={0.35ex}, draw, fill=black, circle, inner sep =1.25pt, text=white] at (-11,6) {$2$};
		
	\node[draw, fill=black, circle, inner sep =1.25pt, text=white] at (-12,6) {$3$};
	
%
%
%

	
	\draw[fill=white] (-11,-5.25) rectangle (-1,-0.75);
	\draw (-10,-1.6) -- (-3,-1.6);
	
	\node at (-6.4,-1.25) {\large Key:};
	
	\node at (-9,-2.5) {$\whitesquare = \underline{\Z}$};
	\node at (-8.875,-3.5) {$\whitesquaredual = \underline{\Z}^*$};
	\node at (-8.875,-4.5) {$\whitecirc = \langle \Z \rangle$};
	
	\node at (-6.4,-2.5) {$\filltrap = \underline{mg}$};
	\node at (-6.3,-3.5) {$\filltrapdual = \underline{mg}^*$};
	
	\node at (-3,-2.5) {$\fillpent = \phi_{LDR}^* \underline{\F_2}$};
	\node at (-2.9,-3.5) {$\fillpentdual = \phi_{LDR}^* \underline{\F_2}^*$};
	\node at (-3.7, -4.5) {$\bardot = \underline{E}$};
	
	\node at (-6,-4.5) {$\begin{tikzpicture} \node[draw,circle,fill=black,inner sep=0.5pt, text=white] {n}; \end{tikzpicture} = \langle \mathbb{F}_2 \rangle^{\oplus n}$};

	
	\draw[fill=white] (11,5.75) rectangle (2,1.75);
	\draw (10,4.9) -- (3,4.9);
	
	\node at (6.5,5.25) {\large Key:};
	
	\node at (4.5,4) {$\whitepent = \phi_{LDR}^* \underline{\Z}$};
	\node at (4.6,3) {$\whitepentdual = \phi_{LDR}^* \underline{\Z}^*$};
	
	\node at (8.5,4) {$\phiLDRf = \phi_{LDR}^* \underline{f}$};
	\node at (8.6,3) {$\phiLDRQ = \phi_{LDR}^* \underline{Q}$};
	\end{tikzpicture}
	
    \end{center}
    \end{figure}
    
\begin{figure}
\caption{$\underline{\pi}^\K_{x + y\overline\rho}(H\underline{\Z})$}
\label{fig:piK4HZ}
\begin{center}
	\begin{tikzpicture}[scale=0.625,font=\tiny]
	
	\draw[color=red!50, thick, fill=red!25, rounded corners] (-2,0.5) -- (-8.15,6.65) -- (-12.5,6.65) -- (-12.5,0.5) -- cycle;

	\draw[color=blue!50, thick, fill=blue!25, rounded corners] (2,-0.5) -- (8,-6.75) -- (12.5,-6.75) -- (12.5,-0.5) -- cycle;
	
	\draw[gray] (-12,-6) grid (12,6); 
	\draw[->] (0,-6) to (0,6.25); 
	\draw[->] (-12,0) to (12,0);
	
	\node at (11.5,0.5) [draw] {$x \cdot 1$};
	\node at (0.5,5.5) [draw, rotate=90] {$y \cdot \overline{\rho}$};

            	           
    \node[draw, circle, fill=white, inner sep=0.5pt] at (0,0) {\normalsize $\underline{\Z}$};
            	           
    \begin{scope}[every node/.style={draw,fill=black,circle,inner sep=1.25pt}]
		\node at (0,-1){};
		\node at (0,-2){};
		\node at (0,-3){};
		\node at (0,-4){};
		\node at (0,-5){};
		\node at (0,-6){};
	\end{scope}
	            	         
	\node[draw, fill=black, trapezium, minimum width=0pt, inner sep=2.5] at (1,-1) {};
	
	\node[regular polygon, fill=white, draw, regular polygon sides=4, 
 minimum width=0pt, inner sep = 0.5ex,] at (3,-1) {};
	
	
	\node[draw, fill=black, circle, text=white, inner sep=1.25] at (1,-2) {$2$};
	
	\node[regular polygon, fill=black, draw, regular polygon sides=5, 
 minimum width=0pt, inner sep = 0.5ex, below left={0.35ex}] at (2,-2) {};
	
	\node[draw, fill=black, circle, inner sep=1.25pt] at (3,-2) {};
	
	\node[trapezium, fill=black, draw, minimum width=0pt, inner sep=2.5pt,scale=1] at (4,-2) {};
	
	\node[regular polygon, fill=white, draw, regular polygon sides=4, 
 minimum width=0pt, inner sep = 0.5ex,] at (6,-2) {};
	

	\node[draw,fill=black,circle,inner sep=1.25pt, text=white] at (1,-3) {$2$};
	
	\node[draw,fill=black,circle,inner sep=1.25pt, text=white] at (2,-3) {$3$}; 
	
	\node[regular polygon, fill=black, draw, regular polygon sides=5, 
 minimum width=0pt, inner sep = 0.5ex, below left={0.35ex}] at (3,-3) {};
	
	\node[draw, fill=black, inner sep=1.25pt, circle, above right={0.35ex}] at (3,-3) {};
	
	\node[draw,fill=black,circle,inner sep=1.25pt, text=white] at (4,-3) {$2$};
	
	\node[regular polygon, fill=black, draw, regular polygon sides=5, 
 minimum width=0pt, inner sep = 0.5ex,] at (5,-3) {};
 
 	\node[draw, fill=black, circle, inner sep=1.25pt] at (6,-3) {};
 	
 	\node[trapezium, fill=black, draw, inner sep=2.5pt,scale=1] at (7,-3) {};
 	
 	\node[regular polygon, fill=white, draw, regular polygon sides=4, 
 minimum width=0pt, inner sep = 0.5ex,] at (9,-3) {};
 
	
	\node[draw, fill=black, inner sep=1.25pt, circle, text=white] at (1,-4) {$2$};

	\node[draw, fill=black, inner sep=1.25pt, circle, text=white] at (2,-4) {$3$};
	
	\node[draw, fill=black, inner sep=1.25pt, circle, text=white] at (3,-4) {$4$};
	
	\node[draw, fill=black, inner sep=1.25pt, circle, text=white, above right={0.35ex}] at (4,-4) {$2$};
	
	\node[regular polygon, fill=black, draw, regular polygon sides=5, 
 minimum width=0pt, inner sep = 0.5ex, below left={0.35ex}] at (4,-4) {};

	\node[draw, fill=black, inner sep=1.25pt, circle, text=white] at (5,-4) {$3$};
 	
 	\node[below left={0.35ex}, regular polygon, fill=black, draw, regular polygon sides=5, 
 minimum width=0pt, inner sep = 0.5ex,] at (6,-4) {};
 
 	\node[above right={0.35ex}, draw,fill=black,circle,inner sep=1.25pt, text=white] at (6,-4) {};
 	
 	\node[draw,fill=black,circle,inner sep=1.25pt, text=white] at (7,-4) {$2$};
 	
 	\node[regular polygon, fill=black, draw, regular polygon sides=5, 
 minimum width=0pt, inner sep = 0.5ex,] at (8,-4) {};
	
	\node[draw,fill=black,circle,inner sep=1.25pt, text=white] at (9,-4) {};
	
	\node[trapezium, fill=black, draw, inner sep=2.5pt,scale=1] at (10,-4) {};
	
	\node[regular polygon, fill=white, draw, regular polygon sides=4, 
 minimum width=0pt, inner sep = 0.5ex,] at (12,-4) {};
				
			    			
	\node[draw, fill=black, inner sep=1.25pt, circle, text=white] at (1,-5) {$2$};
	
	\node[draw, fill=black, inner sep=1.25pt, circle, text=white] at (2,-5) {$3$};
	
	\node[draw, fill=black, inner sep=1.25pt, circle, text=white] at (3,-5) {$4$};
	
	\node[draw, fill=black, inner sep=1.25pt, circle, text=white] at (4,-5) {$5$};
	
	\node[draw, fill=black, inner sep=1.25pt, circle, text=white, above right] at (5,-5) {$3$};
	
	\node[regular polygon, fill=black, draw, regular polygon sides=5, 
 minimum width=0pt, inner sep = 0.5ex, below left={0.35ex}] at (5,-5) {};
	
	\node[draw, fill=black, inner sep=1.25pt, circle, text=white] at (6,-5) {$4$};
	
	\node[below left={0.35ex}, regular polygon, fill=black, draw, regular polygon sides=5, 
 minimum width=0pt, inner sep = 0.5ex,] at (7,-5) {};
 
 	\node[above right={0.35ex}, draw,fill=black,circle,inner sep=1.25pt, text=white] at (7,-5) {$2$}; 
 	
 	\node[draw,fill=black,circle,inner sep=1.25pt, text=white] at (8,-5) {$3$};
 	
 	\node[below left={0.35ex}, regular polygon, fill=black, draw, regular polygon sides=5, 
 minimum width=0pt, inner sep = 0.5ex,] at (9,-5) {};
 
 	\node[above right={0.35ex}, draw,fill=black,circle,inner sep=1.25pt, text=white] at (9,-5) {};
 	
 	\node[draw,fill=black,circle,inner sep=1.25pt, text=white] at (10,-5) {$2$};
 
 	\node[regular polygon, fill=black, draw, regular polygon sides=5, 
 minimum width=0pt, inner sep = 0.5ex,] at (11,-5) {};
 
 \node[draw,fill=black,circle,inner sep=1.25pt, text=white] at (12,-5) {};
			
			    			
	\node[draw, fill=black, inner sep=1.25pt, circle, text=white] at (1,-6) {$2$};
	
	\node[draw, fill=black, inner sep=1.25pt, circle, text=white] at (2,-6) {$3$};
	
	\node[draw, fill=black, inner sep=1.25pt, circle, text=white] at (3,-6) {$4$};
	
	\node[draw, fill=black, inner sep=1.25pt, circle, text=white] at (4,-6) {$5$};
	
	\node[draw, fill=black, inner sep=1.25pt, circle, text=white] at (5,-6) {$6$};	
	
	\node[draw, fill=black, minimum width=0pt, inner sep=1.25pt, circle, text=white, above right={0.35ex}] at (6,-6) {$4$};
	
	\node[regular polygon, fill=black, draw, regular polygon sides=5, 
 minimum width=0pt, inner sep = 0.5ex, below left={0.35ex}] at (6,-6) {};
	
	\node[draw, fill=black, inner sep=1.25pt, circle, text=white] at (7,-6) {$5$};
	
	\node[below left={0.35ex}, regular polygon, fill=black, draw, regular polygon sides=5, 
 minimum width=0pt, inner sep = 0.5ex,] at (8,-6) {};
 
 	\node[above right={0.35ex}, draw,fill=black,circle,inner sep=1.25pt, text=white] at (8,-6) {$3$};
 	
 	\node[draw,fill=black,circle,inner sep=1.25pt, text=white] at (9,-6) {$4$};
 	
 	\node[below left={0.35ex}, regular polygon, fill=black, draw, regular polygon sides=5, 
 minimum width=0pt, inner sep = 0.5ex,] at (10,-6) {};
 
 	\node[above right={0.35ex}, draw,fill=black,circle,inner sep=1.25pt, text=white] at (10,-6) {$2$};
 
 	\node[draw,fill=black,circle,inner sep=1.25pt, text=white] at (11,-6) {$3$};
 	
 	\node[below left={0.35ex}, regular polygon, fill=black, draw, regular polygon sides=5, 
 minimum width=0pt, inner sep = 0.5ex,] at (12,-6) {};
 
 	\node[above right={0.35ex}, draw,fill=black,circle,inner sep=1.25pt, text=white] at (12,-6) {};
	
		
	
	\node[regular polygon, fill=white, draw, regular polygon sides=4, 
 minimum width=0pt, inner sep = 0.5ex,] at (-3,1) {};
	\node at (-3,1) {\large $\ast$};	
	
	
	\node[draw, fill=black, circle, inner sep = 1.25pt] at (-4,2) {};
	
	\node[trapezium, fill=black, draw, inner sep=2.5pt,scale=1] at (-5,2) {};
	\node[text=white] at (-5,2) {\large $\ast$};
	
	\node[regular polygon, fill=white, draw, regular polygon sides=4, 
 minimum width=0pt, inner sep = 0.5ex,] at (-6,2) {};
	\node at (-6,2) {\large $\ast$};
	
	\node[draw, fill=black, circle, inner sep=1.25pt] at (-4,3) {};	
	
	\node[draw, fill=black, circle, inner sep=1.25pt, text=white] at (-5,3) {$2$};
	
	\node[regular polygon, fill=black, draw, regular polygon sides=5, 
 minimum width=0pt, inner sep = 0.5ex,] at (-6,3) {};
	\node[text=white] at (-6,3) {\large $\ast$};
	
	\node[draw, fill=black, inner sep=1.25pt, circle] at (-7,3) {};
	
	\node[draw, trapezium, fill=black, inner sep=2.5pt,scale=1] at (-8,3) {};
	\node[text=white] at (-8,3) {\large $\ast$};
	
	\node[regular polygon, fill=white, draw, regular polygon sides=4, 
 minimum width=0pt, inner sep = 0.5ex,] at (-9,3) {};
	\node at (-9,3) {\large $\ast$};
	
	
	\node[draw, fill=black, circle, inner sep =1.25pt, text=white] at (-4,4) {};
	
	\node[draw, fill=black, circle, inner sep =1.25pt, text=white] at (-5,4) {$2$};
	
	\node[draw, fill=black, circle, inner sep =1.25pt, text=white] at (-6,4) {$3$};
	
	\node[above right={0.35ex}, draw, fill=black, circle, inner sep =1.25pt] at (-7,4) {};
	
	\begin{scope}[xshift=-0.2cm,yshift=-0.2cm]
	\node[regular polygon, fill=black, draw, regular polygon sides=5, 
 minimum width=0pt, inner sep = 0.5ex] at (-7,4) {};
	\node[text=white] at (-7,4) {\large $\ast$};
	\end{scope}
	
	\node[draw, fill=black, circle, inner sep =1.25pt, text=white] at (-8,4) {$2$};
	
	\node[regular polygon, fill=black, draw, regular polygon sides=5, 
 minimum width=0pt, inner sep = 0.5ex,] at (-9,4) {};
	\node[text=white] at (-9,4) {\large $\ast$};
	
	\node[draw, fill=black, inner sep=1.25pt, circle] at (-10,4) {};
	
	\node[draw, trapezium, fill=black, inner sep=2.5pt,scale=1] at (-11,4) {};
	\node[text=white] at (-11,4) {\large $\ast$};
	
	\node[regular polygon, fill=white, draw, regular polygon sides=4, 
 minimum width=0pt, inner sep = 0.5ex,] at (-12,4) {};
	\node at (-12,4) {\large $\ast$};
	
	
	\node[draw, fill=black, circle, inner sep =1.25pt] at (-4,5) {};
	
	\node[fill=black, circle, inner sep=1.25pt, text=white] at (-5,5) {$2$};
	
	\node[draw, fill=black, circle, inner sep =1.25pt, text=white] at (-6,5) {$3$};
	
	\node[draw, fill=black, circle, inner sep =1.25pt, text=white] at (-7,5) {$4$};
	
	\begin{scope}[xshift=-0.2cm,yshift=-0.2cm]
	\node[regular polygon, fill=black, draw, regular polygon sides=5, 
 minimum width=0pt, inner sep = 0.5ex] at (-8,5) {};
	\node[text=white] at (-8,5) {\large $\ast$};
	\end{scope}
	
	\node[above right={0.35ex}, draw, fill=black, circle, inner sep =1.25pt, text=white] at (-8,5) {$2$};
	
	\node[draw, fill=black, circle, inner sep =1.25pt, text=white] at (-9,5) {$3$};
	
	\begin{scope}[xshift=-0.2cm,yshift=-0.2cm]
	\node[regular polygon, fill=black, draw, regular polygon sides=5, 
 minimum width=0pt, inner sep = 0.5ex] at (-10,5) {};
	\node[text=white] at (-10,5) {\large $\ast$};
	\end{scope}
	
	\node[above right={0.35ex}, draw, fill=black, circle, inner sep =1.25pt, text=white] at (-10,5) {};
	
	\node[draw, fill=black, circle, inner sep =1.25pt, text=white] at (-11,5) {$2$};
	
	\node[regular polygon, fill=black, draw, regular polygon sides=5, 
 minimum width=0pt, inner sep = 0.5ex,] at (-12,5) {};
	\node[text=white] at (-12,5) {\large $\ast$};
	
	
	\node[draw, fill=black, circle, inner sep =1.25pt, text=white] at (-4,6) {};
	
	\node[draw, fill=black, circle, inner sep =1.25pt, text=white] at (-5,6) {$2$};
	
	\node[draw, fill=black, circle, inner sep =1.25pt, text=white] at (-6,6) {$3$};
	
	\node[draw, fill=black, circle, inner sep =1.25pt, text=white] at (-7,6) {$4$};
	
	\node[draw, fill=black, circle, inner sep =1.25pt, text=white] at (-8,6) {$5$};
	
	\begin{scope}[xshift=-0.2cm,yshift=-0.2cm]
	\node[regular polygon, fill=black, draw, regular polygon sides=5, 
 minimum width=0pt, inner sep = 0.5ex] at (-9,6) {};
	\node[text=white] at (-9,6) {\large $\ast$};
	\end{scope}
	
	\node[above right={0.35ex}, draw, fill=black, circle, inner sep =1.25pt, text=white] at (-9,6) {$3$};
	
	\node[draw, fill=black, circle, inner sep =1.25pt, text=white] at (-10,6) {$4$};
	
	\begin{scope}[xshift=-0.2cm,yshift=-0.2cm]
	\node[regular polygon, fill=black, draw, regular polygon sides=5, 
 minimum width=0pt, inner sep = 0.5ex] at (-11,6) {};
	\node[text=white] at (-11,6) {\large $\ast$};
	\end{scope}
	
	\node[above right={0.35ex}, draw, fill=black, circle, inner sep =1.25pt, text=white] at (-11,6) {$2$};
		
	\node[draw, fill=black, circle, inner sep =1.25pt, text=white] at (-12,6) {$3$};
	
%
%
%

	
	\draw[fill=white] (-11,-5.25) rectangle (-1,-0.75);
	\draw (-10,-1.6) -- (-3,-1.6);
	
	\node at (-6.4,-1.25) {\large Key:};
	
	\node at (-9,-2.5) {$\whitesquare = \underline{\Z}$};
	\node at (-8.875,-3.5) {$\whitesquaredual = \underline{\Z}^*$};
	
	\node at (-6.4,-2.5) {$\filltrap = \underline{mg}$};
	\node at (-6.3,-3.5) {$\filltrapdual = \underline{mg}^*$};
	
	\node at (-3,-2.5) {$\fillpent = \phi_{LDR}^* \underline{\F_2}$};
	\node at (-2.9,-3.5) {$\fillpentdual = \phi_{LDR}^* \underline{\F_2}^*$};
	
	\node at (-6,-4.5) {$\begin{tikzpicture} \node[draw,circle,fill=black,inner sep=0.5pt, text=white] {n}; \end{tikzpicture} = \langle \mathbb{F}_2 \rangle^{\oplus n}$};
	\end{tikzpicture}
	
    \end{center}
    \end{figure}

\clearpage
\newpage
\bibliographystyle{plain}
\bibliography{refs}

\end{document}